\pgfplotsset{compat=1.10}
\definecolor{darkgreen}{rgb}{0.0,0.5,0.0}
\definecolor{darkblue}{rgb}{0.0,0.0,0.3}
\definecolor{nicosred}{rgb}{0.65,0.1,0.1}
\definecolor{light-gray}{gray}{0.6}
\definecolor{really-light-gray}{gray}{0.8}
\def\R{\mathbb R}
\newcommand{\diver}{\mathrm{div}} 
\newcommand{\mres}{\mathbin{\vrule height 1.6ex depth 0pt width  
0.13ex\vrule height 0.13ex depth 0pt width 1.3ex}}    
\newcommand{\Upp}{^{\vee}}
\newcommand{\Low}{^{\wedge}}
\DeclareMathOperator*{\aplim}{ap\,lim}
\newtheorem{theorem}{Theorem}[section]
\newtheorem{remark}[theorem]{Remark}
\newtheorem{definition}[theorem]{Definition}
\newtheorem{proposition}[theorem]{Proposition}
\newtheorem{lemma}[theorem]{Lemma}
\newtheorem{corollary}[theorem]{Corollary}
\newtheorem{open}[theorem]{Open Problem}
\numberwithin{equation}{section}
\numberwithin{figure}{section}
\begin{document}

\title{Minimally singular functions and the rigidity problem for Steiner's perimeter inequality}

\author[M. Perugini]
 {Matteo Perugini}
 \address[Matteo Perugini]{Universit\'a degli Studi di Milano, Dipartimento di Matematica, Milano}
 \email{matteo.perugini@unimi.it}

\begin{abstract}
{\rm Let $n\geq 1$, and let $\Omega\subset \mathbb{R}^n$ be an open and connected set with finite Lebesgue measure. Among functions of bounded variation in $\Omega$ we introduce the class of \emph{minimally singular} functions. Inspired by the original theory of Vol'pert of one-dimensional restrictions of $BV$ functions, we provide a geometric characterization for this class of functions via the introduction of a pseudometric that we call \emph{singular vertical distance}. As an application, we present a characterization result for \emph{rigidity} of equality cases for Steiner's perimeter inequality. By \emph{rigidity} we mean that the only extremals for Steiner's perimeter inequality are vertical translations of the Steiner symmetric set.} 
\end{abstract}

\maketitle

\section{Introduction}
\subsection{Overview}
Firstly introduced by Jacob Steiner in \cite{Steiner}, Steiner symmetrisation is a well known rearrangement procedure that given any set $E$ and an hyperplane $H$, it produces the so called \emph{Steiner symmetric} set $E^s$ of $E$ with respect to $H$. The set $E^s$ has the property that, every straight line $L$ orthogonal to $H$ intersects $E$ if and only if it also intersects $E^s$, the length of $L\cap E$ coincides with the length of $L\cap E^s$, and $L\cap E^s$ is a segment whose mid point belongs to $H$. By construction it can be proved that $E^s$ has the same volume of the initial set $E$, while its perimeter, that we denote with $P(E^s)$, is never greater than the perimeter of $E$, namely
\begin{align}\label{intro_Steiner inequality}
P(E^s)\leq P(E).
\end{align}
The above relation is called \emph{Steiner's inequality} and although Steiner symmetrisation has been widely used in analysis for more than one hundred years, the study of the equality cases of \eqref{intro_Steiner inequality} and in particular the study of \emph{rigidity} of the equality cases for such inequality, is a difficult and important problem that still attracts the interest of the mathematical community. By \emph{rigidity} of equality cases we mean the situation where the only extremals of \eqref{intro_Steiner inequality} are equivalent to vertical translations of $E^s$. 
The first results in this direction are due to De Giorgi, who studied the equality cases of \eqref{intro_Steiner inequality} in its proof of the isoperimetric inequality (see \cite{DeGiorgi58ISOP,DeGiorgiSelected}). After some decades, the study of the \emph{rigidity} problem has been resumed and deeply studied by Chleb\'ik, Cianchi, and Fusco in \cite{ChlebikCianchiFuscoAnnals05}, where general sufficient conditions for \emph{rigidity} to hold true were found. More recently Cagnetti, Colombo, De Philippis, and Maggi in \cite{CagnettiColomboDePhilippisMaggiSteiner}, after finding a complete characterization for the equality cases of \eqref{intro_Steiner inequality}, were able to provide even more general sufficient conditions for \emph{rigidity}, and to fully characterize it for some specific situations. Thus, apart from  some cases, the \emph{rigidity} problem was left open (for a precise account of the results so far obtained we refer to \cite{CagnettiPortugaliae}).

\noindent
Let us mention that in the context of Steiner's inequality for the anisotropic perimeter, in \cite{PeruginiAnisotropo} the \emph{rigidity} problem has been studied and general sufficient conditions for which \emph{rigidity} in the anisotropic setting is equivalent to \emph{rigidity} in the Euclidean setting are presented. The \emph{rigidity} problem, in the context of perimeter inequalities, has been fully characterized for other types of symmetrisation procedures. We refer indeed to \cite{CagnettiPeruginiStoger}, and \cite{DomazakisSchwarz} for the spherical and Schwarz symmetrisation respectively, while in the context of Ehrhard's symmetrization inequality for Gaussian perimeter we cite \cite{ccdpmGAUSS}. In the context of Schwartz's
symmetrization inequality for the Dirichlet-type integral functionals, Brothers and Ziemer in \cite{brothersziemer} provided general sufficient conditions under which \emph{rigidity} in that setting holds true. Lastly, in \cite{CagnettirigidityPolyaSzego} Cagnetti proved that those sufficient conditions found by Brothers and Ziemer are actually also necessary.

\noindent
Goal of the present work is to tackle the \emph{rigidity} problem for Steiner's inequality in its full generality.

\noindent
With the aim of getting a clear geometric insight of the problem, we decided to simplify the complexity of the setting by localizing the characterization result for the equality cases of \eqref{intro_Steiner inequality} obtained in \cite[Theorem 1.9]{CagnettiColomboDePhilippisMaggiSteiner} over open and connected sets. This approach lead us to the introduction of a class of functions of bounded variation that we call \emph{minimally singular} (see Definition \ref{def_minimally singular functions}).


\noindent
Our main result is a complete geometric characterization of \emph{minimally singular} functions via the introduction of a pseudometric that we call \emph{singular vertical distance} (see Definition \ref{def: SVD}, and Theorem \ref{thm_characterization minimally singular via SVD}). As a by-product we establish a characterization result of \emph{rigidity} of equality cases for Steiner's inequality over open and connected sets (see Theorem \ref{thm_rigidity by perugini}).

\noindent
Let us start by introducing some of the main definitions about Steiner symmetrisation.
\subsection{Steiner's inequality: main definitions}\label{subsection_intro:equality cases}
Let $n \in \mathbb{N}$, with $n \geq 1$. We decompose $\mathbb{R}^{n+1}$ as $\mathbb{R}^{n} \times \mathbb{R}$, 
and we write a generic point of $\mathbb{R}^{n+1}$ as $(x,t)$, with $x \in \mathbb{R}^{n}$ and $t \in \mathbb{R}$. For every set $E \subset \mathbb{R}^{n+1}$ and for every $x\in \mathbb{R}^n$ we define the one dimensional section of $E$ at $x$ as
\[
E_{x} := \{ t \in \mathbb{R}:\, (x,t) \in E \}.
\]
Let $v : \mathbb{R}^{n} \to [0, \infty)$ be a Lebesgue measurable function. We say that $E\subset\mathbb{R}^{n+1}$ is \textit{$v$-distributed} if 
\[
v (x) = \mathcal{H}^1 (E_{x}) \quad \text{ for $\mathcal{L}^{n}$-a.e. } x 
\in \mathbb{R}^{n},
\]
where $\mathcal{H}^k$, and $\mathcal{L}^k$ stand for the $k$-dimensional Hausdorff measure and the $k$-dimensional Lebesgue measure respectively with $k\in \mathbb{N}\cup \{ 0 \}$. We define the set $F[v]\subset \mathbb{R}^{n+1}$ as 
\begin{align}\label{def_F[v]}
F[v] := \left\{ (x, t) \in  \mathbb{R}^{n+1} :\,  
 |t|<\frac{1}{2} v(x) \right\}.
\end{align}
If $E\subset \mathbb{R}^{n+1}$ is a $v$-distributed set, then $F[v]$ is its Steiner symmetric set with respect to the hyperplane 
\( \{ (x, t) \in \mathbb{R}^{n+1} : t = 0 \} = \mathbb{R}^{n}\times\{ 0 \} \). Our choice to use the notation $F[v]$ rather than $E^s$ is motivated by the fact that when studying the cases of equality of \eqref{intro_Steiner inequality}, we are interested in describing the analytical properties of the function $v$ rather than focus on a particular $v$-distributed set $E$. Relation \eqref{intro_Steiner inequality} in particular says that if $E$ is a $v$-distributed set of finite perimeter, then also $F[v]$ is a set of finite perimeter (for more details about sets of finite perimeter we refer to Section \ref{section_fundamentals}). Let us observe that thanks to \cite[Lemma 1.1]{ChlebikCianchiFuscoAnnals05} in case $E$ is a set of finite perimeter with $\mathcal{L}^n(E)=\infty$, then $F[v]$ coincides with the whole $\mathbb{R}^{n+1}$ whose perimeter is zero. So, motivated by this last consideration, since we are interested in the characterization of equality cases of \eqref{intro_Steiner inequality}, it is not restrictive to focus our attention to those  Lebesgue measurable functions $v:\mathbb{R}^{n}\to [0,\infty)$ such that both the perimeter and the volume of $F[v]$ are finite, namely $P(F[v])<\infty$ and $\mathcal{L}^n(F[v])<\infty$. In particular, we have the following characterization result (see \cite[Lemma 1.3]{CagnettiPortugaliae}, and \cite[Proposition 3.2]{CagnettiColomboDePhilippisMaggiSteiner}).
\begin{proposition} 
Let $v:\mathbb{R}^n\to [0,\infty)$ be a Lebesgue measurable function. Then $F[v]\subset \mathbb{R}^{n+1}$ is a set of finite perimeter and finite volume if and only if the following conditions hold true
\begin{align}\label{cond_ F[v] has finite perimeter}
v\in BV(\mathbb{R}^n;[0,\infty)) \textnormal{ and } \mathcal{L}^n(\{ v>0 \})<\infty.
\end{align}
\end{proposition}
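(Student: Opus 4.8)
The plan is to prove both implications of the equivalence, treating the "only if" direction as essentially a slicing argument and the "if" direction as a construction-plus-estimate.

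First I would address the direction assuming $F[v]$ has finite perimeter and finite volume. The finite-volume statement is immediate from Fubini, since $\mathcal{L}^{n+1}(F[v]) = \int_{\mathbb{R}^n} v(x)\, dx$, so $\|v\|_{L^1} < \infty$ and in particular $\mathcal{L}^n(\{v>0\}) < \infty$ follows once we know $v \in L^1$ together with a simple observation (or directly: $\{v > 0\} = \bigcup_k \{v > 1/k\}$ and each $\{v>1/k\}$ has measure at most $k\|v\|_{L^1}$). To get $v \in BV(\mathbb{R}^n)$, the natural tool is the slicing theory of sets of finite perimeter: for $\mathcal{L}^{n-1}$-a.e. line in each coordinate direction $e_i$, the slice of $F[v]$ is a set of finite perimeter in $\mathbb{R}^2$, and one compares the perimeter of $F[v]$ with the total variation of $v$. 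Concretely, I would test the distributional derivative of $v$ against a vector field: for $\varphi \in C_c^1(\mathbb{R}^n; \mathbb{R}^n)$, build the vector field $\Phi(x,t) = \varphi(x)\,\mathrm{sgn}(t)$ (suitably mollified near $t = 0$, or just argue on the reduced boundary) on $\mathbb{R}^{n+1}$, apply the Gauss--Green formula for $F[v]$, and recognize $\int v \, \mathrm{div}\,\varphi$ on the left, bounded by $\|\Phi\|_\infty P(F[v]) \leq P(F[v])$ on the right. This shows $|Dv|(\mathbb{R}^n) \leq P(F[v]) < \infty$.

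For the converse, assuming \eqref{cond_ F[v] has finite perimeter}, I would show $P(F[v]) < \infty$ and $\mathcal{L}^{n+1}(F[v]) < \infty$. The volume bound is again Fubini together with the elementary inequality $\int v = \int_{\{v>0\}} v \le \|v\|_{BV}$ — actually one just needs $v \in L^1$, which is part of $v \in BV$ in this normalization (or follows since $v \ge 0$ and $\{v>0\}$ has finite measure via Chebyshev). For the perimeter bound the cleanest route is to first treat $v \in C_c^\infty$ or a smooth $v$ with compact support, where $F[v]$ has smooth boundary and one computes $P(F[v]) = \int (\text{something involving } \sqrt{1 + |\nabla v/2|^2}) \le \mathcal{L}^n(\{v>0\}) + c\,|Dv|(\mathbb{R}^n)$ or similar, and then pass to the general case by approximation: take $v_k \to v$ in $L^1$ with $|Dv_k|(\mathbb{R}^n) \to |Dv|(\mathbb{R}^n)$ and supports controlled, observe $F[v_k] \to F[v]$ in $L^1_{\mathrm{loc}}(\mathbb{R}^{n+1})$ (indeed in $L^1$), and use lower semicontinuity of perimeter to transfer the uniform bound $\sup_k P(F[v_k]) < \infty$ to $P(F[v]) < \infty$.

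The main obstacle, in my view, is handling the behavior at the "equator" $\{t = 0\}$: the vector field $\mathrm{sgn}(t)$ jumps across $\mathbb{R}^n \times \{0\}$, so one cannot blindly plug it into the Gauss--Green formula. The rigorous fix is either to use the structure theorem for $F[v]$ — identifying $\partial^* F[v]$ in terms of the graph of $\pm v/2$ and the "vertical" part over the jump and Cantor parts of $v$ — or to truncate $\mathrm{sgn}(t)$ to a Lipschitz function equal to $\pm 1$ outside $\{|t| < \eta\}$, apply Gauss--Green cleanly, and let $\eta \to 0$, controlling the error by the measure of $F[v] \cap \{|t| < \eta\}$, which vanishes. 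Since this proposition is quoted from \cite{CagnettiPortugaliae} and \cite{CagnettiColomboDePhilippisMaggiSteiner}, I expect the paper to simply cite it, but the slicing-plus-approximation scheme above is the self-contained argument.
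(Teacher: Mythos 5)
You are right that the paper does not prove this proposition; it quotes it from \cite[Lemma 1.3]{CagnettiPortugaliae} and \cite[Proposition 3.2]{CagnettiColomboDePhilippisMaggiSteiner} without proof, so there is no argument in the paper to compare against. However, your blind attempt contains two concrete errors in the ``only if'' direction.

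First, you claim that $\mathcal{L}^n(\{v>0\})<\infty$ follows from $\|v\|_{L^1}<\infty$ (i.e.\ from the finite volume of $F[v]$). This is false. Your Chebyshev observation shows that each $\{v>1/k\}$ has finite measure, but the increasing union $\{v>0\}=\bigcup_k\{v>1/k\}$ can have infinite measure: take $v(x)=e^{-|x|^2}$, which is in $L^1\cap W^{1,1}(\mathbb{R}^n)\subset BV(\mathbb{R}^n)$ yet has $\{v>0\}=\mathbb{R}^n$. The finiteness of $\mathcal{L}^n(\{v>0\})$ must come from the \emph{perimeter} hypothesis, not the volume. The correct route is vertical slicing: by Vol'pert's theorem, for $\mathcal{L}^n$-a.e.\ $x$ the slice $(F[v])_x=(-v(x)/2,v(x)/2)$ has finite perimeter in $\mathbb{R}$, with
\begin{align*}
\int_{\mathbb{R}^n}P\bigl((F[v])_x;\mathbb{R}\bigr)\,dx\ \le\ P(F[v])<\infty,
\end{align*}
and $P((F[v])_x;\mathbb{R})=2$ whenever $v(x)>0$, so $2\,\mathcal{L}^n(\{v>0\})\le P(F[v])$.

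Second, your vector field $\Phi(x,t)=\varphi(x)\,\mathrm{sgn}(t)$ produces zero on the left-hand side, not $\int v\,\mathrm{div}\,\varphi$. Since $\Phi$ has no vertical component, $\mathrm{div}\,\Phi=\mathrm{sgn}(t)\,\mathrm{div}_x\varphi(x)$, and by Fubini
\begin{align*}
\int_{F[v]}\mathrm{div}\,\Phi
=\int_{\mathbb{R}^n}\mathrm{div}_x\varphi(x)\left(\int_{-v(x)/2}^{v(x)/2}\mathrm{sgn}(t)\,dt\right)dx=0,
\end{align*}
because $F[v]$ is symmetric in $t$ while $\mathrm{sgn}$ is odd. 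So the obstruction is not merely the discontinuity of $\mathrm{sgn}$ at $t=0$, as you suggest at the end; the factor has the wrong parity and should simply be omitted. Taking $\Phi(x,t)=(\varphi(x)\psi(t),0)$ with $\psi\in C_c^1(\mathbb{R})$ even, $|\psi|\le 1$, $\psi\equiv 1$ on $[-M,M]$, and letting $M\to\infty$ via dominated convergence does give $\bigl|\int v\,\mathrm{div}\,\varphi\bigr|\le P(F[v])$, hence $|Dv|(\mathbb{R}^n)\le P(F[v])$.

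For the ``if'' direction your smooth-approximation plan is workable in spirit, but note that a mollification $v_k=v*\rho_k$ need not keep $\mathcal{L}^n(\{v_k>0\})$ uniformly bounded (the $1/k$-neighbourhood of a finite-measure set can have infinite measure), so the bound $\sup_k P(F[v_k])<\infty$ is not automatic. A cleaner route that uses both hypotheses of \eqref{cond_ F[v] has finite perimeter} directly is the converse of the slicing theorem for $BV$: the horizontal slices $F[v]_s=\{v>2|s|\}$ satisfy $\int_{\mathbb{R}}P(F[v]_s)\,ds=\int_0^\infty P(\{v>\tau\})\,d\tau=|Dv|(\mathbb{R}^n)<\infty$ by coarea, while the vertical sections $(F[v])_x$ are intervals with $\int_{\mathbb{R}^n}\mathrm{pV}(\chi_{(F[v])_x};\mathbb{R})\,dx=2\,\mathcal{L}^n(\{v>0\})<\infty$; together these give $P(F[v])<\infty$ with no approximation argument.
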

\noindent
We denote with $\mathcal{M}(v)$ the class of all $v$-distributed sets $E$ having finite perimeter and attaining equality in \eqref{intro_Steiner inequality}, namely
\begin{align}\label{def_M(v)}
\mathcal{M}(v):=\left\{E\subset \mathbb{R}^{n+1}:\, E \textnormal{ is $v$-distributed and }P(F[v])=P(E)  \right\}.
\end{align}
We are now ready to define rigorously the concept of \emph{rigidity} in the context of Steiner's inequality. Let $v:\mathbb{R}^n\to [0,\infty)$ be a Lebesgue measurable function satisfying condition \eqref{cond_ F[v] has finite perimeter}. Then, we say that \emph{rigidity} holds true if and only if 
\begin{align}\label{rigidity classic definition}
\forall\,E \in \mathcal{M}(v)\textnormal{ there exists } t \in \mathbb{R}\textnormal{ so that }\mathcal{H}^{n+1}\left( E \Delta (t e_{n+1}+ F[v]) \right)=0,   
\end{align}
where $(e_1,\dots,e_{n+1})$ stands for the canonical basis of $\mathbb{R}^{n+1}$, and $A\Delta B$ stands for the symmetric difference of the sets $A,B\subset \mathbb{R}^{n+1}$. In other words, given $v:\mathbb{R}^n\to [0,\infty)$ a Lebesgue measurable function satisfying \eqref{cond_ F[v] has finite perimeter} consider the following geometric variational problem 
\begin{align*}
\inf \left\{P(E):\, E\subset \mathbb{R}^{n+1} \textnormal{ is $v$-distributed}     \right\}.
\end{align*}
Then Steiner's inequality \eqref{intro_Steiner inequality} guarantees the existence of at least a minimizer, that is $F[v]$, while studying \emph{rigidity} amounts to investigate uniqueness (up to vertical translation) for such minimizer. Thus for us the \emph{rigidity} problem for Steiner's inequality consists in finding the necessary and sufficient conditions that the function $v$ has to satisfy so that \eqref{rigidity classic definition} holds true.

\subsection{What we know about \emph{rigidity}}\label{subsection_intro:rigidity}

We are now ready to discuss more specifically about the \emph{rigidity} problem. First of all, we cannot expect that \emph{rigidity} holds true if we don't assume a certain notion of connectedness for the projection of the symmetric set $F[v]$ on $\mathbb{R}^n$, that is $\{ v>0 \}$ (see Figure \ref{fig_serveconnessione}). In the following $v\Upp$ and $v\Low$ are the approximate upper and lower limits of $v$, respectively (for more details we refer to Section \ref{section_fundamentals}).

\begin{figure}[!htb]
\centering
\def\svgwidth{11cm}
\begingroup%
  \makeatletter%
  \providecommand\color[2][]{%
    \errmessage{(Inkscape) Color is used for the text in Inkscape, but the package 'color.sty' is not loaded}%
    \renewcommand\color[2][]{}%
  }%
  \providecommand\transparent[1]{%
    \errmessage{(Inkscape) Transparency is used (non-zero) for the text in Inkscape, but the package 'transparent.sty' is not loaded}%
    \renewcommand\transparent[1]{}%
  }%
  \providecommand\rotatebox[2]{#2}%
  \newcommand*\fsize{\dimexpr\f@size pt\relax}%
  \newcommand*\lineheight[1]{\fontsize{\fsize}{#1\fsize}\selectfont}%
  \ifx\svgwidth\undefined%
    \setlength{\unitlength}{689.73234211bp}%
    \ifx\svgscale\undefined%
      \relax%
    \else%
      \setlength{\unitlength}{\unitlength * \real{\svgscale}}%
    \fi%
  \else%
    \setlength{\unitlength}{\svgwidth}%
  \fi%
  \global\let\svgwidth\undefined%
  \global\let\svgscale\undefined%
  \makeatother%
  \begin{picture}(1,0.52107905)%
    \lineheight{1}%
    \setlength\tabcolsep{0pt}%
    \put(0,0){\includegraphics[width=\unitlength,page=1]{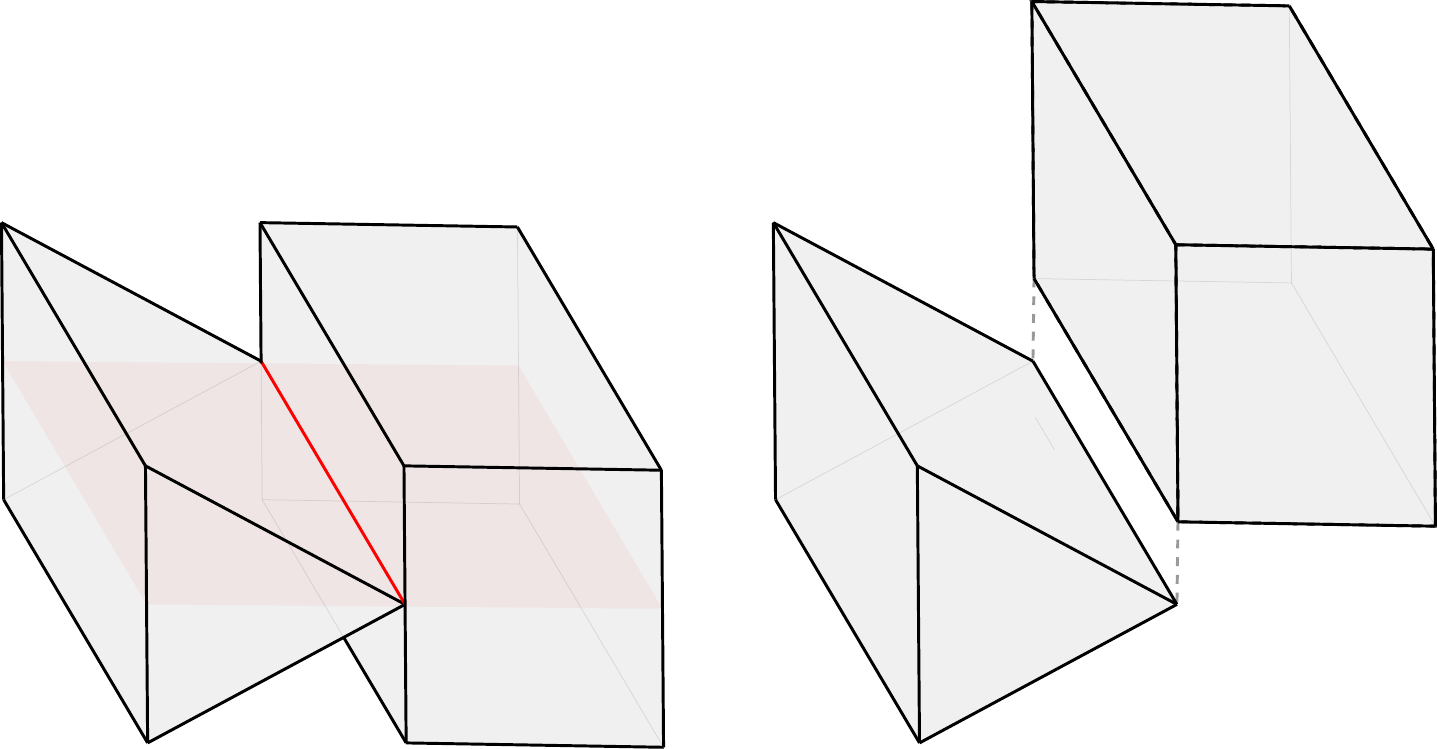}}%
    \put(0.0971261,0.34039725){\color[rgb]{0,0,0}\makebox(0,0)[lt]{\lineheight{1.25}\smash{\begin{tabular}[t]{l}\tiny{$F[v]$}\end{tabular}}}}%
    \put(0.61811409,0.34650764){\color[rgb]{0,0,0}\makebox(0,0)[lt]{\lineheight{1.25}\smash{\begin{tabular}[t]{l}\tiny{$E$}\end{tabular}}}}%
    \put(0.29027286,0.1166792){\color[rgb]{1,0,0}\makebox(0,0)[lt]{\lineheight{1.25}\smash{\begin{tabular}[t]{l}\tiny{$\{  v\Low = 0 \}$}\end{tabular}}}}%
  \end{picture}%
\endgroup%

\caption{A pictorial representation in $\mathbb{R}^3$ of $\{ v\Low=0 \}$ (see the red line on the left) that essentially disconnects the projection of $F[v]$ on $\mathbb{R}^2$ that is $\{ v>0 \}$ (see the light red region on the left). Indeed the set $E$ on the right is a $v$-distributed set having the same perimeter of $F[v]$, thus in this situation \emph{rigidity} fails. Let us observe that there is no limitation on how much we could have lifted up the parallelepiped-block of the set $E$.}
\label{fig_serveconnessione}
\end{figure} 
\noindent
What is happening in the example presented in Figure \ref{fig_serveconnessione} is that the set $\{  v\Low=0\}$ \emph{essentially disconnects} the projection $\{ v>0 \}$ allowing to separately shift one of the two disconnected parts of $F[v]$ without affecting the perimeter. This notion of connectedness valid for Borel sets, was firstly introduced in \cite{ccdpmGAUSS} while studying the rigidity problem in the context of Ehrhard's symmetrization inequality for Gaussian perimeter. To be more precise, let $K, G \subset \mathbb{R}^m$, with $m\in \mathbb{N}$ be Borel sets. Then we say that $K$ \emph{essentially disconnects} $G$ if and only if there exists a non trivial Borel partition $\{ G_+,G_- \}$ of $G$ such that 
\begin{align*}
\mathcal{H}^{m-1}\left( (G^{(1)}\cap \partial ^{e}G_+ \cap \partial ^{e}G_-)\setminus K  \right)=0,
\end{align*}
where for any Lebesgue measurable set $A\subset \mathbb{R}^n$ we write $A^{(t)}$, with $0\leq t\leq 1$ to indicate the set of points of density $t$ of $A$, and we denote  with $\partial ^e A = \mathbb{R}^n\setminus (A^{(0)}\cup A^{(1)})$ the essential boundary of $A$, while by non trivial Borel partition we mean that the two sets $G_+,G_-$ satisfy the following conditions
\begin{align}\label{non trivial Borel partition}
\mathcal{H}^m(G_+ \cap G_- )=0,\quad \mathcal{H}^m(G \Delta (G_+\cup G_-))=0,\quad \mathcal{H}^m(G_+)\mathcal{H}^m(G_-)>0.
\end{align}
Conversely, we say that $K$ \emph{does not essentially disconnect} $G$ if and only if for every non trivial Borel partition  $\{ G_+,G_- \}$ of $G$ we have that 
\begin{align}\label{def_does not essentially disconnect}
\mathcal{H}^{m-1}\left( (G^{(1)}\cap \partial ^{e}G_+ \cap \partial ^{e}G_-)\setminus K  \right)>0.
\end{align}
Lastly, we say that $G$ is \emph{essentially connected} if the empty set does not essentially disconnect $G$. Thus, the following condition 
\begin{align}\label{cond_ F[v] is indecomposable}
\{ v\Low =0 \}\textnormal{ does not essentially disconnect }\{ v>0 \}
\end{align}
is a necessary condition for \emph{rigidity} to hold true (the reader interested on how this notion of connectedness relates to the notion of  \emph{indecomposability} for sets of finite perimeter can check \cite[Remark 2.3]{ccdpmGAUSS}, and \cite[Section 4D]{CagnettiColomboDePhilippisMaggiSteiner}). 

\noindent
Another aspect that we need to take into account when studying the \emph{rigidity} problem, is the role of the singular part of the distributional derivative of $v$, namely $D^s v$. Let us recall that with $Dv$, namely the distributional gradient  of $v$, we denote the $\mathbb{R}^n$-valued Radon measure on $\mathbb{R}^n$, and $Dv= D^a v + D^s v$, where $D^a v$ and $D^s v$ are the absolutely continuous and singular part of $Dv$, respectively. We also recall that $D^s v= D^j v + D^c v$ where $D^j v$ and $D^c v$ are the jump and Cantor part of $D^s v$, respectively. As it is also clear from the formula of the perimeter of $F[v]$ (see \cite[Theorem 3.1]{CagnettiColomboDePhilippisMaggiSteiner}) the total variation of $D^s v$ evaluated over $\mathbb{R}^n$, namely $|D^s v|(\mathbb{R}^n)$ coincides with the $\mathcal{H}^{n}$-measure of the parts of the reduced boundary $\partial^* F[v]$ of $F[v]$ having the measure-theoretic outer unit normal $\nu^{F[v]}$ that is parallel with respect to the symmetrisation hyperplane: roughly speaking we refer to these parts of the reduced boundary as ``the vertical parts'' (see for instance Figure \ref{fig_vertical parts}).

\begin{figure}[!htb]
\centering
\def\svgwidth{11cm}
\begingroup%
  \makeatletter%
  \providecommand\color[2][]{%
    \errmessage{(Inkscape) Color is used for the text in Inkscape, but the package 'color.sty' is not loaded}%
    \renewcommand\color[2][]{}%
  }%
  \providecommand\transparent[1]{%
    \errmessage{(Inkscape) Transparency is used (non-zero) for the text in Inkscape, but the package 'transparent.sty' is not loaded}%
    \renewcommand\transparent[1]{}%
  }%
  \providecommand\rotatebox[2]{#2}%
  \newcommand*\fsize{\dimexpr\f@size pt\relax}%
  \newcommand*\lineheight[1]{\fontsize{\fsize}{#1\fsize}\selectfont}%
  \ifx\svgwidth\undefined%
    \setlength{\unitlength}{846.28048532bp}%
    \ifx\svgscale\undefined%
      \relax%
    \else%
      \setlength{\unitlength}{\unitlength * \real{\svgscale}}%
    \fi%
  \else%
    \setlength{\unitlength}{\svgwidth}%
  \fi%
  \global\let\svgwidth\undefined%
  \global\let\svgscale\undefined%
  \makeatother%
  \begin{picture}(1,0.57813243)%
    \lineheight{1}%
    \setlength\tabcolsep{0pt}%
    \put(0,0){\includegraphics[width=\unitlength,page=1]{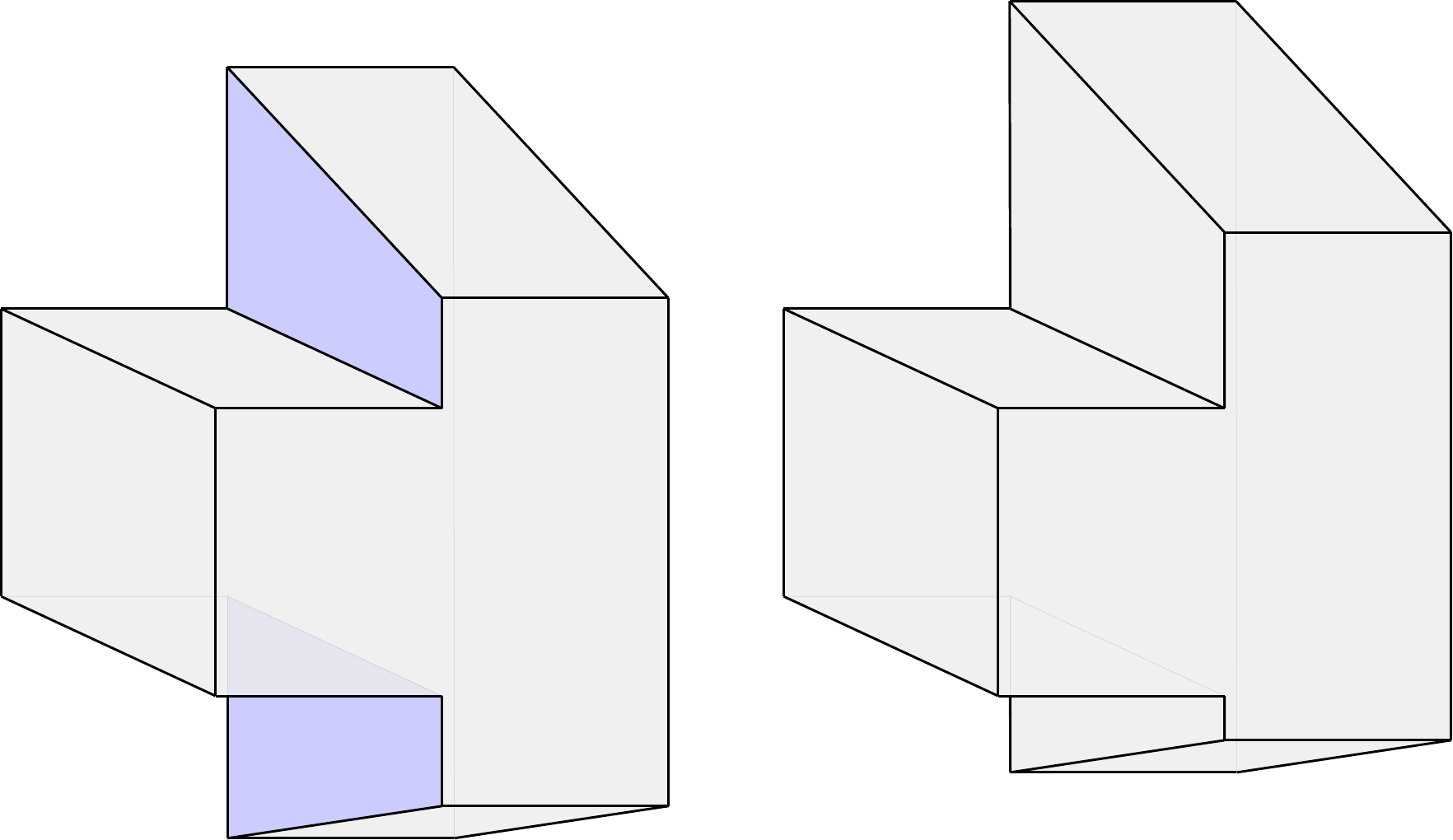}}%
    \put(0.07042,0.48319013){\color[rgb]{0,0,0}\makebox(0,0)[lt]{\lineheight{1.25}\smash{\begin{tabular}[t]{l}\tiny{$F[v]$}\end{tabular}}}}%
    \put(0.61840952,0.46211361){\color[rgb]{0,0,0}\makebox(0,0)[lt]{\lineheight{1.25}\smash{\begin{tabular}[t]{l}\tiny{$E$}\end{tabular}}}}%
  \end{picture}%
\endgroup%

\caption{A pictorial representation in $\mathbb{R}^3$ of how some of the vertical parts of the reduced boundary of $F[v]$ (see the light blue parts on the left) may be used to create counterexamples to \emph{rigidity} (see the set $E$ on the right). Note that the set $E$ has been obtained by vertically shifting upward an entire piece of $F[v]$. In this case $P(F[v])=P(E)$, thus $E\in \mathcal{M}(v)$, but since $E$ and $F[v]$ are not $\mathcal{H}^{n+1}$-equivalent up to vertical translations, \emph{rigidity} fails.}
\label{fig_vertical parts}
\end{figure} 
\noindent
Roughly speaking we can say that, in the example presented in Figure \ref{fig_vertical parts}, the vertical parts of $\partial^* F[v]$ that appear in light blue, create a sort of vertical surface, along which we can slide an entire chunk of $F[v]$ without effecting the perimeter.

\noindent
In the seminal paper \cite{ChlebikCianchiFuscoAnnals05},  Chleb\'ik, Cianchi and Fusco proved the following important result (see \cite[Theorem 1.3]{ChlebikCianchiFuscoAnnals05}, and \cite[Theorem B]{CagnettiColomboDePhilippisMaggiSteiner}) which provides general sufficient conditions under which \emph{rigidity} holds true. 

\begin{theorem}\label{thm_rigidity by CCF}
Let $v:\mathbb{R}^n\to [0,\infty)$ be a Lebesgue measurable function satisfying \eqref{cond_ F[v] has finite perimeter}. Let $\Omega\subset \mathbb{R}^n$ be an open and connected set such that both the following conditions hold true, namely
\begin{align*}
D^s v \mres \Omega= 0,\quad\textnormal{and}\quad v\Low(x)>0\textnormal{ for }\mathcal{H}^{n-1}\textnormal{-a.e. }x\in \Omega.
\end{align*}
Then for every $E\in \mathcal{M}(v)$ there exists $t \in \mathbb{R}$ such that $\mathcal{H}^{n+1}\left( \left(E \Delta (t e_{n+1}+ F[v])\right) \cap (\Omega\times \mathbb{R}) \right)=0$. Moreover, if there exists an $\Omega\subset \mathbb{R}^n$ as above such that $\mathcal{L}^n( \{ v>0 \}\setminus \Omega)=0$ then for every $E\in \mathcal{M}(v)$ there exists $t \in \mathbb{R}$ such that $\mathcal{H}^{n+1}\left( \left(E \Delta (t e_{n+1}+ F[v])\right) \right)=0$.
\end{theorem}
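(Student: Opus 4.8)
\emph{The plan.} The plan is to deduce the statement from the characterization of the equality cases of Steiner's inequality proved in \cite{CagnettiColomboDePhilippisMaggiSteiner} (the theorem above being \cite[Theorem~1.3]{ChlebikCianchiFuscoAnnals05}, restated as \cite[Theorem~B]{CagnettiColomboDePhilippisMaggiSteiner}). I would fix $E\in\mathcal{M}(v)$ and invoke \cite[Theorem~1.9]{CagnettiColomboDePhilippisMaggiSteiner}: for $\mathcal L^n$-a.e.\ $x$ the section $E_x$ is equivalent to a single segment of length $v(x)$, so, writing $\sigma(x)$ for its midpoint, $E$ is $\mathcal H^{n+1}$-equivalent, up to a modification carried by the ``vertical parts'' of $F[v]$ over the jump set $J_v$ of $v$, to
\[
\bigl\{(x,t):\ x\in\{v>0\},\ |t-\sigma(x)|<\tfrac12 v(x)\bigr\},
\]
where the midpoint function $\sigma$ is locally of bounded variation on $\{v\Low>0\}$ and satisfies $\nabla\sigma=0$ $\mathcal L^n$-a.e.\ on $\{v>0\}$, while its singular derivative $D^s\sigma$, restricted to $\{v\Low>0\}$, is subordinate to $D^s v$ (the jump set of $\sigma$ lies in $J_v$ modulo $\mathcal H^{n-1}$-null sets, and the Cantor part of $D\sigma$ is dominated by $D^c v$); the precise size bounds in these jump and Cantor conditions will be irrelevant here.

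\emph{The rigidity argument on $\Omega$.} Next I would observe that $v\Low>0$ $\mathcal H^{n-1}$-a.e.\ on $\Omega$ forces $\mathcal L^n(\Omega\setminus\{v>0\})=0$ and $\mathcal H^{n-1}(\Omega\setminus\{v\Low>0\})=0$, so $\sigma\in BV_{\loc}(\Omega)$ and $D^a\sigma\mres\Omega=\nabla\sigma\,\mathcal L^n\mres\Omega=0$. Since $|D^s\sigma|$ charges no $\mathcal H^{n-1}$-negligible set, the subordination of $D^s\sigma\mres\{v\Low>0\}$ to $D^s v$ together with $D^s v\mres\Omega=0$ gives $D^s\sigma\mres\Omega=0$; hence $D\sigma\mres\Omega=0$, and as $\Omega$ is open and connected, a $BV_{\loc}$ function with vanishing derivative there is $\mathcal L^n$-a.e.\ equal to a constant $c\in\mathbb R$ on $\Omega$. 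Because $D^s v\mres\Omega=0$ the set $F[v]$ carries no vertical parts over $\Omega$, so the modification over $J_v$ does not affect sections with $x\in\Omega$; thus for $\mathcal L^n$-a.e.\ $x\in\Omega$ one has $E_x=(c-\tfrac12 v(x),c+\tfrac12 v(x))=(c\,e_{n+1}+F[v])_x$ up to $\mathcal H^1$-null sets, and Fubini's theorem yields
\[
\mathcal H^{n+1}\bigl((E\,\Delta\,(c\,e_{n+1}+F[v]))\cap(\Omega\times\mathbb R)\bigr)=\int_\Omega\mathcal H^1\bigl(E_x\,\Delta\,(c\,e_{n+1}+F[v])_x\bigr)\,d\mathcal L^n(x)=0,
\]
i.e.\ the first assertion with $t=c$. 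If in addition $\mathcal L^n(\{v>0\}\setminus\Omega)=0$, then $v(x)=0$ for $\mathcal L^n$-a.e.\ $x\notin\Omega$, so there $E_x$ and $(c\,e_{n+1}+F[v])_x$ are both $\mathcal H^1$-null, and Fubini upgrades the identity to $\mathcal H^{n+1}(E\,\Delta\,(c\,e_{n+1}+F[v]))=0$.

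\emph{The main obstacle.} Once the structure theorem for $\mathcal{M}(v)$ is available the rest is bookkeeping, so the hard part is really the input from \cite[Theorem~1.9]{CagnettiColomboDePhilippisMaggiSteiner}: showing that the midpoint function $\sigma$ is genuinely $BV_{\loc}$ on $\{v\Low>0\}$ — this is exactly where the hypothesis $v\Low>0$ is indispensable, since when $v\Low$ vanishes on a large set the midpoint need not even be locally summable — and that the singular part of $D\sigma$ is subordinate to $D^s v$ away from $\{v\Low=0\}$. A more self-contained route, closer to the original argument of \cite{ChlebikCianchiFuscoAnnals05}, would be to bypass the characterization and slice: via Vol'pert's theorem one restricts $v$ and $E$ to $\mathcal H^{n-1}$-a.e.\ line parallel to a fixed direction of $\mathbb R^n$, proves the elementary one-dimensional rigidity statement on each slice, and deduces that the midpoint is a.e.\ constant on $\Omega$; the delicate step there is the ``gluing'' across slices, where connectedness of $\Omega$ together with $v\Low>0$ come into play.
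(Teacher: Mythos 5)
Theorem~\ref{thm_rigidity by CCF} is not proved in the paper: it is recalled as background, attributed to \cite[Theorem~1.3]{ChlebikCianchiFuscoAnnals05} and \cite[Theorem~B]{CagnettiColomboDePhilippisMaggiSteiner}, so there is no in-paper argument to compare against. That said, your route is exactly the one this paper's framework suggests: fix $E\in\mathcal{M}(v)\subset\mathcal{M}_\Omega(v)$, apply Proposition~\ref{prop_equality cases over open sets} to get the barycenter $b_{E,\Omega}$ with $\nabla^* b_{E,\Omega}=0$, $[b_{E,\Omega}]\leq\tfrac12[v]$ $\mathcal{H}^{n-1}$-a.e.\ on $\Omega$, and the Cantor part of its truncations dominated by $D^c v$; then $D^s v\mres\Omega=0$ kills both bounds ($D^j v\mres\Omega=0$ gives $[v]=0$ $\mathcal{H}^{n-1}$-a.e.\ on $\Omega$, and $D^c v\mres\Omega=0$ trivializes the Cantor condition), so the barycenter is derivative-free on the connected open $\Omega$ and hence a.e.\ constant. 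This is also the ``easy'' half of Theorem~\ref{thm_rigidity by perugini}: under $D^s v\mres\Omega=0$, the function $v$ is minimally singular in an obvious way. The slicing alternative you mention at the end is indeed the original argument of \cite{ChlebikCianchiFuscoAnnals05}.

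One point needs tightening for full rigor. You assert that the midpoint $\sigma=b_{E,\Omega}$ is $BV_{\loc}$ on $\{v\Low>0\}$, but Proposition~\ref{prop_equality cases over open sets} only gives $b_{E,\Omega}\in GBV(\Omega)$, with the jump and Cantor controls stated for the truncations $b^M_{E,\Omega}$. The argument should therefore run through truncations: for a.e.\ $M>0$ one has $\nabla b^M_{E,\Omega}=0$ $\mathcal{L}^n$-a.e., $[b^M_{E,\Omega}]\leq\tfrac12[v]=0$ $\mathcal{H}^{n-1}$-a.e.\ on $\Omega$, and $D^c b^M_{E,\Omega}\mres\Omega=0$; so $Db^M_{E,\Omega}\mres\Omega=0$ and $b^M_{E,\Omega}$ is a.e.\ constant on $\Omega$. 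Since $b_{E,\Omega}$ is a.e.\ finite, choosing $M$ large enough that the constant $c_M$ satisfies $|c_M|<M$ then forces $b_{E,\Omega}=c_M$ a.e.\ on $\Omega$. The conclusion is the same, but the $GBV$-to-constant step cannot be invoked directly. A smaller, cosmetic remark: once $E_x$ is a segment of length $v(x)$ with midpoint $\sigma(x)$ for $\mathcal{L}^n$-a.e.\ $x$, the set $E$ is $\mathcal{H}^{n+1}$-equivalent to $\{(x,t):|t-\sigma(x)|<\tfrac12 v(x)\}$ outright, with no ``modification over $J_v$'' — the vertical parts live in $\partial^*E$, not in $E$ — so that caveat in your sketch is unnecessary.
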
  
\noindent
The condition $v\Low (x)>0$ for $\mathcal{H}^{n-1}$-a.e. $x\in \Omega$ guarantees that $\{ v\Low =0 \}$ does not essentially disconnect $\Omega$, while requiring that $D^s v \mres \Omega=0$ (i.e. having that $v\in W^{1,1}(\Omega)$) guarantees that $\partial^* F[v]\cap (\Omega\times\mathbb{R})$ has no ``vertical parts''. For $n\geq 2$ these two conditions are far from being necessary conditions for \emph{rigidity} (the situation is indeed different for $n=1$ as shown in Theorem \ref{thm_rigidity when n=1}). In the attempt of weakening such strong requirements, in \cite{CagnettiColomboDePhilippisMaggiSteiner} the authors proved the following result that significantly improves Theorem \ref{thm_rigidity by CCF} (see \cite[Theorem 1.11]{CagnettiColomboDePhilippisMaggiSteiner}). In the following $S_v$ stands for the jump set of $v$.

\begin{theorem}\label{thm_rigidity CCDFM suffcient}
Let $v:\mathbb{R}^n\to [0,\infty)$ be a Lebesgue measurable function satisfying \eqref{cond_ F[v] has finite perimeter}. Assume in addition that $D^c v$ is concentrated in a Borel set $K$ such that
\begin{align}\label{eq_CCDFM suff rigidity}
S_v\cup K \cup \{ v\Low=0 \} \textnormal{ does not essentially disconnect }\{ v>0 \}.
\end{align}
Then for every $E\in \mathcal{M}(v)$ there exists $t \in \mathbb{R}$ such that $\mathcal{H}^{n+1}\left( \left(E \Delta (t e_{n+1}+ F[v])\right) \right)=0$.
\end{theorem}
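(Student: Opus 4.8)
The plan is to combine the known characterization of the equality cases of \eqref{intro_Steiner inequality} with a superlevel-set argument that converts the non-disconnection hypothesis \eqref{eq_CCDFM suff rigidity} into the assertion that the ``vertical profile'' of an optimal set is constant.

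\textbf{Step 1: structure of $E\in\mathcal M(v)$.} Fix $E\in\mathcal M(v)$. I would first invoke the characterization of the equality cases (\cite[Theorem 1.9]{CagnettiColomboDePhilippisMaggiSteiner}; equivalently, the localization over open connected sets developed in the present paper) to obtain a Borel function $M$ on $\{v>0\}$ — the barycentre of the vertical sections of $E$ — such that $E$ is $\mathcal H^{n+1}$-equivalent to $\{(x,t):|t-M(x)|<\tfrac12 v(x)\}$, together with the regularity of $M$ that the same theorem yields: $M$ has locally bounded variation on $\{v\Low>0\}$ and
\[
\nabla M=0 \ \ \mathcal L^n\text{-a.e.},\qquad D^cM\ \text{is concentrated on }K,\qquad S_M\subseteq S_v\ \ \text{up to }\mathcal H^{n-1}\text{-null sets}.
\]
These facts come out of a term-by-term analysis of the identity $P(E)=P(F[v])$ through the sharp one-dimensional slicing estimates underlying \eqref{intro_Steiner inequality}: strict convexity of $\xi\mapsto\sqrt{1+|\xi|^2}$ forces the absolutely continuous part to vanish, the triangle inequality for the total variation of $\mathbb R^n$-valued measures forces $D^cM\ll D^cv$ with density in $[-\tfrac12,\tfrac12]$, and a blow-up handles the jump part. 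The upshot of this step is that the $\mathbb R^n$-valued measure $DM$ is concentrated on $S_v\cup K\cup\{v\Low=0\}$.

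\textbf{Step 2: $M$ is essentially constant.} For $\lambda\in\mathbb R$, set $G_+:=\{v>0\}\cap\{M>\lambda\}$ and $G_-:=\{v>0\}\cap\{M\le\lambda\}$, a Borel partition of $\{v>0\}$; at each point of density one for $\{v>0\}$ one has $x\in\partial^eG_+\iff x\in\partial^eG_-\iff x\in\partial^e\{M>\lambda\}$. By the coarea formula $\int_{\mathbb R}\mathcal H^{n-1}\big(B\cap\partial^*\{M>\lambda\}\big)\,d\lambda=|DM|(B)$ applied with $B:=\{v>0\}^{(1)}\setminus\big(S_v\cup K\cup\{v\Low=0\}\big)$, on which $|DM|$ vanishes by Step 1, and Federer's theorem to replace $\partial^*$ by $\partial^e$, one gets, for $\mathcal L^1$-a.e.\ $\lambda$,
\[
\mathcal H^{n-1}\Big(\{v>0\}^{(1)}\cap\partial^eG_+\cap\partial^eG_-\setminus\big(S_v\cup K\cup\{v\Low=0\}\big)\Big)=0.
\]
By \eqref{eq_CCDFM suff rigidity} the set $S_v\cup K\cup\{v\Low=0\}$ does not essentially disconnect $\{v>0\}$, so, comparing with \eqref{def_does not essentially disconnect}, $\{G_+,G_-\}$ cannot satisfy \eqref{non trivial Borel partition}; that is, $\mathcal H^n(G_+)\,\mathcal H^n(G_-)=0$ for $\mathcal L^1$-a.e.\ $\lambda$. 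Setting $t:=\sup\{\lambda\in\mathbb R:\mathcal H^n(\{v>0\}\cap\{M\le\lambda\})=0\}$, which is finite because $M\in BV_{\mathrm{loc}}$ and $\mathcal L^n(\{v>0\})<\infty$, one concludes $M=t$ $\mathcal L^n$-a.e.\ on $\{v>0\}$, hence $E$ is $\mathcal H^{n+1}$-equivalent to $te_{n+1}+F[v]$, which is \eqref{rigidity classic definition}.

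\textbf{The main obstacle.} The hard part is Step 1 — squeezing the three local constraints on $M$ out of the single scalar identity $P(E)=P(F[v])$. The absolutely continuous and Cantor parts are controlled by convexity and the triangle inequality, but the jump part is genuinely delicate: over $S_v$ the symmetrized set $F[v]$ already carries vertical boundary, so the jump contributions of $E$ and of $F[v]$ have to be compared through a blow-up at $\mathcal H^{n-1}$-a.e.\ point of $S_v$; moreover the pinching region $\{v\Low=0\}$, where the sections of $F[v]$ collapse and the barycentre $M$ loses its meaning, must be excised throughout — precisely why it occurs in \eqref{eq_CCDFM suff rigidity}. Granting Step 1, Step 2 is purely measure-theoretic, of the same nature as the rigidity argument for Ehrhard's symmetrization inequality in \cite{ccdpmGAUSS}.
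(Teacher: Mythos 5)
This theorem is not proved in the paper: it is quoted verbatim as \cite[Theorem 1.11]{CagnettiColomboDePhilippisMaggiSteiner}, so there is no ``paper's own proof'' to compare against. Your argument is, in fact, essentially the CCDPM proof, and it is correct. Step 1 (the constraints $\nabla b_E=0$ a.e., $D^cb_E\ll D^cv$, $S_{b_E}\subset S_v$ up to $\mathcal H^{n-1}$-null sets) is precisely the content of \cite[Theorem 1.9]{CagnettiColomboDePhilippisMaggiSteiner}, whose localized version appears here as Proposition \ref{prop_equality cases over open sets}; you are right that this is where all the hard analysis lives, and your sketch of the mechanisms (strict convexity for the absolutely continuous part, absolute continuity with bounded density for the Cantor part, a jump comparison at $\mathcal H^{n-1}$-a.e.\ point of $S_v$) is accurate. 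Step 2 (superlevel sets of $b_E$, coarea plus Federer, then the definition \eqref{def_does not essentially disconnect} of essential non-disconnection forcing triviality of each partition $\{G_+,G_-\}$) is exactly the device CCDPM use to pass from their Theorem 1.9 to Theorem 1.11, and it parallels the rigidity argument for Ehrhard's inequality in \cite{ccdpmGAUSS}.

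Two small inaccuracies, neither fatal. First, $b_E$ is only a $GBV(\mathbb R^n)$ function, not $BV_{\mathrm{loc}}$ on $\{v^\wedge>0\}$; the coarea formula you need in Step 2 does hold for $GBV$ functions, via \eqref{def: Cantor part GBV}, so the computation survives, but the regularity you quote should be $GBV$. Second, the finiteness of $t$ does not follow from $b_E\in BV_{\mathrm{loc}}$; it follows from $b_E$ being real-valued $\mathcal L^n$-a.e.\ on $\{v>0\}$ together with $\mathcal L^n(\{v>0\})>0$ (if $\mathcal L^n(\{v>0\})=0$ the statement is vacuous), combined with the monotonicity in $\lambda$ of $\mathcal H^n(G_\pm)$. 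Worth noting for context: the paper's own new rigidity result, Theorem \ref{thm_rigidity by perugini}, is a two-sided characterization proved by the completely different SVD machinery, but it requires the structural hypothesis \eqref{CCF condition for Omega}; the theorem you proved is one-sided (sufficiency only) but does not need $\{v^\wedge>0\}$ to be essentially an open connected set, which is why both results coexist.
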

\noindent
Roughly speaking the above result does not ask for the singular part of $D v$ to be zero, but it requires  that the set where $D^s v$ is concentrated together with $\{ v\Low = 0 \}$, namely $S_v \cup K \cup \{ v\Low =0 \}$, to be ``not too big'' in the sense of not essentially disconnecting $\{ v>0 \}$. 
\noindent
Although the above result represents a remarkable improvement in the understanding of the \emph{rigidity} problem, the conditions assumed in the statement are still not necessary, but merely sufficient. In order to obtain proper characterization results of \emph{rigidity}, in \cite{CagnettiColomboDePhilippisMaggiSteiner}  the authors restricted their study to different specific scenarios depending on the choice of the function $v$ and on the space dimension $n\geq 1$. In particular for $n=1$ they obtained the following complete characterization result (see \cite[Theorem 1.30]{CagnettiColomboDePhilippisMaggiSteiner} for the original statement).


\begin{theorem}\label{thm_rigidity when n=1}
Let $v:\mathbb{R}\to [0,\infty)$ be a Lebesgue measurable function satisfying \eqref{cond_ F[v] has finite perimeter}. Then the following statements are equivalent:
\begin{itemize}
\item[i)] for every $E \in \mathcal{M}(v)$ there exists $t \in \mathbb{R}$ so that $\mathcal{H}^{2}\left( E \Delta (t e_{2}+ F[v])\right) =0$;\vspace*{0.1cm}
\item[ii)] $\{ v>0 \}$ is $\mathcal{H}^1$-equivalent to a bounded open interval $(a,b)$, $v\in W^{1,1}((a,b))$, and $v\Low (x)>0$ for every $x\in (a,b)$.
\end{itemize}
\end{theorem}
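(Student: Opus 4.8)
The plan is to prove the equivalence by showing $ii)\Rightarrow i)$ as a direct application of the sufficient conditions already available, and then $i)\Rightarrow ii)$ by contraposition, constructing explicit counterexamples in $\mathcal{M}(v)$ whenever any clause of $ii)$ fails.

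\textbf{Direction $ii)\Rightarrow i)$.} Assume $\{v>0\}$ is $\mathcal{H}^1$-equivalent to a bounded open interval $(a,b)$, that $v\in W^{1,1}((a,b))$, and that $v\Low(x)>0$ for every $x\in(a,b)$. I would simply take $\Omega=(a,b)$ and verify the hypotheses of Theorem \ref{thm_rigidity by CCF}: $\Omega$ is open and connected; $v\in W^{1,1}((a,b))$ means $D^s v\mres\Omega=0$; and $v\Low(x)>0$ for \emph{every} $x\in(a,b)$ certainly gives it for $\mathcal{H}^0$-a.e.\ $x$, which is the relevant ``$\mathcal{H}^{n-1}$-a.e.'' statement when $n=1$. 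Finally $\mathcal{L}^1(\{v>0\}\setminus\Omega)=0$ because $\{v>0\}$ is $\mathcal{H}^1$-equivalent to $(a,b)$. The ``moreover'' part of Theorem \ref{thm_rigidity by CCF} then yields $i)$ verbatim.

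\textbf{Direction $i)\Rightarrow ii)$.} I would argue the contrapositive: if any of the three conditions in $ii)$ fails, then rigidity fails, i.e.\ there is $E\in\mathcal{M}(v)$ not $\mathcal{H}^2$-equivalent to any vertical translate of $F[v]$. There are three cases. First, if $\{v>0\}$ is not $\mathcal{H}^1$-equivalent to a bounded open interval: since $v\in BV(\R)$ with $\mathcal{L}^1(\{v>0\})<\infty$, up to $\mathcal{L}^1$-null sets $\{v>0\}$ is a finite or countable union of disjoint open intervals; if it is not a single interval then $\{v\Low=0\}$ essentially disconnects $\{v>0\}$ (one can split off one connected component), which by the necessity of condition \eqref{cond_ F[v] is indecomposable} kills rigidity — concretely, one vertically translates $F[v]$ over one component while leaving the rest fixed, as in Figure \ref{fig_serveconnessione}, producing a competitor with the same perimeter. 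Second, suppose $\{v>0\}\sim(a,b)$ but $v\notin W^{1,1}((a,b))$, so $D^s v\mres(a,b)\neq 0$; then there is a point $x_0\in(a,b)$ carrying either a jump of $v$ or, after localizing, an atom of $|D^s v|$ in every neighborhood. Here I would exploit the vertical parts of $\partial^* F[v]$ above $S_v\cup\mathrm{supp}(D^c v)$, as illustrated in Figure \ref{fig_vertical parts}: one slides the portion of $F[v]$ lying over $(a,x_0)$ (say) vertically by a small amount $s$, glueing along the vertical segment over $x_0$; because that interface is genuinely vertical the perimeter is unchanged, giving $E_s\in\mathcal{M}(v)$ for a range of $s$, hence rigidity fails. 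Third, if $v\Low(x_0)=0$ for some $x_0\in(a,b)$, then $\{x_0\}$ (a single point, which is all of $\{v\Low=0\}\cap(a,b)$ relevant here) essentially disconnects $(a,b)$ into $(a,x_0)$ and $(x_0,b)$, since the fiber over $x_0$ has vanishing length and contributes no reduced boundary; again one translates the two halves of $F[v]$ by different amounts at no perimeter cost.

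\textbf{Main obstacle.} The delicate step is the rigorous construction of the competitors in the second case, where $v$ merely has nontrivial Cantor part rather than a clean jump: one must check that cutting $F[v]$ along the vertical line $\{x_0\}\times\R$ and translating one side changes neither the volume (obvious, translation is measure-preserving and the fiber over a single point is $\mathcal{L}^1$-negligible) nor the perimeter. The perimeter invariance is where the structure of $F[v]$ and the formula for $P(F[v])$ from \cite[Theorem 3.1]{CagnettiColomboDePhilippisMaggiSteiner} enter: the only contribution that could change is along $\{x_0\}\times\R$, and because $v\Low(x_0)=0$ or because $x_0$ carries singular mass, the relevant boundary there is already ``as vertical as possible,'' so sliding one side up or down leaves $\mathcal{H}^1$ of the cut face — and thus the total perimeter — unchanged. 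Making this precise requires care with the measure-theoretic boundary of $F[v]$ near $x_0$ and with the fact that for $s$ in a suitable interval the translated configuration remains a $v$-distributed set; but conceptually it is exactly the phenomenon depicted in Figures \ref{fig_serveconnessione} and \ref{fig_vertical parts} specialized to $n=1$, where the ``disconnecting'' set is a single point and essential connectedness of an interval is equivalent to the pointwise positivity of $v\Low$ together with the absence of singular mass.
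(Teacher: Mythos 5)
Your direction $ii)\Rightarrow i)$ is correct and is the standard route via Chleb\'ik--Cianchi--Fusco. For $i)\Rightarrow ii)$ the overall contrapositive strategy is sound, and cases~1 and~3 work as sketched (though in case~1 the structural claim that the positivity set of a $BV$ function on $\mathbb{R}$ decomposes, up to a null set, into countably many disjoint open intervals is not automatic and really needs the essential-connectedness argument packaged in Lemma~\ref{lem_open sets exists for n=1}). The genuine gap is case~2 in the subcase $D^s v\mres(a,b)=D^c v\mres(a,b)\neq 0$ with $D^j v\mres(a,b)=0$, which is exactly where you flag the difficulty; unfortunately the workaround you propose there is wrong. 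Cutting $F[v]$ along $\{x_0\}\times\mathbb{R}$ for $x_0\in\mathrm{supp}(D^c v)$ and sliding one side by $s\neq 0$ does \emph{not} preserve perimeter: since $v$ has no jump at $x_0$, the two one-sided traces of $F[v]$ over $x_0$ agree, $\partial^*F[v]\cap(\{x_0\}\times\mathbb{R})$ is $\mathcal{H}^1$-null, and after sliding the symmetric difference of the traces has $\mathcal{H}^1$-measure $2|s|>0$, which is \emph{added} to the perimeter. In the language of Proposition~\ref{prop_equality cases over open sets}, the barycenter of this competitor jumps by $|s|$ at $x_0$ while $[v](x_0)=0$, violating \eqref{eq_Salto b< 1/2 salto v for E PROP}. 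The Cantor part does not produce a vertical interface one can slide along --- precisely because it is diffuse, its ``vertical trace'' in each fiber is a single point.

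The correct competitor in the Cantor case must itself be diffuse: take $b(x):=\tfrac12|D^s v|((a,x))$, a nonconstant $BV$ function on $(a,b)$ with $\nabla b=0$ a.e., $[b]=\tfrac12[v]$, and $D^c b=\tfrac12|D^c v|$, so that $E=\{(x,t):|t-b(x)|<\tfrac12 v(x)\}$ satisfies the conditions of Proposition~\ref{prop_equality cases over open sets} and lies in $\mathcal{M}(v)$ without being a vertical translate of $F[v]$. This is exactly the construction carried out in the proof of Proposition~\ref{prop_n=1}, which identifies ``minimally singular'' with ``absolutely continuous'' when $n=1$; the paper assembles Theorem~\ref{thm_rigidity when n=1} from Lemma~\ref{lem_open sets exists for n=1} (to obtain the interval $(a,b)$ from the necessity of \eqref{cond_ F[v] is indecomposable}), Theorem~\ref{thm_rigidity by perugini}, and Proposition~\ref{prop_n=1}, a route that treats the jump and Cantor cases uniformly. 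Your proof needs this explicit Cantor-type competitor in place of the cut-and-slide in case~2.
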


\begin{remark}
Let us note that having $v(x)\Low >0$ for every $x \in (a,b)$ is equivalent to condition \eqref{cond_ F[v] is indecomposable}.
\end{remark}

\noindent
Instead for $n\geq 2$, a complete characterization of \emph{rigidity} was still possible only for those functions of bounded variation $v:\mathbb{R}^n\to [0,\infty)$ such that $D^c v= 0$, and $S_v\cap \{ v\Low >0 \}$ is locally $\mathcal{H}^{n-1}$-finite (for more details in this direction we suggest to see \cite[Theorem 1.13, Theorem 1.20, and Theorem 1.29]{CagnettiColomboDePhilippisMaggiSteiner}). Thus, apart from some specific situations, the characterization of \emph{rigidity} for Steiner's inequality was left open.

\subsection{A localized version of \emph{rigidity}}

As observed so far, what is really important in the process of characterizing \emph{rigidity}, is getting a clear picture of the following two problems: on the one hand understanding in which way the set $\{ v\Low = 0 \}$ disconnects the projection $\{ v>0 \}$ of $F[v]$, while on the other hand comprehending the role of the ``vertical parts'' of $\partial^*F[v]$
in disconnecting the remaining part of $\partial ^* F[v]$. If the role of $\{ v\Low=0 \}$ disconnecting $\{ v>0 \}$ was made clear through the introduction of essential connectedness, less is known about the second aspect regarding the role played by the ``vertical parts'' of $\partial^*F[v]$, which as we noticed before, are deeply connected with $D^s v$.

\noindent
Thus, in order to have a much more clear insight of the geometric nature of \emph{rigidity}, we decided to restrict our study on this latter aspect. Motivated by this task, we decided to localize the study of \emph{rigidity} over a particular class of sets. 
\noindent
Before entering into details, let us recall the localized version of Steiner's inequality, and correspondingly the localized version of \emph{rigidity}.  Let $E\subset \mathbb{R}^{n+1}$ be a set of finite perimeter, and let $G\subset \mathbb{R}^{n+1}$ be a Borel set, we denote with $P(E;G)$ the relative perimeter of $E$ with respect to $G$ (see Section \ref{section_fundamentals} for more details). If $G=\mathbb{R}^n$ we simply write $P(E)$ instead of  $P(E;\mathbb{R}^n)$. Let $v:\mathbb{R}^n\to [0,\infty)$ be a Lebesgue measurable function satisfying \eqref{cond_ F[v] has finite perimeter}. Then, as proved in \cite[Lemma 3.4]{ChlebikCianchiFuscoAnnals05} (see also \cite[Theorem 5.9]{PeruginiCircolare}) for every $v$-distributed set $E\subset \mathbb{R}^{n+1}$ of finite perimeter we have the following localized version of Steiner's inequality
\begin{align}\label{Steiner's inequality}
P(F[v];B\times\mathbb{R}) \leq P(E;B\times\mathbb{R})\quad \forall\, B\subset \mathbb{R}^n \textnormal{ Borel.}
\end{align}
Given $B\subset \mathbb{R}^n$ Borel set, we set
\begin{align}\label{def_M_Omega(v)}
\mathcal{M}_{B}(v):= \left\{E\subset \mathbb{R}^{n+1}:\, E\textnormal{ is }v\textnormal{-distributed and }P(F[v];B\times\mathbb{R}) = P(E;B\times\mathbb{R})   \right\}.
\end{align} 
If $B=\mathbb{R}^n$ we simply write $\mathcal{M}(v)$ instead of $\mathcal{M}_{\mathbb{R}^n}(v)$, which coincides with \eqref{def_M(v)}. Now, 
let $\Omega\subset \mathbb{R}^n$ be an open set.
Motivated by the notation we have just introduced we say that \emph{rigidity} over $\Omega$ holds true if and only if 
\begin{align}\label{rigidity over Omega}
\forall\,E \in \mathcal{M}_\Omega(v)\textnormal{ there exists } t \in \mathbb{R}\textnormal{ so that }\mathcal{H}^{n+1}\left( \left(E \Delta (t e_{n+1}+ F[v])\right) \cap (\Omega\times \mathbb{R}) \right)=0. 
\end{align}
A first step in the study of the \emph{rigidity} problem, is the characterization of the equality cases for Steiner's inequality. Available in the literature there are two characterization results of the extremals of \eqref{Steiner's inequality} a geometric one and an analytical one. The geometric one (see \cite[Theorem 5.9]{PeruginiCircolare}) characterizes every set $E\in\mathcal{M}_B(v)$, with $B\subset \mathbb{R}^n$ any Borel set, in terms of the symmetric properties of its measure-theoretic inner unit normal defined over its reduced boundary, while the analytical one (see \cite[Theorem 1.9]{CagnettiColomboDePhilippisMaggiSteiner}) characterizes every set $E\in\mathcal{M}(v)$ in terms of the fine properties of its barycenter function. We found the latter characterization result particularly useful in understanding \emph{rigidity}, and for this reason in the following we are going to present a localized version of that result.
Let us start by introducing the barycenter function. As already shown by Ennio De Giorgi (see \cite{DeGiorgi58ISOP,DeGiorgiSelected}), a necessary condition for a set $E$ to belong to $\mathcal{M}(v)$ is to have its vertical sections equivalent to segments. This property holds true also in the localized setting (see for instance \cite[Theorem 5.9]{PeruginiCircolare}), namely
given $\Omega\subset \mathbb{R}^n$ open set, for every $E\in \mathcal{M}_\Omega(v)$ we have that
\begin{align}\label{sections as segments}
E_x\textnormal{ is }\mathcal{H}^1\textnormal{-equivalent to a segment for }\mathcal{L}^n\textnormal{-a.e. }x\in \Omega.
\end{align}
Sets satisfying the above condition can be uniquely associated with a function that describes the position of the mid point of their vertical slices. To be more precise let $\Omega\subset \mathbb{R}^n$ be an open set,
 and let $E$ be a $v$-distributed set satisfying \eqref{sections as segments}, then we define its barycenter function $b_{E,\Omega}:\Omega\to \mathbb{R}$ as 
\begin{align}\label{baricenter}
b_{E,\Omega}(x):=
\begin{cases}
\displaystyle\frac{1}{v(x)} \int_{E_x}t\,dt \quad &\mbox{if }x\in \Omega\cap \{0<v<\infty\},\vspace*{0.2cm}\\
0\quad &\mbox{otherwise}.
\end{cases}
\end{align}
If $\Omega=\mathbb{R}^n$ we simply write $b_E$ instead of $b_{E,\mathbb{R}^n}$. The barycenter function is a powerful tool for the study of the \emph{rigidity} problem. Indeed  \emph{rigidity} over $\Omega$ holds true if and only if for every $E\in \mathcal{M}_\Omega(v)$ its barycenter function is constant in $\Omega\cap \{ v>0 \}$, namely
\begin{align}\label{rigidity via the baricenter}
\forall\, E\in\mathcal{M}_\Omega(v) \textnormal{ there exists }c\in\mathbb{R} \textnormal{ such that } b_{E,\Omega}(x)=c \textnormal{ for }\mathcal{L}^n\textnormal{-a.e. }x\in\Omega\cap \{ v>0 \}. 
\end{align}
As we mentioned at the beginning of this section,  an important ingredient of our strategy to understand the role that $D^s v$ plays in spoiling \emph{rigidity} is to focus our analysis over a particular class of open sets, more precisely we consider those sets $\Omega$ such that
\begin{align}\label{CCF condition for Omega}
\Omega\subset \mathbb{R}^n \textnormal{ is open and connected, and }   v\Low (x)>0\textnormal{ for }\mathcal{H}^{n-1}\textnormal{-a.e. }x\in \Omega.
\end{align} 
Indeed, working over a set $\Omega$ as in \eqref{CCF condition for Omega}, reduces the complexity of the characterization result for sets in $\mathcal{M}_\Omega(v)$, allowing to get a better insight on \emph{rigidity}. Before presenting the localized version of an important result proved in \cite{CagnettiColomboDePhilippisMaggiSteiner}, let us briefly introduce some notation (for the precise definitions we refer to Section \ref{section_fundamentals}). We denote with $[b_{E,\Omega}]:=b_{E,\Omega}\Upp-b_{E,\Omega}\Low$ the jump of $b_{E,\Omega}$, and $S_{b_{E,\Omega}}$ is the jump set of $b_{E,\Omega}$. We recall that a function $f:\Omega\to \mathbb{R}$ belongs to $GBV(\Omega)$, where $GBV(\Omega)$ stands for the space of generalised functions of bounded variation, if and only if for every $M>0$ the truncated function $f^M(x):=\max \{-M, \min\{ M,f(x) \} \}$ is a function of locally bounded variation in $\Omega$. If $f\in GBV(\Omega)$ we denote with $\nabla^* f$ its approximate gradient.  A careful inspection of the proofs of both  \cite[Theorem 1.7, Theorem 1.9]{CagnettiColomboDePhilippisMaggiSteiner}  leads to the following result (see \cite[Theorem 1.7, Theorem 1.9]{CagnettiColomboDePhilippisMaggiSteiner} for the original statements with $\Omega=\mathbb{R}^n$).
\begin{proposition}\label{prop_equality cases over open sets}
Let $v:\mathbb{R}^n\to [0,\infty)$ be a Lebesgue measurable function satisfying \eqref{cond_ F[v] has finite perimeter}, and let $\Omega\subset \mathbb{R}^n$ be as in \eqref{CCF condition for Omega}. Then $E\in \mathcal{M}_\Omega(v)$ if and only if the following conditions hold true
\begin{align}
&i)\; E_x \textnormal{ is } \mathcal{H}^1\textnormal{-equivalent to a segment for }\mathcal{L}^n\textnormal{-a.e. }x\in \Omega,\textnormal{ and }b_{E,\Omega}  \in GBV(\Omega); \\
&ii)\;\nabla^* b_{E,\Omega}(x)=0,\quad \textnormal{for } \mathcal{L}^{n}\textnormal{-a.e. }x\in\Omega;\label{eq_nabla b for E = 0 PROP}\\
&iii)\;[b_{E,\Omega}]\leq 1/2 [v], \quad \mathcal{H}^{n-1}\textnormal{-a.e. on }\Omega; \label{eq_Salto b< 1/2 salto v for E PROP}\\
&iv)\;\textnormal{there exists a Borel function } f:\Omega\rightarrow [-1/2,1/2]\textnormal{ such that }  \nonumber\\
 & \quad D^c\left( b_{E,\Omega}^M \right)(B)= \int_{B\cap \{ |b_{E,\Omega}|<M \}^{(1)}}f(x)\,dD^c v(x),\label{eq_Cantor b< 1/2 Cantor v for E PROP}\quad \forall\,B\subset\Omega \textnormal{ bounded Borel set,}\\ &\quad \textnormal{ and for }\mathcal{L}^1\textnormal{-a.e. } M>0\nonumber.
\end{align}
\end{proposition}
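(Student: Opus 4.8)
The plan is to obtain the statement by localizing, over an open connected $\Omega$ satisfying \eqref{CCF condition for Omega}, the chain of arguments behind \cite[Theorem 1.7 and Theorem 1.9]{CagnettiColomboDePhilippisMaggiSteiner}, of which this Proposition is the restriction to such an $\Omega$. The entry point is the \emph{geometric} characterization of the equality cases over Borel sets, \cite[Theorem 5.9]{PeruginiCircolare}: if $E\in\mathcal{M}_\Omega(v)$ then $E_x$ is $\mathcal{H}^1$-equivalent to a segment for $\mathcal{L}^n$-a.e. $x\in\Omega$ --- this is \eqref{sections as segments} and the first half of i) --- so that the barycenter $b_{E,\Omega}$ of \eqref{baricenter} is well defined $\mathcal{L}^n$-a.e. on $\Omega\cap\{0<v<\infty\}$, and moreover the measure-theoretic inner unit normal of $E$ on $\partial^*E\cap(\Omega\times\mathbb{R})$ has the symmetric structure prescribed by that theorem. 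The first job is to convert this structure of $\partial^*E$ into regularity and structure for $b_{E,\Omega}$.

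Following \cite{CagnettiColomboDePhilippisMaggiSteiner}, I would first show that for $\mathcal{L}^1$-a.e. $M>0$ the truncation $b_{E,\Omega}^M$ belongs to $BV_{\rm loc}(\Omega)$ --- hence $b_{E,\Omega}\in GBV(\Omega)$, completing i) --- via the one-dimensional (Vol'pert-type) restriction theory of $BV$ functions applied to $E$ and to the identity $\mathcal{H}^1(E_x)=v(x)$, together with the finiteness of $P(E;A\times\mathbb{R})$ for $A\cc\Omega$. With this regularity available, one derives a slicing formula for the relative perimeter of a $v$-distributed set with segment sections: for every bounded open $A\cc\Omega$,
\begin{align*}
P(E;A\times\mathbb{R})=\int_{A}\Big(\sqrt{1+\big|\nabla^* b_{E,\Omega}+\tfrac12\nabla v\big|^2}+\sqrt{1+\big|\nabla^* b_{E,\Omega}-\tfrac12\nabla v\big|^2}\,\Big)\,d\mathcal{L}^n+J_E(A)+C_E(A),
\end{align*}
with a jump term $J_E$ and a Cantor term $C_E$; the analogous identity for $F[v]$ is obtained by setting $b_{E,\Omega}\equiv0$, in which case the two square roots collapse to $2\sqrt{1+\tfrac14|\nabla v|^2}$, while $J_E$ and $C_E$ become $\int_{S_v\cap A}[v]\,d\mathcal{H}^{n-1}$ and $|D^cv|(A)$ respectively (cf. \cite[Theorem 3.1]{CagnettiColomboDePhilippisMaggiSteiner}). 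It is exactly here that the hypothesis $v\Low(x)>0$ for $\mathcal{H}^{n-1}$-a.e. $x\in\Omega$ in \eqref{CCF condition for Omega} enters: it forces the ``lateral'' vertical portions of $\partial^*F[v]$ coming from $\{v\Low=0\}$ to be $\mathcal{H}^n$-negligible inside $\Omega\times\mathbb{R}$, so that no extra term survives in the formula --- which is what keeps the list of conditions as short as i)--iv).

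The characterization now follows by comparing the two formulae term by term. From $E\in\mathcal{M}_\Omega(v)$, i.e. $P(F[v];\Omega\times\mathbb{R})=P(E;\Omega\times\mathbb{R})$, and the Borel-localized Steiner inequality \eqref{Steiner's inequality} one gets $P(F[v];B\times\mathbb{R})=P(E;B\times\mathbb{R})$ for every Borel $B\subset\Omega$; since the absolutely continuous, jump and Cantor contributions are carried by mutually singular sets, equality is forced in each of them. For the absolutely continuous part one uses the elementary convexity inequality $\sqrt{1+|a+c|^2}+\sqrt{1+|a-c|^2}\ge 2\sqrt{1+|c|^2}$, with equality if and only if $a=0$, applied with $a=\nabla^* b_{E,\Omega}$ and $c=\tfrac12\nabla v$: this gives ii). For the jump part the one-dimensional analogue of that inequality yields $[b_{E,\Omega}]\le\tfrac12[v]$ $\mathcal{H}^{n-1}$-a.e. on $\Omega$, which is iii). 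For the Cantor part, equality forces $|D^c(b_{E,\Omega}^M)|\ll|D^cv|$ for $\mathcal{L}^1$-a.e. $M>0$ with density bounded by $\tfrac12$ in modulus; a Radon--Nikodym argument together with the compatibility of these densities across truncations produces a single Borel $f:\Omega\to[-1/2,1/2]$ as in \eqref{eq_Cantor b< 1/2 Cantor v for E PROP}, i.e. iv). For the converse, assuming i)--iv) makes all three comparisons equalities, so the slicing formula for $E$ reduces on each $A\cc\Omega$ to that of $F[v]$, whence $P(E;\Omega\times\mathbb{R})=P(F[v];\Omega\times\mathbb{R})$ and $E\in\mathcal{M}_\Omega(v)$.

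The main obstacle is the middle step: establishing, in the localized setting, both the $GBV$-regularity of $b_{E,\Omega}$ and the slicing formula for $P(E;A\times\mathbb{R})$, i.e. verifying that the global arguments of \cite{CagnettiColomboDePhilippisMaggiSteiner} and \cite{PeruginiCircolare} survive the replacement of $\mathbb{R}^n$ by an open connected $\Omega$ with $v\Low>0$ $\mathcal{H}^{n-1}$-a.e. One must control the behaviour near $\partial\Omega$ (done by exhausting $\Omega$ with open sets $A\cc\Omega$, which never touch the boundary) and check that \eqref{CCF condition for Omega} removes precisely the vertical boundary contributions associated with $\{v\Low=0\}$, leaving only the absolutely continuous, jump and Cantor terms. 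Once this is settled, the term-by-term comparison and the three elementary inequalities are routine.
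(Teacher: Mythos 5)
Your proposal matches the paper's intended route: the paper gives no written-out proof of this Proposition, asserting only that ``a careful inspection of the proofs of \cite[Theorem 1.7, Theorem 1.9]{CagnettiColomboDePhilippisMaggiSteiner}'' yields the result, and the localization you carry out is exactly that inspection --- start from the geometric characterization in \cite[Theorem 5.9]{PeruginiCircolare} and \eqref{sections as segments}, obtain $GBV$-regularity of $b_{E,\Omega}$ through truncations and Vol'pert-type slicing, and compare the absolutely continuous, jump and Cantor contributions to the relative perimeter term by term using the strict convexity of $t\mapsto\sqrt{1+t^2}$ together with a Radon--Nikodym argument for the Cantor part. You also correctly pinpoint that $v\Low>0$ $\mathcal{H}^{n-1}$-a.e.\ on $\Omega$ in \eqref{CCF condition for Omega} is what suppresses the boundary condition on $\{v\Low=0\}$ that appears in the global CCDPM characterization, which is why the list stops at i)--iv).
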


\subsection{Minimally singular functions and main results}\label{subsection_intro:main results}
Thanks to the above result, and motivated by the characterization of \emph{rigidity} in terms of the properties of the barycenter function of a generic set in $\mathcal{M}_\Omega(v)$ (see \eqref{rigidity via the baricenter}) we decided to introduce the following class of functions. From now on it will be often convenient to consider $\Omega\subset \mathbb{R}^n$ to be a set satisfying the following properties
\begin{align}\label{assumptions on Omega}
\Omega\subset \mathbb{R}^n \textnormal{ is open and connected with }\mathcal{L}^n(\Omega)<\infty.
\end{align}
\begin{definition}\label{def_minimally singular functions}
Let $\Omega\subset \mathbb{R}^n$ be as in \eqref{assumptions on Omega}, let $u \in BV(\Omega)$, and let $b\in GBV(\Omega)$ be any function that satisfies the following conditions 
\begin{align}
&i)\;\nabla^* b(x)=0,\quad \textnormal{for } \mathcal{L}^{n}\textnormal{-a.e. }x\in\Omega;\label{eq_nabla b = 0}\\
&ii)\;[b]\leq [u], \quad \mathcal{H}^{n-1}\textnormal{-a.e. on }\Omega; \label{eq_Salto b< 1/2 salto v}\\
&iii)\;\textnormal{there exists a Borel function } f:\Omega\rightarrow [-1,1]\textnormal{ such that }  \nonumber\\
 & \quad D^c\left( b^M \right)(B)= \int_{B\cap \{ |b|<M \}^{(1)}}f(x)\,dD^cu(x),\label{eq_Cantor b< 1/2 Cantor v}\quad \forall\,B\subset\Omega \textnormal{ bounded Borel set,}\\ &\quad \textnormal{ and for }\mathcal{L}^1\textnormal{-a.e. } M>0.\nonumber
\end{align}
Then we say that $u\in BV(\Omega)$ is \emph{minimally singular} if and only if for every $b\in GBV(\Omega)$ satisfying \eqref{eq_nabla b = 0}, \eqref{eq_Salto b< 1/2 salto v} and \eqref{eq_Cantor b< 1/2 Cantor v},
there exists $c\in\mathbb{R}$ such that $b(x)=c$ for $\mathcal{L}^n$-a.e. $x\in\Omega$.
\end{definition}
\noindent
Roughly speaking, we say that $u\in BV(\Omega)$ is \emph{minimally singular} if and only if $D^s u$ is ``small enough'' to prevent the existence of any  non-constant function $b \in GBV(\Omega)$ whose weak distributional gradient $D b= D^s b$ is controlled, in the sense of the conditions listed in Definition \ref{def_minimally singular functions}, by $D^su$ (see for instance Figure \ref{fig_not minimally singular} and Figure \ref{fig_minimally singular} for an example of a not \emph{minimally singular} and of a \emph{minimally singular} function, respectively). In the following, given $\Omega\subset \mathbb{R}^n$ open set and $u:\Omega \to \mathbb{R}$ a Lebesgue measurable function, we denote by $\Sigma^u$ the subgraph of $u$, which is defined as 
$$
\Sigma^u:=\left\{(x,t)\in \Omega\times\mathbb{R}:\, t<u(x) \right\}.
$$
Recall that if $u\in BV(\Omega)$ then $\Sigma^u$ is a set of locally finite perimeter in $\Omega\times\mathbb{R}$, and $\partial^* \Sigma^u\subset \Omega\times\mathbb{R}$ denotes its reduced boundary.

\begin{figure}[!htb]
\centering
\def\svgwidth{14cm}
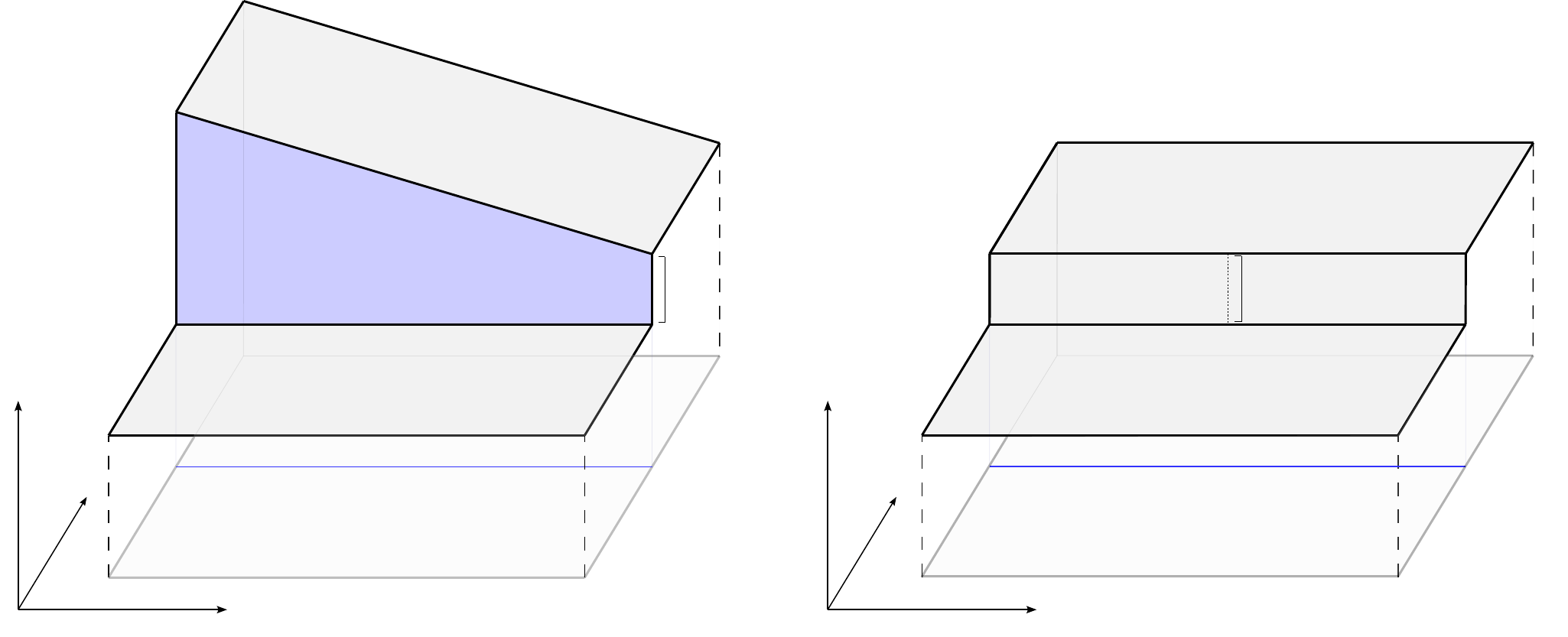
\caption{On the left, a pictorial representation in $\mathbb{R}^3$ of the subgraph of a function $u\in BV(\Omega)$ which is not \emph{minimally singular}. In light blue we have the vertical parts of $\partial^* \Sigma^u$, while in grey we have the remaining part of $\partial^* \Sigma^u$. On the right, in grey we can see $\partial^* \Sigma^b$, where $b\in BV(\Omega)$. Note that $b$ is a non constant function satisfying the three conditions listed in Definition \ref{def_minimally singular functions} thus $u$ is not \emph{minimally singular}. Note that the vertical parts of $\partial^* \Sigma^u$ somehow ``disconnect'' the remaining part of $\partial^* \Sigma^u $.}
\label{fig_not minimally singular}
\end{figure}

\noindent
Let us immediately observe that in case $\{ v\Low > 0 \}\subset \mathbb{R}^n$ is $\mathcal{H}^{n-1}$-equivalent to an open and connected set $\Omega$ it can be proved that \emph{rigidity} over $\mathbb{R}^n$ holds true if and only if $v\in BV(\Omega)$ is \emph{minimally singular} (a proof of this fact is contained in the argument of Theorem \ref{thm_rigidity by perugini}).

\noindent
The geometric intuition behind the ideas that will follow is based on the fact that if we want \emph{rigidity} over $\Omega$ to hold true we basically have to require that the vertical parts of $\partial^* F[v]\cap (\Omega\times\mathbb{R})$ ``do not disconnect'' the remaining part of $\partial^* F[v]\cap (\Omega\times\mathbb{R})$.


\noindent
Based on the original ideas of Vol'pert of restricting $BV$ functions over $1$-dimensional sections (see \cite{Volpert}, and \cite[Chapter 3.11]{AFP}), given $u\in BV(\Omega)$, with $\Omega\subset \mathbb{R}^n$ as in \eqref{assumptions on Omega}, we introduce a notion of distance between two distinct points of $\Omega$ with respect to the function $u$. Before presenting this definition, let us briefly introduce some notation. In the following $\gamma: I_\gamma \to \Omega$ is a polygonal chain (see relation \eqref{piecewise affine curves}), where $ I_\gamma\subset [0,\infty)$ is a closed and bounded interval, and $I^\circ_\gamma$ is the interior of $I_\gamma$. We define $u_\gamma: I_\gamma \to [0,\infty)$ as the restriction of $u$ to $\gamma$ namely $u_\gamma(t):= u\Low(\gamma(t))$ for every $t\in I_\gamma$ (for more details we refer to Section \ref{section_restriction BV over curves}, and to Section \ref{section_SVD}). Let $\Omega\subset \mathbb{R}^n$ be as in \eqref{assumptions on Omega}, and let $u\in BV(\Omega)$. Given $x_1, x_2 \in \Omega$, we call \emph{singular vertical distance} between $x_1$ and $x_2$ with respect to $u$ in $\Omega$, the quantity defined as
\begin{align*}
\textnormal{SVD}_{u,\Omega}(x_1,x_2):=
\begin{cases}
\inf \left\{|D^s u_\gamma|(I_\gamma^\circ):\, \gamma(I_\gamma)\subset \Omega,\,\gamma\textnormal{ connects }x_1,x_2  \right\}\quad &\mbox{if } x_1\neq x_2;\vspace*{0.2cm}\\
0 \quad &\mbox{if }x_1=x_2;
\end{cases}
\end{align*}
where the infimum is taken over all polygonal chains $\gamma$ connecting  $x_1$ and $x_2$ such that $u_\gamma\in BV(I_\gamma^\circ)$ (for a precise definition we refer to Definition \ref{def: SVD}, see also Figure \ref{fig_minimally singular} for a pictorial representation of the ideas so far explained). 

\begin{figure}[!htb]
\centering
\def\svgwidth{12cm}
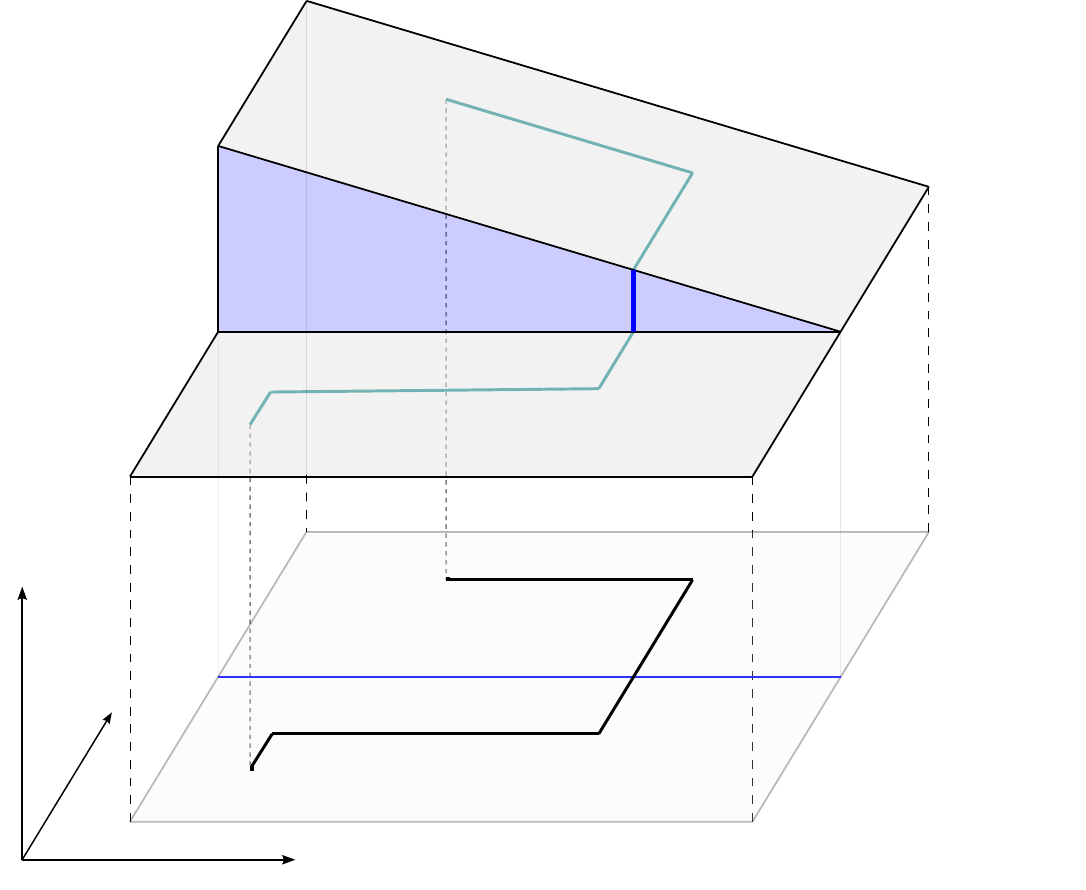
\caption{A pictorial representation in $\mathbb{R}^3$ of the idea at the hearth of the notion of \emph{singular vertical distance} between $\bar{x}$ and $x$ for a given $u\in BV(\Omega)$. In bold black we can see a polygonal chain $\gamma$ connecting $\bar{x}$ and $x$ in $\Omega$. In green  (appearing on $\partial^* \Sigma^u$) we drew $(\gamma, u_\gamma)$ where $u_\gamma$ is the restriction of $u\Low$ to $\gamma$. Note that the length of the bold blue segment equals $|D^s u_\gamma|(I_\gamma^\circ)$. It is not difficult to convince ourselves that $\textnormal{SVD}_{u,\Omega}(\bar{x},x)=0$ for $\mathcal{L}^n$-a.e. $x\in\Omega$, and in particular thanks to Theorem \ref{thm_characterization minimally singular via SVD}, we have that $u$ is \emph{minimally singular}. Let us stress that the condition of having $\textnormal{SVD}_{u,\Omega}(\bar{x},x)=0$ for $\mathcal{L}^n$-a.e. $x\in\Omega$ can be geometrically interpreted as having that the vertical parts of $\partial^* \Sigma^u $ ``do not disconnect'' the remaining part of $\partial^* \Sigma^u$.}
\label{fig_minimally singular}
\end{figure}

\noindent
Our main result is the following characterization of \emph{minimally singular} functions via the notion of \emph{singular vertical distance}. 

\begin{theorem}\label{thm_characterization minimally singular via SVD}
Let $\Omega\subset\mathbb{R}^n$ be as in  \eqref{assumptions on Omega} and let $u\in BV(\Omega)$. Then $u$ is \emph{minimally singular} if and only if there exists $\bar{x}\in \Omega$ such that $\textnormal{SVD}_{u,\Omega}(\bar{x},x)=0$ for $\mathcal{L}^n$-a.e. $ x\in \Omega$.
\end{theorem}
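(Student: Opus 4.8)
The plan is to prove both implications by passing to one--dimensional restrictions of $u$ (and of the competitor functions $b$) along polygonal chains, in the spirit of Vol'pert's theory recalled in Section~\ref{section_restriction BV over curves}. The key device is a \textbf{transfer lemma}: if $b\in GBV(\Omega)$ satisfies \eqref{eq_nabla b = 0}, \eqref{eq_Salto b< 1/2 salto v} and \eqref{eq_Cantor b< 1/2 Cantor v} relative to $u$, then for every polygonal chain $\gamma$ with image in $\Omega$ along which both $u_\gamma$ and $b_\gamma$ are of bounded variation (the generic situation, by a Fubini--type argument, since restrictions to almost every line are $BV$) one has
\begin{align*}
|Db_\gamma|(I_\gamma^\circ)=|D^sb_\gamma|(I_\gamma^\circ)\le|D^su_\gamma|(I_\gamma^\circ).
\end{align*}
Indeed \eqref{eq_nabla b = 0} kills the absolutely continuous part of $b_\gamma$ (the approximate gradient commutes with restriction to almost every line); \eqref{eq_Salto b< 1/2 salto v} controls, crossing by crossing, the atomic part of $Db_\gamma$ by that of $Du_\gamma$; and \eqref{eq_Cantor b< 1/2 Cantor v} together with $|f|\le 1$ and $b^M\to b$ controls the Cantor part of $b_\gamma$ by that of $u_\gamma$. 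Connectedness of $\Omega$ guarantees that any two of its points are joined by such chains, and $\mathcal{L}^n(\Omega)<\infty$ supplies the integrability needed in these restriction estimates.

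\medskip\noindent\emph{Sufficiency of the $\textnormal{SVD}$ condition.} Suppose $\textnormal{SVD}_{u,\Omega}(\bar x,\cdot)=0$ $\mathcal{L}^n$-a.e., and let $b\in GBV(\Omega)$ be any competitor. By the triangle inequality for $\textnormal{SVD}_{u,\Omega}$, any point $x$ with $\textnormal{SVD}_{u,\Omega}(\bar x,x)=0$ also has $\textnormal{SVD}_{u,\Omega}(x,\cdot)=0$ $\mathcal{L}^n$-a.e., so we may assume in addition that $\bar x\notin S_b$ and that the trace of $b_\gamma$ at $\bar x$ equals $b\Low(\bar x)$ for a.e. chain $\gamma$. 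For $\mathcal{L}^n$-a.e. $x$ choose polygonal chains $\gamma_k$ joining $\bar x$ to $x$ in $\Omega$ with $|D^su_{\gamma_k}|(I_{\gamma_k}^\circ)\to 0$; the transfer lemma gives $|Db_{\gamma_k}|(I_{\gamma_k}^\circ)\to 0$, so the two endpoint traces of $b_{\gamma_k}$ --- which for the good choices of $\bar x,x$ are $b\Low(\bar x)$ and $b\Low(x)$ --- differ by at most $|Db_{\gamma_k}|(I_{\gamma_k}^\circ)\to 0$, since a $BV$ function on an interval oscillates by at most its total variation. Hence $b\Low=b\Low(\bar x)$ $\mathcal{L}^n$-a.e., i.e. $b$ is a.e. constant; the $GBV$ case follows by applying this to $b^M$ and letting $M\to\infty$. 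Thus $u$ is minimally singular.

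\medskip\noindent\emph{Necessity of the $\textnormal{SVD}$ condition.} Assume $u$ is minimally singular. Fix $\bar x\in\Omega$ and set $b_{\bar x}:=\arctan\!\big(\textnormal{SVD}_{u,\Omega}(\bar x,\cdot)\big)$, a bounded, strictly increasing reparametrisation of the singular vertical distance. The first step is to prove $b_{\bar x}\in GBV(\Omega)$ (in fact $b_{\bar x}\in BV(\Omega)$): this rests on the Lipschitz--type bound $|\textnormal{SVD}_{u,\Omega}(\bar x,x)-\textnormal{SVD}_{u,\Omega}(\bar x,y)|\le\textnormal{SVD}_{u,\Omega}(x,y)$, on an estimate of $\textnormal{SVD}_{u,\Omega}(x,y)$ by the singular variation of $u$ along short chains joining $x$ and $y$, and on a lower--semicontinuity property of $\textnormal{SVD}_{u,\Omega}$ under perturbation of its arguments. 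The second step is to check \eqref{eq_nabla b = 0}--\eqref{eq_Cantor b< 1/2 Cantor v} for $b_{\bar x}$: property \eqref{eq_nabla b = 0} holds because $\mathcal{L}^n$-a.e. $x$ is a point of $|D^su|$-density zero, so the singular mass of $u$ along the segment from $x$ to $x'$ is $o(|x-x'|)$ as $x'\to x$, forcing $\nabla^*b_{\bar x}(x)=0$; property \eqref{eq_Salto b< 1/2 salto v} holds because across $\mathcal{H}^{n-1}$-a.e. $y\in S_u$ the jump of $\textnormal{SVD}_{u,\Omega}(\bar x,\cdot)$ is bounded, via the triangle inequality and short ``crossing'' chains, by the cost $[u](y)$ of traversing $S_u$ at $y$; and property \eqref{eq_Cantor b< 1/2 Cantor v} holds because, along a.e. chain, the Cantor part of $(b_{\bar x})_\gamma$ is absolutely continuous with respect to $D^cu_\gamma$ with density in $[-1,1]$, and this can be globalised to a Borel $f:\Omega\to[-1,1]$ as required. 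By minimal singularity, $b_{\bar x}$ --- hence $\textnormal{SVD}_{u,\Omega}(\bar x,\cdot)$ --- is $\mathcal{L}^n$-a.e. equal to a constant $c_{\bar x}$. Finally, symmetry and the triangle inequality force $c_{\bar x}$ to equal a single constant $c$ for $\mathcal{L}^n$-a.e. $\bar x$; and for $\mathcal{L}^n$-a.e. $\bar x$ one has $\textnormal{SVD}_{u,\Omega}(\bar x,x)\to 0$ as $x\to\bar x$ along a set of density one at $\bar x$ (again because $|D^su|$ has density zero at $\bar x$, by an integral--geometric estimate on the segments emanating from $\bar x$), whence $c=0$; any such $\bar x$ satisfies $\textnormal{SVD}_{u,\Omega}(\bar x,\cdot)=0$ $\mathcal{L}^n$-a.e.

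\medskip\noindent I expect the main difficulty to lie in the necessity part, and within it in showing that $\textnormal{SVD}_{u,\Omega}(\bar x,\cdot)$ is a genuine $(G)BV$ function whose jump and Cantor parts satisfy \eqref{eq_Salto b< 1/2 salto v}--\eqref{eq_Cantor b< 1/2 Cantor v} \emph{exactly}: since $\textnormal{SVD}_{u,\Omega}$ is an infimum over an uncountable family of polygonal chains, controlling its fine properties requires a careful ``good chain'' selection together with stability of the infimum under perturbations of both the endpoints and the chain, and it is precisely here that openness, connectedness and $\mathcal{L}^n(\Omega)<\infty$ are all used. By contrast, the transfer lemma and the reduction to one--dimensional sections, though technical, are routine once the restriction theory of Sections~\ref{section_restriction BV over curves}--\ref{section_SVD} is available.
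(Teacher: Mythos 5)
Your sufficiency argument is essentially the paper's: the ``transfer lemma'' is precisely the slicing estimate $|D(b^M)^\nu_y|\leq|D^su^\nu_y|$ that the paper extracts in Step~1.1 of its proof (by assembling the jump and Cantor densities from \eqref{eq_Salto b< 1/2 salto v} and \eqref{eq_Cantor b< 1/2 Cantor v} into a single Borel $f_M$ with $D b^M=f_M\,D^su$), and the oscillation bound at the endpoints of a chain is exactly Lemma~\ref{lem_calculation |u_gamma (a0)-u_gamma(am)|}. That half is sound. For the necessity direction you also follow the paper's strategy of turning $\textnormal{SVD}_{u,\Omega}(\bar x,\cdot)$ into a competitor function; the $\arctan$ reparametrisation is a pleasant variant that makes $b_{\bar x}$ bounded and hence (since $\mathcal L^n(\Omega)<\infty$) in $L^1$, giving $BV$ membership cheaply, whereas the paper only proves $GBV$ and records the $L^1$ issue as Open Problem~\ref{open 1}.

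The genuine gap is in your final step, where you argue that the constant $c$ must vanish. You invoke an ``integral--geometric estimate on the segments emanating from $\bar x$'' to claim $\textnormal{SVD}_{u,\Omega}(\bar x,x)\to 0$ along a density-one set. This is not proven and is not a routine Fubini: the segments through the \emph{fixed} point $\bar x$ form a measure-zero subfamily of all lines, so the slicing identity $|D^s_\nu u|(\Omega)=\int_{\Omega_\nu}|D^su^\nu_y|(\Omega^\nu_y)\,dy$ gives no direct control over the slice through $\bar x$ itself, let alone over the cumulative singular mass picked up along the later legs of a polygonal chain that must change direction to reach a generic nearby $x$. One would need a radial slicing lemma together with an inductive selection of good legs, which is substantial work your sketch does not supply. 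The paper avoids all of this: since $b(x)=\textnormal{SVD}_{u,\Omega}(\bar x,x)$ satisfies $b(\bar x)=0$ by definition, and the oscillation bound (Lemma~\ref{lem_calculation |u_gamma (a0)-u_gamma(am)|} applied to $b^M$, via Lemmas~\ref{lem_bgamma is BV} and~\ref{lem_SVDb=SVDu}) gives
\begin{align*}
\textnormal{SVD}_{u,\Omega}(\bar x,x)=|b^M(x)-b^M(\bar x)|\leq|D(b^M)_\gamma|(I_\gamma^\circ)
\end{align*}
for every $\gamma\in\Gamma_\Omega(b^M)$ joining $\bar x$ to $x$, the hypothesis that $b^M$ is $\mathcal L^n$-a.e.\ constant forces the right-hand side to vanish and hence $c=0$ immediately, with no density argument at all. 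You should replace the density claim with this observation; until then the necessity direction is incomplete.
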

\noindent
In other words, we have that $u$ is \emph{minimally singular} in $\Omega$ if and only if there exists a point $\bar{x}\in \Omega$ with the following property. For $\mathcal{L}^n$-a.e. $x\in \Omega$ there exists a sequence of polygonal chains connecting $\bar{x}$ and $x$ along which the total variation of the singular part of the distributional derivative of the restriction of $u\Low$ to those curves, is arbitrary small. Let us immediately note that if $n=1$ then $u\in BV(\Omega)$ is \emph{minimally singular} if and only if $u$ is absolutely continuous (see Proposition \ref{prop_n=1}).

\noindent
We now apply the ideas so far explained to provide a characterization result of \emph{rigidity} of equality cases for Steiner's inequality. Our result is the following.

\begin{theorem}\label{thm_rigidity by perugini}
Let $v:\mathbb{R}^n\to [0,\infty)$ be a Lebesgue measurable function satisfying \eqref{cond_ F[v] has finite perimeter}, and let $\Omega \subset \mathbb{R}^n$ be as in \eqref{CCF condition for Omega}. Then the following statements are equivalent:
\begin{itemize}
\item[i)] for every $E \in \mathcal{M}_\Omega(v)$ there exists $t \in \mathbb{R}$ so that $\mathcal{H}^{n+1}\left( \left(E \Delta (t e_{n+1}+ F[v])\right) \cap (\Omega\times \mathbb{R}) \right)=0$;\vspace*{0.1cm}
\item[ii)] $v\in BV(\Omega)$ is \textnormal{minimally singular}, that is there exists $\bar{x}\in \Omega$ such that $\textnormal{SVD}_{u,\Omega}(\bar{x},x)=0$ for $\mathcal{L}^n$-a.e. $ x\in \Omega$.
\end{itemize}
Moreover, if there exists $\Omega\subset \mathbb{R}^n$ as in \eqref{CCF condition for Omega} such that $\mathcal{H}^{n-1}(\{ v\Low>0 \}\setminus \Omega  )=0$, then statement $i)$ can be substituted with the following one
\begin{itemize}
\item[$\textnormal{i}^*)$] for every $E \in \mathcal{M}(v)$ there exists $t \in \mathbb{R}$ so that $\mathcal{H}^{n+1}\left( E \Delta (t e_{n+1}+ F[v]) \right)=0$.
\end{itemize}
\end{theorem}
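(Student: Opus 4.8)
The plan is to reduce rigidity over $\Omega$ to the notion of \emph{minimally singular} function via Proposition \ref{prop_equality cases over open sets}, and then invoke Theorem \ref{thm_characterization minimally singular via SVD} for the geometric reformulation in terms of $\textnormal{SVD}$. First I would establish the implication $i) \Rightarrow ii)$. Assuming rigidity over $\Omega$, I need to show $v$ is minimally singular, i.e.\ that the only function $b \in GBV(\Omega)$ satisfying the three conditions \eqref{eq_nabla b = 0}--\eqref{eq_Cantor b< 1/2 Cantor v} (with $u = v$, and with constants $[b] \le \tfrac12[v]$, $f \in [-\tfrac12, \tfrac12]$ — note the factor $1/2$ discrepancy between Definition \ref{def_minimally singular functions} and Proposition \ref{prop_equality cases over open sets}, which is harmless after rescaling $b \mapsto 2b$) is constant. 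Given such a $b$, I would build a competitor set $E$ whose barycenter function $b_{E,\Omega}$ equals (a truncation/rescaling of) $b$: concretely, take $E := \{(x,t) : |t - b(x)| < \tfrac12 v(x)\}$ when $b$ is bounded, and an exhaustion argument via truncations $b^M$ in general. By the ``if'' direction of Proposition \ref{prop_equality cases over open sets}, this $E$ lies in $\mathcal{M}_\Omega(v)$; rigidity forces $b_{E,\Omega}$ — hence $b$ — to be constant a.e.\ on $\Omega \cap \{v > 0\}$, and since $\Omega$ satisfies \eqref{CCF condition for Omega} we have $\{v\Low > 0\}$ of full measure in $\Omega$, so $b$ is constant a.e.\ on $\Omega$. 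The converse $ii) \Rightarrow i)$ runs the same correspondence backwards: given $E \in \mathcal{M}_\Omega(v)$, Proposition \ref{prop_equality cases over open sets} says $b_{E,\Omega}$ (after rescaling) is an admissible competitor $b$ in Definition \ref{def_minimally singular functions}, so minimal singularity of $v$ forces $b_{E,\Omega}$ constant, which by \eqref{rigidity via the baricenter} is exactly rigidity over $\Omega$. The equivalence of the two formulations in $ii)$ — ``$v$ minimally singular'' versus ``$\exists \bar x$ with $\textnormal{SVD}_{v,\Omega}(\bar x, x) = 0$ for a.e.\ $x$'' — is precisely the content of Theorem \ref{thm_characterization minimally singular via SVD}.

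For the final ``Moreover'' part, I would deduce statement $\textnormal{i}^*)$ from $i)$ under the extra hypothesis $\mathcal{H}^{n-1}(\{v\Low > 0\} \setminus \Omega) = 0$. The point is that $\mathcal{L}^n(\{v > 0\} \setminus \Omega) = 0$ as well (since $\{v > 0\}$ and $\{v\Low > 0\}$ agree up to an $\mathcal{H}^{n-1}$-null, hence $\mathcal{L}^n$-null, set by the structure of $BV$ functions), so rigidity over $\Omega$ already pins down $E$ on $\Omega \times \mathbb{R}$, which is $\mathcal{H}^{n+1}$-almost all of $F[v]$ and of any $E \in \mathcal{M}(v)$. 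One must check that $E \in \mathcal{M}(v)$ implies $E \in \mathcal{M}_\Omega(v)$: this follows from the localized Steiner inequality \eqref{Steiner's inequality} applied to $B = \Omega$ and to $B = \mathbb{R}^n \setminus \Omega$ — equality globally and inequality on each piece forces equality on each piece. Conversely $\textnormal{i}^*) \Rightarrow i)$ is immediate since $\mathcal{M}(v) \supset$ the sets relevant to $\mathcal{M}_\Omega(v)$ up to the null complement, or more directly one argues that any $E \in \mathcal{M}_\Omega(v)$ can be modified outside $\Omega \times \mathbb{R}$ to lie in $\mathcal{M}(v)$ without changing it on $\Omega \times \mathbb{R}$.

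I expect the main obstacle to be the $i) \Rightarrow ii)$ direction, specifically the construction of the competitor set $E$ from an arbitrary $b \in GBV(\Omega)$ that is only controlled through its truncations $b^M$. When $b$ is unbounded one cannot simply set $E = \{|t - b(x)| < \tfrac12 v(x)\}$ and expect finite perimeter or even membership in $\mathcal{M}_\Omega(v)$; the delicate point is verifying that conditions $i)$--$iv)$ of Proposition \ref{prop_equality cases over open sets} are stable under the truncation-and-limit procedure, in particular that the Cantor-part identity \eqref{eq_Cantor b< 1/2 Cantor v for E PROP} for $b_{E,\Omega}^M$ transfers correctly from \eqref{eq_Cantor b< 1/2 Cantor v}, and that $b_{E,\Omega} = b$ holds in the appropriate a.e.\ sense rather than only up to the truncation level. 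A careful bookkeeping of the sets $\{|b| < M\}^{(1)}$ and their behaviour as $M \to \infty$, together with the fact that $D^c v$ is a finite measure so that $|D^c b^M|(\Omega) \le |D^c v|(\Omega)$ is bounded uniformly in $M$, should make the limit $b^M \to b$ legitimate; but this is the technical heart of the argument and the place where the hypothesis $\mathcal{L}^n(\Omega) < \infty$ and the connectedness of $\Omega$ are genuinely used.
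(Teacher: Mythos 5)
Your overall strategy matches the paper's: reduce rigidity over $\Omega$ to the statement that $\tfrac12 v$ (equivalently $v$, after rescaling) is minimally singular via Proposition \ref{prop_equality cases over open sets} and the characterization \eqref{rigidity via the baricenter}, then invoke Theorem \ref{thm_characterization minimally singular via SVD}; and for the ``Moreover'' part, show $\mathcal{M}(v)=\mathcal{M}_\Omega(v)$. You correctly flag the construction of a competitor $E$ from an unbounded $b\in GBV(\Omega)$ as the delicate point in $i)\Rightarrow ii)$, and the factor-$1/2$ rescaling is indeed harmless, exactly as you say.

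The one place you gloss over is the inclusion $\mathcal{M}_\Omega(v)\subset\mathcal{M}(v)$, which is what actually lets you run $\textnormal{i}^*)$ on an arbitrary $E\in\mathcal{M}_\Omega(v)$. Your ``modification outside $\Omega\times\mathbb{R}$'' alternative is not straightforward: gluing $E\cap(\Omega\times\mathbb{R})$ to $F[v]\cap(\Omega^c\times\mathbb{R})$ along $\partial\Omega\times\mathbb{R}$ need not produce a set of finite perimeter without further argument, and even if it does, one still has to check equality of the global perimeters. The paper instead proves $\mathcal{M}_\Omega(v)\subset\mathcal{M}(v)$ directly: decompose
\begin{align*}
P(E)=P(E;\Omega\times\mathbb{R})+P(E;(\{v\Low>0\}\setminus\Omega)\times\mathbb{R})+P(E;\{v\Low=0\}\times\mathbb{R}),
\end{align*}
note the middle term vanishes since $\mathcal{H}^{n-1}(\{v\Low>0\}\setminus\Omega)=0$ (by De Giorgi's structure theorem), and then use the fact that for \emph{any} $v$-distributed set of finite perimeter $E$ one has $P(E;\{v\Low=0\}\times\mathbb{R})=P(F[v];\{v\Low=0\}\times\mathbb{R})$ (this is \cite[Proposition 3.8]{CagnettiColomboDePhilippisMaggiSteiner}). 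Combined with $P(E;\Omega\times\mathbb{R})=P(F[v];\Omega\times\mathbb{R})$ this yields $P(E)=P(F[v])$. Without this input (or an equivalent replacement) your ``Moreover'' step is incomplete; with it, the rest of your outline goes through as stated.
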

\noindent
Let us stress that, likewise in the statement of Theorem \ref{thm_rigidity by CCF}, requiring that $\Omega$ is as in \eqref{CCF condition for Omega} guarantees that $\{ v\Low = 0 \}$ does not essentially disconnect $\Omega$.

\begin{remark}
As already mentioned above, in \cite{CagnettiColomboDePhilippisMaggiSteiner} the authors managed to prove a characterization result of \emph{rigidity} for Steiner's inequality whenever $v:\mathbb{R}^n \to [0,\infty)$ is a special function of bounded variation that satisfies \eqref{cond_ F[v] has finite perimeter} and it is such that $S_v\cap \{ v\Low > 0 \}$ is $\mathcal{H}^{n-1}$-locally finite. Such characterization was given in terms of a geometric property that the function $v$ had to verify which they called \emph{mismatched stairway property} (see \cite[Definition 1.26]{CagnettiColomboDePhilippisMaggiSteiner}). Thus, if $\{ v\Low >0 \}$ is an open and connected set with $\mathcal{L}^n(\{ v\Low >0 \})< \infty$, combining Theorem \ref{thm_rigidity by perugini} with their characterization result  \cite[Theorem 1.29]{CagnettiColomboDePhilippisMaggiSteiner} we get that the function $v$ satisfies the \emph{mismatched stairway property} if and only if $v$ is \emph{minimally singular} in $\{ v\Low>0 \}$.
\end{remark}

\noindent
Finally, we show that \emph{minimally singular} functions with null gradient are necessarily $\mathcal{L}^n$-equivalent to constant functions (see Proposition \ref{prop_characterization of constant functions}, and Corollary \ref{cor_rigidity for constant functions}). Let us conclude with some final comments.
\subsection{Final comments}\label{subsection_intro:final comments}
The existence of a set $\Omega$ satisfying the conditions mentioned in the second part of Theorem \ref{thm_rigidity by perugini} namely,
\begin{align}\label{cond_conjecture on Omega}
\Omega\subset \mathbb{R}^n \textnormal{ open and connected}, \textnormal{ and }\mathcal{H}^{n-1}(\{ v\Low >0 \}\Delta \Omega)=0,
\end{align}
is unfortunately something that one cannot take for granted. Next result shows that if $n=1$ such a set $\Omega$ actually exists, providing in this way, together with Theorem \ref{thm_rigidity by perugini}, an alternative proof of Theorem \ref{thm_rigidity when n=1}.

\begin{lemma}\label{lem_open sets exists for n=1}
Let $v:\mathbb{R}\to [0,\infty)$ be a Lebesgue measurable function such that $\mathcal{L}^1(\{  v>0\})<\infty$, and such that \eqref{cond_ F[v] is indecomposable} holds true. Then there exists an open and bounded interval $(a,b)\subset \mathbb{R}$ such that $\{ v\Low > 0 \}= (a,b)$.
\end{lemma}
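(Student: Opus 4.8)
The plan is to work directly with the one-dimensional structure of $\{v\Low>0\}$ and to show that condition \eqref{cond_ F[v] is indecomposable} forces this set to be (up to $\mathcal{H}^0$-null sets, i.e. exactly) an interval, and that finiteness of $\mathcal{L}^1(\{v>0\})$ forces it to be bounded. First I would recall that since $v\in BV(\mathbb{R})$ (this follows from $\mathcal{L}^1(\{v>0\})<\infty$ together with the fact that $F[v]$ has finite perimeter, via the Proposition right after \eqref{cond_ F[v] has finite perimeter}; alternatively one reduces to the case $v\in BV$ as in the standard setup), the approximate lower limit $v\Low$ is a well-defined Borel function on $\mathbb{R}$, and the set $\{v\Low>0\}$ differs from $\{v>0\}$ by an $\mathcal{L}^1$-null set, hence has finite Lebesgue measure. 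In dimension one, $BV$ functions have left and right limits at every point, so $v\Low$ is lower semicontinuous in a suitable pointwise sense and $\{v\Low>0\}$ can be written as a countable disjoint union of open intervals together with possibly some isolated points; more precisely, I would argue that $\{v\Low > 0\}$ is $\mathcal{H}^0$-equivalent, hence equal, to its interior which is a countable union of open intervals $(a_i,b_i)$.

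The heart of the matter is to show there is only one such interval and that it is bounded. Suppose for contradiction that $\{v\Low>0\} \supset (a_1,b_1)\cup(a_2,b_2)$ with $b_1 \le a_2$ and these being two distinct maximal intervals, so that there is a point $c\in[b_1,a_2]$ with $v\Low(c)=0$ (if $b_1<a_2$ any point of the gap works; if $b_1=a_2$ then $c=b_1$ is a point where $v\Low$ vanishes, since otherwise the two intervals would merge). I would then set $G=\{v>0\}$, $G_-= G\cap(-\infty,c)$, $G_+=G\cap(c,\infty)$, check that $\{G_+,G_-\}$ is a non-trivial Borel partition of $G$ in the sense of \eqref{non trivial Borel partition} — nontriviality of both pieces follows because each contains one of the two intervals, which have positive length — and observe that in $\mathbb{R}$ the essential boundary of a subset of the line intersected with $G^{(1)}$ is a discrete set; the only candidate point in $G^{(1)}\cap\partial^e G_+\cap\partial^e G_-$ is $c$ itself, and $\{c\}\subset \{v\Low=0\}$, so $\mathcal{H}^{0}\big((G^{(1)}\cap\partial^e G_+\cap\partial^e G_-)\setminus\{v\Low=0\}\big)=0$. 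This exhibits $\{v\Low=0\}$ essentially disconnecting $\{v>0\}$, contradicting \eqref{cond_ F[v] is indecomposable}. Hence $\{v\Low>0\}$ is a single interval; boundedness then follows because $\mathcal{L}^1(\{v\Low>0\})=\mathcal{L}^1(\{v>0\})<\infty$, so it must be a bounded open interval $(a,b)$ (with $a>-\infty$, $b<\infty$), and I would note it is genuinely open — the endpoints are excluded — because at $b$, say, one has $v\Low(b)=0$ (the right limit of $v$ at $b$ is $0$ since $v=0$ $\mathcal{L}^1$-a.e. to the right, while the left limit could be positive, so the approximate lower limit, being the min-type quantity, is $0$).

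The main obstacle I anticipate is the careful bookkeeping at the "boundary" between two touching components, i.e. correctly identifying a point $c$ where $v\Low(c)=0$ when the two pieces of $\{v\Low>0\}$ abut without an open gap, and making rigorous the claim that in one dimension the measure-theoretic boundary $\partial^e G_\pm$ meets $G^{(1)}$ in at most the single point $c$ (and in particular that no spurious density-one points of $G$ survive on the common essential boundary away from $c$). This requires invoking the one-dimensional structure theory: $\partial^e G$ for a set of finite perimeter in $\mathbb{R}$ is a finite set, the density-one set $G^{(1)}$ is open up to $\mathcal{H}^0$-null sets, and once one cuts $G$ at $c$ the only new essential-boundary point common to both halves is $c$. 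Modulo this standard but slightly fiddly one-dimensional measure theory — or alternatively a clean appeal to the equivalence between essential connectedness and indecomposability recorded in \cite[Remark 2.3]{ccdpmGAUSS} and \cite[Section 4D]{CagnettiColomboDePhilippisMaggiSteiner}, which in $\mathbb{R}$ reduces indecomposable sets of finite perimeter to intervals — the argument is routine.
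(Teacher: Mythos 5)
The cut-point argument at the heart of your proof is exactly the engine the paper uses (Step~1 of the paper's proof of this lemma), so the strategy is sound. However, there is a genuine gap in the scaffolding. You invoke $v\in BV(\mathbb{R})$ and then assert that $\{v\Low>0\}$ equals its interior and is a disjoint union of maximal open intervals. Neither is available: the lemma's only hypotheses are measurability of $v$, $\mathcal{L}^1(\{v>0\})<\infty$, and \eqref{cond_ F[v] is indecomposable}; nothing gives $v\in BV(\mathbb{R})$, and the route you sketch (via $F[v]$ having finite perimeter) also invokes a hypothesis the lemma does not make. For instance $v(x)=(1+|\sin(1/x)|)\chi_{(0,1)}(x)$ satisfies all three hypotheses and has $\{v\Low>0\}=(0,1)$, so the conclusion holds, yet $v$ is not of bounded variation — a proof that opens with ``since $v\in BV(\mathbb{R})$'' does not cover the statement. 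Moreover the openness and interval structure of $\{v\Low>0\}$ is precisely the content to be proved, not an input; for a general measurable $v$ this set need not be open, so one cannot speak of its ``maximal intervals,'' and your contradiction is set up in terms of objects that are not yet shown to exist.

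The fix is small and brings you onto the paper's route: drop BV entirely and extract the structure from \eqref{cond_ F[v] is indecomposable} itself. The paper proves (Step~1), using your cut construction, the characterization $x\in\{v\Low>0\}$ iff $\mathcal{L}^1((-\infty,x)\cap\{v>0\})\cdot\mathcal{L}^1((x,\infty)\cap\{v>0\})>0$, from which convexity (Step~2) and openness (Step~3) of $\{v\Low>0\}$ follow by elementary measure theory. Equivalently, you can keep the contradiction framing without maximal intervals: if $\{v\Low>0\}$ is not convex, pick $x_1<c<x_2$ with $v\Low(x_1),v\Low(x_2)>0$ and $v\Low(c)=0$; set $G_+=\{v>0\}\cap(c,\infty)$ and $G_-=\{v>0\}\cap(-\infty,c)$; nontriviality follows because $v\Low(x_i)>0$ gives $x_i\in\{v>0\}^{(1)}$ by \eqref{eq: 2.9 Filippo page 15}, hence $\{v>0\}$ has positive measure in each half-line; and your identification $G^{(1)}\cap\partial^e G_+\cap\partial^e G_-\subset\{c\}\subset\{v\Low=0\}$ contradicts \eqref{cond_ F[v] is indecomposable}. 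Your closing arguments for openness of the resulting interval and for boundedness from $\mathcal{L}^1(\{v>0\})<\infty$ are in substance correct and do not actually need BV, so they can stand.
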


\noindent
If for $n=1$, as shown by Lemma \ref{lem_open sets exists for n=1}, the existence of a set $\Omega$ as in \eqref{cond_conjecture on Omega} is a necessary condition for \emph{rigidity} to hold true, for $n\geq 2$ this is not true any more. More precisely we have the following result.

\begin{proposition}\label{prop_no open set equivalency}
Let $n\geq 2$. Then there exists a Lebesgue measurable function $v:\mathbb{R}^n \to [0,\infty)$ satisfying both \eqref{cond_ F[v] has finite perimeter} and \eqref{cond_ F[v] is indecomposable}, with the property that \emph{rigidity} holds true, and for every open set $\Omega\subset \mathbb{R}^n$ we have that $\mathcal{L}^n(\{ v>0 \}\Delta \Omega)>0$.
\end{proposition}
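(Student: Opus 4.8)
The plan is to construct an explicit function $v$ on $\mathbb{R}^n$, $n\geq 2$, whose positivity set $\{v>0\}$ is, up to $\mathcal{L}^n$-null sets, a countable union of disjoint open balls accumulating against a point or a segment, arranged so that (a) no single open set can match it modulo $\mathcal{L}^n$-null sets, yet (b) the configuration is still essentially connected and $v$ is minimally singular on an auxiliary open connected set $\Omega$ satisfying \eqref{CCF condition for Omega}, so that Theorem \ref{thm_rigidity by perugini} yields rigidity. For concreteness, take a sequence of pairwise disjoint open balls $B_k = B(x_k, r_k)\subset \mathbb{R}^n$ with $x_k\to x_\infty$, $r_k\to 0$, chosen so that the balls are mutually at positive distance but their union has a ``bad'' boundary at $x_\infty$; connect consecutive balls $B_k$ and $B_{k+1}$ by thin open tubes $T_k$, and let $U := \bigcup_k (B_k\cup T_k)$, which is open, connected and bounded. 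Define $v$ so that $\{v>0\}$ is $\mathcal{L}^n$-equivalent to $U$, with $v$ chosen smooth and bounded away from $0$ on each $B_k$ but with $v^\wedge$ still controlled on the tubes; in fact one can simply take $v\in C^\infty_c$ with $\{v>0\}=U$, so that $D^s v = 0$ entirely, making $v$ trivially minimally singular on $\Omega := U$ (here $v^\wedge = v > 0$ on $\Omega$ since $v$ is continuous and positive there, so \eqref{CCF condition for Omega} holds).

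The first step is to verify that $\{v^\wedge > 0\}$ is $\mathcal{H}^{n-1}$-equivalent to the open connected set $\Omega = U$: since $v$ is continuous, $\{v>0\}=\{v^\wedge>0\}=U$ exactly, so $\mathcal{H}^{n-1}(\{v^\wedge>0\}\setminus\Omega)=0$ trivially, and moreover \eqref{cond_ F[v] has finite perimeter} and \eqref{cond_ F[v] is indecomposable} hold ($U$ connected open implies $\emptyset$ does not essentially disconnect $U$, and $\{v^\wedge=0\}\cap U=\emptyset$). Then Theorem \ref{thm_rigidity by perugini}, applied with this $\Omega$ (using that $D^sv\mres\Omega = 0$, hence $v$ is minimally singular on $\Omega$ with $b$ forced constant by \eqref{eq_nabla b = 0} plus connectedness), gives that rigidity holds. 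The genuinely delicate step is the second half: showing that for \emph{every} open $\Omega'\subset\mathbb{R}^n$ one has $\mathcal{L}^n(\{v>0\}\,\Delta\,\Omega')>0$. Here is where the geometry of the construction must be exploited. If $\Omega'$ were $\mathcal{L}^n$-equivalent to $U$, then $\Omega'$ would have to contain (up to null sets) each ball $B_k$; since $\Omega'$ is open and $\mathcal{L}^n(\Omega'\setminus U)=0$, openness would force $\Omega'\subset \overline{U}$ while $\Omega'$ must ``fill in'' no extra mass — the point is to arrange the balls and tubes so that $\overline U$ has nonempty interior points not in $U$ of positive measure is \emph{avoided}, but instead so that any open set containing $\bigcup_k B_k$ up to measure zero is forced to contain a full neighborhood of $x_\infty$, which has positive measure disjoint from a suitably sparse $U$. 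Concretely: choose $r_k$ decaying fast enough (e.g. $r_k = 4^{-k}$, $x_k = x_\infty + 2^{-k}e_1$) that $\mathcal{L}^n(U)<\infty$ and that $x_\infty$ is a point of density $0$ of $U$; then any open $\Omega'$ with $\mathcal{L}^n(\Omega'\setminus U)=0$ satisfies $x_\infty\notin\Omega'$ (else a whole ball around $x_\infty$ lies in $\Omega'$, contradicting density $0$), but also each $x_k\in\Omega'$ (openness plus $B_k\subset\Omega'$ mod null), forcing $\Omega'$ to contain points arbitrarily close to $x_\infty$; refining this, one shows $\Omega'$ must contain open sets straddling the gaps between consecutive balls beyond what $U$ covers, producing positive symmetric difference.

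I expect the main obstacle to be this last quantitative argument: making the accumulation geometry rigid enough that \emph{no} open set approximates $U$ in measure, while keeping $U$ connected (so rigidity genuinely holds via Theorem \ref{thm_rigidity by perugini}) and $\mathcal{L}^n(U)<\infty$. The cleanest route is probably to take $U$ to be a connected open set whose complement has a Lebesgue point of density $1$ at some $x_\infty \in \partial U$ where simultaneously $x_\infty$ is a density point of $U$ from another direction — i.e. $U$ oscillates near $x_\infty$ — so that any open $\Omega'$ agreeing with $U$ a.e. would need $x_\infty$ to be simultaneously interior and exterior, which is impossible; one then only needs $\mathcal{L}^n(\partial U)=0$ to be \emph{false} in the relevant localized sense, i.e. positive-measure discrepancy is unavoidable. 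A convenient explicit model realizing all of this is a ``comb'' or ``infinite corridor'' domain: $U = \bigl((0,1)\times(0,1)^{n-1}\bigr)\setminus \bigcup_{k\geq 1} \bigl(\{2^{-k}\}\times[2^{-k},1]\times(0,1)^{n-2}\bigr)$-type thickened slit region, connected, bounded, with $\partial U$ of positive $\mathcal{L}^n$-measure near the accumulation line $\{0\}\times(0,1)^{n-1}$ in the thickened version — after thickening the slits to open strips of summable width the union $U$ still has the property that its ``filled'' version differs from it by positive measure and no open set can interpolate. I would present the comb construction, verify connectedness and finiteness of measure directly, take $v\in C^\infty_c(\mathbb{R}^n)$ with $\{v>0\}=U$ (so $D^sv=0$ and \eqref{CCF condition for Omega} holds with $\Omega=U$), invoke Theorem \ref{thm_rigidity by perugini} for rigidity, and close with the topological/measure-theoretic lemma that a comb domain admits no open $\mathcal{L}^n$-representative.
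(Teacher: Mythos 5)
Your construction cannot work, and the failure is immediate: you take $v\in C^\infty_c(\mathbb{R}^n)$ with $\{v>0\}=U$, where $U$ is by your own description an \emph{open} set. But then $\Omega:=U$ is an open set with $\mathcal{L}^n(\{v>0\}\,\Delta\,\Omega)=0$, which is exactly the conclusion the proposition forbids. The statement asks for a $v$ whose positivity set is not $\mathcal{L}^n$-equivalent to \emph{any} open set; as soon as $v$ is continuous, $\{v>0\}$ is itself open and the claim fails trivially, no matter how intricate the comb or accumulation geometry is. Your later paragraph about ``any open $\Omega'$ with $\mathcal{L}^n(\Omega'\setminus U)=0$'' analyzes the wrong quantity and never addresses the candidate $\Omega'=U$. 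There is a second, related tension: you invoke Theorem \ref{thm_rigidity by perugini} to conclude rigidity, but the second part of that theorem requires an open connected $\Omega$ with $\mathcal{H}^{n-1}(\{v\Low>0\}\setminus\Omega)=0$; since your $\{v\Low>0\}=U$ is itself such an $\Omega$, this again produces an open set $\mathcal{L}^n$-equivalent to $\{v>0\}$, so you cannot simultaneously apply this theorem in the form you want and prove the final measure-theoretic claim.

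The paper's construction is of a genuinely different type and avoids this trap. It takes $v=\chi_C$ where $C$ is an indecomposable component (via the Ambrosio--Caselles--Masnou--Morel decomposition theorem) of a ``Swiss cheese'' set $K=\overline{B_1(0)}\setminus E$, with $E=\bigcup_h B_{r_h}(x_h)$ a dense open subset of $B_1(0)$ of small measure and small perimeter. Since $\overline{E}=\overline{B_1(0)}$, the set $K=\partial E$ has empty interior and, more strongly, every ball $B_\rho(x)$ with $x\in C^{(1)}$ meets both $C$ and its complement in positive measure; this is what rules out any open $\mathcal{L}^n$-representative of $C$. Rigidity is then obtained not from Theorem \ref{thm_rigidity by perugini} (which is inapplicable precisely because no good $\Omega$ exists) but from the external sufficient criterion \cite[Theorem 1.16]{CagnettiColomboDePhilippisMaggiSteiner}, after verifying $|D^s v|(\{v\Low>0\})=0$ via the coarea formula. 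If you want to repair your attempt, the essential missing idea is to make $\{v>0\}$ a positive-measure set with empty interior in the density sense (a fat-Cantor-type complement of a dense open set), which forces $v$ to be a characteristic function rather than a smooth function, and to source the rigidity conclusion from a criterion that does not presuppose the existence of an open set equivalent to $\{v>0\}$.
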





\noindent
However, let us note that condition \eqref{cond_ F[v] is indecomposable} together with asking that the topological boundary of $\{ v\Low >0 \}$ is contained in $\{ v\Low = 0 \}$ up to a set of $\mathcal{H}^{n-1}$-measure zero, namely 
\begin{align}\label{cond_la chiusa di v = 0 meno v = 0 minuscola}
\mathcal{H}^{n-1}(\partial\{ v\Low > 0 \}\setminus \{ v\Low =0 \})=0
\end{align}
is sufficient to get the existence of a set $\Omega$ satisfying \eqref{cond_conjecture on Omega}. Indeed, we have the following simple result.

\begin{lemma}\label{lem_sufficient condition for the existence of the open set}
Let $v:\mathbb{R}^n\to [0,\infty)$ be a Lebesgue measurable function such that $\mathcal{L}^n(\{ v>0 \})<\infty$ and satisfying both \eqref{cond_ F[v] is indecomposable}, and \eqref{cond_la chiusa di v = 0 meno v = 0 minuscola}. Then the set $\Omega:=\mathbb{R}^n\setminus \overline{\{ v\Low =0 \}}$ satisfies \eqref{cond_conjecture on Omega}.
\end{lemma}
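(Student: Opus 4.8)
The plan is to verify, one requirement at a time, the three properties packaged in \eqref{cond_conjecture on Omega} for the candidate set $\Omega:=\mathbb{R}^n\setminus\overline{\{v\Low=0\}}$: that it is open, that it is $\mathcal{H}^{n-1}$-equivalent to $\{v\Low>0\}$, and that it is connected. Openness is immediate, since $\Omega$ is the complement of a closed set.

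For the $\mathcal{H}^{n-1}$-equivalence, I would first observe that, as $v\geq0$ (hence $v\Low\geq0$), the sets $\{v\Low=0\}$ and $\{v\Low>0\}$ partition $\mathbb{R}^n$; therefore $\Omega\subseteq\mathbb{R}^n\setminus\{v\Low=0\}=\{v\Low>0\}$, so that $\{v\Low>0\}\Delta\Omega=\{v\Low>0\}\setminus\Omega=\{v\Low>0\}\cap\overline{\{v\Low=0\}}=\overline{\{v\Low=0\}}\setminus\{v\Low=0\}$. Every point of this set lies in $\partial\{v\Low=0\}=\partial\{v\Low>0\}$ but not in $\{v\Low=0\}$, hence in $\partial\{v\Low>0\}\setminus\{v\Low=0\}$, which is $\mathcal{H}^{n-1}$-negligible by \eqref{cond_la chiusa di v = 0 meno v = 0 minuscola}. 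This gives $\mathcal{H}^{n-1}(\{v\Low>0\}\Delta\Omega)=0$ and at the same time records the auxiliary fact that $\overline{\{v\Low=0\}}\setminus\{v\Low=0\}$ is $\mathcal{H}^{n-1}$-null, i.e. $\overline{\{v\Low=0\}}$ and $\{v\Low=0\}$ are $\mathcal{H}^{n-1}$-equivalent. Since $\mathcal{H}^{n-1}$-null sets are $\mathcal{L}^n$-null, and since at $\mathcal{L}^n$-a.e. point one has $v\Low=v$ (so that $\{v\Low>0\}$ is $\mathcal{L}^n$-equivalent to $\{v>0\}$), I also deduce $\mathcal{L}^n(\{v>0\}\Delta\Omega)=0$, which I shall use below.

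The substantive point is connectedness, which I would prove by contradiction against \eqref{cond_ F[v] is indecomposable}. Suppose $\Omega=\Omega_1\cup\Omega_2$ with $\Omega_1,\Omega_2$ non-empty, disjoint and open (hence open in $\mathbb{R}^n$, as $\Omega$ is open). Each $\Omega_i$ is then clopen in $\Omega$, so $\overline{\Omega_i}\cap\Omega=\Omega_i$ and therefore $\partial\Omega_i\cap\Omega=\emptyset$, that is $\partial\Omega_i\subseteq\mathbb{R}^n\setminus\Omega=\overline{\{v\Low=0\}}$. Using that the essential boundary of a measurable set is always contained in its topological boundary, I get $\partial^e\Omega_1\cap\partial^e\Omega_2\subseteq\partial\Omega_1\cap\partial\Omega_2\subseteq\overline{\{v\Low=0\}}$, and hence, by the auxiliary $\mathcal{H}^{n-1}$-equivalence above, $\mathcal{H}^{n-1}\bigl((\partial^e\Omega_1\cap\partial^e\Omega_2)\setminus\{v\Low=0\}\bigr)=0$, a fortiori $\mathcal{H}^{n-1}\bigl((\{v>0\}^{(1)}\cap\partial^e\Omega_1\cap\partial^e\Omega_2)\setminus\{v\Low=0\}\bigr)=0$. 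On the other hand $\{\Omega_1,\Omega_2\}$ is a non-trivial Borel partition of $\{v>0\}$ in the sense of \eqref{non trivial Borel partition}: the two pieces are $\mathcal{L}^n$-disjoint, their union agrees with $\{v>0\}$ up to $\mathcal{L}^n$-null sets by the previous paragraph, and both have positive measure since they are non-empty open sets. Thus $\{v\Low=0\}$ essentially disconnects $\{v>0\}$, contradicting \eqref{cond_ F[v] is indecomposable}. Therefore $\Omega$ is connected, and $\Omega$ satisfies \eqref{cond_conjecture on Omega}. (When $\mathcal{L}^n(\{v>0\})=0$ the assertion is vacuous, with $\Omega=\emptyset$.)

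The only step requiring any care is this last one, and within it precisely the passage from the inclusion $\partial\Omega_i\subseteq\overline{\{v\Low=0\}}$---which is all that topology alone provides---to the set $\{v\Low=0\}$ that actually appears in the definition of essential disconnection: this is exactly the role played by hypothesis \eqref{cond_la chiusa di v = 0 meno v = 0 minuscola}. Everything else reduces to elementary point-set considerations together with the Lebesgue density theorem, so I do not expect genuine obstacles.
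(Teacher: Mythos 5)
Your proof is correct, and in substance it rests on the same two observations the paper uses: that $\{v\Low>0\}\Delta\Omega \subset \overline{\{v\Low=0\}}\setminus\{v\Low=0\}\subset \partial\{v\Low>0\}\setminus\{v\Low=0\}$, which is $\mathcal{H}^{n-1}$-null by \eqref{cond_la chiusa di v = 0 meno v = 0 minuscola}, and that a hypothetical disconnection of the open set $\Omega$ into open pieces $\Omega_1,\Omega_2$ forces $\partial^e\Omega_1\cap\partial^e\Omega_2\subset\overline{\{v\Low=0\}}$. The difference is organizational: the paper first invokes an external stability result (Remark~1.5 of \cite{CagnettiColomboDePhilippisMaggiSteiner}) to transfer the essential-non-disconnection from $\{v\Low=0\}$ to $\overline{\{v\Low=0\}}$ and from $\{v>0\}$ to $\Omega$, and then quotes Lemma \ref{lem_Omega^c non disconnette Omega} to pass from essential connectedness to topological connectedness, whereas you fuse these two reductions and argue the contradiction against \eqref{cond_ F[v] is indecomposable} directly in one stroke, exhibiting $\{\Omega_1,\Omega_2\}$ as a non-trivial Borel partition of $\{v>0\}$ in the sense of \eqref{non trivial Borel partition}. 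This makes your proof a bit more self-contained (it does not need the external Remark~1.5, and in fact reproves the content of Lemma \ref{lem_Omega^c non disconnette Omega} inline via the same boundary inclusions), at the cost of not isolating the reusable implication ``$\Omega^c$ does not essentially disconnect $\Omega$ $\Rightarrow$ $\Omega$ connected'' as a separate statement. Both routes are sound; yours is slightly more direct for this single lemma, while the paper's modularization is the natural choice given that Lemma \ref{lem_Omega^c non disconnette Omega} is stated and proved independently.
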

\noindent
All in all, despite Theorem \ref{thm_rigidity by perugini} provides a characterization of  \emph{rigidity} under quite general conditions, Proposition \ref{prop_no open set equivalency} tells us that there's plenty of situations where still \emph{rigidity} holds true, but Theorem \ref{thm_rigidity by perugini} cannot be applied. Thus the characterization of \emph{rigidity} in its full generality still remains an open problem.

\subsection{Structure of the paper}\label{subsection_intro:structure of the paper}
In Section \ref{section_fundamentals} we recall some important definitions from geometric measure theory and the theory of functions of bounded variation. In Section \ref{section_restriction BV over curves} inspired by the work of Vol'pert originally done for characteristic functions, we construct a family of polygonal curves in $\Omega$ with the property that the restriction of $u$ to those curves is a function of bounded variation of one variable, and we prove some technical results that we will need in the subsequent sections. In Section \ref{section_SVD} we give a precise definition of the notion of \emph{singular vertical distance}, and we prove some important intermediate results that we will need for the characterization result of \emph{minimally singular} functions. Finally, we conclude the section with a couple of open problems (see Open Problem \ref{open 1} and \ref{open 2}). In Section \ref{section_minimally singular characterized via SVD} we prove Theorem \ref{thm_characterization minimally singular via SVD}. In the first part of Section \ref{section_application to Steiner} we apply the results obtained in the previous sections to prove Theorem \ref{thm_rigidity by perugini}. Lastly, in the remaining part of Section \ref{section_application to Steiner} we prove Lemma \ref{lem_open sets exists for n=1}, Proposition \ref{prop_no open set equivalency}, and Lemma \ref{lem_sufficient condition for the existence of the open set}.

\subsection*{Acknowledgements}
The author would like to thank Filippo Cagnetti for his valuable comments on the first draft of this work.

\section{Fundamentals of geometric measure theory}\label{section_fundamentals}
\noindent
In this section we recall some tools from Geometric Measure Theory and functions of bounded variation. For more details we refer to \cite{AFP,GMSbook1,maggiBOOK}. We divide this section into few subsections.

\subsection{Density points}
Let $n\in\mathbb{N}$. Let $x\in\mathbb{R}^n$, and $\rho>0$. We denote with  $B_\rho(x)$ the open ball in $\mathbb{R}^n$ of radius $\rho$ and center $x$, while $\mathbb{S}^{n-1}:=\{x\in\mathbb{R}^n:\, |x|=1  \}$ where $|\cdot|$ stands for the Euclidean norm in $\mathbb{R}^n$. If $x=0$ we will simply write $B_\rho$ instead of $B_\rho(x)$. Let $B \subset \R^n$ be a Lebesgue measurable set and let $x\in\R^n$. 
The upper and lower $n$-dimensional densities of $B$ at $x$ are defined as
\begin{eqnarray*}
  \theta^*(B,x) :=\limsup_{\rho\to 0^+}\frac{\mathcal{H}^n(B\cap B_\rho(x))}{\omega_n\,\rho^n}\,,
  \qquad
  \theta_*(B,x) :=\liminf_{\rho\to 0^+}\frac{\mathcal{H}^n(B\cap B_\rho(x))}{\omega_n\,\rho^n}\,,
\end{eqnarray*}
respectively, where $\omega_n\rho^n=\mathcal{L}^n(B_\rho(x))$.
It turns out that $x \mapsto  \theta^*(B,x)$ and $x \mapsto  \theta_*(B,x)$
are Borel functions that agree $\mathcal{L}^n$-a.e. on $\mathbb{R}^n$. 
Therefore, the $n$-dimensional density of $B$ at $x$
\[
\theta(B,x) := \lim_{\rho\to 0^+}\frac{\mathcal{H}^n(B\cap B_\rho(x))}{\omega_n\,\rho^n}\,,
\]
is defined for $\mathcal{L}^n$-a.e. $x\in\mathbb{R}^n$, and $x \mapsto  \theta (B,x)$
is a Borel function on $\mathbb{R}^n$. Given $t \in [0,1]$, we set
$$
B^{(t)} :=\{x\in\R^n:\theta(B,x)=t\}.
$$
By the Lebesgue differentiation theorem, the pair $\{B^{(0)},B^{(1)}\}$ 
is a partition of $\mathbb{R}^n$, up to a $\mathcal{L}^n$-negligible set. 
The set $\partial^{\mathrm{e}} B :=\mathbb{R}^n\setminus(B^{(0)}\cup B^{(1)})$ is called the \textit{essential boundary} of $B$.

\subsection{Functions of bounded variation and sets of finite perimeter} 
Let $f:\mathbb{R}^n  \to\R$ be a Lebesgue measurable function, 
and let $\Omega\subset \mathbb{R}^n$ be an open set, such that $f\in L^1(\Omega)$. Then we say that $f$ is of bounded variation in $\Omega$, and we write $f\in BV(\Omega)$ if and only if 
\begin{align}\label{def: total variation of Df, con f in BV}
\sup\Big\{\int_\Omega\,f(x)\,\diver\,T(x)\,dx:\,T\in C^1_c(\Omega;\R^n)\,,|T|\le 1\Big\}<\infty,
\end{align}
where $C^1_c(\Omega;\R^{n})$ is the set of $C^1$ functions from $\Omega$ to $\R^n$ with compact support. More in general, we say that $f\in BV_{\textnormal{loc}}(\Omega)$ if $f\in BV(\Omega')$ for every open set $\Omega'$ compactly contained in $\Omega$. If $f\in BV_{\textnormal{loc}}(\Omega)$ then there exists a $\mathbb{R}^n$-valued Radon measure on $\Omega$, which we call the distributional derivative of $f$ and we denote it with $Df$, with the following property. For every $T\in C^1_c(\Omega;\mathbb{R}^n)$ we have
\begin{align*}
\int_{\Omega}f(x)\textnormal{div}\,T(x)\,dx = -\int_{\Omega}T(x)\cdot dDf(x).
\end{align*} 
Moreover, if $u\in BV(\Omega)$ the total variation $|Df|$ of $Df$ in $\Omega$ is finite and its value $|Df|(\Omega)$ coincides with \eqref{def: total variation of Df, con f in BV}. One can write the  Radon--Nykodim decomposition of $Df$ with respect to $\mathcal{L}^n$
as $Df=D^af+D^sf$, where $D^sf$ and $\mathcal{L}^{n}$ are mutually singular, and $D^af\ll\mathcal{L}^{n}$. We denote the density of $D^af$ with respect to $\mathcal{L}^n$ by $\nabla f$, so that $\nabla\,f\in L^1(\Omega;\R^n)$ with $D^af=\nabla f\,\mathcal{L}^{n}$. Moreover, for $\mathcal{L}^{n}$-a.e. $x\in \Omega$, $\nabla f(x)$ is the approximate differential of $f$ at $x$. 

\noindent
Let us now briefly recall some definitions about the theory of functions of bounded variation of one variable (for more details we refer to \cite[Chapter 3.2]{AFP}). Let $a,b \in \mathbb{R}\cup \{ \pm \infty \}$ with $a<b$, and let $f: (a,b)\to \mathbb{R}$. Then we define the \emph{pointwise variation} $\textnormal{pV}(f,(a,b))$ of $f$ in $(a,b)$ as 
\begin{align*}
\textnormal{pV}(f,(a,b)):=\sup\left\{\sum_{i=1}^{m-1} |f(t_{i+1})-f(t_i)|:\, m\geq 2,\, a<t_1<\dots<t_m<b  \right\}.
\end{align*} 
Note that if $\textnormal{pV}(f,(a,b))$ is finite then $f$ is bounded in $(a,b)$. If $I^\circ\subset \mathbb{R}$ is a generic open set then $\textnormal{pV}(f,I^\circ)$ coincides with the sum of the pointwise variation of $f$ on each connected components of $I^\circ$. We also define the \emph{essential variation} $\textnormal{eV}(f,I^\circ)$ of $f$ in $I^\circ$ as
\begin{align*}
\textnormal{eV}(f,I^\circ):=\inf \left\{\textnormal{pV}(g,I^\circ):\, g(t)=f(t)\textnormal{ for }\mathcal{L}^1\textnormal{-a.e. }t\in I^\circ \right\}.
\end{align*}
It can be proved that if $f\in L^1 (I^\circ)$ and $\textnormal{eV}(f,I^\circ)<\infty$ then $f\in BV(I^\circ)$ with $|Df|(I^\circ)=\textnormal{eV}(f,I^\circ)$.

\noindent
Let us now introduce some basic definitions about the theory of sets of finite perimeter. Let $E\subset\R^n$ be a Lebesgue measurable set, and let  $\Omega\subset \mathbb{R}^n$ be an open set. We say that  $E\subset \mathbb{R}^n$ is a set of finite perimeter in $\Omega$ if and only if 
\begin{align}\label{def: locally finite perimeter of a set in R^k}
\sup\left\{\int_{\mathbb{R}^n}\chi_E(x)\textnormal{div}\,T(x)\,dx:\, T\in C^1_c(\Omega;\mathbb{R}^n)  \right\}<\infty,
\end{align}
where $\chi_E$ is the characteristic function of $E$. If $E\subset \mathbb{R}^n$ is a set of finite perimeter in $\Omega$, we denote with $P(E;\Omega)$ its relative perimeter in $\Omega$, where $P(E;\Omega)$ coincides with the quantity in \eqref{def: locally finite perimeter of a set in R^k}. If $P(E):=P(E;\mathbb{R}^n)<\infty$ we say that $E$ is a set of finite perimeter, while more generally if $P(E;\Omega')<\infty$ for every $\Omega'\subset\subset \Omega$, we say that $E$ is a set of locally finite perimeter in $\Omega$. If $E\subset \mathbb{R}^n$ is a set of finite perimeter and finite volume in $\Omega$, then we have that $\chi_E\in BV(\Omega)$, while in general if $E\subset \mathbb{R}^n$ is a set of finite perimeter in $\Omega$ then $\chi_E\in BV_{\textnormal{loc}}(\Omega)$. Moreover, if $E\subset \mathbb{R}^n$ is a set of locally finite perimeter in $\Omega$ we define the \emph{reduced boundary} $\partial^*E\subset\Omega\subset \mathbb{R}^n$ of $E$ as the set of those points of $\Omega$ such that
\begin{align*}
\nu^E(x):=\lim_{\rho\to 0^+}\frac{D\chi_E(B_\rho(x))}{|D\chi_E|(B_\rho(x))},
\end{align*}
exists and belongs to $\mathbb{S}^{n-1}$. The Borel function $\nu^E:\partial^*E\to \mathbb{S}^{n-1}$ 
is called the {\it (measure-theoretic) inner unit normal} to $E$. Given $E\subset \mathbb{R}^n$ set of locally finite perimeter in $\Omega$, we have that $D\chi_E=\nu^E\mathcal{H}^{n-1}\mres \partial^*E$ and,
\begin{align*}
\int_{\mathbb{R}^n}\chi_E(x)\textnormal{div}\,T(x)\,dx=-\int_{\partial^*E\cap \Omega}T(x)\cdot \nu^E(x)\,d\mathcal{H}^{n-1}(x),\quad \forall\,T\in C^1_c(\Omega;\mathbb{R}^n).
\end{align*}
The relative perimeter of $E$ in $B\subset \Omega$ is then defined by
$$
P(E;B):=|D\chi_E|(B)=\mathcal{H}^{n-1}(\partial^*E\cap B)
$$
for every Borel set $B \subset \Omega$. If $E$ is a set of locally finite perimeter in $\Omega$, it turns out that 
\begin{equation*}
  \label{inclusioni frontiere}
  \partial^*E  \subset (E^{(1/2)}\cap \Omega) \subset (\partial^{\mathrm{e}} E\cap \Omega).
\end{equation*}
Moreover, {\it Federer's theorem} holds true (see \cite[Theorem 3.61]{AFP} and \cite[Theorem 16.2]{maggiBOOK}), namely
\[
\mathcal{H}^{n-1}((\partial^{\mathrm{e}} E\cap \Omega)\setminus \partial^*E)=0.
\]
\subsection{General facts about measurable functions}
Let $f:\mathbb{R}^n\to \mathbb{R}$ be a Lebesgue measurable function. We define the {\it approximate upper limit} $f\Upp(x)$ and the {\it approximate lower limit} 
$f\Low(x)$ of $f$ at $x\in\R^n$ as 
\begin{eqnarray}
  \label{def fvee}
  f\Upp(x)=\inf\Big\{t\in\mathbb{R}: x\in \{f>t\}^{(0)}\Big\}\,,
  \\
  \label{def fwedge}
  f\Low (x)=\sup\Big\{t\in\mathbb{R}: x\in \{f>t\}^{(1)}\Big\}\,.
\end{eqnarray}
We observe that $f\Upp$ and $f\Low$ are Borel functions that are defined at 
{\it every} point of $\R^n$, with values in $\R\cup\{\pm\infty\}$.
Moreover, if $f_1: \R^n \to \R$ and $f_2: \R^n \to \R$
are measurable functions satisfying $f_1=f_2$ $\mathcal{L}^n$-a.e. on $\R^n$, 
then $f_1\Upp=f_2\Upp$ and $f_1\Low=f_2\Low$ {\it everywhere} on $\R^n$. 
We define the {\it approximate discontinuity} set $S_f$ of $f$ as $S_f: =\{f\Low<f\Upp\}$.
\noindent
Note that, by the above considerations, it follows that $\mathcal{H}^n(S_f)=0$. 
Although $f^\wedge$ and $f^\vee$ may take infinite values on $S_f$, 
the difference $f^\vee(x)-f^\wedge(x)$ is well defined in $\R\cup\{\pm\infty\}$ for every $x\in S_f$.
Then, we can define the {\it approximate jump} $[f]$ of $f$ as the Borel function $[f]:\R^n\to[0,\infty]$ given by
  \begin{eqnarray*}
    [f](x):=\left\{\begin{array}{l l}
      f\Upp(x)-f\Low(x)\,&\mbox{if $x\in S_f$}\,,
      \vspace{.2cm} \\
      0\,&\mbox{if $x\in \R^n\setminus S_f$}\,.
    \end{array}
    \right .
  \end{eqnarray*}

\noindent
For every Lebesgue measurable function $f:\R^n \rightarrow \R$ and for every $t\in \R$  we can prove the following relations
\begin{align}
\{|f|\Upp < t  \} &= \{-t<f\Low  \} \cap \{f\Upp < t   \},\label{eq: 2.7 Filippo page 15}\\
\{ f\Upp < t \}&\subset \{f < t  \}^{(1)} \subset \{ f\Upp \leq t  \},\label{eq: 2.8 Filippo page 15}\\
\{f\Low > t  \} &\subset \{ f> t  \}^{(1)} \subset \{f \Low \geq t  \}.\label{eq: 2.9 Filippo page 15}
\end{align}
Furthermore, if $f,g:\R^n \rightarrow \R$ are Lebesgue measurable functions and $f=g$ $\mathcal{L}^n$-a.e. on a Borel set $B$, then
\begin{align}\label{eq: 2.10 Filippo pag 16}
f\Upp(x)= g\Upp(x), \quad f\Low(x)=g\Low(x),\quad [f](x)=[g](x), \quad \forall x\in B^{(1)}.
\end{align} 
Let $A\subset\R^n$ be a Lebesgue measurable set. 
We say that $t\in\R\cup\{\pm\infty\}$ is the weak approximate limit of $f$ at $x$ with respect to $A$, 
and write $t=\aplim (f,A,x)$, if
\begin{eqnarray}
  &&\theta\Big(\{|f-t|>\varepsilon\}\cap A;x\Big)=0\,,\qquad\forall\varepsilon>0\,,\hspace{0.3cm}\qquad (t\in\R)\,,
  \\
  &&\theta\Big(\{f<M\}\cap A;x\Big)=0\,,\qquad\hspace{0.6cm}\forall M>0\,,\qquad (t=+\infty)\,,
  \\
  &&\theta\Big(\{f>-M\}\cap A;x\Big)=0\,,\qquad\hspace{0.3cm}\forall M>0\,,\qquad (t=-\infty)\,.
\end{eqnarray}
For $x\in\R^n$ and $\nu\in \mathbb{S}^{n-1}$, we denote by $H_{x,\nu}\subset \mathbb{R}^n$ the hyperplane passing through $x$ and orthogonal to $\nu$, namely
\begin{align*}
H_{x,\nu}:=\left\{z\in\mathbb{R}^n:\, (z-x)\cdot \nu=0   \right\}.
\end{align*}
If $x=0$ we simply write $H_\nu$ instead of $H_{0,\nu}$.
Moreover, we denote with $H_{x,\nu}^+$ and $H_{x,\nu}^-$ the closed half-spaces respectively whose boundaries are orthogonal to $\nu$, namely
\begin{eqnarray}\label{Hxnu+}
  H_{x,\nu}^+:=\Big\{z\in\R^n:(z-x)\cdot\nu\ge 0\Big\},\quad  H_{x,\nu}^-:=\Big\{z\in\R^n:(z-x)\cdot\nu\le 0\Big\}\,.
\end{eqnarray}
We say that $x\in S_f$ is a \textit{jump point} of $f$ if there exists $\nu\in \mathbb{S}^{n-1}$ such that
\[
f^\vee(x)=\aplim(f,H_{x,\nu}^+,x)> f^\wedge(x)=\aplim(f,H_{x,\nu}^-,x).
\]
If this is the case, we say that $\nu_f(x):= \nu$ is the approximate jump direction of $f$ at $x$.
We denote by $J_f$ the set of approximate jump points of $f$, moreover we have that $J_f\subset S_f$ 
and $\nu_f:J_f\to \mathbb{S}^{n-1}$ is a Borel function. Given $f:\R^n\rightarrow \R$ Lebesgue measurable function, we say that $f$ is \emph{approximately differentiable} at $x\in S_f^c$ provided $f\Low(x)=f\Upp(x)\in \R$ and there exists $\xi \in \R^n$ such that
\begin{align*}
\aplim(g,\R^n,x)=0,
\end{align*} 
where $g(z)= (f(z)-\tilde{f}(x)-\xi\cdot(z-x))/|z-x|$ for $z \in \mathbb{R}^n\setminus\{x \}$.
If this is the case, then $\xi$ is uniquely determined, we set $\xi=\nabla f(x)$, and call $\nabla f(x)$ the \emph{approximate differential} of $f$ at $x$. The localization property (\ref{eq: 2.10 Filippo pag 16}) holds true also for the approximate differentials, namely if $g,f:\R^n\rightarrow \R$ are Lebesgue measurable functions, $f=g$ $\mathcal{L}^n$-a.e. on a Borel set $B$, and $f$ is approximately differentiable $\mathcal{L}^n$-a.e. on $B$, then so it is $g$ $\mathcal{L}^n$-a.e. on $B$ with
\begin{align}\label{eq: 2.12 FIlippo pag 16}
\nabla f(x)=\nabla g(x) \quad \textit{\emph{for $\mathcal{L}^n$-a.e. $x\in B$}}.
\end{align}
Let us recall some useful properties we will need on the next sections (see \cite[Lemma 2.2, Lemma 2.3]{CagnettiColomboDePhilippisMaggiSteiner} for further details).
\begin{lemma}\label{lem: cantor total variation of v on v=0 is null}
If $f\in BV(\R^n)$, then $|D^cf|(\{f\Low=0  \})=0$. In particular, if $f=g$ $\mathcal{L}^n$-a.e. on a Borel set $B\subset \mathbb{R}^n$, then $D^cf\mres B^{(1)}= D^cg\mres B^{(1)}$.
\end{lemma}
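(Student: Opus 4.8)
The plan is to obtain the first assertion from the coarea formula for $BV$ functions, and then to deduce the second assertion from the first one together with the locality relations \eqref{eq: 2.10 Filippo pag 16} and \eqref{eq: 2.12 FIlippo pag 16}. It is worth noting that the most direct route to the first assertion --- applying a chain rule to the truncation $f\mapsto f\vee 0$ and reading off its Cantor part --- is circular, since the chain rule one would need is valid at the value $0$ precisely when $|D^cf|(\{f\Low=0\})=0$; the coarea formula is what lets us sidestep this.

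For the first assertion I would argue as follows. Since $D^cf$ does not charge the discontinuity set $S_f$ (it is the restriction of $D^sf$ to $\mathbb{R}^n\setminus S_f$), we have $|D^cf|(\{f\Low=0\})=|D^cf|(A)\le |Df|(A)$, where $A:=\{f\Low=0\}\setminus S_f$ is a Borel set on which $f\Low=f\Upp=0$. Then I would apply the coarea formula $|Df|(A)=\int_{\mathbb{R}}\mathcal{H}^{n-1}\big(A\cap\partial^*\{f>t\}\big)\,dt$ (see \cite{AFP}). The key point is that $A\cap\partial^*\{f>t\}=\emptyset$ for every $t\neq 0$: if $x\in A$ and $t>0$, then by \eqref{def fvee} the infimum defining $f\Upp(x)=0$ is strictly below $t$, hence there is $s<t$ with $x\in\{f>s\}^{(0)}$, and therefore $x\in\{f>t\}^{(0)}$ because $\{f>t\}\subseteq\{f>s\}$; symmetrically, if $x\in A$ and $t<0$, then by \eqref{def fwedge} there is $s>t$ with $x\in\{f>s\}^{(1)}$, hence $x\in\{f>t\}^{(1)}$; in either case $x\notin\{f>t\}^{(1/2)}\supseteq\partial^*\{f>t\}$. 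Consequently the integrand vanishes for all $t\neq 0$, so the integral is zero and $|D^cf|(\{f\Low=0\})=0$.

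For the second assertion, set $h:=f-g\in BV(\mathbb{R}^n)$, so that $h=0$ $\mathcal{L}^n$-a.e. on $B$. Applying \eqref{eq: 2.10 Filippo pag 16} with the identically zero function gives $h\Low=h\Upp=0$ on $B^{(1)}$, hence $B^{(1)}\subseteq\{h\Low=0\}$ and, by the first assertion, $|D^ch|(B^{(1)})=0$. Likewise $\nabla h=0$ $\mathcal{L}^n$-a.e. on $B$ by \eqref{eq: 2.12 FIlippo pag 16}, hence $\mathcal{L}^n$-a.e. on $B^{(1)}$, so $D^ah\mres B^{(1)}=0$; and since $B^{(1)}\cap S_h=\emptyset$ we also have $D^jh\mres B^{(1)}=0$. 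Therefore $Dh\mres B^{(1)}=0$, i.e. $Df\mres B^{(1)}=Dg\mres B^{(1)}$, and decomposing both measures into their absolutely continuous, jump and Cantor parts --- an operation that commutes with restriction to a Borel set --- yields $D^cf\mres B^{(1)}=D^cg\mres B^{(1)}$, as claimed.

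In summary, the only genuinely substantive step is the density dichotomy used in the coarea argument: that at a point with $f\Low=f\Upp=0$ the super-level set $\{f>t\}$ has density $0$ when $t>0$ and density $1$ when $t<0$, and so never density $1/2$, which confines the coarea integrand to the $\mathcal{L}^1$-negligible set $\{0\}$. Everything else is routine bookkeeping with the fine properties of $BV$ functions recalled in this section.
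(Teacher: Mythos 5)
Your proof is correct. Note, though, that this is one of the preliminary facts the paper recalls without proof, deferring to \cite[Lemma 2.2, Lemma 2.3]{CagnettiColomboDePhilippisMaggiSteiner}; there is no argument in the present paper to compare against. Your route — the coarea formula combined with the density dichotomy showing that a point with $f\Low=f\Upp=0$ lies in $\{f>t\}^{(0)}$ for $t>0$ and in $\{f>t\}^{(1)}$ for $t<0$, hence never on $\partial^*\{f>t\}$ for $t\neq 0$ — is exactly the standard mechanism used there, and the deduction of the locality statement from the first claim together with \eqref{eq: 2.10 Filippo pag 16}, \eqref{eq: 2.12 FIlippo pag 16} and the uniqueness of the diffuse/jump splitting of a singular measure is sound (you are implicitly using that $D^jf$ is concentrated on an $\mathcal{H}^{n-1}$-$\sigma$-finite set while $|D^cf|$ vanishes on every such set, which is what makes the decomposition intrinsic to the measure and hence stable under restriction and subtraction).
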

\begin{lemma}\label{lem: 2.3 FIlippo page 18}
If $f,g \in BV(\R^n)$, $E$ is a set of finite perimeter in $\mathbb{R}^n$ and $f=\chi_E\,g  $, then
\begin{align}
\nabla f&=\chi_E \nabla g,\quad \quad \quad \mathcal{L}^n\textnormal{-a.e. on }\mathbb{R}^n,\\
D^cf&= D^cg\mres E^{(1)},\\
S_f\cap E^{(1)} &= S_g \cap E^{(1)}. 
\end{align}
\end{lemma}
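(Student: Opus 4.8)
The strategy is to localize the distributional derivative of $f$ over the Borel partition $\{E^{(0)},E^{(1)},\partial^{\mathrm{e}}E\}$ of $\mathbb{R}^n$ and then to read off each of the three identities from the localization properties \eqref{eq: 2.10 Filippo pag 16}, \eqref{eq: 2.12 FIlippo pag 16}, and Lemma \ref{lem: cantor total variation of v on v=0 is null}. First I would record two elementary consequences of the Lebesgue density theorem: $\chi_E=1$ $\mathcal{L}^n$-a.e. on $E^{(1)}$ and $\chi_E=0$ $\mathcal{L}^n$-a.e. on $E^{(0)}$, so that $f=g$ $\mathcal{L}^n$-a.e. on $E^{(1)}$ and $f=0$ $\mathcal{L}^n$-a.e. on $E^{(0)}$; and the fact that, since the density $\theta(\cdot,x)$ depends on a set only through its Lebesgue measure, one has $(E^{(1)})^{(1)}=E^{(1)}$ and $(E^{(0)})^{(1)}=E^{(0)}$.

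Granted this, the identity for the absolutely continuous part is immediate: on $E^{(1)}$, \eqref{eq: 2.12 FIlippo pag 16} gives $\nabla f=\nabla g=\chi_E\nabla g$ $\mathcal{L}^n$-a.e.; on $E^{(0)}$, $f\equiv 0$ a.e. forces $\nabla f=0=\chi_E\nabla g$ $\mathcal{L}^n$-a.e.; and $E^{(0)}\cup E^{(1)}$ exhausts $\mathbb{R}^n$ up to an $\mathcal{L}^n$-null set. For the Cantor part I would split $D^cf=D^cf\mres E^{(1)}+D^cf\mres E^{(0)}+D^cf\mres \partial^{\mathrm{e}}E$: the first term equals $D^cg\mres E^{(1)}$ by Lemma \ref{lem: cantor total variation of v on v=0 is null} together with $(E^{(1)})^{(1)}=E^{(1)}$; the second term vanishes by the same lemma with $g$ replaced by the zero function, using $(E^{(0)})^{(1)}=E^{(0)}$; and the third term vanishes because $E$ has finite perimeter, whence $\mathcal{H}^{n-1}(\partial^{\mathrm{e}}E)=\mathcal{H}^{n-1}(\partial^*E)=P(E)<\infty$ by Federer's theorem, while the Cantor part of a $BV$ derivative charges no set of $\sigma$-finite $\mathcal{H}^{n-1}$-measure (see \cite{AFP}). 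Finally, for the jump set, \eqref{eq: 2.10 Filippo pag 16} with $B=E^{(1)}$ (and again $B^{(1)}=E^{(1)}$) yields $f\Upp=g\Upp$ and $f\Low=g\Low$ at every point of $E^{(1)}$, so that a point of $E^{(1)}$ lies in $S_f$ precisely when it lies in $S_g$, i.e. $S_f\cap E^{(1)}=S_g\cap E^{(1)}$.

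The only step requiring a little care — and the nearest thing to an obstacle — is the vanishing of $D^cf$ on the essential boundary $\partial^{\mathrm{e}}E$, which combines Federer's theorem (to bound $\mathcal{H}^{n-1}(\partial^{\mathrm{e}}E)$ by $P(E)$) with the standard mutual singularity between the Cantor part of a $BV$ derivative and $\mathcal{H}^{n-1}$ on $\sigma$-finite sets; everything else is routine bookkeeping with the localization properties already collected above.
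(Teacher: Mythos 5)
Your proof is correct. The paper does not reprove this lemma (it is recalled from Cagnetti--Colombo--De~Philippis--Maggi, to which the reader is referred), so there is no in-paper argument to compare against; but the argument you give is the standard one and is sound. The decomposition of $\mathbb{R}^n$ into $E^{(0)}\cup E^{(1)}\cup\partial^{\mathrm{e}}E$, the observations that $(E^{(1)})^{(1)}=E^{(1)}$ and $(E^{(0)})^{(1)}=E^{(0)}$, and the use of the localization properties \eqref{eq: 2.10 Filippo pag 16}, \eqref{eq: 2.12 FIlippo pag 16} and Lemma~\ref{lem: cantor total variation of v on v=0 is null} on each piece all check out. The only step genuinely beyond bookkeeping is the one you flag: killing $D^cf$ on $\partial^{\mathrm{e}}E$ via Federer's theorem (which gives $\mathcal{H}^{n-1}(\partial^{\mathrm{e}}E)<\infty$) combined with the mutual singularity of the Cantor part with $\sigma$-finite-$\mathcal{H}^{n-1}$ sets (\cite[Proposition 3.92]{AFP}); this is used correctly.
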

\noindent
A Lebesgue measurable function $f:\mathbb{R}^n\rightarrow \mathbb{R}$, it's called of \emph{generalized bounded variation} on $\mathbb{R}^n$, shortly $f\in GBV(\mathbb{R}^n)$ if and only if $f^M\in BV_{loc}(\mathbb{R}^{n})$ for every $M>0$ where $f^M(x):=\max \{-M, \min\{ M,f(x) \} \}$. Let us observe that the structure theorem of BV-functions holds true for GBV-functions too. Indeed, given $f\in GBV(\mathbb{R}^n)$, then, (see \cite[Theorem 4.34]{AFP}) $\{ f>t \}$ is a set of finite perimeter for $\mathcal{L}^1$-a.e. $t\in \mathbb{R}$, $f$ is approximately differentiable $\mathcal{L}^n$-a.e. on $\mathbb{R}^n$, and the usual coarea formula takes the form 
\begin{align*}
\int_{\R}P(\{ f> t \};B)\,dt = \int_{B}|\nabla f|d\mathcal{H}^n + \int_{B\cap S_f}[f]d\mathcal{H}^{n-1} + |D^cf|(B),
\end{align*}
where for every Borel set $B\subset \R^n$, $|D^c f|$ denotes the Borel measure on $\mathbb{R}^n$ defined as
\begin{align}\label{def: Cantor part GBV}
|D^c f|(B)=\lim_{M\rightarrow +\infty}|D^c(f^M)|(B)=\sup_{M>0}|D^c (f^M)|(B).
\end{align}

\subsection{k-dimensional restrictions of BV-functions}\label{subsec_k-dimensional restrictions}
For more details about the following topics we refer to \cite[Chapter 3.11]{AFP}. Let $\Omega\subset \mathbb{R}^n$ be an open set. 
Let $k,n\in \mathbb{N}$ with $n\geq 2$, and $1\leq k \leq n-1$, and let $(\nu_i)_i\subset \mathbb{S}^{n-1}$ with $i=1,\dots,k$ be linear independent vectors which up to a single orthogonal transformation $R:\mathbb{R}^n\to\mathbb{R}^n$ they coincide with the first $k$ vectors of the canonical basis of $\mathbb{R}^n$, namely $R(\nu_i)=e_i$ for $i=1,\dots, k$. We denote with $\Omega_{\nu_1,\dots,\nu_k}\subset \mathbb{R}^n$ the set defined as
\begin{align*}
\Omega_{\nu_1,\dots,\nu_k}:=\left\{x\in \mathbb{R}^n:\, x\cdot\nu_i=0\, \forall\,i=1,\dots,k,\, \exists (t_1,\dots,t_k)\in\mathbb{R}^k \textnormal{ s.t. }x+t_1\nu_1+\dots +t_k\nu_k \in \Omega   \right\}.
\end{align*} 
For every $x\in \Omega$ we set with $x_{\nu_1,\dots, \nu_k}\in \mathbb{R}^n$ the unique point  in $\Omega_{\nu_1,\dots, \nu_k}$ with the property that there exists $(t_1,\dots,t_k)\in \mathbb{R}^k$ such that 
$$ x= x_{\nu_1,\dots, \nu_k} + t_1\nu_1+ \dots + t_k\nu_k. $$
For every $y\in \Omega_{\nu_1,\dots,\nu_k}$ we define the set $\Omega_y^{\nu_1,\dots,\nu_k}\subset\mathbb{R}^k$ as
\begin{align}\label{def_k dimensional slice}
\Omega_y^{\nu_1,\dots,\nu_k}:=\left\{(t_1,\dots,t_k)\in \mathbb{R}^k:\, y+t_1\nu_1+\dots +t_k\nu_k \in \Omega    \right\}.
\end{align}
In the following we will often refer to $\Omega_y^{\nu_1,\dots,\nu_k}$ as the \emph{$k$-dimensional slice} of $\Omega$ (at $y$). Let us stress that up to this point there was no need of working with open sets, indeed the notation introduced so far could have been defined also for generic Borel sets. If $u:\Omega\to \mathbb{R}$ is a Lebesgue measurable function we set with $u^{\nu_1,\dots,\nu_k}_y$ the restriction of $u$ to $\Omega^{\nu_1,\dots,\nu_k}_y$ namely
\begin{align*}
u^{\nu_1,\dots,\nu_k}_y(t_1,\dots,t_k):= u(y+ t_1\nu_1+ \dots +t_k\nu_k)\quad \forall\, (t_1,\dots, t_k)\in \Omega^{\nu_1,\dots, \nu_k}_y.
\end{align*}
As explained at the end of \cite[Section 3.11]{AFP} the theory of 1-dimensional restriction of BV function can be extended to higher dimensional sets without significance change in the proofs. In particular, as a consequence of a higher dimensional version of \cite[Theorem 3.103]{AFP} we have the following result.


\begin{proposition}\label{prop_k-dimensional BV slicing}
Let $u\in BV(\Omega)$, and let $G_{\nu_1,\dots,\nu_k}\subset \Omega_{\nu_1,\dots,\nu_k}$ be defined as
\begin{align*}
G_{\nu_1,\dots,\nu_k}:=\left\{y\in \Omega_{\nu_1,\dots,\nu_k}:\, u^{\nu_1,\dots,\nu_k}_{y}\in BV(\Omega^{\nu_1,\dots,\nu_k}_{y})   \right\}.
\end{align*}
Then $\mathcal{L}^{n-k}(\Omega_{\nu_1,\dots,\nu_k}\setminus G_{\nu_1,\dots,\nu_k})=0$.
\end{proposition}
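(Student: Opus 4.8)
The plan is to deduce the statement from the one–dimensional slicing theorem for $BV$ functions, i.e. from the higher–dimensional analogue of \cite[Theorem 3.103]{AFP} mentioned in the excerpt, by a Fubini bookkeeping argument; no new analytic ingredient is needed, which is precisely why the proofs go through ``without significant change''.

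\emph{Reduction.} Since $\nu_1,\dots,\nu_k$ are orthonormal with $R(\nu_i)=e_i$ for the orthogonal map $R$, the function $v:=u\circ R^{-1}$ lies in $BV(R(\Omega))$, $R$ carries $\Omega_{\nu_1,\dots,\nu_k}$ isometrically onto $(R\Omega)_{e_1,\dots,e_k}$ and slices of $\Omega$ onto slices of $R\Omega$, and orthogonal maps preserve the relevant lower–dimensional Lebesgue measures. Hence it suffices to treat the case $\nu_i=e_i$, $i=1,\dots,k$. Write $\mathbb{R}^n=\mathbb{R}^k_z\times\mathbb{R}^{n-k}_y$, so that $\Omega_{e_1,\dots,e_k}$ is identified with the projection $\Pi:=\{y\in\mathbb{R}^{n-k}:\ \exists\,z,\ (z,y)\in\Omega\}$ and, for $y\in\Pi$, the slice $\Omega_y^{e_1,\dots,e_k}$ with the open set $\Omega_y:=\{z\in\mathbb{R}^k:(z,y)\in\Omega\}\subset\mathbb{R}^k$; with this identification $G_{e_1,\dots,e_k}=\{y\in\Pi:\ u(\cdot,y)\in BV(\Omega_y)\}$.

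\emph{The criterion in $\mathbb{R}^k$.} Recall (the same slicing theory, now in ambient dimension $k$) that $w\in L^1(U)$ on an open set $U\subset\mathbb{R}^k$ belongs to $BV(U)$ if and only if, for every $j=1,\dots,k$, the integral $\int \textnormal{eV}(w^{e_j}_\xi,U^{e_j}_\xi)\,d\xi$ over the projection of $U$ onto $e_j^\perp$ is finite. So I must check, for $\mathcal{L}^{n-k}$–a.e. $y\in\Pi$, that $u(\cdot,y)\in L^1(\Omega_y)$ and that its $k$ integrated essential variations are finite. The $L^1$ part is immediate from Fubini applied to $u\in L^1(\Omega)$. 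For the variation part, fix $j\in\{1,\dots,k\}$ and apply the one–dimensional slicing theorem to $u\in BV(\Omega)$ in the direction $e_j$: it gives $\int_{\Omega_{e_j}}\textnormal{eV}(u^{e_j}_\eta,\Omega^{e_j}_\eta)\,d\eta<\infty$, the integral being over the hyperplane $e_j^\perp$, and it guarantees measurability of the integrand. Decomposing $\eta=(\zeta,y)$ with $\zeta\in\mathbb{R}^{k-1}$ ranging over the $z$–coordinates other than the $j$-th and $y\in\mathbb{R}^{n-k}$, Fubini's theorem yields that for $\mathcal{L}^{n-k}$–a.e. $y$ one has $\int_{\{\zeta:(\zeta,y)\in\Omega_{e_j}\}}\textnormal{eV}(u^{e_j}_{(\zeta,y)})\,d\zeta<\infty$. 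One then checks the elementary identity that $u^{e_j}_{(\zeta,y)}$, as a function of one variable, coincides with the $e_j$–section of $u(\cdot,y):\Omega_y\to\mathbb{R}$ at $\zeta$, with matching one–dimensional domains; hence the integral just obtained is exactly the $j$-th integrated essential variation of $u(\cdot,y)$.

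\emph{Conclusion.} Intersecting, over $j=1,\dots,k$, the $\mathcal{L}^{n-k}$–full measure sets of ``good'' $y$ from the previous step together with the full measure set on which $u(\cdot,y)\in L^1(\Omega_y)$, we obtain a set of full $\mathcal{L}^{n-k}$–measure in $\Pi$ on which the criterion above applies and gives $u(\cdot,y)\in BV(\Omega_y)$, i.e. $y\in G_{e_1,\dots,e_k}$. Undoing the reduction, $\mathcal{L}^{n-k}(\Omega_{\nu_1,\dots,\nu_k}\setminus G_{\nu_1,\dots,\nu_k})=0$. I do not expect a genuine obstacle here: the whole analytic content is already packaged in the one–dimensional slicing theorem, and the only points requiring (minor) care are the measurability statements underlying the applications of Fubini and the elementary identification of the iterated one–dimensional sections, both of which are part of the slicing theory of \cite[Chapter 3.11]{AFP}.
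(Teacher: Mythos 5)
Your proof is correct, and the route you take — reducing by an orthogonal change of variables to the coordinate case, then deriving the $k$-dimensional statement from the one-dimensional slicing theorem via Fubini together with the ``integrated essential variation'' criterion for membership in $BV$ over $\mathbb{R}^k$ — is precisely the argument the paper is gesturing at when it invokes ``a higher dimensional version of \cite[Theorem 3.103]{AFP}'' and the remark at the end of \cite[Section 3.11]{AFP} that the $1$-dimensional restriction theory extends without significant change. The paper gives no details at all, so you are filling in the Fubini bookkeeping rather than taking a different path. The two technical points you flag at the end — measurability of $\zeta\mapsto |Du^{e_j}_{(\zeta,y)}|(\Omega^{e_j}_{(\zeta,y)})$ as input to Fubini (which in fact comes for free from the ``only if'' direction of Theorem~3.103 applied in $\mathbb{R}^n$), and the identification $\big(u(\cdot,y)\big)^{e_j}_\zeta = u^{e_j}_{(\zeta,y)}$ with matching domains — are indeed the only places where care is needed, and you handle both adequately.
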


\noindent
For $k=1$, we can state the above result in a more refined way (see \cite[Theorem 3.107, Theorem 3.108, and Theorem 4.35]{AFP}). In the following given $u\in BV(\Omega)$, for any $\nu\in\mathbb{S}^{n-1}$ we call \emph{directional distributional derivative} of $u$ along $\nu$, and we denote it with $D_\nu u$ the finite Radon measure on $\mathbb{R}^n$ defined as
$$
D_\nu u(B):=\int_{B} \nu\cdot dDu(x)\quad \forall\,B\subset \mathbb{R}^n\textnormal{ Borel}.
$$
\begin{proposition}\label{prop_GBV slicing}
Let $u\in L^1(\Omega)$. Then $u\in BV(\Omega)$ if and only if for every $\nu\in\mathbb{S}^{n-1}$ and for every $y\in G_\nu$ the $1$-dimensional section $u^\nu_y$ belongs to $BV(\Omega^\nu_y)$ and $|D_\nu u|(\Omega)=\int_{\Omega_\nu}|Du^\nu_y|(\Omega^\nu_y)\,dy<\infty$; moreover for every $y\in G_\nu$ the following equalities hold:
\begin{itemize}
\item[i)]$\nabla u(y+t \nu)\cdot \nu = \nabla u^\nu_y (t)$\quad for $\mathcal{L}^1$-a.e. $t\in\Omega^\nu_y$;\vspace*{0.2cm}
\item[ii)]$S_{u^\nu_y}=\{t\in \Omega^\nu_y:\, y+t \nu \in S_u  \}$;\vspace*{0.2cm}
\item[iii)]$(u^\nu_y)\Low(t)=u\Low(y+ t \nu)$\quad and \quad $(u^\nu_y)\Upp(t)=u\Upp(y+ t \nu)$\quad for every $t\in\Omega^\nu_y$.
\end{itemize}
\end{proposition}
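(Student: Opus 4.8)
\noindent
The plan is to reduce the whole statement to the classical one-dimensional slicing of $BV$ functions. Composing with a single orthogonal transformation $R$ with $R\nu=e_n$ we may assume $\nu=e_n$ throughout, and write points of $\mathbb{R}^n$ as $(y,t)$ with $y\in\Omega_\nu\subset\mathbb{R}^{n-1}$ and $t\in\Omega^\nu_y\subset\mathbb{R}$, so that $u^\nu_y(t)=u(y,t)$ and $\partial_\nu=\partial_t$. One then argues separately: (a) if $u\in BV(\Omega)$ the slicing identity holds; (b) conversely, the slicing condition for the canonical directions forces $u\in BV(\Omega)$; (c) the fine-structure identities (i)--(iii).

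For (a), Proposition \ref{prop_k-dimensional BV slicing} applied with $k=1$ already gives $\mathcal{L}^{n-1}(\Omega_\nu\setminus G_\nu)=0$, so only the identity $|D_\nu u|(\Omega)=\int_{\Omega_\nu}|Du^\nu_y|(\Omega^\nu_y)\,dy<\infty$ remains. The bound $|D_\nu u|(\Omega)\le\int_{\Omega_\nu}|Du^\nu_y|(\Omega^\nu_y)\,dy$ I would obtain from the duality formula $|D_\nu u|(\Omega)=\sup\{\int_\Omega u\,\partial_\nu\varphi\,dx:\varphi\in C^1_c(\Omega),\,|\varphi|\le 1\}$ (testing $Du$ against vector fields $\varphi\,\nu$): Fubini rewrites $\int_\Omega u\,\partial_\nu\varphi\,dx=\int_{\Omega_\nu}\big(\int_{\Omega^\nu_y}u^\nu_y\,(\varphi^\nu_y)'\,dt\big)\,dy$, and for $\mathcal{L}^{n-1}$-a.e.\ $y$ the inner integral is bounded by $|Du^\nu_y|(\Omega^\nu_y)$ since $u^\nu_y\in BV(\Omega^\nu_y)$ and $\varphi^\nu_y\in C^1_c(\Omega^\nu_y)$ with $|\varphi^\nu_y|\le 1$. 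For the reverse inequality I would mollify: with $u_\varepsilon=u\star\rho_\varepsilon$ one has $u_\varepsilon\to u$ in $L^1(\Omega')$ and $\int_{\Omega'}|\partial_\nu u_\varepsilon|\,dx\le|D_\nu u|(\Omega)$ for every $\Omega'\subset\subset\Omega$ (convolution with an $L^1$ kernel does not increase the total mass of $D_\nu u$); for the smooth $u_\varepsilon$ Fubini gives $\int_{\Omega'_\nu}\big(\int_{(\Omega')^\nu_y}|\partial_\nu u_\varepsilon|\,dt\big)\,dy=\int_{\Omega'}|\partial_\nu u_\varepsilon|\,dx$, and passing to a sequence $\varepsilon_h\to 0$ along which $u_{\varepsilon_h}(y,\cdot)\to u^\nu_y$ in $L^1((\Omega')^\nu_y)$ for $\mathcal{L}^{n-1}$-a.e.\ $y$, lower semicontinuity of the essential variation on each section together with Fatou's lemma yields $\int_{\Omega'_\nu}\textnormal{eV}(u^\nu_y,(\Omega')^\nu_y)\,dy\le|D_\nu u|(\Omega)$; since $\textnormal{eV}(u^\nu_y,\cdot)=|Du^\nu_y|(\cdot)$ for $y\in G_\nu$, letting $\Omega'\uparrow\Omega$ and using monotone convergence closes the identity.

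For (b), assume that for every canonical direction $e_i$ the section $u^{e_i}_y$ is $BV$ for $\mathcal{L}^{n-1}$-a.e.\ $y$ with $\int_{\Omega_{e_i}}|Du^{e_i}_y|(\Omega^{e_i}_y)\,dy<\infty$. The same Fubini computation then shows $\big|\int_\Omega u\,\partial_i\varphi\,dx\big|\le\int_{\Omega_{e_i}}|Du^{e_i}_y|(\Omega^{e_i}_y)\,dy$ for all $\varphi\in C^1_c(\Omega)$ with $|\varphi|\le 1$, so each distributional partial derivative $\partial_i u$ is represented by a finite Radon measure on $\Omega$ (Riesz representation); since also $u\in L^1(\Omega)$, we get $u\in BV(\Omega)$, and part (a) then applies to every $\nu\in\mathbb{S}^{n-1}$.

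For (c), granting $u\in BV(\Omega)$, the identity of part (a) (localized over Borel sets of $y$, as its proof gives) upgrades to an exact disintegration $D_\nu u=\int_{\Omega_\nu}(Du^\nu_y)\,dy$ along the lines $y+\mathbb{R}\nu$. Splitting both sides into the parts absolutely continuous with respect to $\mathcal{L}^n$, resp.\ $\mathcal{L}^1$, and using Fubini and the uniqueness of the Lebesgue decomposition, one identifies $(\nabla u(y,t)\cdot\nu)\,dt$ with $(u^\nu_y)'(t)\,dt$ for a.e.\ $y$, which is (i). For (ii)--(iii), since $u\in BV(\Omega)$ the superlevel sets $\{u>s\}$ are of locally finite perimeter for $\mathcal{L}^1$-a.e.\ $s$, and the one-dimensional slicing of sets of finite perimeter gives, for a.e.\ such $s$ and a.e.\ $y$, the correspondence $y+t\nu\in\{u>s\}^{(k)}\iff t\in\{u^\nu_y>s\}^{(k)}$ for $k\in\{0,1\}$ (densities computed in $\mathbb{R}^n$ and $\mathbb{R}$ respectively, using $\{u^\nu_y>s\}=(\{u>s\})^\nu_y$); inserting this into the definitions \eqref{def fvee}--\eqref{def fwedge} of $u\Upp,u\Low$ and their one-dimensional analogues yields $(u^\nu_y)\Upp(t)=u\Upp(y,t)$ and $(u^\nu_y)\Low(t)=u\Low(y,t)$, whence $S_{u^\nu_y}=\{t:(y,t)\in S_u\}$. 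The step I expect to be the main obstacle is promoting all these ``$\mathcal{L}^{n-1}$-a.e.\ $y$'' conclusions to the pointwise precision (``for every $y\in G_\nu$, for every $t\in\Omega^\nu_y$'') asserted in (ii)--(iii): this requires choosing a single exceptional null set simultaneously for a countable dense family of levels $s$ (and truncations $M$), and then exploiting that $u\Upp,u\Low$ are genuine everywhere-defined Borel representatives, unaffected by $\mathcal{L}^n$-modifications of $u$, so that once the identities hold off an $\mathcal{L}^{n-1}$-negligible set of $y$'s they in fact pin down the one-dimensional precise representative at every point of every fibre over $G_\nu$.
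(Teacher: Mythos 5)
The paper offers no proof of this proposition --- it is cited verbatim as a standard slicing result, with pointers to Theorems~3.107, 3.108 and 4.35 of Ambrosio--Fusco--Pallara. Your reconstruction follows precisely the route those theorems take (duality plus mollification for the equality $|D_\nu u|(\Omega)=\int_{\Omega_\nu}|Du^\nu_y|(\Omega^\nu_y)\,dy$, Riesz representation for the converse, disintegration of $D_\nu u$ for (i), coarea and Vol'pert slicing of superlevel sets for (ii)--(iii)), so the argument is sound in outline; there is nothing in the paper to compare it against beyond the citation.

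The one point worth sharpening is exactly the one you flag at the end. As defined in Proposition~\ref{prop_k-dimensional BV slicing}, $G_\nu$ is the set of $y$ for which $u^\nu_y\in BV(\Omega^\nu_y)$, and nothing more. Your coarea argument, after diagonalising over a countable dense family of levels $s$ (and truncations $M$), produces a single $\mathcal{L}^{n-1}$-null set $N\subset\Omega_\nu$ such that for every $y\in\Omega_\nu\setminus N$ the identities (ii)--(iii) hold for every $t\in\Omega^\nu_y$. That is the correct form of the conclusion, and it is what AFP in fact proves. But it does not extend automatically to every $y$ with $u^\nu_y\in BV$: the approximate limits $u\Upp,u\Low$ are computed with $n$-dimensional balls, while $(u^\nu_y)\Upp,(u^\nu_y)\Low$ use $1$-dimensional intervals on the fibre, and for a null set of fibres these can disagree even when the slice is $BV$. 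The observation that $u\Upp,u\Low$ are everywhere-defined Borel functions does not by itself transfer an $\mathcal{L}^{n-1}$-a.e. identity over $y$ to an identity on every fibre, since both sides are everywhere-defined and the mismatch sits precisely on a null set of $y$'s. The statement should therefore be read with $G_\nu$ replaced by (or tacitly shrunk to) the full-measure subset on which (i)--(iii) hold --- the standard convention in this circle of results, and the reading under which your argument closes.
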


\section{Restriction of BV functions to one-dimensional curves}\label{section_restriction BV over curves}
\noindent
Let $\Omega\subset \mathbb{R}^n$ be as in \eqref{assumptions on Omega}. Let $u\in BV(\Omega)$. The goal of this section is twofold. First, we introduce some notation about a particular family of curves associated to $u$ and $\Omega$. Second, we study the  properties of the restriction of $u$ over curves in that family. We divide this section in few subsections. 
\subsection{Some notation about curves}\label{subsec_A class of one-dimensional curves}
In the following, we consider curves of the type
\begin{align}\label{piecewise affine curves}
\gamma:I_\gamma\to \Omega\textnormal{ continuous, piecewise affine, and injective},
\end{align}
where $I_\gamma\subset\mathbb{R}$ is a closed bounded interval, and by \emph{continuous piecewise affine} here we mean that $\gamma(I_\gamma)\subset \Omega$ is a simple polygonal chain in $\Omega$, i.e. it is a curve of finite length which is the union of a finite number of segments inside $\Omega$ that intersect each other if and only if they are consecutive, and only at their extreme points. More precisely throughout this paper for every $\gamma:I_\gamma\to \Omega$ as in \eqref{piecewise affine curves} there exist $m=m(\gamma)\in\mathbb{N}$ and points $a_i\in [0,\infty)$ for $i=0,\dots,m$ with the property that $a_0 < a_1 < \dots < a_m$,  $I_\gamma=[a_0,a_m]$, and setting with $\eta_i:= (\gamma(a_i)-\gamma(a_{i-1}))/|\gamma(a_i)-\gamma(a_{i-1})|$ for $i=1,\dots, m$ we have that
\begin{align*}
\gamma'(t)=\eta_i \in \mathbb{S}^{n-1}\quad \forall\, t \in (a_{i-1},a_i), \textnormal{ for } i=1,\dots,m.
\end{align*}
Thus as a consequence, we have that
\begin{align}\label{gamma di t preliminary}
\gamma(t)= \gamma(a_{i-1}) + (t-a_{i-1})\eta_i \quad \forall\, t\in [a_{i-1},a_i], \textnormal{ for } i=1,\dots,m.
\end{align} 
\noindent
According with the notation introduced in Section \ref{subsec_k-dimensional restrictions}, we set $y_i=(\gamma(a_{i-1}))_{\eta_i}\in \Omega_{\eta_i}$ i.e. the orthogonal projection of $\gamma(a_{i-1})\in \Omega$  along the direction $\eta_i$ on the hyperplane $H_{\eta_i}$, and we call with $t_i\in \Omega^{\eta_i}_{y_i}$ the real number such that $\gamma(a_{i-1})= y_i +t_i\,\eta_i$ (see Figure \ref{fig_poligonal chains}). Thus, $[t_i,t_i+ (a_i-a_{i-1})]\subset \Omega^{\eta_i}_{y_i}$ and 
\begin{align*}
\gamma([a_{i-1},a_i])=\left\{y_i + t\, \eta_i:\, t\in [t_i,t_i+ (a_i-a_{i-1})] \right\},\quad \textnormal{ for } i=1,\dots,m.
\end{align*}


\noindent
In particular, we can rewrite \eqref{gamma di t preliminary} in the following way
\begin{align}\label{gamma di t intermediate}
\gamma(t)= y_i + (t+ (t_i -a_{i-1}))\,\eta_i \quad \forall\,t\in [a_{i-1},a_i],\textnormal{ for }i=1,\dots,m.
\end{align}
\noindent
We conclude this section with a definition.
\begin{definition}\label{def_gamma connects two points}
Given any two points $x_1, x_2 \in \Omega$, we say that $\gamma$ as in \eqref{piecewise affine curves} \emph{connects} $x_1, x_2$, if and only if $\gamma(a_0)=x_1$, and $\gamma(a_m)=x_2$ where $a_0,a_m$ are associated to $\gamma$ as described above.
\end{definition}

\begin{figure}[!htb]
\centering
\def\svgwidth{10cm}
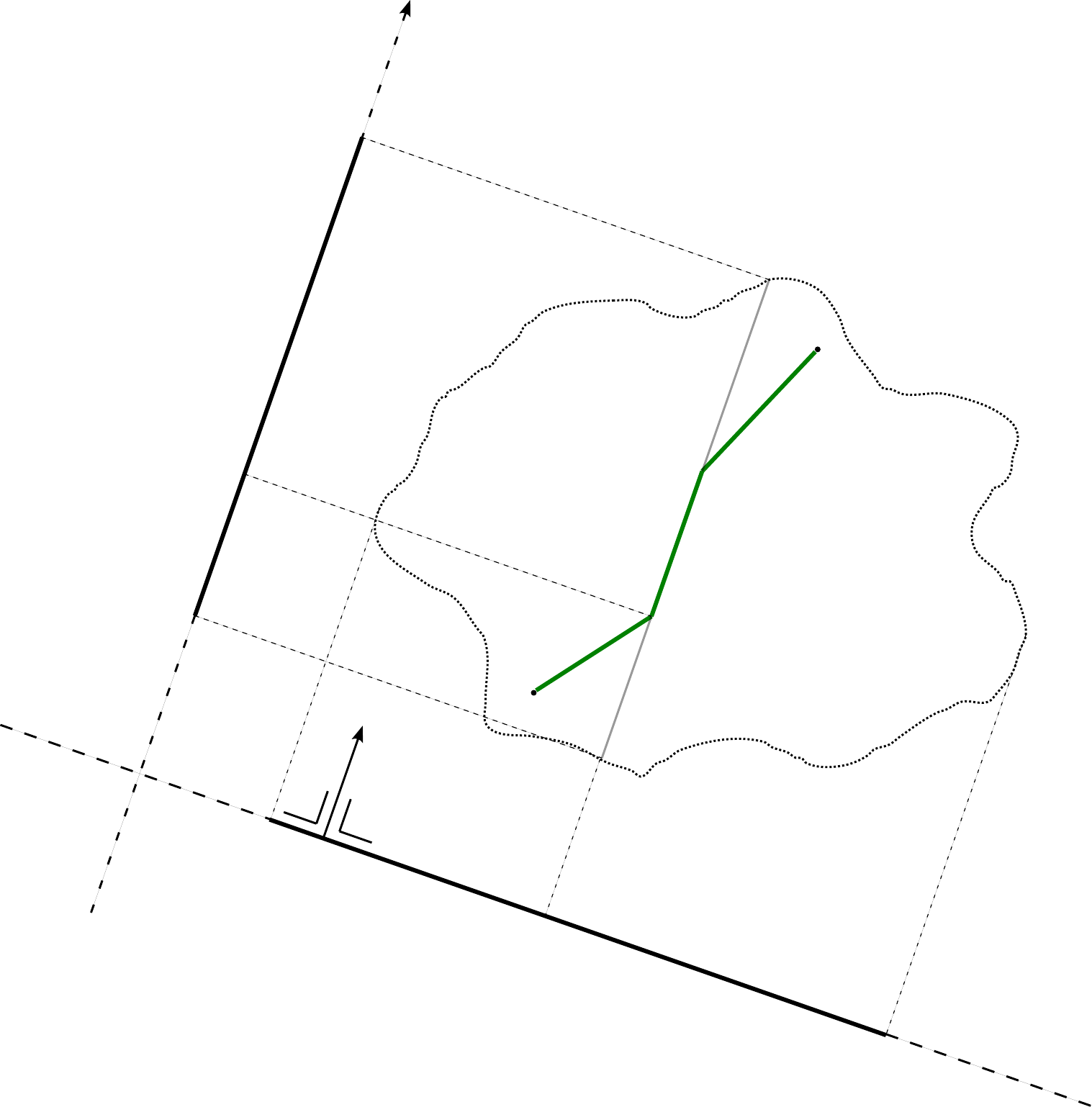
\caption{A pictorial representation in $\mathbb{R}^2$ of part of the notation introduced in Section \ref{subsec_k-dimensional restrictions} and in Section \ref{subsec_A class of one-dimensional curves}. In particular, $O$ stands for the origin in $\mathbb{R}^2$, and for $\gamma$ as in the above picture we have that $m=3$ with $\gamma(a_0)=\bar{x}$ and $\gamma(a_3)=x$.}
\label{fig_poligonal chains}
\end{figure}

\subsection{A family of one-dimensional curves}
Given $u\in BV(\Omega)$, we introduce the following family of curves in $\Omega$
\begin{align}\label{def_Gamma_Omega(u)}
\Gamma_{\Omega}(u):=\left\{ \gamma:I_\gamma \to \Omega:\, \gamma \textnormal{ is as in Section } \ref{subsec_A class of one-dimensional curves},\textnormal{ and } y_i \in G_{\eta_i}\,\forall\,i=1,\dots,m \right\},
\end{align}
where $(\eta_i)_i$ and $(y_i)_i$ are uniquely associated to each $\gamma$ as described in Section \ref{subsec_A class of one-dimensional curves}, while the set $G_{\eta_i}$ was introduced in Proposition \ref{prop_k-dimensional BV slicing}. Let us stress that, by definition of $\Gamma_\Omega(u)$ we have that $\gamma\in \Gamma_\Omega(u)$ if and only if $u^{\eta_i}_{y_i}\in BV(\Omega^{\eta_i}_{y_i})$ for every $i=1,\dots,m$. In the first part of this section we will show that it is possible to connect ``almost'' every couple of points in $\Omega$ with curves in $\Gamma_\Omega(u)$ (see Proposition \ref{prop_si puo connettere via Gamma}). In the remaining part of the section we will show that for every $\gamma\in \Gamma_\Omega(u)$, the function $u_\gamma:I_\gamma \to \mathbb{R}$ defined as 
\begin{align}\label{def_u_gamma}
u_\gamma(t):=u\Low(\gamma(t)),\quad \forall\,t\in I_{\gamma}
\end{align}
is a function of bounded variation in $I_\gamma^\circ$ and we will prove a result that is a sort of ``polygonal chain counterpart'' of Proposition \ref{prop_GBV slicing} (see Proposition \ref{prop_fine properties of ugamma}). Let us now begin with a definition.

\begin{definition}\label{def_suitable for connection}
A point $x\in \Omega$ is said to be \emph{suitable for connections (with respect to the function $u$}) if there exist $(n-1)$ linear independent vectors $(\nu_i)_i\subset \mathbb{S}^{n-1}$ which up to a single orthogonal transformation $R:\mathbb{R}^n\to\mathbb{R}^n$ they coincide with the first $(n-1)$ vectors of the canonical basis of $\mathbb{R}^n$, namely $R(\nu_i)=e_i$ for $i=1,\dots, n-1$, such that 
$$
x_{\nu_1,\dots,\nu_k} \in G_{\nu_1,\dots, \nu_k}\quad \forall\, k=1,\dots,n-1,
$$
where the point $x_{\nu_1,\dots,\nu_k}$ and the set $G_{\nu_1,\dots,\nu_k}$ were defined in  Section \ref{subsec_k-dimensional restrictions}, and in Proposition \ref{prop_k-dimensional BV slicing} respectively.
\end{definition}

\noindent
Roughly speaking, a point in $\Omega$ is \emph{suitable for connections} if and only if for every $k=1,\dots,n-1$, we can find a $k$-dimensional slice such that $x$ can be written as
$$
x=x_{\nu_1,\dots,\nu_k} + t_1\nu +\dots + t_k\nu_k\quad \textnormal{for some } (t_1,\dots,t_k)\in \Omega^{\nu_1,\dots,\nu_k}_{x_{\nu_1,\dots,\nu_k}},\, \forall\,k=1,\dots,n-1,
$$ 
and the function $u$ restricted to each of those $k$-dimensional slices is of bounded variation in the sense of Proposition \ref{prop_k-dimensional BV slicing}, namely $u^{\nu_1,\dots,\nu_k}_{x_{\nu_1,\dots,\nu_k}}\in BV\left(\Omega^{\nu_1,\dots,\nu_k}_{x_{\nu_1,\dots,\nu_k}}\right)$.

\begin{lemma}\label{lem_admissable for connections}
Let $u\in BV(\Omega)$, then $\mathcal{L}^n$-a.e. $x\in \Omega$ is suitable for connections.
\end{lemma}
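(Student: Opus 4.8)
The plan is to prove the statement by induction on the dimension $k$ of the slices appearing in Definition \ref{def_suitable for connection}, using Proposition \ref{prop_k-dimensional BV slicing} together with a Fubini-type argument. Fix once and for all the standard orthogonal transformation $R$ so that $\nu_i = e_i$ for $i=1,\dots,n-1$, and write points of $\mathbb{R}^n$ as $x = (x_1,\dots,x_n)$; for $1\le k\le n-1$ the projection $x_{\nu_1,\dots,\nu_k}$ is then $(0,\dots,0,x_{k+1},\dots,x_n)$, and $\Omega_{\nu_1,\dots,\nu_k}$ is canonically identified with an open subset of $\{0\}^k\times\mathbb{R}^{n-k}$. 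The goal is to show that the set
\[
B := \bigl\{x\in\Omega:\, x_{\nu_1,\dots,\nu_k}\in G_{\nu_1,\dots,\nu_k}\ \text{for all } k=1,\dots,n-1\bigr\}
\]
satisfies $\mathcal{L}^n(\Omega\setminus B) = 0$, which suffices because the single fixed choice $\nu_i = e_i$ already witnesses suitability for every $x\in B$.

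For each fixed $k$, Proposition \ref{prop_k-dimensional BV slicing} gives $\mathcal{L}^{n-k}(\Omega_{\nu_1,\dots,\nu_k}\setminus G_{\nu_1,\dots,\nu_k}) = 0$. The set of $x\in\Omega$ failing the $k$-th condition is $\pi_k^{-1}(\Omega_{\nu_1,\dots,\nu_k}\setminus G_{\nu_1,\dots,\nu_k})$, where $\pi_k(x) := x_{\nu_1,\dots,\nu_k}$ is (after the identification) the orthogonal projection onto the last $n-k$ coordinates. By Fubini's theorem applied in the coordinates $(x_1,\dots,x_k)$ versus $(x_{k+1},\dots,x_n)$, a set of the form $\pi_k^{-1}(N)$ with $\mathcal{L}^{n-k}(N)=0$ has $\mathcal{L}^n$-measure zero: indeed its section over any point of $N$ is a subset of $\mathbb{R}^k$ (bounded, since $\Omega$ is bounded by \eqref{assumptions on Omega}), and
\[
\mathcal{L}^n\bigl(\pi_k^{-1}(N)\bigr) \le \int_{N} \mathcal{L}^k\bigl(\{t\in\mathbb{R}^k:\, y + t\bigr\})\, d\mathcal{L}^{n-k}(y) = 0.
\]
Summing over the finitely many values $k=1,\dots,n-1$, the set $\Omega\setminus B$ is contained in a finite union of $\mathcal{L}^n$-null sets, hence $\mathcal{L}^n(\Omega\setminus B)=0$.

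The only point requiring a little care — and the step I expect to be the mildly technical one — is checking measurability and the correct measure-theoretic bookkeeping of the sets $\pi_k^{-1}(\Omega_{\nu_1,\dots,\nu_k}\setminus G_{\nu_1,\dots,\nu_k})$: one needs that $G_{\nu_1,\dots,\nu_k}$ is a Borel (or at least Lebesgue measurable) subset of $\Omega_{\nu_1,\dots,\nu_k}$ so that the Fubini slicing argument is legitimate, and that the identification of $\Omega_{\nu_1,\dots,\nu_k}$ with the projection of $\Omega$ onto the last $n-k$ coordinates is compatible with the slice sets $\Omega_y^{\nu_1,\dots,\nu_k}$ of \eqref{def_k dimensional slice}. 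Both are routine consequences of the construction in Section \ref{subsec_k-dimensional restrictions} and of the fact that $u\in BV(\Omega)$ forces the relevant slice functions to be jointly measurable; no genuine obstacle arises because $n-1$ is finite and the only input needed is the single codimension-$k$ null-set statement of Proposition \ref{prop_k-dimensional BV slicing} for each $k$.
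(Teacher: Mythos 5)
Your argument is essentially the paper's proof: both fix the frame $\nu_i=e_i$, invoke Proposition \ref{prop_k-dimensional BV slicing} to get $\mathcal{L}^{n-k}(\Omega_{\nu_1,\dots,\nu_k}\setminus G_{\nu_1,\dots,\nu_k})=0$ for each $k$, use Fubini to deduce that the set of $x\in\Omega$ whose projection $x_{\nu_1,\dots,\nu_k}$ misses $G_{\nu_1,\dots,\nu_k}$ is $\mathcal{L}^n$-null, and then intersect over the finitely many $k=1,\dots,n-1$. One minor misstatement: condition \eqref{assumptions on Omega} gives $\mathcal{L}^n(\Omega)<\infty$ but not boundedness of $\Omega$; this is harmless, since integrating the (possibly infinite) nonnegative slice measure over the $\mathcal{L}^{n-k}$-null set $\Omega_{\nu_1,\dots,\nu_k}\setminus G_{\nu_1,\dots,\nu_k}$ already yields zero without any boundedness of the sections.
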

\begin{proof}
Let $(\nu_i)_i\subset \mathbb{S}^{n-1}$ with $i=1,\dots,n-1$ be a sequence of $n-1$ linear independent vectors which up to a single orthogonal transformation $R:\mathbb{R}^n\to\mathbb{R}^n$ they coincide with the first $n-1$ vectors of the canonical basis of $\mathbb{R}^n$, namely $R(\nu_i)=e_i$ for $i=1,\dots, n-1$. Let $k\in \mathbb{N}$ with $1\leq k \leq n-1$, and let us call with $\Omega_k\subset \Omega$ the set so defined
$$
\Omega_k:=\left\{x\in \Omega:\, x_{\nu_1,\dots,\nu_k} \in G_{\nu_1,\dots,\nu_k}  \right\}.
$$
Let us show that $\mathcal{L}^n(\Omega\setminus \Omega_k)=0$ for every $k=1,\dots, n-1$. Indeed, by Fubini Theorem we get
\begin{align*}
\int_{\Omega}1\,dx &= \int_{\Omega_{\nu_1,\dots,\nu_k}}d\mathcal{H}^{n-k}(y)\int_{\Omega^{\nu_1,\dots,\nu_k}_y}1\,dt_1\dots dt_k =  \int_{G_{\nu_1,\dots,\nu_k}}d\mathcal{H}^{n-k}(y)\int_{\Omega^{\nu_1,\dots,\nu_k}_y}1\,dt_1\dots dt_k\\
&+ \int_{\Omega_{\nu_1,\dots,\nu_k}\setminus G_{\nu_1,\dots,\nu_k}}d\mathcal{H}^{n-k}(y)\int_{\Omega^{\nu_1,\dots,\nu_k}_y}1\,dt_1\dots dt_k\\
&= \int_{G_{\nu_1,\dots,\nu_k}}d\mathcal{H}^{n-k}(y)\int_{\Omega^{\nu_1,\dots,\nu_k}_y}1\,dt_1\dots dt_k=\int_{\Omega_k}1\,dx
\end{align*}
where for the second last equality sign we used that $\mathcal{H}^{n-k}(\Omega_{\nu_1,\dots,\nu_k}\setminus G_{\nu_1,\dots,\nu_k})=0$ (see  Proposition \ref{prop_k-dimensional BV slicing}). Thus $\mathcal{L}^n(\Omega\setminus \Omega_k)=0$ for every $k=1,\dots,n-1$, and so every $x\in \cap_{k=1}^{n-1}\Omega_k$ is admissible for connections. Since $\mathcal{L}^n(\Omega\setminus \cap_{k=1}^{n-1}\Omega_k)=0$ we conclude.
\end{proof}

\noindent
Next result provides a positive answer to the following question: given any two points $x_1, x_2\in \Omega$ that are both suitable for connections, is there always a curve $\gamma\in \Gamma_\Omega(u)$ that connects them? 

\begin{proposition}\label{prop_si puo connettere via Gamma}
Let $x,\bar{x}\in\Omega$ be suitable for connections with $x\neq \bar{x}$. Then there exists $\gamma\in \Gamma_\Omega(u)$ that connects them in the sense of Definition \ref{def_gamma connects two points}.
\end{proposition}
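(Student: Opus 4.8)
The plan is to deduce the statement from two facts glued together with the connectedness of $\Omega$: a \emph{local connectivity} fact valid around every point of $\Omega$, and a \emph{reachability} fact valid around the two suitable points $x,\bar x$. It is convenient to say that $p,p'\in\Omega$ are \emph{$u$-admissibly connected} if there is a continuous, piecewise affine (\emph{not necessarily injective}) path in $\Omega$ from $p$ to $p'$ made of finitely many segments, each of which, in the notation of Section~\ref{subsec_A class of one-dimensional curves}, has its base point $y_i$ in the corresponding set $G_{\eta_i}$. This relation is symmetric (reversing a segment of direction $\eta$ does not change its base point, and $G_{-\eta}=G_\eta$) and closed under concatenation; moreover a standard loop-removal procedure turns a $u$-admissible path between two \emph{distinct} points into an injective one, since removing loops only replaces whole segments by subsegments, whose base points stay in the same $G_{\eta_i}$. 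Thus it suffices to prove that $x$ and $\bar x$ are $u$-admissibly connected, the resulting injective path then being the desired $\gamma\in\Gamma_\Omega(u)$; we may assume $n\ge 2$, the case $n=1$ being immediate.

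For the \emph{local connectivity} fact I would show: for every $q\in\Omega$ there is an axis-parallel open cube $Q_q$ with $q\in Q_q$ and $\overline{Q_q}\subset\Omega$ such that for $\mathcal{L}^{2n}$-a.e.\ $(p,p')\in Q_q\times Q_q$ the ``staircase'' path from $p$ to $p'$ — which changes one coordinate at a time, in the order $1,\dots,n$ — is $u$-admissible. Its $i$-th segment has direction $\pm e_i$ and base point obtained from the $i$-th vertex by setting the $i$-th coordinate to $0$; this base point depends on only $n-1$ of the $2n$ real parameters of $(p,p')$, and since $\mathcal{L}^{n-1}(\Omega_{e_i}\setminus G_{e_i})=0$ by Proposition~\ref{prop_k-dimensional BV slicing} (case $k=1$), Fubini's theorem gives that the set of $(p,p')$ for which it misses $G_{e_i}$ is $\mathcal{L}^{2n}$-null. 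Intersecting over $i=1,\dots,n$, and discarding the null set where some $p_i=p'_i$ (so that the staircase is injective, and, being contained in the cube, stays in $\Omega$), proves the claim.

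The core is the \emph{reachability} fact: if $x\in\Omega$ is suitable for connections with associated vectors $\nu_1,\dots,\nu_{n-1}$, and $\nu_n$ is a unit normal to their span, then there is a parallelepiped $Q_x\ni x$ such that $\mathcal{L}^n$-a.e.\ $p\in Q_x$ is $u$-admissibly connected to $x$. I would connect $x$ to $p:=x+\sum_{i=1}^{n-1}t_i\nu_i+s\,\nu_n$ by the path $x\to x+t_1\nu_1\to x+t_1\nu_1+t_2\nu_2\to\cdots\to x+\sum_{i=1}^{n-1}t_i\nu_i\to p$ with all parameters small. Its $k$-th segment ($1\le k\le n-1$) has direction $\nu_k$ and base point $x_{\nu_k}+\sum_{l<k}t_l\nu_l$, which lies in the $k$-plane $\Pi^{(k)}:=x_{\nu_1,\dots,\nu_k}+\mathrm{span}(\nu_1,\dots,\nu_k)$ (note $x\in\Pi^{(k)}$). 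Because $x$ is suitable, $x_{\nu_1,\dots,\nu_k}\in G_{\nu_1,\dots,\nu_k}$, i.e.\ the restriction of $u$ to $\Pi^{(k)}\cap\Omega$ is a function of bounded variation in $k$ variables; the requirement that the above base point lie in $G_{\nu_k}$ is exactly the requirement that a certain line in direction $\nu_k$ inside $\Pi^{(k)}$ be a good $1$-dimensional slice of this $BV$ function, which holds for $\mathcal{L}^{k-1}$-a.e.\ $(t_1,\dots,t_{k-1})$ by Proposition~\ref{prop_k-dimensional BV slicing} (with $k=1$) applied inside $\Pi^{(k)}$. The last, $\nu_n$-directed segment has base point $x_{\nu_n}+\sum_{i=1}^{n-1}t_i\nu_i\in\Omega_{\nu_n}$, which lies in $G_{\nu_n}$ for $\mathcal{L}^{n-1}$-a.e.\ $(t_1,\dots,t_{n-1})$ again by Proposition~\ref{prop_k-dimensional BV slicing} with $k=1$, after which any small $s$ is admissible. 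Iterating Fubini, for $\mathcal{L}^n$-a.e.\ $(t_1,\dots,t_{n-1},s)$ in a small box every segment is good, and since $(t_1,\dots,t_{n-1},s)\mapsto x+\sum_i t_i\nu_i+s\,\nu_n$ is an affine bijection onto a neighbourhood $Q_x$ of $x$, $\mathcal{L}^n$-a.e.\ point of $Q_x$ is $u$-admissibly connected to $x$.

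Finally I would glue these together. Let $V$ be the set of $q\in\Omega$ such that $\mathcal{L}^n$-a.e.\ point of some neighbourhood of $q$ is $u$-admissibly connected to $x$. It is open, and nonempty since $x\in V$ by the reachability fact; it is also relatively closed in $\Omega$, for if $q\in\Omega$ lies in the closure of $V$ one takes the cube $Q_q$ from the local connectivity fact and a point $q'\in V\cap Q_q$, so that $\mathcal{L}^n$-a.e.\ point of some neighbourhood $N$ of $q'$ is connected to $x$; then $W:=N\cap Q_q$ is a nonempty open subset of $Q_q$, a.e.\ point of $W$ is connected to $x$, and by local connectivity, for $\mathcal{L}^n$-a.e.\ $p'\in Q_q$, $\mathcal{L}^n$-a.e.\ $p\in W$ is connected to $p'$; concatenating shows a.e.\ $p'\in Q_q$ is connected to $x$, i.e.\ $q\in V$. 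Hence $V=\Omega$, so $\mathcal{L}^n$-a.e.\ point of a neighbourhood of $\bar x$ is connected to $x$; since $\bar x$ is also suitable, the reachability fact and symmetry give that $\bar x$ is connected to $\mathcal{L}^n$-a.e.\ point of a neighbourhood of $\bar x$. Choosing $p$ in the positive-measure intersection of the two full-measure sets and concatenating the two paths through $p$ gives a $u$-admissible path from $x$ to $\bar x$, which, $x$ and $\bar x$ being distinct, can be taken injective; this is the required $\gamma\in\Gamma_\Omega(u)$. I expect the reachability fact to be the main obstacle: one must orchestrate the nested one-dimensional restrictions of the $k$-plane slices of $u$ so that the hypotheses defining ``suitable for connections'' at every scale $k=1,\dots,n-1$ feed precisely into the successive Fubini arguments, while carefully tracking the base points of the segments; the local connectivity fact and the open--closed gluing are comparatively routine.
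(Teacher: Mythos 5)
Your proof is correct, and the key technical step---your ``reachability'' fact, establishing that $\mathcal{L}^n$-a.e.\ point in a small neighbourhood of a suitable point can be connected to it---is essentially Step~1 of the paper's proof: both produce the same ``tilted staircase'' along $\nu_1,\dots,\nu_{n-1},\nu_n$, and both justify the successive segment conditions by iterated Fubini using the nested sets $G_{\nu_1,\dots,\nu_k}$, reading off that a line inside the $k$-plane $\Pi^{(k)}$ is a good slice for $u$ precisely because it is a good slice for the $BV$ restriction $u^{\nu_1,\dots,\nu_k}_{x_{\nu_1,\dots,\nu_k}}$.

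Where you genuinely diverge is in the gluing. The paper (Steps~2--3) picks a continuous path from $x$ to $\bar x$, approximates it by a fixed polygonal chain, covers it with disjoint balls $B_0,\dots,B_m$, and \emph{transports} the full-measure good sets from $B_0$ to $B_m$ via the translations $\psi_i$, carefully intersecting finitely many full-measure sets and then choosing a concrete chain of vertices $x_1,\dots,x_{m-1}$. You instead prove a \emph{local connectivity} fact valid around \emph{every} (not necessarily suitable) point---that a.e.\ pair in a small cube is joined by an admissible staircase---and then run a soft open-closed argument on the set $V$ of points $q$ with a.e.\ point near $q$ connected to $x$. This avoids the paper's transport bookkeeping entirely and replaces it with a single Fubini step inside each cube; it also cleanly decouples the measure-theoretic issue (Fubini) from the combinatorial one (producing an \emph{injective} chain) by working throughout with a class of non-injective admissible paths and invoking loop removal only at the end. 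Both are correct; yours is shorter in the gluing and conceptually a bit crisper, while the paper's explicit construction is more hands-on and self-contained.

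Two small points worth tightening if you write this out in full. First, in the loop-removal step you should note why it terminates: each removal replaces whole segments by subsegments and strictly decreases the number of segments (if the two intersecting segments are non-consecutive, the intermediate ones disappear; if consecutive and overlapping, they merge), and removal never creates a new self-intersection since the resulting path is a restriction of the original. Second, it would be cleaner to observe explicitly that your ``$u$-admissibly connected'' relation is preserved under truncation of segments precisely because the base point $y_i$ depends only on the supporting line of the segment, not on its endpoints; this is what the loop-removal argument is secretly using.
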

\noindent
The proof of this result is quite technical and it shows in a detailed way how to construct a polygonal chain as required by the statement. 
\begin{proof}[Proof of Proposition \ref{prop_si puo connettere via Gamma}]
We divide the proof in several steps.\\
\noindent
\textbf{Step 1.} Let $x\in\Omega$ be a point suitable for connections in the sense of Definition \ref{def_suitable for connection}. Up to apply the orthogonal transformation $R$ and up to translate everything to the origin we can consider $x=0$, and the vectors $(v_i)_i$ to be the ones of the canonical basis. Thus,
$$
0 \in G_{e_1,\dots, e_k}\quad \forall\, k=1,\dots,n-1,
$$
where we used the fact that $0_{e_1,\dots, e_k}=0$ for every $k=1,\dots, n-1$. In this first step we will prove that for every $\epsilon>0$ such that $B_\epsilon\subset \Omega$ (where we recall $B_\epsilon$ stands for the open ball in $\mathbb{R}^n$ centred at the origin and having radius equal to $\epsilon$) we can connect (in the sense of Definition \ref{def_gamma connects two points}) the origin with $\mathcal{L}^n$-a.e. $x\in B_\epsilon$. In order to simplify a bit the exposition, we  set $B=B_\epsilon$, and we consider the function $u$ as a function in $BV(B)$. In the following we denote with $B^k$ the $k$-dimensional ball in $\mathbb{R}^k$ of radius $\epsilon$, namely 
\begin{align*}
B^k&=\left\{ (x_1,\dots, x_k)\in \mathbb{R}^k:\, \left( x_1^2+\dots +x_k^2  \right)^{1/2}  <\epsilon \right\}\quad \textnormal{for }k=1,\dots,n.
\end{align*}
Let us notice that by definition of the $k$-dimensional slice (see \eqref{def_k dimensional slice}) we have that 
$$  
B^k=B^{e_1,\dots,e_k}_0\subset \Omega^{e_1,\dots,e_k}_0 \quad \forall\, k=1,\dots,n-1,\\
$$
thus we also set
\begin{align*}
u^k &= u^{e_1,\dots, e_k}_0 \quad \forall\, k=1,\dots,n-1.
\end{align*}
Moreover, with a little abuse of notation we  call with $(e_i)_i\subset \mathbb{S}^{k-1}$ for $i=1,\dots,k$,  where $k\in \mathbb{N}$ and $1\leq k\leq n$ the vectors of the canonical basis of $\mathbb{R}^k$. In this setting, let us note that 
\begin{align}\label{eq_B^k+1_k+1=B^k}
(B^{k+1})_{e_{k+1}}=\left\{(x_1,\dots,x_k,0)\in \mathbb{R}^{k+1}:\, (x_1,\dots,x_k)\in B^k   \right\}\quad \textnormal{for }k=1,\dots,n-1,
\end{align} 
where we recall that the definition of $(B^{k+1})_{e_{k+1}}$ was given in Section \ref{subsec_k-dimensional restrictions} for open sets in $\mathbb{R}^n$. We divide this step in sub-steps. See Figure \ref{fig_admissable} for a pictorial representation of part of the notation introduced in this step and in step 1.1. \\
\textbf{Step 1.1.} In this step we define in a recursive way a sequence of sets $V^k\subset \mathbb{R}^k$ for $k=1,\dots, n-1$, each one somehow related to the set $G_{e_1,\dots,e_k}\subset \Omega_{e_1,\dots,e_k}\subset \mathbb{R}^n$, and we state some preliminary technical  observations. More precisely, let $k\in\mathbb{N}$ with $1 \leq k\leq  n-1$, then by assumptions we know that $0\in G_{e_1,\dots,e_k}$ which by Proposition \ref{prop_k-dimensional BV slicing} implies that $u^k\in BV(\Omega_0^{e_1,\dots,e_k})$, and so keeping in mind the notation introduced in step 1 we have that
\begin{align*}
u^k\in BV(B^k).
\end{align*}
Let us now apply Proposition \ref{prop_k-dimensional BV slicing} in $\mathbb{R}^k$ for $u^k \in BV(B^k)$ for $k=2,\dots, n-1$. Let us slice the function $u^k$ in the direction $e^k$, namely by Proposition \ref{prop_k-dimensional BV slicing} there exists a set $G^k_{e_k}\subset (B^k)_{e_k}\subset \mathbb{R}^k$ such that
\begin{align}\label{eq_Volper set copre la palla}
\mathcal{L}^{k-1}((B^k)_{e_k}\setminus G^k_{e_k})=0\quad \textnormal{for }k=2,\dots, n,
\end{align}
and for every $y\in G^k_{e_k}$ we have that 
\begin{align*}
\left(u^k\right)^{e_k}_y \in BV((B^k)^{e_k}_y).
\end{align*}
For every $2\leq k\leq n$ we define the orthogonal projection $\pi_k:\mathbb{R}^k\to \mathbb{R}^{k-1}$  as
\begin{align*}
\pi_k(x)=(x_1,\dots,x_{k-1})\quad \forall\, x\in \mathbb{R}^k.
\end{align*}
Note that $\pi_{k+1}\left( (B^{k+1})_{e_{k+1}} \right)= B^k$ for every $k=1,\dots,n-1$. We set
\begin{align}\label{eq_V^k TILDE}
\tilde{V}^{k}&:=\pi_{k+1}(G^{k+1}_{e_{k+1}})=\left\{x\in B^{k}:\,(x_1,\dots,x_{k},0)\in G^{k+1}_{e_{k+1}}    \right\}\quad \textnormal{for }k=1,\dots ,n-1,
\end{align}
where in case $k=n-1$ we use the abuse of notation $G^n_{e_n}= G_{e_n}$ where the set $G_{e_n}\subset (B)_{e_n}$ is the set given by Proposition \ref{prop_k-dimensional BV slicing} when slicing the function $u\in BV(B)$ with respect to the direction $e_n$.
By construction of the sets $\tilde{V}^k$, we have that
\begin{align}\label{eq_pi(B^k)-pi(tilde V^k) =0}
\pi_k(\tilde{V}^{k})\subset \pi_k(B^{k})\quad \textnormal{for }k=2,\dots, n,
\end{align}
while thanks to both \eqref{eq_B^k+1_k+1=B^k}, and  \eqref{eq_Volper set copre la palla} we get that
\begin{align}\label{eq_the real volpert copre la palla AUXILIARY}
\mathcal{L}^k(B^k\setminus \tilde{V}^k)=0\quad \textnormal{for }k=1,\dots, n-1.
\end{align}

\begin{figure}[!htb]
\centering
\def\svgwidth{14cm}
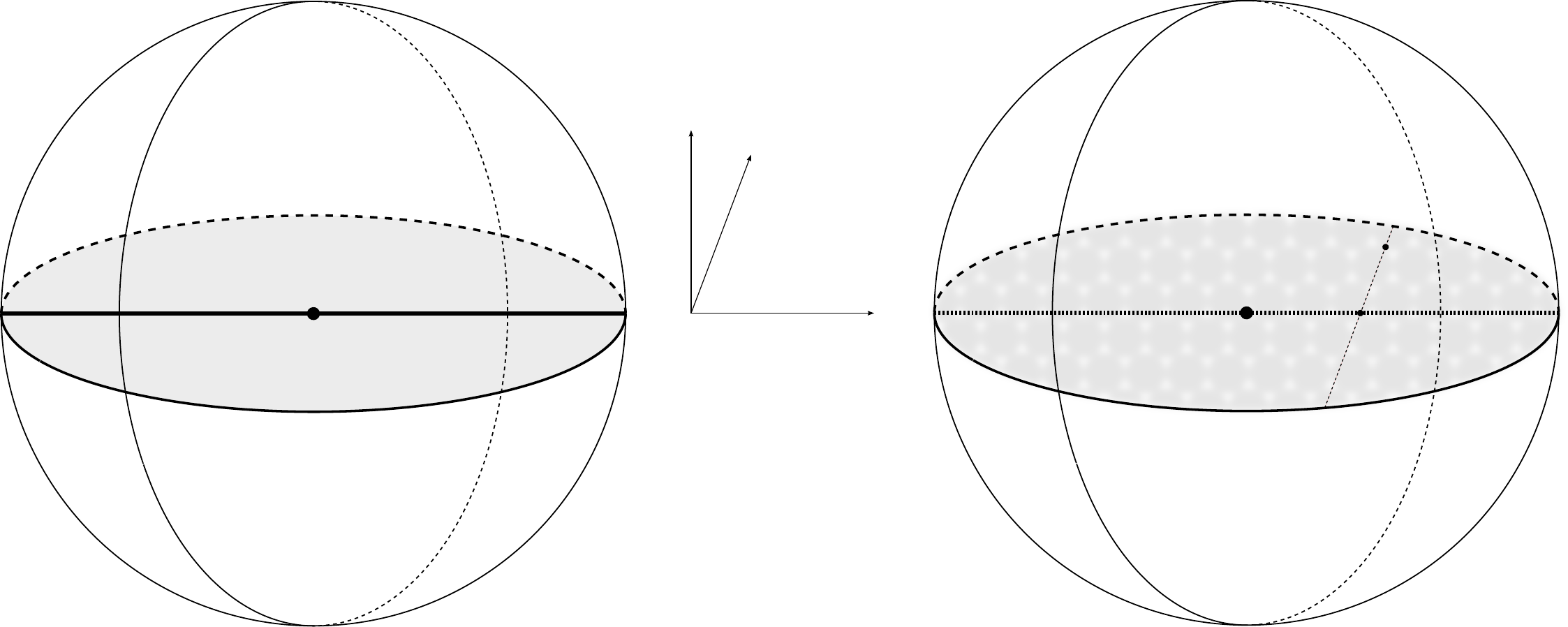
\caption{A pictorial representation in $\mathbb{R}^3$ of parts of the notation introduced at the beginning of step 1 and in step 1.1 of the proof of Proposition \ref{prop_si puo connettere via Gamma}. On the left we can see $B=B_\epsilon(0)\subset \mathbb{R}^3$ together with $B^2$ (in grey) and $B^1$. On the right picture we can see a dotted line replacing $B^1$ that represents the set $\tilde{V}^1=V^1$ (which thanks to \eqref{eq_the real volpert copre la palla AUXILIARY} has the same $\mathcal{L}^1$-measure of $B^1$), while in blurry grey we have the set $\tilde{V}^2$ (which again thanks to \eqref{eq_the real volpert copre la palla AUXILIARY} has the same $\mathcal{L}^2$-measure of $B^2$). Observe that a point $(x_1,x_2)\in \tilde{V}^2$  belongs to $V^2$ if and only if $x_1 \in V^1$.}
\label{fig_admissable}
\end{figure}

Starting from the sets $\tilde{V}^k$ (see for instance Figure \ref{fig_admissable}) we define
\begin{align}
V^1&:=\tilde{V}^1, \label{eq_V^1}\\
V^{k}&:=\left\{x\in \tilde{V}^{k}:\, \pi_k(x)\in V^{k-1}\right\}\quad \textnormal{for }k=2,\dots,n-1. \label{eq_V^k}
\end{align}
\textbf{Step 1.2.} 
Let us show that 
\begin{align}\label{eq_the real volpert copre la palla}
\mathcal{L}^k(B^k\setminus V^k)=0\quad \textnormal{for }k=1,\dots, n-1.
\end{align}
Thanks to \eqref{eq_the real volpert copre la palla AUXILIARY} it will be sufficient to show that 
\begin{align}\label{eq_real Volpert copre fake Volpert}
\mathcal{L}^k(\tilde{V}^k\setminus V^k)=0\quad \textnormal{for }k=1,\dots, n-1.
\end{align}
For $k=1$ thanks to \eqref{eq_V^1} there's nothing to prove. Let us now continue by induction on the dimension, namely let us assume that \eqref{eq_real Volpert copre fake Volpert} holds true for $k=m-1\leq n-2$, then by Fubini we get
\begin{align*}
\mathcal{L}^m(\tilde{V}^m) &= \int_{\pi_m(\tilde{V}^m)\cap V^{m-1}} \mathcal{H}^1\left( (\tilde{V}^m)^{e_m}_{(x_1,\dots,x_{m-1},0)} \right) dx_1\dots dx_{m-1} \\&+ \int_{\pi_m(\tilde{V}^m) \setminus V^{m-1}} \mathcal{H}^1\left( (\tilde{V}^m)^{e_m}_{(x_1,\dots,x_{m-1},0)} \right) dx_1\dots dx_{m-1}\\
&\stackrel{\eqref{eq_real Volpert copre fake Volpert} + \eqref{eq_the real volpert copre la palla AUXILIARY}}{=} \int_{\pi_m(\tilde{V}^m)\cap B^{m-1}} \mathcal{H}^1\left( (\tilde{V}^m)^{e_m}_{(x_1,\dots,x_{m-1},0)} \right) dx_1\dots dx_{m-1} \\&+ \int_{\pi_m(\tilde{V}^m) \setminus V^{m-1}} \mathcal{H}^1\left( (\tilde{V}^m)^{e_m}_{(x_1,\dots,x_{m-1},0)} \right) dx_1\dots dx_{m-1}\\
&\stackrel{\eqref{eq_V^k}}{=}\int_{\pi_m(\tilde{V}^m)\cap B^{m-1}} \mathcal{H}^1\left( (\tilde{V}^m)^{e_m}_{(x_1,\dots,x_{m-1},0)} \right) dx_1\dots dx_{m-1} + \mathcal{L}^m(\tilde{V}^m \setminus V^m)\\
& = \int_{\pi_m(\tilde{V}^m)\cap \pi_m(B^m)} \mathcal{H}^1\left( (\tilde{V}^m)^{e_m}_{(x_1,\dots,x_{m-1},0)} \right) dx_1\dots dx_{m-1} + \mathcal{L}^m(\tilde{V}^m \setminus V^m)\\
& \stackrel{\eqref{eq_pi(B^k)-pi(tilde V^k) =0}}{=}\mathcal{L}^m(\tilde{V}^m)  + \mathcal{L}^m(\tilde{V}^m \setminus V^m),
\end{align*} 
Thus, $\mathcal{L}^m(\tilde{V}^m \setminus V^m)= 0$ which proves \eqref{eq_real Volpert copre fake Volpert} for $k=m$. Thus, by induction on $m$, relation \eqref{eq_real Volpert copre fake Volpert} holds true, and this concludes step 1.2.\\
\noindent
\textbf{Step 1.3.} We conclude step 1, namely we show that we can connect the origin with $\mathcal{L}^n$-a.e. $x\in B$, where we recall $B=B_\epsilon$. Let us observe that thanks to \eqref{eq_the real volpert copre la palla} 
\begin{align}\label{eq_sta nelle coordinate di Volpert}
\textnormal{for }\mathcal{L}^n\textnormal{-a.e. }x\in B \textnormal{ we have that } (x_1,\dots,x_{n-1})\in V^{n-1}.
\end{align}
Let us show that for every $x\in B$ such that \eqref{eq_sta nelle coordinate di Volpert} holds true, there exists $\gamma\in \Gamma_\Omega(u)$ that connects (in the sense of Definition \ref{def_gamma connects two points}) the origin with the point $x$, concluding in this way Step 1. Let us fix any $k\in \mathbb{N}$ with $1\leq k \leq n$ then, in order to conclude it will be sufficient showing that there exists $\gamma\in \Gamma_\Omega(u)$ that connects $(x_1,\dots, x_{k-1},0,\dots,0)$ with $(x_1,\dots,x_k,0,0,\dots,0)$ (in the sense of Definition \ref{def_gamma connects two points}) whenever $|x_k|>0$, in particular we will show that such a $\gamma$ is indeed the segment connecting those two points. Indeed, we set with $\gamma_k:[0,|x_k|] \to \Omega$ the curve so defined
\begin{align*}
\gamma_k(t)=(x_1,\dots,x_{k-1},0,\dots,0) + t\, \textnormal{sign}(x_k)e_k,\quad \forall\,t\in [0,|x_k|].
\end{align*}
We want to show that $\gamma_k\in \Gamma_\Omega(u)$, which by \eqref{def_Gamma_Omega(u)} it means that $(\gamma_k(0))_{\textnormal{sign}(x_k)e_k} \in G_{\textnormal{sign}(x_k)e_k}$, that is equivalent to ask that
\begin{align}\label{eq_il segmento connette i due punti}
(x_1,\dots,x_{k-1},0,\dots,0)\in G_{e_k}.
\end{align}
Indeed, thanks to \eqref{eq_sta nelle coordinate di Volpert}, and by relation \eqref{eq_V^k} we know that $(x_1,\dots,x_{k-1})\in V^{k-1}$ which by \eqref{eq_V^k TILDE} implies that 
$(x_1,\dots,x_{k-1},0)\in (G^k)_{e_k}$. Then, for every $t\in (B^k)^{e_k}_{(x_1,\dots,x_{k-1},0)}$ we have that
\begin{align*}
&(u^k)^{e_k}_{(x_1,\dots,x_{k-1},0)}(t)=u^k((x_1,\dots,x_{k-1},0)+ t e_k)= u^k((x_1,\dots,x_{k-1},t))\\
&= u^{(e_1,\dots,e_{k-1},e_k)}_0((x_1,\dots,x_{k-1},t))=u(0 + x_1 e_1 + \dots + t e_k)\\
&=u((x_1,\dots,x_{k-1},0,\dots,0)+t e_k)= u^{e_k}_{(x_1,\dots,x_{k-1},0,\dots,0)}(t).
\end{align*}
Since $(x_1,\dots,x_{k-1},0)\in (G^k)_{e_k}$ we know that $(u^k)^{e_k}_{(x_1,\dots,x_{k-1},0)} \in BV\left((B^k)^{e_k}_{(x_1,\dots,x_{k-1},0)}\right)$, and so thanks to the above calculations we have also that $u^{e_k}_{(x_1,\dots,x_{k-1},0,\dots,0)}\in BV\left(B^{e_k}_{(x_1,\dots,x_{k-1},0,\dots,0)}\right)$ which implies that $(x_1,\dots,x_{k-1},0,\dots,0)\in G_{e_k}$. This proves \eqref{eq_il segmento connette i due punti}, and by generality of $k$ we conclude the first step.\\
\noindent
\textbf{Step 2.} In this step we introduce some notation. We divide this step in few sub steps.\\
\noindent
\textbf{Step 2.1.} In this step we construct a polygonal chain that connects $x$ and $\bar{x}$. By connectedness of $\Omega$ we can find a continuous function $\gamma_0: [a_0,b_0] \to \Omega$ such that $\gamma_0(a_0)=x$ and $\gamma_0(b_0) = \bar{x}$. Since $\gamma_0(I_0)$ is a compact set in $\Omega$, there exists $\epsilon_0>0$ such that
$$
B_{\epsilon_0}(\gamma_0(t))\subset \Omega\quad \forall\, t\in I_0.
$$ 
Thus, there exits a polygonal chain $\gamma: I_\gamma \to \Omega$ as described as in Section \ref{subsec_A class of one-dimensional curves} that connects $x$ and $\bar{x}$, and such that 
$$
\gamma(I_\gamma)\subset \bigcup_{t\in I_0}B_{\epsilon_0}(\gamma_0(t)).
$$
It is not restrictive to choose $\gamma$ with the following property, namely that $R(\eta_1)= e_n$, and $\bar{R}(\eta_m)=e_n$ where $R:\mathbb{R}^n \to \mathbb{R}^n$, and $\bar{R}:\mathbb{R}^n \to \mathbb{R}^n$ are those rotations given by Definition \ref{def_suitable for connection} for $x$ and $\bar{x}$ respectively, while the vectors $(\eta_i)_i\subset \mathbb{S}^{n-1}$ are associated to $\gamma$ as described in Section \ref{subsec_A class of one-dimensional curves}. So, arguing as before, by compactness of $\gamma(I_\gamma)$ in $\Omega$, there exists $\epsilon>0$ such that 
$$
B_{\epsilon}(\gamma(t))\subset \Omega\quad \forall\, t\in I_\gamma.
$$ 
Let us choose $\epsilon>0$ such that $B_{\epsilon}(\gamma(a_i))\cap B_{\epsilon}(\gamma(a_j))=\emptyset$ for every $i\neq j$ with $i,j=0,\dots,m$, where the points $(a_i)_i\subset [0,\infty)$ are associated to $\gamma$ as described in Section \ref{subsec_A class of one-dimensional curves}. This concludes step 2.1.\\
\noindent
\textbf{Step 2.2.}
In this step we fix some notation. We set
\begin{align*}
B_i=B_\epsilon(\gamma(a_i))\quad \textnormal{for } i=0,\dots,m,
\end{align*}
We call with $D_0\subset B_0$, and $D_m\subset B_m$ the set of those points that thanks to Step 1, they can be connected to $x$ and $\bar{x}$ respectively. We also set 
$$
D_i:=\left\{x\in B_i:\, x_{\eta_{i+1}}\in (G_i)_{\eta_{i+1}}   \right\},\quad \textnormal{for }i=1,\dots,m-1
$$
where $(G_i)_{\eta_{i+1}}\subset (B_i)_{\eta_{i+1}}$ is the set given by Proposition \ref{prop_k-dimensional BV slicing} for the function $u$ restricted to the set $B_i$. Let us note that by definition we get
\begin{align}\label{eq_D_i occupa tutta B_i}
\mathcal{L}^{n}(B_i\setminus D_i)=0\quad \textnormal{for }i=1,\dots,m.
\end{align}
We call with $\psi_i:\mathbb{R}^n \to \mathbb{R}^n$ the diffeomorphism defined as
\begin{align*}
\psi_i(x)&:= x + (a_i-a_{i-1})\,\eta_i\quad \textnormal{for }i=1,\dots,m,\\
\psi^{-1}_i(x) &:= x- (a_i-a_{i-1})\,\eta_i\quad \textnormal{for }i=1,\dots,m.
\end{align*}
Note that 
$$
\psi_i(B_{i-1})=B_i,\qquad \psi^{-1}_i(B_i)=B_{i-1}\quad \textnormal{for }i=1,\dots,m.
$$
Lastly, we set 
\begin{align*}
\psi_{j,\dots,i}(x)&:=\psi_j\left(\dots \psi_i(x)   \dots \right) \quad \textnormal{for } 1\leq i < j\leq m,\\
\psi_{j,\dots,i}^{-1}(x)&:=\psi_i^{-1}\left(\dots \psi_j^{-1}(x)   \dots \right) \quad \textnormal{for }1\leq i < j\leq m.
\end{align*}
Note that 
\begin{align*}
\psi_{j,\dots,i}(B_i)=B_j,\qquad \psi_{j,\dots,i}^{-1}(B_j)=B_i\quad \textnormal{for }1\leq i < j\leq m.
\end{align*}
With this we conclude step 2.2.\\
\noindent
\textbf{Step 3.} We conclude the proof. We  divide this step in sub steps.\\
\noindent
\textbf{Step 3.1.} In this step we prove that if $x_0\in D_0$ then $\psi_1(x_0)$ can be connected with $x$, moreover we prove that if $x_m\in D_m$ then $\psi^{-1}_m(x_m)$ can be connected with $\bar{x}$. Indeed, up to apply the rotation $R$ associated to the point $x$ as described in Definition \ref{def_suitable for connection}, and up to translate $x$ to the origin, we obtain the same setting as in step 1, so that $B_0=B$. Moreover, if $x_0\in D_0\subset B_0$ then the point $\psi_1(x_0)\in B_1$, and by assumption on $\eta_1$ (see end of step 2.1) we have that the first $n-1$ components of $\psi_1(x_0)$ coincide with the first $n-1$ components of $x_0$, namely
$$
(\psi_1(x_0)_1,\dots,\psi_1(x_0)_{n-1})=((x_0)_1,\dots,(x_0)_{n-1}).
$$
Furthermore, since $x_0\in D_0$, then \eqref{eq_sta nelle coordinate di Volpert} is in force, and thus
\begin{align*}
(\psi_1(x_0)_1,\dots,\psi_1(x_0)_{n-1})\stackrel{\eqref{eq_sta nelle coordinate di Volpert}}{\in} V_{n-1}.
\end{align*}
Thus, since 
\begin{align}\label{eq_segment}
\left\{x_0+(t-a_0)e_n:\, \forall\, t\in [a_0,a_1]\right\}\subset \Omega,
\end{align}
arguing as in step 1.3 we conclude that also the point $\psi_1(x_0)$ can be connected with the origin. Let us point out that, thanks to the particular choice of $\epsilon>0$ (see end of step 2.1) the segment defined in \eqref{eq_segment} intersects the curve connecting $(\psi_1(x_0)_1,\dots,\psi_1(x_0)_{n-1},0)$ with the origin, only in $(\psi_1(x_0)_1,\dots,\psi_1(x_0)_{n-1},0)$. A similar argument can be used to prove that if $x_m\in D_{m}$, then also $\psi^{-1}_m(x_m)$ can be connected with $\bar{x}$. This concludes step 3.1.\\
\noindent
\textbf{Step 3.2.} We conclude in case $m=1,2$. Let's start with $m=1$. Referring to the notation introduced in step 2.2, we define
\begin{align*}
\bar{B}_1:=\psi_1(D_0)\cap D_1.
\end{align*}
By properties of the diffeomorphism $\psi_i$  we have that $\mathcal{L}^n(B_1\setminus \bar{B}_1)=0$. Let $x_1\in \bar{B}_1$, then  since $x_1\in \psi_1(D_0)$ and thanks to step 3.1 we can connect $x_1$ with $x$, while since $x_1\in D_1$ we can also connect $x_1$ with $\bar{x}$, so we conclude for $m=1$. In case $m=2$ we define
\begin{align*}
\bar{B}_1:=\psi_1(D_0)\cap D_1 \cap \psi^{-1}_2(D_2).
\end{align*}
By properties of the diffeomorphism $\psi_i$  we have that $\mathcal{L}^n(B_1\setminus \bar{B}_1)=0$. Similarly to what we have done for $m=1$, let $x_1\in \bar{B}_1$, then thanks to step 3.1, since $x_1\in \psi_1(D_0)$  we can connect $x_1$ with $x$, while since $x_1\in \psi^{-1}_2(D_2)$ we can also connect $x_1$ with $\bar{x}$, so we conclude for $m=2$.
\\
\noindent
\textbf{Step 3.3.} Let us conclude for $m\geq 3$. We define
\begin{align*}
B_{1,1}&:=\psi_1(D_0)\cap D_1,\\
B_{1,2}&:= B_{1,1}\cap \psi^{-1}_{2}(D_2),\\
B_{1,j}&:= B_{1,(j-1)} \cap \psi^{-1}_{j,\dots,2}(D_j) \quad \textnormal{for }j=3,\dots,m,
\end{align*}
Let us now consider the following sequence of sets $\bar{B}_i\subset B_i$ for $i=1,\dots,m-1$ defined as
\begin{align*}
\bar{B}_1&:= B_{1,m},\\
\bar{B}_i&:= \psi_i(\bar{B}_{i-1})\quad \textnormal{for }i=2,\dots,m-1.
\end{align*}
By properties of the diffeomorphism $\psi_i$  we have that $\mathcal{L}^n(B_i\setminus \bar{B}_i)=0$ for $i=1,\dots,m-1$. Let us now construct a sequence of $m-1$ points in $\Omega$ and a polygonal chain in $\Gamma_\Omega(u)$ that connects them. More precisely, let $x_1\in \bar{B}_1$ be any point, let us call with
\begin{align*}
x_2&:=\psi_2(x_1),
\end{align*}
while in case $m\geq 4$ we further set
\begin{align*}
x_i&:=\psi_{i,\dots,2}(x_{i-1})\quad \textnormal{for }i=3,\dots,m-1.
\end{align*}
Note that by construction, $x_i\in \bar{B}_i$ for $i=1,\dots,m-1$. Let us show that the curve $\gamma_i:[a_{i-1},a_i]\to \Omega$ defined as
\begin{align*}
\gamma_i(t)= x_{i-1} + (t-a_{i-1})\eta_i
\end{align*}
is in $\Gamma_\Omega(u)$ and connects $x_{i-1}$ with $x_i$ for $i=2,\dots,m-1$. Then, since by step 3.1 $x_1$ can be connected to $x$, and $x_{m-1}$ can be connected with $\bar{x}$, by connecting all the curves $\gamma_i$ we conclude the proof. Indeed, $x_{i-1}\in \bar{B}_{i-1}$ and so thanks to the way we constructed the sets $(\bar{B}_i)_i$ we deduce that $x_{i-1}\in D_{i-1}$, thus by definition of $D_{i-1}$ we have that $(x_{i-1})_{\eta_i}\in (G_{i-1})_{\eta_i}$ and this proves that $\gamma_i \in \Gamma_\Omega(u)$. By generality of $i=2,\dots,m-1$ we conclude.
\end{proof}
\noindent
Next result shows some useful properties of the function $u_\gamma$ that was defined in \eqref{def_u_gamma}. Roughly speaking, this result can be seen as a sort of ``polygonal chain counterpart'' of Proposition \ref{prop_GBV slicing}.

\begin{proposition}\label{prop_fine properties of ugamma}
Let $\gamma\in \Gamma_\Omega(u)$. Then $u_\gamma\in BV(I_\gamma^\circ)$, and the following three relations hold true:
\begin{itemize}
\item[i)] $\nabla u_\gamma(t)= \nabla u(y_i+(t+(t_i-a_{i-1}))\eta_i)\cdot\eta_i$ \quad \textnormal{for } $\mathcal{L}^1$-\textnormal{a.e. }$t\in (a_{i-1},a_i)$, $i=1,\dots,m$;\vspace*{0.2cm}
\item[ii)] \textnormal{If }$t\in I_{\gamma}^\circ$\textnormal{ is a jump point for }$u_\gamma$,\textnormal{ then }$\gamma(t)\in \Omega$\textnormal{ is a jump point for }$u$,\textnormal{ moreover}\vspace*{0.2cm}
$$[u_\gamma](t)=[u](\gamma(t))\quad \forall\,t \in J_{u_\gamma};$$\vspace*{0.01cm}
\item[iii)] $|D^c u_\gamma|(I^\circ_{\gamma})=\sum_{i=1}^m |D^c(u^{\eta_i}_{y_i})|((t_i,t_i+(a_{i}-a_{i-1})))$,\vspace*{0.01cm}
\end{itemize}
where we recall that $(y_i)_i$, $(a_i)_i$ and $(\eta_i)_i$ for $i=0,\dots,m$ are associated to $\gamma$ as described in Section \ref{subsec_A class of one-dimensional curves}.
\end{proposition}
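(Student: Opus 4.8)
The plan is to reduce every assertion to Proposition~\ref{prop_GBV slicing} applied on each affine piece of $\gamma$, after an elementary change of variables. By \eqref{gamma di t intermediate}, on $[a_{i-1},a_i]$ one has $\gamma(t)=y_i+s_i(t)\,\eta_i$ with $s_i(t):=t+(t_i-a_{i-1})$, a translation mapping $[a_{i-1},a_i]$ onto $J_i:=[t_i,t_i+(a_i-a_{i-1})]$; thus $\gamma|_{[a_{i-1},a_i]}$ is an affine reparametrisation of the segment $\{y_i+s\eta_i:\ s\in J_i\}$. Since $\gamma\in\Gamma_\Omega(u)$ we have $y_i\in G_{\eta_i}$, so $u^{\eta_i}_{y_i}\in BV(\Omega^{\eta_i}_{y_i})$ and, by Proposition~\ref{prop_GBV slicing}~iii), $u_\gamma(t)=u\Low(\gamma(t))=(u^{\eta_i}_{y_i})\Low(s_i(t))$ for every $t\in[a_{i-1},a_i]$. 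As $\gamma([a_{i-1},a_i])$ is a compact subset of the open set $\Omega$, the interval $J_i$ is compactly contained in $\Omega^{\eta_i}_{y_i}$, hence the precise representative $(u^{\eta_i}_{y_i})\Low$ has finite pointwise variation on $J_i$; transporting it by the translation $s_i$ gives $\textnormal{pV}(u_\gamma,[a_{i-1},a_i])<\infty$. Summing over $i$ and using $\textnormal{pV}(u_\gamma,I_\gamma^\circ)\le\sum_{i=1}^m\textnormal{pV}(u_\gamma,[a_{i-1},a_i])$, together with the fact that $u_\gamma$ is bounded and $I_\gamma^\circ$ is a bounded interval, we obtain $\textnormal{eV}(u_\gamma,I_\gamma^\circ)<\infty$, hence $u_\gamma\in BV(I_\gamma^\circ)$.

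Items i) and iii) then follow by locality. On each open segment $(a_{i-1},a_i)$ the measure $Du_\gamma$ is, via $s_i$, the translate of $Du^{\eta_i}_{y_i}\mres(t_i,t_i+(a_i-a_{i-1}))$, so its density equals $\nabla u^{\eta_i}_{y_i}(s_i(t))$, which by Proposition~\ref{prop_GBV slicing}~i) equals $\nabla u\big(y_i+(t+(t_i-a_{i-1}))\eta_i\big)\cdot\eta_i$; this is i). Since $D^cu_\gamma$ carries no atoms it gives no mass to the finite set $\{a_1,\dots,a_{m-1}\}$, whence $|D^cu_\gamma|(I_\gamma^\circ)=\sum_{i=1}^m|D^cu_\gamma|\big((a_{i-1},a_i)\big)$, and on each such interval locality of $D^c$ and the translation $s_i$ identify this quantity with $|D^c(u^{\eta_i}_{y_i})|\big((t_i,t_i+(a_i-a_{i-1}))\big)$; this is iii).

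For ii) I would distinguish whether the jump point $t$ lies in the interior of some segment or is a ``corner'' $a_i$. If $t\in(a_{i-1},a_i)$, then near $t$ the function $u_\gamma$ is a translate of $(u^{\eta_i}_{y_i})\Low$, so $t\in J_{u_\gamma}$ forces $s_i(t)\in J_{u^{\eta_i}_{y_i}}$; since $y_i\in G_{\eta_i}$, the refined one-dimensional slicing theorem (\cite[Theorem 3.107, Theorem 3.108]{AFP}) gives $\gamma(t)=y_i+s_i(t)\eta_i\in J_u$, while the identities $(u^{\eta_i}_{y_i})\Low=u\Low(y_i+\cdot\,\eta_i)$, $(u^{\eta_i}_{y_i})\Upp=u\Upp(y_i+\cdot\,\eta_i)$ from Proposition~\ref{prop_GBV slicing}~iii) yield $[u_\gamma](t)=[u^{\eta_i}_{y_i}](s_i(t))=[u](\gamma(t))$. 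If $t=a_i$ is a corner, let $\ell,r$ be the one-sided limits of $u_\gamma$ at $a_i$; $\ell$ is a one-sided limit of $u^{\eta_i}_{y_i}$ at $t_i+(a_i-a_{i-1})$ and $r$ one of $u^{\eta_{i+1}}_{y_{i+1}}$ at $t_{i+1}$, so, applying Proposition~\ref{prop_GBV slicing}~iii) through $y_i\in G_{\eta_i}$ and through $y_{i+1}\in G_{\eta_{i+1}}$, both $\ell$ and $r$ belong to $\{u\Low(\gamma(a_i)),u\Upp(\gamma(a_i))\}$. If $a_i\in J_{u_\gamma}$ then $\ell\neq r$, so this set has two distinct elements, i.e.\ $\gamma(a_i)\in S_u$ and $\{\ell,r\}=\{u\Low(\gamma(a_i)),u\Upp(\gamma(a_i))\}$, giving $[u_\gamma](a_i)=u\Upp(\gamma(a_i))-u\Low(\gamma(a_i))=[u](\gamma(a_i))$; finally $\gamma(a_i)\in S_u$ together with $y_i\in G_{\eta_i}$ yields $t_i+(a_i-a_{i-1})\in S_{u^{\eta_i}_{y_i}}=J_{u^{\eta_i}_{y_i}}$ (recall that one-dimensional $BV$ functions satisfy $S=J$), and the refined slicing theorem again gives $\gamma(a_i)\in J_u$.

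The change of variables and the additivity of the pointwise variation are routine; I expect the genuinely delicate point to be the treatment of the corners in ii), where one must exploit that \emph{both} endpoints $y_i$ and $y_{i+1}$ of the two adjacent slices are good points, so that the left and right traces of $u_\gamma$ at a vertex are forced into the pair $\{u\Low(\gamma(a_i)),u\Upp(\gamma(a_i))\}$.
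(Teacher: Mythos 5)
Your proof is correct. The $BV$-regularity of $u_\gamma$ and items i) and iii) are handled essentially as in the paper: change of variables $s_i$ on each affine piece, Proposition~\ref{prop_GBV slicing}, and additivity of the pointwise variation; the paper's Step~1.2 is a touch more careful in passing from the open intervals $(t_i-\e,t_i+(a_i-a_{i-1})+\e)$ to closed ones via Borel regularity of $|Du^{\eta_i}_{y_i}|$, but the conclusion is the same. For item ii) your treatment of the corners $a_i\in N(\gamma)$ is genuinely different and noticeably more streamlined. The paper proceeds through three contradiction arguments with explicit weak approximate limit estimates (its Steps~2.1--2.3 and~3): it first shows that a jump of $u^{\eta_i}_{y_i}$ at $t_i+(a_i-a_{i-1})$ is equivalent to a jump of $u^{\eta_{i+1}}_{y_{i+1}}$ at $t_{i+1}$; it then shows that $a_i\in J_{u_\gamma}$ forces both slices to jump; it then rules out, by a second density computation, the ``mismatched'' configuration where both one-sided traces of $u_\gamma$ at $a_i$ equal the same extreme value $u\Upp(\gamma(a_i))$; only then does it identify the traces. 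You instead invoke at once the elementary fact that the one-sided limits of a good representative of a one-dimensional $BV$ function at any point lie in $\{g\Low,g\Upp\}$ of that point, transport this via Proposition~\ref{prop_GBV slicing}~iii) to conclude $\ell,r\in\{u\Low(\gamma(a_i)),u\Upp(\gamma(a_i))\}$, and observe that the hypothesis $\ell\neq r$ then forces $\{\ell,r\}$ to exhaust this two-element set; this yields all three intermediate facts of the paper's Steps~2.1--2.3 simultaneously, without any $\rho\to0^+$ limit. What the paper's computational route buys is a display of the density arguments that recur throughout the area; what your route buys is brevity. Finally, you correctly flag the need for the refined version of the slicing theorem to upgrade $\gamma(t)\in S_u$ (which is all Proposition~\ref{prop_GBV slicing}~ii) literally delivers) to $\gamma(t)\in J_u$; the paper's proof is silent on this point, so this is not a defect of your argument relative to the paper's.
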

\begin{proof}
We divide the proof in several steps.\\
\textbf{Step 1.} In this first part of the proof we will prove that $u_\gamma\in BV(I_{\gamma}^\circ)$. This will be done in two sub steps.\\
\textbf{Step 1.1.} In this step we will prove that $u_\gamma\in BV((a_{i-1},a_i))$, and $u_\gamma$ is a good representative in the equivalence class of $u_\gamma$ in $(a_{i-1},a_i)$ for every $i=1,\dots,m$. In particular, we will prove that setting $N(\gamma)=\{a_i:\, i=1,\dots,m-1  \}$, we have that 
\begin{align}\label{ugamma in BV preliminary}
u_\gamma\in BV(I_\gamma^\circ\setminus (N(\gamma))),
\end{align}
and so, since $\mathcal{L}^1(N(\gamma))=0$, $u\in L^1(I_\gamma^\circ)$. Let us fix any $i\in \{ 1,\dots,m \}$, then 
\begin{align*}
u_\gamma(t)\stackrel{\eqref{gamma di t intermediate}+\eqref{def_u_gamma}}{=} u\Low(y_i+(t+(t_i-a_{i-1}))\eta_i)\quad \forall\, t\in (a_{i-1},a_i).
\end{align*} 
Thus, applying the change of variable $s=t + (t_i-a_{i-1})$ we get that
\begin{align}\label{ugamma = u fettato in (ai-1 , ai)}
u_\gamma(t)=u_\gamma(s-(t_i-a_{i-1}))= u\Low(y_i+s\,\eta_i)=u^{\eta_i}_{y_i}(s)\quad \forall\, t\in (a_{i-1},a_i).
\end{align} 
Thus, by definition of $\Gamma_{\Omega}(u)$ we know that $u^{\eta_i}_{y_i}\in BV((t_i,t_i+(a_i-a_{i-1})))$ for all $i=1,\dots,m$, and thus $u_\gamma\in BV((a_{i-1},a_i))$, which implies \eqref{ugamma in BV preliminary}. As a consequence of \eqref{ugamma in BV preliminary}, \eqref{ugamma = u fettato in (ai-1 , ai)}, and of Proposition \ref{prop_GBV slicing} we also deduce the validity of relation i).\\
\textbf{Step 1.2.} In this step we will show that $u_\gamma\in BV(I_\gamma^\circ)$. For any $i\in\{1,\dots,m  \}$ there exists $\bar{\epsilon}>0$ such that for every $0<\epsilon<\bar{\epsilon}$
$$
[t_i,t_i +(a_i-a_{i-1})]\subset (t_i-\epsilon,t_i + (a_i-a_{i-1})+\epsilon)\subset \Omega^{\eta_i}_{y_i}.
$$
Thus, recalling the definitions of \emph{pointwise variation} and \emph{essential variation} (see Section \ref{section_fundamentals}) we get 
\begin{align*}
&\textnormal{pV}(u_\gamma,[a_{i-1},a_i])=\textnormal{pV}(u^{\eta_i}_{y_1},[t_i,t_i+(a_i-a_{i-1})])\leq \textnormal{pV}(u^{\eta_i}_{y_i}, (t_i-\epsilon,t_i+(a_i-a_{i-1})+\epsilon))\\
&= \textnormal{eV}(u^{\eta_i}_{y_i}, (t_i-\epsilon,t_i+(a_i-a_{i-1})+\epsilon)),
\end{align*}
where for the last equality sign, we used that $u^{\eta_i}_{y_i}\in BV((t_i-\epsilon,t_i+(a_i-a_{i-1})+\epsilon))$, together with the fact that $(u\Low)^{\eta_i}_{y_i}$ is a good representative for $u^{\eta_i}_{y_i}$ in $\Omega^{\eta_i}_{y_i}$ (see \cite[relation (3.24)]{AFP}). Passing to the $\inf$ among all $0<\epsilon<\bar{\epsilon}$, and by Borel regularity of the Radon measure $|Du^{\eta_i}_{y_i}|$ defined on $\mathbb{R}$ we conclude that
\begin{align}\label{pV < eV}
\textnormal{pV}(u_\gamma,[a_{i-1},a_i])\leq \textnormal{eV}(u^{\eta_i}_{y_i}, [t_i,t_i + (a_i-a_{i-1})])<\infty.
\end{align}
Thus,
\begin{align*}
\textnormal{pV}(u_\gamma,I_\gamma^\circ) \leq \sum_{i=1}^m\textnormal{pV}(u_\gamma,[a_{i-1},a_i])\stackrel{\eqref{pV < eV}}{\leq}
\sum_{i=1}^m\textnormal{eV}(u^{\eta_i}_{y_i},[t_i,t_i + (a_i-a_{i-1})])<\infty.
\end{align*}
Thus $u_\gamma\in BV(I_\gamma^\circ)$. This concludes Step 1. Moreover, thanks to \eqref{ugamma = u fettato in (ai-1 , ai)} and since $\mathcal{H}^0(N(\gamma))<\infty$ we also deduce the validity of relation iii).\\
\textbf{Step 2.} In this step we will show the validity of relation ii). We divide the proof in few sub steps. Let us first notice that thanks to step 1.1, relation \eqref{ugamma = u fettato in (ai-1 , ai)}, and Proposition \ref{prop_GBV slicing} we have that relation ii) holds true whenever $t\in J_{u_\gamma}\setminus N(\gamma)$. Thus we will focus our attention in the case of $t\in J_{u_\gamma}\cap N(\gamma)$.\\
\textbf{Step 2.1.} In this step we will show that, for any $i\in \{1,\dots, m-1  \}$, $t_i+(a_i-a_{i-1})\in \Omega^{\eta_i}_{y_i}$ is a jump point for $u^{\eta_i}_{y_i}$ if and only if $t_{i+1}\in \Omega^{\eta_{i+1}}_{y_{i+1}}$ is a jump point for $u^{\eta_{i+1}}_{y_{i+1}}$. We will prove only one implication since the opposite one follows by similar considerations. Let us assume that $t_i+(a_i-a_{i-1})\in \Omega^{\eta_i}_{y_i}$ is a jump point for $u^{\eta_i}_{y_i}$, namely
$$
t_i+(a_i-a_{i-1}) \in S_{u^{\eta_i}_{y_i}}.
$$
Thanks to point ii) of Proposition \ref{prop_GBV slicing} this implies that
$$
\gamma(a_i)=y_i+(t_i+(a_i-a_{i-1}))\eta_i \in S_u,
$$
where in the above relation we used \eqref{gamma di t intermediate} in the interval $[a_{i-1}, a_i]$ with $t=a_i$. In a similar way, using once again \eqref{gamma di t intermediate} in the interval $[a_i,a_{i+1}]$, and $t=a_i$ we get that
$$
\gamma(a_i)=y_{i+1}+t_{i+1}\eta_{i+1},
$$
which thanks to point ii) of Proposition \ref{prop_GBV slicing} implies that $t_{i+1}\in S_{u^{\eta_{i+1}}_{y_{i+1}}}$.
This concludes step 2.1.\\
\textbf{Step 2.2.} Let $m\geq 2$. In this step we will show that if $a_i\in N(\gamma)$ is a jump point for $u_\gamma$, for some $i\in\{1,\dots, m-1 \}$, then $t_i+(a_i-a_{i-1})\in \Omega^{\eta_i}_{y_i}$, and $t_{i+1}\in \Omega^{\eta_{i+1}}_{y_{i+1}}$ are jump points for $u^{\eta_i}_{y_i}$ and for $u^{\eta_{i+1}}_{y_{i+1}}$, respectively. We will prove this by contradiction. Let assume that $a_i$ is a jump point of $u_\gamma$, and suppose that $t_i+(a_i-a_{i-1})$ is not a jump point for $u^{\eta_i}_{y_i}$. Thus by Proposition \ref{prop_GBV slicing} we infer that the weak approximate limit of $u$ at $\gamma(a_i)$ exists and we call it $z\in \mathbb{R}$. Let us show that $z$ is the weak approximate limit of $u_\gamma$ at $a_i$. Indeed, for every $0<\rho<\min\{(a_{i+1}-a_i), (a_i-a_{i-1}) \}$, and for every $\epsilon>0$ we have that
\begin{align*}
&\frac{\mathcal{H}^1\left(\{t\in (a_i-\rho,a_i+\rho):\,|u_{\gamma}(t)-z|>\epsilon \} \right)}{\rho}= \frac{\mathcal{H}^1\left(\{t\in (a_i-\rho,a_i):\,|u_{\gamma}(t)-z|>\epsilon \} \right)}{\rho}\\
&+ \frac{\mathcal{H}^1\left(\{t\in  (a_i,a_i+\rho):\,|u_{\gamma}(t)-z|>\epsilon \} \right)}{\rho} 
\\&\stackrel{\eqref{ugamma = u fettato in (ai-1 , ai)}}{=}\frac{\mathcal{H}^1\left(\{s\in (t_i+ (a_i-a_{i-1})-\rho, t_i+(a_i-a_{i-1})):\,|u^{\eta_i}_{y_i}(s)-z|>\epsilon \} \right)}{\rho}
\\& + \frac{\mathcal{H}^1\left(\{s\in (t_{i+1},t_{i+1}+\rho):\,|u^{\eta_{i+1}}_{y_{i+1}}(s)-z|>\epsilon \} \right)}{\rho}
\\&\leq \frac{\mathcal{H}^1\left(\{s\in (t_i+ (a_i-a_{i-1})-\rho, t_i+ (a_i-a_{i-1})+\rho):\,|u^{\eta_i}_{y_i}(s)-z|>\epsilon \} \right)}{\rho}
\\& + \frac{\mathcal{H}^1\left(\{s\in (t_{i+1}-\rho, t_{i+1}+\rho):\,|u^{\eta_{i+1}}_{y_{i+1}}(s)-z|>\epsilon \} \right)}{\rho}.
\end{align*} 
Passing to the limit as $\rho\to 0^+$, since (thanks to step 2.1) $z$ is the weak approximate limit of both $u^{\eta_i}_{y_i}$, and of $u^{\eta_{i+1}}_{y_i+1}$ at $t_i+(a_i-a_{i-1})$ and at $t_{i+1}$, respectively we get that
$$
\lim_{\rho\to 0^+}\frac{\mathcal{H}^1\left(\{t\in (a_i-\rho,a_i+\rho):\,|u_{\gamma}(t)-z|>\epsilon \} \right)}{\rho}=0.
$$
Thus, thanks to \cite[Proposition 3.65]{AFP} we deduce that $z$ is the approximate limit of $u_\gamma$ at $a_i$, which is a contradiction. This concludes the proof of step 2.2.\\
\textbf{Step 2.3.} Let $a_i\in N(\gamma)$ be a jump point of $u_\gamma$. Thanks to step 2.2 we know that both $t_i+(a_i-a_{i-1})\in \Omega^{\eta_i}_{y_i}$, and $t_{i+1}\in \Omega^{\eta_{i+1}}_{y_{i+1}}$ are jump points for $u^{\eta_i}_{y_i}$ and for $u^{\eta_{i+1}}_{y_{i+1}}$, respectively.
Without loss of generality we can assume that 
\begin{align}\label{eq_eta+1 salta}
u\Upp(\gamma(a_i))=(u^{\eta_{i+1}}_{y_{i+1}})\Upp(t_{i+1})=\textnormal{aplim}(u^{\eta_{i+1}}_{y_{i+1}},[t_{i+1},\infty), t_{i+1}),
\end{align}
where for the first equality sign we used point iii) of Proposition \ref{prop_GBV slicing}. Since by step 2.2 also $t_i+(a_i-a_{i-1})$ is a jump point for $u^{\eta_i}_{y_i}$, we are left with only two possibilities, namely either 
\begin{align}\label{eq_eta salta ,ma ugamma no}
u\Upp(\gamma(a_i))=(u^{\eta_{i}}_{y_{i}})\Upp(t_{i}+(a_i-a_{i-1}))=\textnormal{aplim}(u^{\eta_{i}}_{y_{i}}, (-\infty, t_{i}+(a_i-a_{i-1})], t_{i}+(a_i-a_{i-1})),
\end{align}
or
\begin{align}\label{eq_eta salta , e ugamma pure}
u\Low(\gamma(a_i))=(u^{\eta_{i}}_{y_{i}})\Low(t_{i}+(a_i-a_{i-1}))=\textnormal{aplim}(u^{\eta_{i}}_{y_{i}}, (-\infty, t_{i}+(a_i-a_{i-1})], t_{i}+(a_i-a_{i-1})).
\end{align}
Goal of this step is to prove that if $a_i\in N(\gamma)$ is a jump point of $u_\gamma$ and \eqref{eq_eta+1 salta} is in force, then \eqref{eq_eta salta , e ugamma pure} holds true. Suppose by contradiction that \eqref{eq_eta+1 salta}, and \eqref{eq_eta salta ,ma ugamma no} hold true. Setting $z=u\Upp(\gamma(a_i))$ we have that
\begin{align*}
&\frac{\mathcal{H}^1\left(\{t\in (a_i-\rho,a_i+\rho):\,|u_{\gamma}(t)-z|>\epsilon \} \right)}{\rho}= \frac{\mathcal{H}^1\left(\{t\in (a_i-\rho,a_i):\,|u_{\gamma}(t)-z|>\epsilon \} \right)}{\rho}\\
&+ \frac{\mathcal{H}^1\left(\{t\in  (a_i,a_i+\rho):\,|u_{\gamma}(t)-z|>\epsilon \} \right)}{\rho} 
\\&\stackrel{\eqref{ugamma = u fettato in (ai-1 , ai)}}{=}\frac{\mathcal{H}^1\left(\{s\in (t_i+ (a_i-a_{i-1})-\rho, t_i+(a_i-a_{i-1})):\,|u^{\eta_i}_{y_i}(s)-z|>\epsilon \} \right)}{\rho}
\\& + \frac{\mathcal{H}^1\left(\{s\in (t_{i+1},t_{i+1}+\rho):\,|u^{\eta_{i+1}}_{y_{i+1}}(s)-z|>\epsilon \} \right)}{\rho}.
\end{align*} 
Thanks to  relations \eqref{eq_eta+1 salta}, and \eqref{eq_eta salta ,ma ugamma no}, we get that
$$
\lim_{\rho\to 0^+}\frac{\mathcal{H}^1\left(\{t\in (a_i-\rho,a_i+\rho):\,|u_{\gamma}(t)-z|>\epsilon \} \right)}{\rho}=0,
$$
thus $u\Upp(\gamma(a_i))$ is the approximate limit of $u_\gamma$ at $a_i$, and so $a_i\in N(\gamma)$ is not a jump point for $u_\gamma$. This contradicts our assumption, and concludes the step.\\
\textbf{Step 3.} We conclude. So let $a_i\in N(\gamma)$ be a jump point for $u_\gamma$, and assume that \eqref{eq_eta+1 salta} holds true, then by step 2.3 we know that \eqref{eq_eta salta , e ugamma pure} holds true. Moreover,
\begin{align*}
&(u_\gamma)\Upp (a_i)=\textnormal{aplim}(u_\gamma,(a_i,\infty),a_i)\stackrel{\eqref{ugamma = u fettato in (ai-1 , ai)}}{=}\textnormal{aplim}(u^{\eta_{i+1}}_{y_{i+1}},(t_{i+1},\infty), t_{i+1})\stackrel{\eqref{eq_eta+1 salta}}{=}(u^{\eta_{i+1}}_{y_{i+1}})\Upp(t_{i+1})=u\Upp(\gamma(a_i)),\vspace*{0.2cm}\\
&(u_\gamma)\Low(a_i)=\textnormal{aplim}(u_\gamma,(-\infty,a_i),a_i)\stackrel{\eqref{ugamma = u fettato in (ai-1 , ai)}}{=}\textnormal{aplim}(u^{\eta_{i}}_{y_{i}}, (-\infty, t_{i}+(a_i-a_{i-1})], t_{i}+(a_i-a_{i-1}))
\\ &\stackrel{\eqref{eq_eta salta , e ugamma pure}}{=}(u^{\eta_{i}}_{y_{i}})\Low(t_{i}+(a_i-a_{i-1}))=u\Low(\gamma(a_i)).
\end{align*}
This concludes the proof.
\end{proof} 

\noindent
Let us now focus our analysis to a $\mathcal{L}^n$-equivalent subset of $\Omega$, namely we introduce the set
\begin{align}\label{def_tilda Omega u}
\tilde{\Omega}_u:= \left\{x\in \Omega\setminus S_u:\,x \textnormal{ is suitable for connections in the sense of Definition \ref{def_suitable for connection}  } \right\}.
\end{align}

\begin{lemma}\label{lem_calculation |u_gamma (a0)-u_gamma(am)|}
Let $x_1,x_2\in \tilde{\Omega}_u$, and let $\gamma\in \Gamma_\Omega(u)$ be such that it connects $x_1,x_2$ in the sense of Definition \ref{def_gamma connects two points}. Then,
\begin{align}\label{eq_|u_gamma (a0)-u_gamma(am)| < eV}
|u_\gamma(a_m)-u_\gamma(a_0)|\leq|Du_\gamma|(I_\gamma^\circ),
\end{align}
where $a_0,a_m$ are associated to $\gamma$ as described in Section \ref{subsec_A class of one-dimensional curves}.
\end{lemma}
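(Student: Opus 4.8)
The plan is to reduce the statement to elementary one-dimensional $BV$ calculus, using the hypothesis $x_1,x_2\in\tilde{\Omega}_u$ only to pin down the two boundary values of $u_\gamma$. First I would recall from Proposition~\ref{prop_fine properties of ugamma} that $u_\gamma\in BV(I_\gamma^\circ)$, with $I_\gamma^\circ=(a_0,a_m)$, and fix a good representative $\tilde{u}_\gamma$ of $u_\gamma$ on $(a_0,a_m)$; since it has bounded variation as a function of one real variable, $\tilde{u}_\gamma$ admits finite one-sided limits at every point of $[a_0,a_m]$, in particular the limits $\tilde{u}_\gamma(a_0^+)$ and $\tilde{u}_\gamma(a_m^-)$ exist.

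Next I would invoke the standard estimate $|\tilde{u}_\gamma(t)-\tilde{u}_\gamma(s)|\le |Du_\gamma|([s,t])$, valid for all $a_0<s<t<a_m$ (see \cite[Chapter~3.2]{AFP}); since $[s,t]\subset I_\gamma^\circ$ this gives $|\tilde{u}_\gamma(t)-\tilde{u}_\gamma(s)|\le |Du_\gamma|(I_\gamma^\circ)$ for all such $s,t$, and letting $s\downarrow a_0$, $t\uparrow a_m$ one obtains
\[
\bigl|\tilde{u}_\gamma(a_m^-)-\tilde{u}_\gamma(a_0^+)\bigr|\le |Du_\gamma|(I_\gamma^\circ).
\]
It then remains to identify $\tilde{u}_\gamma(a_0^+)=u_\gamma(a_0)$ and $\tilde{u}_\gamma(a_m^-)=u_\gamma(a_m)$, which is precisely where the assumption $x_1,x_2\notin S_u$ enters.

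For the left endpoint I would argue as follows. By relation~\eqref{ugamma = u fettato in (ai-1 , ai)} with $i=1$, on the first segment one has $u_\gamma(t)=u^{\eta_1}_{y_1}\!\bigl(t+(t_1-a_0)\bigr)$ for $t\in(a_0,a_1)$, so that $\tilde{u}_\gamma(a_0^+)$ is the right-hand limit at $t_1$ of the good representative of the slice $u^{\eta_1}_{y_1}$. Because $\gamma\in\Gamma_\Omega(u)$ we have $u^{\eta_1}_{y_1}\in BV(\Omega^{\eta_1}_{y_1})$ with $\Omega^{\eta_1}_{y_1}$ open and $t_1\in\Omega^{\eta_1}_{y_1}$, so $u^{\eta_1}_{y_1}$ is a genuine $BV$ function in a neighbourhood of $t_1$; and since $\gamma(a_0)=x_1\notin S_u$, item~ii) of Proposition~\ref{prop_GBV slicing} yields $t_1\notin S_{u^{\eta_1}_{y_1}}$. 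Hence the good representative of $u^{\eta_1}_{y_1}$ is continuous at $t_1$, with value $(u^{\eta_1}_{y_1})\Low(t_1)=u\Low(y_1+t_1\eta_1)=u\Low(x_1)$ by item~iii) of the same proposition, and therefore $\tilde{u}_\gamma(a_0^+)=u\Low(\gamma(a_0))=u_\gamma(a_0)$. The identical argument on the last segment $[a_{m-1},a_m]$, using $\gamma(a_m)=x_2\notin S_u$, gives $\tilde{u}_\gamma(a_m^-)=u\Low(\gamma(a_m))=u_\gamma(a_m)$. Inserting these two equalities into the displayed inequality yields \eqref{eq_|u_gamma (a0)-u_gamma(am)| < eV}.

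The one genuinely delicate point is this last identification of the boundary values: for a generic point $x_1$ — say a jump point of $u$ — the one-sided limit $\tilde{u}_\gamma(a_0^+)$ need not equal the value $u\Low(\gamma(a_0))$ that defines $u_\gamma(a_0)$, so restricting the endpoints to $\tilde{\Omega}_u$ (equivalently, asking $x_1,x_2\notin S_u$) is exactly what makes the boundary contributions in the one-dimensional fundamental theorem of calculus behave as needed; everything else is the routine one-dimensional $BV$ theory recalled in Section~\ref{section_fundamentals}.
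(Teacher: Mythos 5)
Your proof is correct and takes essentially the same approach as the paper: both first identify $u_\gamma(a_0)$ and $u_\gamma(a_m)$ with the one-sided limits of a good representative at the endpoints---using the hypothesis $x_1,x_2\notin S_u$ together with items ii) and iii) of Proposition~\ref{prop_GBV slicing}---and then invoke the one-dimensional $BV$ fundamental theorem. The only difference is cosmetic: the paper realizes the final estimate by extending $u_\gamma$ by constants to a slightly larger interval $(a_0-\epsilon,a_m+\epsilon)$ so that $a_0,a_m$ become interior approximate-continuity points, whereas you pass to the limit $s\downarrow a_0$, $t\uparrow a_m$ directly on the open interval.
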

\begin{proof}
We divide the proof in few steps.\\
\textbf{Step 1.} In this step we will prove that
\begin{align}\label{se tacca bene}
u_\gamma(a_0)=\lim_{t\to a_0^+} (u_\gamma)^r(t),\quad u_\gamma(a_m)=\lim_{t\to a_m^-}(u_\gamma)^l(t),
\end{align}
where $(u_\gamma)^r$, and $(u_\gamma)^l$ are the right continuous, and left continuous representative of $u_\gamma$ in $I_\gamma^\circ=(a_0,a_m)$, respectively. Indeed,
\begin{align*}
\lim_{t\to a_0^+} (u_\gamma)^r(t)\stackrel{\eqref{ugamma = u fettato in (ai-1 , ai)}}{=}\lim_{s\to t_1^+}(u^{\eta_1}_{y_1})^r(s)=(u^{\eta_1}_{y_1})^r(t_1)=(u^{\eta_1}_{y_1})\Low(t_1)=u\Low(y_1+t_1\eta_1)\stackrel{\eqref{gamma di t intermediate}}{=}u_{\gamma}(a_0),
\end{align*}
where for the second, and third  equality signs we used the fact that $t_1\in  (\Omega^{\eta_1}_{y_1}\setminus S_{u^{\eta_1}_{y_1}})$, and thus $u^{\eta_1}_{y_1}$ is approximately continuous in $t_1$, while for the second last equality we used point iii) of Proposition \ref{prop_GBV slicing}. Following a similar argument to the one we have just used, we complete the proof of \eqref{se tacca bene}. This concludes the first step.\\
\textbf{Step 2.} Let $\epsilon>0$, we define the function $u_{\gamma,\epsilon}:(a_0-\epsilon,a_m+\epsilon)\to \mathbb{R}$ as
\begin{align}
u_{\gamma,\epsilon}(t)=
\begin{cases}
u_\gamma(a_0)\quad &\mbox{if }t\in (a_0-\epsilon,a_0],\vspace*{0.1cm}\\
u_{\gamma}(t)\quad &\mbox{if }t\in (a_0,a_m)\vspace*{0.1cm},\\
u_\gamma(a_m)\quad &\mbox{if }t \in [a_m,a_m+\epsilon).
\end{cases}
\end{align}
Let us show that $u_{\gamma,\epsilon}\in BV((a_0-\epsilon,a_m+\epsilon))$ and that any good representative of $u_{\gamma,\epsilon}$ in $(a_0-\epsilon,a_m+\epsilon)$ is continuous in both $a_0$ and $a_m$. Indeed, by construction it is not difficult to prove that $u_{\gamma,\epsilon}\in L^1((a_0-\epsilon,a_m+\epsilon))$, moreover
\begin{align*}
\textnormal{pV}(u_{\gamma,\epsilon}, (a_0-\epsilon,a_m+\epsilon))=\textnormal{pV}(u_\gamma,[a_0,a_m])\stackrel{\eqref{pV < eV}}{<}\infty,
\end{align*}
and so $u_{\gamma,\epsilon}\in BV((a_0-\epsilon,a_m+\epsilon))$. Let us now address the problem of the continuity at $a_0$ and $a_m$. Indeed,
\begin{align*}
(u_{\gamma,\epsilon})^r(a_0)= \lim_{t\to a_0^+}(u_{\gamma,\epsilon})^r(t)=\lim_{t\to a_0^+} (u_\gamma)^r(t)=u_\gamma(a_0),
\end{align*}
where for the last equality sign we used step 1. Finally,
\begin{align*}
\lim_{t\to a_0^-}(u_{\gamma,\epsilon})^l(t)=\lim_{t\to a_0^-}(u_{\gamma}(a_0))^l(t)=u_{\gamma}(a_0),
\end{align*}
which together with the relation above proves that $u_{\gamma,\epsilon}$ is approximately continuous in $a_0$. A similar argument to the one just used shows the approximate continuity of $u_{\gamma,\epsilon}$ in $a_m$. This concludes the second step.\\
\textbf{Step 3.} We conclude. Indeed,
\begin{align*}
|u_\gamma(a_m)-u_\gamma(a_0)|&=|(u_{\gamma,\epsilon})^l(a_m)-(u_{\gamma,\epsilon})^l(a_0)|=|Du_{\gamma,\epsilon}([a_0,a_m))|=|Du_{\gamma,\epsilon}(a_0) + Du_{\gamma,\epsilon}((a_0,a_m))|\\&=|Du_\gamma((a_0,a_m))|\leq |Du_\gamma|((a_0,a_m)),
\end{align*}
where for the first and fourth equality signs we used the fact that, thanks to step 2, $u_{\gamma,\epsilon}$ is approximate continuous in both $a_0$ and $a_m$ (so both $a_0$ and $a_m$ are not atoms of $Du_{\gamma,\epsilon}$), while for the second equality sign we used \cite[Theorem 3.28]{AFP}. This concludes the proof of the lemma.
\end{proof}

\section{The notion of singular vertical distance}\label{section_SVD}
\noindent
Let $\Omega\subset \mathbb{R}^n$ be as in \eqref{assumptions on Omega}. Given $u\in BV(\Omega)$ we introduce the notion of \emph{singular vertical distance} between two points of $\tilde{\Omega}_u$ (defined in \eqref{def_tilda Omega u}), and we prove some properties of such a notion of distance. These results will be then used to give a geometric characterization for \emph{minimally singular} functions (see Theorem \ref{thm_characterization minimally singular via SVD}).

\begin{definition}\label{def: SVD}
Let $u\in BV(\Omega)$. Given any $x_1, x_2 \in \tilde{\Omega}_u$, we call \emph{singular vertical distance} between $x_1$ and $x_2$ with respect to $u$ in $\Omega$, and we denote it with $\textnormal{SVD}_{u,\Omega}(x_1,x_2)\in [0,\infty)$, the quantity defined as
\begin{align}\label{eq: def SVD}
\textnormal{SVD}_{u,\Omega}(x_1,x_2):=
\begin{cases}
\inf \left\{|D^s u_\gamma|(I_\gamma^\circ):\, \gamma\in \Gamma_\Omega(u),\,\gamma\textnormal{ connects }x_1,x_2  \right\}\quad &\mbox{if } x_1\neq x_2;\vspace*{0.2cm}\\
0 \quad &\mbox{if }x_1=x_2;
\end{cases}
\end{align}
where the notion of $\gamma$ connecting $x_1$ and $x_2$ was given in Definition \ref{def_gamma connects two points}.
\end{definition}

\begin{remark}\label{rem_SVD for more points}
Let us observe that, more in general, we could define the singular vertical distance between $x_1$ and $x_2$ with respect to $u$ in $\Omega$ for every couple of points $x_1,x_2 \in \Omega$ such that they are both suitable for connections (see Definition \ref{def_suitable for connection}), thus including in this way also points that possibly are in $S_u$.
\end{remark}

\begin{remark}\label{rem: notion of connectedness}
Let us observe that the couple $(\tilde{\Omega}_u, \textnormal{SVD}_{u,\Omega})$ is a pseudometric space (see for instance \cite[Chapter 9]{Royden}). Indeed, let us check that $\textnormal{SVD}_{u,\Omega}:\tilde{\Omega}_u \times \tilde{\Omega}_u \to [0,\infty)$ is a pseudometric which by definition is equivalent to say that the following three properties hold true:
\begin{itemize}
\item[i)]$\textnormal{SVD}_{u,\Omega}(x,x)=0$ for every $x\in \tilde{\Omega}_u$;
\item[ii)]$\textnormal{SVD}_{u,\Omega}(x_1,x_2)=\textnormal{SVD}_{u,\Omega}(x_2,x_1)$ for every $x_1,x_2 \in \tilde{\Omega}_u$;
\item[iii)] $\textnormal{SVD}_{u,\Omega}(x_1,x_2)\leq \textnormal{SVD}_{u,\Omega}(x_1,x_3)+\textnormal{SVD}_{u,\Omega}(x_3,x_2)$ for every $x_1,x_2,x_3 \in \tilde{\Omega}_u$.
\end{itemize}
The first two relations follow by definition of $\textnormal{SVD}_{u,\Omega}$. Let us show that relation iii) holds true, i.e. let us show the validity of the triangular inequality for the notion of $\textnormal{SVD}_{u,\Omega}$. Indeed,
\begin{align*}
&\textnormal{SVD}_{u,\Omega}(x_1,x_3) + \textnormal{SVD}_{u,\Omega}(x_3,x_2)\\
&=\inf\left\{|D^s u_\gamma|(I_\gamma^\circ):\, \gamma\in \Gamma_\Omega(u),\, \gamma \textnormal{ connects }x_1, x_2,\textnormal{ and }x_3=\gamma(t) \textnormal{ for some }t\in I_\gamma^\circ   \right\}\\
&\geq \inf\left\{|D^s u_\gamma|(I_\gamma^\circ):\, \gamma\in \Gamma_\Omega(u),\, \gamma \textnormal{ connects }x_1, x_2 \right\}\\
&=\textnormal{SVD}_{u,\Omega}(x_1,x_2).
\end{align*}
Motivated by the above three properties, we can use the \emph{singular vertical distance} to induce an equivalence relation between points of $\tilde{\Omega}_u$: more precisely we say that $x_1$ is equivalent to $x_2$ (in $\tilde{\Omega}_u$), if and only if $\textnormal{SVD}_{u,\Omega}(x_1,x_2)=0$. Thus, for every $\bar{x}\in \tilde{\Omega}_u$ we denote with $\{ \bar{x}\}_{u,\Omega}$ the equivalence class of $\tilde{\Omega}_u$ having $\bar{x}$ as representative, namely
\begin{align}\label{def: (x) equivalence class}
\{ \bar{x} \}_{u,\Omega}=\left\{x\in\tilde{\Omega}_u:\, \textnormal{SVD}_{u,\Omega} (\bar{x},x)=0 \right\}.
\end{align}  
\end{remark}

\begin{remark}
Thanks to Remark \ref{rem: notion of connectedness} we have the following equivalent statement of Theorem \ref{thm_characterization minimally singular via SVD}: $u$ is \emph{minimally singular} if and only if there exists $\bar{x}\in \Omega$ such that $\{\bar{x} \}_{u,\Omega}=\Omega$ up to a set of $\mathcal{L}^n$-measure zero.
\end{remark}

\noindent
Next result represents the main ingredient in the proof of Theorem \ref{thm_characterization minimally singular via SVD}.
\begin{lemma}\label{lem_SVD is in GBV}
Let $u\in BV(\Omega)$, let $\bar{x}\in \tilde{\Omega}_u$, and let $b: \Omega \to [0,\infty)$ be defined as
\begin{align}\label{def: the smart baricenter}
b(x):=
\begin{cases}
\textnormal{SVD}_{u,\Omega}(\bar{x},x)\quad &\textnormal{for } x\in \tilde{\Omega}_u,\vspace*{0.2cm}\\
0\quad &\textnormal{otherwise}.
\end{cases}
\end{align}
Then $b\in GBV(\Omega)$, and it satisfies conditions i), ii), and iii) of Definition \ref{def_minimally singular functions} with respect to  the function $u$.
\end{lemma}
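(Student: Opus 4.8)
The plan is to establish the three required properties (i)--(iii) of Definition \ref{def_minimally singular functions} for the function $b$ by exploiting the slicing structure of $b$ along polygonal chains. The key observation is that $b$ behaves, along curves $\gamma \in \Gamma_\Omega(u)$, exactly like the cumulative singular variation $t \mapsto |D^s u_\gamma|((a_0,t))$ composed with an additive-over-$\gamma$ structure coming from the triangle inequality (Remark \ref{rem: notion of connectedness}). First I would verify that $b \in GBV(\Omega)$: truncating $b$ at level $M$, I restrict $b^M$ to curves $\gamma \in \Gamma_\Omega(u)$ and show $(b^M)_\gamma$ is $BV$ in $I_\gamma^\circ$ with total variation controlled by $|D^s u_\gamma|(I_\gamma^\circ)$, using that for $s<t$ in $I_\gamma^\circ$ one has $|b(\gamma(t)) - b(\gamma(s))| \le \mathrm{SVD}_{u,\Omega}(\gamma(s),\gamma(t)) \le |D^s u_{\gamma|_{[s,t]}}|((s,t)) \le |D^s u_\gamma|((s,t))$. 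Combined with Proposition \ref{prop_si puo connettere via Gamma} (almost every pair of points in $\tilde\Omega_u$ is joined by a curve in $\Gamma_\Omega(u)$) and a slicing criterion for $GBV$ — essentially Proposition \ref{prop_GBV slicing} applied to $b^M$ in each coordinate direction, with finiteness of the sliced variations following from the estimate above integrated over the slices — this gives $b^M \in BV_{\loc}(\Omega)$.

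\textbf{Property (i): $\nabla^* b = 0$ a.e.} The heart of the matter is that $b$ has \emph{no absolutely continuous part} along any curve. Using the estimate $|b(\gamma(t))-b(\gamma(s))| \le |D^s u_\gamma|((s,t))$ and that $|D^s u_\gamma|$ has no absolutely continuous part with respect to $\mathcal{L}^1$, I conclude that the one-dimensional restriction $b_\gamma$ satisfies $\nabla b_\gamma = 0$ $\mathcal{L}^1$-a.e. on $I_\gamma^\circ$. Then by Proposition \ref{prop_GBV slicing}(i) applied to $b^M$ (via the curve/slice correspondence in Proposition \ref{prop_fine properties of ugamma}(i)) we get $\nabla b^M(x) \cdot \nu = 0$ for $\mathcal{L}^n$-a.e. $x$, for each of $n$ independent directions $\nu$, hence $\nabla b^M = 0$ $\mathcal{L}^n$-a.e., and letting $M \to \infty$ gives $\nabla^* b = 0$ $\mathcal{L}^n$-a.e. on $\Omega$.

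\textbf{Properties (ii) and (iii): control of $[b]$ and $D^c b$ by $[u]$ and $D^c u$.} For the jump part, at a jump point $x$ of $b^M$ in direction $\nu$, the sliced function $b_\gamma$ (for a suitable $\gamma$ through $x$ with direction $\nu$) jumps, and $[b_\gamma](t) \le$ (the jump of $|D^s u_\gamma|$ as a cumulative function at $t$) $= [u_\gamma](t) = [u](\gamma(t))$ by Proposition \ref{prop_fine properties of ugamma}(ii), since the singular cumulative variation jumps by exactly the jump amplitude of $u_\gamma$ at a jump point. Translating back via Proposition \ref{prop_GBV slicing}(ii)--(iii) yields $[b] \le [u]$ $\mathcal{H}^{n-1}$-a.e. on $\Omega$. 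For the Cantor part, again working along curves: $D^c u_\gamma$ and $D^c b_\gamma$ are both carried on the Cantor set of $u_\gamma$, and the inequality $|D^s u_\gamma|((s,t)) \ge |b_\gamma(t) - b_\gamma(s)|$ together with monotonicity-type considerations forces $|D^c b_\gamma| \le |D^c u_\gamma|$ as measures, giving a Radon--Nikodym density $f_\gamma$ with $|f_\gamma| \le 1$; patching these densities across a covering family of curves (using Proposition \ref{prop_fine properties of ugamma}(iii), Lemma \ref{lem: cantor total variation of v on v=0 is null}, and the disintegration of $D^c b^M$ and $D^c u$ along slices) produces the global Borel function $f : \Omega \to [-1,1]$ with $D^c(b^M)(B) = \int_{B \cap \{|b|<M\}^{(1)}} f \, dD^c u$ for a.e.\ $M$.

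\textbf{The main obstacle} I anticipate is property (iii): passing from the one-dimensional, curve-by-curve domination of Cantor parts to a single globally defined density $f$ requires care, because the density obtained on each slice must be shown to be consistent ($\mathcal{H}^{n-1}$-a.e.\ independent of which admissible curve/direction one slices along), and one must invoke the disintegration of the measures $D^c(b^M)$ and $D^c u$ along the slicing directions (as in the treatment of \cite[Lemma 2.2]{CagnettiColomboDePhilippisMaggiSteiner} and Proposition \ref{prop_GBV slicing}) to glue the slice densities measurably. A secondary technical point is the bookkeeping at the vertices of polygonal chains — that the cumulative variation $t \mapsto |D^s u_\gamma|((a_0,t))$ genuinely dominates the increments of $b_\gamma$ even across the finitely many vertex parameters $N(\gamma)$ — but this is handled cleanly by Proposition \ref{prop_fine properties of ugamma} and Lemma \ref{lem_calculation |u_gamma (a0)-u_gamma(am)|}.
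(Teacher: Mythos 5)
Your overall strategy is the same as the paper's: slice along one-dimensional sections, use the pseudometric triangle inequality for $\textnormal{SVD}_{u,\Omega}$ to dominate the sliced variation of $b^M$ by $|D^s u^\nu_y|$, then lift this to the $n$-dimensional measure inequality and extract densities by Radon--Nikodym. Your treatment of $GBV$-membership and of properties (i)--(ii) is broadly consonant with the paper (the paper actually derives both (i) and (ii) in one stroke from the measure inequality $|D(b^M)|(B)\le|D^s u|(B)$, while you argue them pointwise along curves; both routes work). One small technical wrinkle in Step~1: the estimate $|b(\gamma(t))-b(\gamma(s))|\le \textnormal{SVD}_{u,\Omega}(\gamma(s),\gamma(t))$ is only valid for $\gamma(s),\gamma(t)\in\tilde\Omega_u$, so you need to argue with essential variation over a full-measure set of partition points. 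The paper sidesteps this by introducing the corrected slice function $\bar b(t)=\textnormal{SVD}_{u,\Omega}(\bar x, x_t)+\tfrac12[u^\nu_y](t)$, for which the pointwise variation bound holds at \emph{every} $t$ (see \eqref{eq_Barbie baricenter}--\eqref{eq: Step 2.1 1 necessity}).

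The genuine gap is in property (iii). You identify the right obstacle (passing from one-dimensional slice densities to a single scalar density $f$), but the resolution you propose --- ``invoke the disintegration of $D^c b^M$ and $D^c u$ along slices to glue the slice densities measurably'' --- does not by itself produce a scalar function. Disintegration gives, for each direction $\nu$, a scalar $g_{(M,\nu)}$ with $D^c_\nu(b^M)=g_{(M,\nu)}\,D^c_\nu u$; these are \emph{a priori} different functions for different $\nu$. To conclude $D^c(b^M)=f_M\,D^c u$ for a single Borel $f_M:\Omega\to[-1,1]$, one must first show that the polar vectors $\tfrac{dD^c b^M}{d|D^c b^M|}$ and $\tfrac{dD^c u}{d|D^c u|}$ are collinear $|D^c b^M|$-a.e. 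The paper obtains this from the inequality $\int_\Omega|\psi\cdot dD^c b^M|\le\int_\Omega|\psi\cdot dD^c u|$ for \emph{every} bounded Borel vector field $\psi$, proved by upgrading the directional estimate \eqref{eq: V_v (b) < 1/2 |D^s v| Borel} from piecewise-constant vector fields to general $\psi$ via the Simple Approximation Theorem. Only once this alignment is established does the vector-valued Radon--Nikodym density $\mathcal{G}_M$ of $D^c(b^M)$ with respect to $|D^c u|$ collapse to a scalar density $f_M=|\mathcal{G}_M|\,\textnormal{sign}_M$. Your ``monotonicity-type considerations'' and ``gluing'' stop short of supplying this step; without it the construction of $f$ is not complete. (The final consistency across truncation levels $M<\overline M$, via Lemma \ref{lem: cantor total variation of v on v=0 is null} and Lemma \ref{lem: 2.3 FIlippo page 18}, you do flag correctly.)
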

\begin{proof}
We divide the proof in few steps.\\
\noindent
\textbf{Step 1.} Goal of the present step is to prove that $b\in GBV(\Omega)$. Since $u\in BV(\Omega)$, thanks to Proposition \ref{prop_GBV slicing}, for every $\nu \in \mathbb{S}^{n-1}$ we have that $|D_\nu u|(\Omega)<\infty$. So let us fix any $\nu \in \mathbb{S}^{n-1}$, and let $y\in G_\nu\subset  \Omega_\nu$ so that $|D (u^\nu_y)|(\Omega^\nu_y)<\infty$. Since $\Omega$ is open, so it is $\Omega^\nu_y$. Thus, let us consider any connected component $(a,b)$ of $\Omega^\nu_y$, and let $t_1,t_2\in \Omega^\nu_y$ be such that $a<t_1<t_2<b$. We set $x_1= y+t_1 \nu$, $x_2= y+t_2 \nu$, and $x_t=y+t\,\nu$ for a generic $t\in (t_1,t_2)$. Since $\mathcal{L}^n(\Omega\setminus\tilde{\Omega}_u)=0$ we can assume without loss of generality that $\mathcal{L}^1(\Omega^\nu_y\setminus (\tilde{\Omega}_u)^\nu_y)=0$. Recalling Remark \ref{rem_SVD for more points}, let us define $\bar{b}:(a,b)\to [0,\infty)$ as 

\begin{align}\label{eq_Barbie baricenter}
\bar{b}(t):=
\textnormal{SVD}_{u,\Omega}(\bar{x}, x_t)+\frac{1}{2}[u^\nu_y](t)\quad \forall\, t\in \Omega^\nu_y.
\end{align}
Let us observe that according with Definition \ref{def_suitable for connection}, since $\mathcal{L}^1(\Omega^\nu_y\setminus (\tilde{\Omega}_u)^\nu_y)=0$, every point in $\Omega^\nu_y$ is suitable for connections . Moreover, by definition of $\bar{b}$, and thanks to $\mathcal{L}^1(\Omega^\nu_y\setminus (\tilde{\Omega}_u)^\nu_y)=0$, we have that $\bar{b}(t)= b^\nu_y (t)$ for $\mathcal{L}^1$-a.e. $t\in \Omega^\nu_y$. Let us now prove that 
\begin{align}\label{eq: Step 2.1 1 necessity}
|\bar{b}(t_1)-\bar{b}(t_2)|\leq |D^s u^\nu_y|((t_1,t_2))+\frac{1}{2}[u^\nu_y](t_1)+\frac{1}{2}[u^\nu_y](t_2)\quad \forall\,t_1,t_2 \in (a,b).
\end{align}
In order to prove \eqref{eq: Step 2.1 1 necessity}, let us first prove that for every $x_1,x_2 \in \Omega$ that are suitable for connections we get
\begin{align}\label{SVD triangular inequality}
&\textnormal{SVD}_{u,\Omega}(x_1,x_2)+ \frac{1}{2}[u](x_1)+ \frac{1}{2}[u](x_2) \nonumber
\\& \geq \left|\textnormal{SVD}_{u,\Omega}(\bar{x},x_1) + \frac{1}{2}[u](x_1)-\textnormal{SVD}_{u,\Omega}(\bar{x},x_2)-\frac{1}{2}[u](x_2)\right|.
\end{align}
Indeed, by Definition \ref{def: SVD} we get
\begin{align*}
&\textnormal{SVD}_{u,\Omega}(\bar{x},x_1) + [u](x_1)+ \textnormal{SVD}_{u,\Omega}(x_1,x_2)\\ 
& = \inf\left\{|D^su_\gamma|(I_\gamma^\circ):\, \gamma \in \Gamma_\Omega(u),\, \gamma \textnormal{ connects }\bar{x},x_2,\textnormal{ and } x_1=\gamma(t) \textnormal{ for some  } t\in I_\gamma^\circ  \right\}\\
& \geq \inf\left\{|D^su_\gamma|(I_\gamma^\circ):\, \gamma \in \Gamma_\Omega(u),\, \gamma \textnormal{ connects }\bar{x},x_2 \right\}\\
&= \textnormal{SVD}_{u,\Omega}(\bar{x},x_2).
\end{align*}
which is equivalent to
\begin{align}\label{eq_SVD triangular part 1}
&\textnormal{SVD}_{u,\Omega}(x_1,x_2)+ \frac{1}{2}[u](x_1)+ \frac{1}{2}[u](x_2) \nonumber
\\& \geq \textnormal{SVD}_{u,\Omega}(\bar{x},x_2)+\frac{1}{2}[u](x_2) -\textnormal{SVD}_{u,\Omega}(\bar{x},x_1) - \frac{1}{2}[u](x_1).
\end{align}
We also have that
\begin{align*}
\textnormal{SVD}_{u,\Omega}(\bar{x},x_2) + [u](x_2)+ \textnormal{SVD}_{u,\Omega}(x_2,x_1) \geq \textnormal{SVD}_{u,\Omega}(\bar{x},x_1),
\end{align*}
which is equivalent to
\begin{align}\label{eq_SVD triangular part 2}
&\textnormal{SVD}_{u,\Omega}(x_2,x_1)+ \frac{1}{2}[u](x_1)+ \frac{1}{2}[u](x_2) \nonumber
\\& \geq \textnormal{SVD}_{u,\Omega}(\bar{x},x_1)+\frac{1}{2}[u](x_1) -\textnormal{SVD}_{u,\Omega}(\bar{x},x_2) - \frac{1}{2}[u](x_2).
\end{align}
By Definition \ref{def: SVD} we have that $\textnormal{SVD}_{u,\Omega}(x_2,x_1)=\textnormal{SVD}_{u,\Omega}(x_1,x_2)$, hence combining \eqref{eq_SVD triangular part 1} and \eqref{eq_SVD triangular part 2}, we get \eqref{SVD triangular inequality}. Thus,
\begin{align*}
&|\bar{b}(t_1)-\bar{b}(t_2)|\stackrel{\eqref{eq_Barbie baricenter}}{=}\left|\textnormal{SVD}_{u,\Omega}(\bar{x},x_1)+ \frac{1}{2}[u](x_1)-\textnormal{SVD}_{u,\Omega}(\bar{x},x_2)-\frac{1}{2}[u](x_2)\right|\\
&\stackrel{\eqref{SVD triangular inequality}}{\leq} \textnormal{SVD}_{u,\Omega}(x_1,x_2)+\frac{1}{2}[u](x_1)+\frac{1}{2}[u](x_2)\stackrel{\eqref{eq: def SVD}}{\leq}|D^s u^\nu_y|((t_1,t_2))+\frac{1}{2}[u](x_1)+\frac{1}{2}[u](x_2)
\\
&=|D^s u^\nu_y|((t_1,t_2))+\frac{1}{2}[u^\nu_y](t_1)+\frac{1}{2}[u^\nu_y](t_2),
\end{align*}
where for the last inequality sign we used the fact that the segment connecting the two points $x_1$, and $x_2$ is a curve in $\Gamma_\Omega(u)$, and this proves \eqref{eq: Step 2.1 1 necessity}. Thus, recalling the definition of \emph{pointwise}, and \emph{essential} variation (see for instance \cite[Chapter 3.2]{AFP}) which we denote with $\textnormal{pV}$ and \textnormal{eV} respectively, and recalling that $b^\nu_y(t)=\bar{b}(t)$ for $\mathcal{L}^1$-a.e. $t\in (a,b)$ we get

\begin{align*}
&\textnormal{eV}(b^\nu_y,(a,b))\leq \textnormal{pV}(\bar{b},(a,b))= \sup\left\{\sum_{i=1}^{m-1} |\bar{b}(t_{i+1})-\bar{b}(t_i)|:\, m\geq 2,\, a<t_1<\dots<t_m<b  \right\}\\
&\stackrel{\eqref{eq: Step 2.1 1 necessity}}{\leq}\sup\left\{\sum_{i=1}^{m-1}|D^s u^\nu_y|((t_i,t_{i+1}))+ \frac{1}{2}[u^\nu_y](t_i)+ \frac{1}{2}[u^\nu_y](t_{i+1}) :\, m\geq 2,\, a<t_1<\dots<t_m<b  \right\}\\
&\leq |D^s u^\nu_y|((a,b))<\infty.
\end{align*} 
Repeating the same calculation for all the connected components of $\Omega^\nu_y$ we deduce that 
\begin{align}\label{eq_eddaje!}
\textnormal{eV}(b^\nu_y,\Omega^\nu_y)\leq |D^s u^\nu_y|(\Omega^\nu_y)<\infty.
\end{align}
Thus, $b^\nu_y$ is a bounded function in $\Omega^\nu_y$, hence since $\mathcal{L}^1(\Omega^\nu_y)<\infty$ for $\mathcal{L}^{n-1}$-a.e. $y \in \Omega_\nu$, $b^\nu_y\in L^1(\Omega^\nu_y)$, which together with $\textnormal{eV}(b^\nu_y,\Omega^\nu_y)<\infty$ implies that $b^\nu_y\in BV(\Omega^\nu_y)$. By generality of $\nu\in \mathbb{S}^{n-1}$, and of $y \in \Omega_\nu$, thanks to \cite[Proposition 4.35]{AFP} we conclude that $b\in GBV(\Omega)$. This concludes step 1.\\
\textbf{Step 2.} Goal of this step is to prove that $b$ satisfies conditions i), and ii) of Definition \ref{def_minimally singular functions}. Let us fix $M>0$. Similarly to what we have done at the beginning of step 1, let $\nu \in \mathbb{S}^{n-1}$ be such that $|D_\nu u|(\Omega)<\infty$. In particular, fixing any $y\in G_\nu$, thanks to \eqref{eq_eddaje!}, and repeating the argument presented in step 1 we get $|D (b^M)^\nu_y|(I)\leq |D^s u^\nu_y|(I)$ for every $I\subset \Omega^\nu_y$ open set, thus by \ Borel regularity we have that $|D (b^M)^\nu_y|(G)\leq |D^s u^\nu_y|(G)$ for every $G\subset \Omega^\nu_y$ Borel. As a consequence of that, we have that $D (b^M)^\nu_y <<|D^s u^\nu_y|$, and so by the Radon-Nikodym theorem there exists $f_{(M,\nu,y)}:\Omega^\nu_y\to [-1,1]$ with $f_{(M,\nu,y)}\in L^1(\Omega^\nu_y,|D^s u^\nu_y|)$, such that $D (b^M)^\nu_y = f_{(M,\nu,y)}|D^s u^\nu_y|$, namely
\begin{align}\label{ma che ne so 1}
D (b^M)^\nu_y(G)=\int_{G}f_{(M,\nu,y)}(t)\,d |D^s u^\nu_y|(t),\quad \forall\,G\subset \Omega^\nu_y \textnormal{ Borel}.
\end{align}
Let us observe that a similar expression to \eqref{ma che ne so 1} can be obtained with $d D^s u^\nu_y$ in place of $d |D^s u^\nu_y|$. Indeed, by polar decomposition of $|D^s u^\nu_y|$ we get that
$$
D^s u^\nu_y(G)=\int_{G}\frac{d D^s u^\nu_y}{d|D^s u^\nu_y|}(t)\,d|D^s u^\nu_y|(t),
$$
where $\frac{d D^s u^\nu_y}{d|D^s u^\nu_y|}(t)\in \mathbb{S}^0$, i.e. $\frac{d D^s u^\nu_y}{d|D^s u^\nu_y|}(t)\in \{-1,+1  \}$ for $|D^s u^\nu_y|$-a.e. $t\in \Omega^\nu_y$. Let us set
$$
g_{(M,\nu,y)}(t):= \frac{d D^s u^\nu_y}{d|D^s u^\nu_y|}(t) f_{(M,\nu,y)}(t).
$$ 
Thus, we get 
\begin{align}\label{ma che ne so 2}
D (b^M)^\nu_y(G)=\int_{G}g_{(M,\nu,y)}(t)\,d D^s u^\nu_y(t),\quad \forall\,G\subset \Omega^\nu_y \textnormal{ Borel}.
\end{align}
Hence, for every $\varphi\in C^1_c(\Omega)$, thanks to \cite[Theorem 3.107, relation (3.104)]{AFP} applied to $b^M$, and thanks to  \cite[Theorem 3.108, relation (3.108)]{AFP} applied to $u$ we get,
\begin{align*}
&\int_{\Omega}\varphi(x)\,dD_\nu (b^M)(x)=-\int_{\Omega}b^M(x)\frac{\partial \varphi}{\partial \nu}(x)\, dx = -\int_{\Omega_\nu}dy \int_{\Omega^\nu_y} (b^M)^\nu_y (t) (\varphi^\nu_y)'(t)\,dt\\
&=\int_{\Omega_\nu}dy\int_{\Omega^\nu_y}(\varphi^\nu_y)(t)\,d D(b^M)^\nu_y(t)\stackrel{\eqref{ma che ne so 2}}{=}\int_{\Omega_\nu}dy\int_{\Omega^\nu_y}(\varphi^\nu_y)(t)g_{(M,\nu,y)}(t)\,d D^s(u)^\nu_y(t)\\
&=\int_{\Omega} \varphi(x) g_{(M,\nu)}(x)\, dD^s_\nu u(x),
\end{align*}
where $g_{(M,\nu)}:\Omega\to [-1,1]$ is defined as $g_{(M,\nu)}(x):= g_{(M,\nu,y)}(t)$ whenever $x=y+t\nu$. Thus, we get 
\begin{align}\label{eq: step 2.2 star 1}
\int_{\Omega}\varphi(x)\,d D_\nu (b^M)(x)= \int_{\Omega}\varphi(x) g_{(M,\nu)}(x)\, d D^s_\nu u(x),
\end{align}
for every $\varphi\in C^1_c(\Omega)$.
Now let us set $D_{e_i}=D_i$, $g_{(M,e_i)}=g_{(M,i)}$ for $i=1,\dots,n$, and for every $\varphi\in C^1_c(\Omega;\mathbb{R}^n)$ we call $\tilde{\varphi}:\Omega\to \mathbb{R}^n$ the bounded Borel function defined as
$$
\tilde{\varphi}(x)=(g_{(M,1)}(x)\varphi_1(x),\dots,g_{(M,n)}(x)\varphi_n(x)),\quad \forall\, x\in \Omega.
$$
Then, thanks to \eqref{eq: step 2.2 star 1} that was obtained for a generic $\nu\in \mathbb{S}^{n-1}$, a direct computation shows that
\begin{align}\label{eq: step 2.2 star 2}
\int_{\Omega}\varphi(x)\cdot dD(b^M)(x)=\int_{\Omega}\tilde{\varphi}(x)\cdot dD^s u(x)\leq |D^s u|(\Omega),
\end{align}
for every $\varphi\in C^1_c(\Omega;\mathbb{R}^n)$, where for the last inequality we used that $|\tilde{\varphi}|\leq 1$. Passing to the $\sup$ on the left hand side among all $\varphi\in C^1_c(\Omega;\mathbb{R}^n)$ with $|\varphi|\leq 1$, gives that $|D(b^M)|(\Omega)\leq |D^s u|(\Omega)$. Repeating the same argument for every open set of $\Omega$, and by Borel regularity we finally get that
\begin{align}\label{eq: |Db^m|<< 1/2 |D^s v|}
|D(b^M)|(B)\leq |D^s u|(B)\quad \forall\, B\subset \Omega,
\end{align}
which proves conditions i), ii) of Definition \ref{def_minimally singular functions} for the function $(b^M)$. Thanks to \cite[Theorem 4.34]{AFP} we deduce that the same conditions hold true also for the function $b$. This concludes step 2.\\
\noindent
\textbf{Step 3.} In this step we will prove that $b$ satisfies condition  iii) of Definition \ref{def_minimally singular functions}. Let $M>0$. By \eqref{eq: |Db^m|<< 1/2 |D^s v|} we have that $D^c(b^M)<<|D^c u|$, and so by Radon-Nikodym theorem there exists $\mathcal{G}_{M}:\Omega\to\mathbb{R}^n$ with $\mathcal{G}_{M}\in L^1(\Omega,|D^c u|;\mathbb{R}^n)$,  such that $D^c(b^M)=\mathcal{G}_M|D^c u|$, namely
\begin{align}\label{eq: step 2.2 star 4.1}
D^c(b^M)(B)=\int_{B}\mathcal{G}_M(x)\, d|D^c u|(x) \quad \forall\,B\subset \Omega\textnormal{ Borel}.
\end{align}
Let us observe that thanks to \eqref{eq: |Db^m|<< 1/2 |D^s v|}, we have $|\mathcal{G}_M(x)|\leq 1$ for $|D^c u|$-a.e. $x\in\Omega$. Let us now observe the following fact: let $(E_i)_{i\in I}$ be a partition of $\Omega$ made of Borel sets, where $I\subset \mathbb{N}$ is a finite set of indexes, let $(c_i)_{i\in I}$ with $c_i\in\mathbb{R}$ for every $i\in I$ and $\sup_i|c_i|<\infty$, and let $(\eta_i)_{i\in I}$ be vectors in $\mathbb{S}^{n-1}$. Let $\varphi:\Omega\to \mathbb{R}^n$ be the bounded Borel function defined as
\begin{align}\label{piecewise constant direction function}
\varphi(x):= \sum_{i\in I}c_i\eta_i \chi_{E_i}(x)\quad \forall\,x\in \Omega,
\end{align}
then, we get
\begin{align}\label{distributional directional condition}
\int_{\Omega}|\varphi(x)\cdot dD^c b^M(x)|\leq  \int_{\Omega}|\varphi(x)\cdot dD^c u(x) |.
\end{align}
Indeed, thanks to \eqref{eq_eddaje!}, \cite[Theorem 3.103]{AFP}, and \cite[Theorem 3.108, relation (3.108)]{AFP} we get
\begin{align*}
|D^c_\nu (b^M)|(U)=\int_{U_\nu} |D^c (b^M)^\nu_y|(U^\nu_y)\,dy\stackrel{\eqref{eq_eddaje!}}{\leq} \int_{U_\nu} |D^c u^\nu_y|(U^\nu_y)\,dy=|D_\nu^c u|(U)\quad\forall\, U\subset \Omega \textnormal{ open},
\end{align*}
which by Borel regularity implies that
\begin{align}\label{eq: V_v (b) < 1/2 |D^s v| Borel}
|D^c_\nu (b^M)|(B)\leq |D_\nu^c u|(B)\quad \forall\, B\subset \Omega \textnormal{ Borel}.
\end{align}
Thus,
\begin{align*}
&\int_{\Omega}|\varphi(x)\cdot dD^c b^M(x)|= \int_{\Omega}\left|\left( \sum_{i\in I}c_i\eta_i \chi_{E_i}(x) \right)\cdot dD^c b^M(x)   \right| = \int_{\Omega} \sum_{i\in I}\left|\left(c_i\eta_i \chi_{E_i}(x) \right)\cdot dD^c b^M(x)   \right|\\ &=\int_{\Omega} \sum_{i\in I}\left|c_i\eta_i \cdot dD^c b^M(x)\chi_{E_i}(x)   \right|\stackrel{\eqref{eq: V_v (b) < 1/2 |D^s v| Borel}}{\leq}\int_{\Omega} \sum_{i\in I}\left|c_i\eta_i \cdot dD^c u(x)\chi_{E_i}(x)   \right| = \int_{\Omega}|\varphi(x)\cdot dD^c u(x)|.
\end{align*}
Thus, let us show that for every $\psi:\Omega \to \mathbb{R}^n$ bounded Borel function we have that
\begin{align}\label{what we need for Cantor}
\int_{\Omega} |\psi(x)\cdot dD^cb^M(x)| \leq \int_{\Omega}|\psi(x)\cdot dD^c u(x)|.
\end{align}
Indeed, thanks to the \emph{Simple Approximation Theorem} (see for instance \cite[Chapter 18]{Royden}) applied to each component of $\psi$, there exists a sequence of functions $\psi_h:\Omega\to \mathbb{R}^n$ as in \eqref{piecewise constant direction function}, such that $|\psi_h(x)|\leq |\psi(x)|$ for $|D^c u|$-a.e. $x\in \Omega$, such that $\lim_{h\to \infty}\psi_h(x)=\psi(x)$ for $|D^c u|$-a.e. $x\in \Omega$. Thus, by the Dominated Convergence Theorem we get that
\begin{align*}
&\int_{\Omega}|\psi(x)\cdot dD^c u(x)|= \int_{\Omega}\left|\psi(x)\cdot \frac{dD^c u}{d|D^c u|}(x)\right| d|D^c u|(x)= \int_{\Omega}\lim_{h\to \infty}\left|\psi_h(x)\cdot \frac{dD^c u}{d|D^c u|}(x)\right| d|D^c u|(x)\\
&=\lim_{h\to \infty}\int_{\Omega}\left|\psi_h(x)\cdot \frac{dD^c u}{d|D^c u|}(x)\right| d|D^c u|(x)\stackrel{\eqref{distributional directional condition}}{\geq} \lim_{h\to \infty}\int_{\Omega}\left|\psi_h(x)\cdot \frac{dD^c b^M}{d|D^cb^M|}(x)\right| d|D^c b^M|(x) \\
&=\int_{\Omega}\left|\psi(x)\cdot \frac{dD^c b^M}{d|Db^M|}(x)\right| d|D^c b^M|(x)= \int_{\Omega}|\psi(x)\cdot dD^c b^M(x)|,
\end{align*}
which proves \eqref{what we need for Cantor}. Thus, let $\psi:\Omega \to \mathbb{S}^{n-1}$ be a Borel function such that 
$$
\psi(x)\cdot \frac{dD^c u}{d|D^c u|}(x)=0\quad \textnormal{for }|D^c u|\textnormal{-a.e. }x\in\Omega.
$$
Thanks to \eqref{what we need for Cantor} we deduce that 
$$
\psi(x)\cdot \frac{dD^c b^M}{d|D^c b^M|}(x)=0\quad \textnormal{for }|D^c b^M|\textnormal{-a.e. }x\in\Omega.
$$
Thus, by generality of $\psi$, we deduce that
$$
\left|\frac{dD^c b^M}{d|D^c b^M|}(x)\cdot\frac{dD^c u}{d|D^c u|}(x)\right|=1  \quad \textnormal{for }|D^c b^M|\textnormal{-a.e. }x\in\Omega.
$$
Let us set
\begin{align*}
K_{b^M}&:=\left\{x\in \textnormal{spt}(|D^c u|):\, \frac{dD^c b^M}{d|D^c b^M|}(x) \in \mathbb{S}^{n-1}   \right\},\\
K_u&:=\left\{x\in \textnormal{spt}(|D^c u|):\, \frac{dD^c u}{d|D^c u|}(x) \in \mathbb{S}^{n-1}   \right\}.
\end{align*}
Let us observe that thanks to \eqref{eq: |Db^m|<< 1/2 |D^s v|} we have that $|D^c b^M|(K_{b^M}\setminus K_u)=0$. Let us define $\textnormal{sign}_M:\Omega \to [-1,1]$ as
\begin{align*}
\textnormal{sign}_M(x):=
\begin{cases}
\frac{dD^c b^M}{d|D^c b^M|}(x)\cdot \frac{dD^c u}{d|D^c u|}(x)\quad &\mbox{if } x\in K_{b^M}\cap K_u,\vspace*{0.2cm}\\
0\quad &\mbox{otherwise}.
\end{cases}
\end{align*}
Then, for every $B\subset \Omega$ Borel set we get
\begin{align*}
&D^c b^M (B)=\int_{B} dD^c b^M(x)=\int_{B}\frac{dD^c b^M}{d|D^c b^M|}(x)\, d|D^c b^M|(x)\stackrel{\eqref{eq: step 2.2 star 4.1}}{=}\int_{B} |\mathcal{G}_M(x)| \frac{dD^c b^M}{d|D^c b^M|}(x) d|D^c u|(x)\\
&=\int_{B}|\mathcal{G}_M(x)|\textnormal{sign}_M(x)\frac{dD^c u}{d|D^c u|}(x) d|D^c u|(x)= \int_{B}|\mathcal{G}_M(x)|\textnormal{sign}_M(x) dD^c u(x).
\end{align*}
We call
\begin{align*}
f_M(x) = |\mathcal{G}_M(x)|\textnormal{sign}_M(x)\quad\forall
\, x\in \Omega,
\end{align*}
and so thanks to the above calculation we get that
\begin{align}\label{eq_Cantor almost there 1}
D^c b^M (B):=\int_B f_M(x)\, d D^c u(x)\quad \forall\, B\subset \Omega\textnormal{ Borel}.
\end{align}
Let us define 
\begin{align*}
f(x):= \sup_{M>0} f_M(x)\quad \forall\, x\in \Omega.
\end{align*}
In order to conclude the step we need to show that for every $M>0$ we have that
\begin{align}\label{eq_Cantor almost there 2}
f(x)=f_M(x) \quad\textnormal{for }|D^c u|\textnormal{-a.e. }x\in \{b\leq M  \}^{(1)}.
\end{align}
Indeed putting together \eqref{eq_Cantor almost there 1} with \eqref{eq_Cantor almost there 2} we get condition iii) of Definition \ref{def_minimally singular functions}. In order to prove \eqref{eq_Cantor almost there 2} let us prove that for every $\overline{M}>M>0$ we have that
\begin{align}\label{f_ barM = f_M in (b<=M)}
f_{\overline{M}}(x)= f_{M}(x)\quad \textnormal{for }|D^c u|\textnormal{-a.e. }x\in \{|b|\leq M  \}^{(1)} .
\end{align}
Indeed, thanks to Lemma \ref{lem: cantor total variation of v on v=0 is null} and Lemma \ref{lem: 2.3 FIlippo page 18} for every $M>0$ such that $\{|b|\leq M  \}$ is a set of finite perimeter, we have that 
\begin{align}\label{eq_consequence of Lemma 2.2 Cagnetti}
D^c b^{\overline{M}}\mres \{|b|\leq M  \}^{(1)} = D^c b^M\mres \{|b|\leq M \}^{(1)}.
\end{align}
Thanks to \eqref{eq_consequence of Lemma 2.2 Cagnetti} together with \eqref{eq: step 2.2 star 4.1} we deduce that 
\begin{align}\label{e one}
\mathcal{G}_{\overline{M}}(x)=\mathcal{G}_M(x)\quad \textnormal{for }|D^c u|\textnormal{-a.e. }x\in \{|b|\leq M \}^{(1)},
\end{align}
while again thanks to \eqref{eq_consequence of Lemma 2.2 Cagnetti} and by definition of the function $\textnormal{sign}_M$ we also deduce that 
\begin{align}\label{e two}
\textnormal{sign}_{\overline{M}}(x)=\textnormal{sign}_M(x)\quad \forall\, x\in \{|b|\leq M \}^{(1)}.
\end{align}
Putting together \eqref{e one} and \eqref{e two} we get that \eqref{f_ barM = f_M in (b<=M)} holds true, which implies \eqref{eq_Cantor almost there 2}. This concludes step 3. Putting together step 1, step 2, and step 3 we conclude.
\end{proof}
\noindent
For the seek of completeness, in the following we gather some properties of the function $b:\Omega\to [0,\infty)$ defined in \eqref{def: the smart baricenter}. We start presenting a result that essentially says that if $\gamma\in \Gamma_\Omega(u)$ then the restriction of $b$ to $\gamma$, namely $b_\gamma$, satisfies some regularity properties, and in particular Proposition \ref{prop_fine properties of ugamma}
 and Lemma \ref{lem_calculation |u_gamma (a0)-u_gamma(am)|} applies to $b_\gamma$.

\begin{lemma}\label{lem_bgamma is BV}
Let $u\in BV(\Omega)$, let $\bar{x}\in \tilde{\Omega}_u$, and let $b:\Omega\to [0,\infty)$ be defined as in \eqref{def: the smart baricenter}. Then the following statements hold true:
\begin{itemize}
\item[i)] if $\gamma\in \Gamma_\Omega(u)$ then $b_\gamma\in BV(I_\gamma^\circ)$ where $b_\gamma(t)=\textnormal{SVD}_{u,\Omega}(\bar{x},\gamma(t))$ for every $t \in I_\gamma$, and the three conditions of Proposition \ref{prop_fine properties of ugamma} applies to $b_\gamma$. In particular 
\begin{align}\label{eq_eddaje super}
|D b_\gamma|(I_\gamma^\circ)\leq |D^s u_\gamma|(I_\gamma^\circ)\quad \forall\, \gamma\in \Gamma_\Omega(u).
\end{align}
 \vspace*{0.2cm}
\item[ii)] if $x_1,x_2\in \tilde{\Omega}_u$, and $\gamma\in \Gamma_\Omega(u)$ connects them in the sense of Definition \ref{def_gamma connects two points} then
\begin{align}\label{eq_calculation |b_gamma (a0)-b_gamma(am)|}
|b(x_1)-b(x_2)|\leq |Db_\gamma|(I_\gamma^\circ).
\end{align}
\end{itemize}
\end{lemma}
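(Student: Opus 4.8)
The plan is to bootstrap the one-dimensional restriction theory already developed for $u$ up to the function $b$ of \eqref{def: the smart baricenter}, exploiting two facts from Lemma \ref{lem_SVD is in GBV}: that $b\in GBV(\Omega)$, and that $b$ satisfies conditions i)--iii) of Definition \ref{def_minimally singular functions} relative to $u$. The key preliminary observation is that, although a curve $\gamma\in\Gamma_\Omega(u)$ is tailored to $u$, its data are also admissible for $b$: for every $i$ the estimate \eqref{eq_eddaje!} obtained inside the proof of Lemma \ref{lem_SVD is in GBV} gives $b^{\eta_i}_{y_i}\in BV(\Omega^{\eta_i}_{y_i})$ whenever $y_i\in G_{\eta_i}$, and, by Borel regularity, the \emph{slice domination} $|Db^{\eta_i}_{y_i}|(B)\le|D^s u^{\eta_i}_{y_i}|(B)$ holds for all Borel $B\subset\Omega^{\eta_i}_{y_i}$.

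For part i), I would first check that the proof of Proposition \ref{prop_fine properties of ugamma} carries over with $b$ in place of $u$: that proof uses only that each section $u^{\eta_i}_{y_i}$ is $BV$ --- now available for $b$ by the previous paragraph --- together with the slicing identities of Proposition \ref{prop_GBV slicing}, which transfer to the $GBV$ function $b$ via the $GBV$ slicing theory (\cite[Theorem 4.35]{AFP}, applied to the truncations $b^M$ and passed to the limit, using that for $M$ larger than the supremum of the relevant section the truncated section is unchanged). This yields $b_\gamma\in BV(I_\gamma^\circ)$ together with the three identities of Proposition \ref{prop_fine properties of ugamma} for $b_\gamma$. I would then deduce \eqref{eq_eddaje super} by combining: identity i) for $b_\gamma$ with condition i) of Definition \ref{def_minimally singular functions} for $b$, giving $\nabla b_\gamma=0$ a.e., hence $Db_\gamma=D^s b_\gamma$; the change of variables \eqref{ugamma = u fettato in (ai-1 , ai)}, which identifies $b_\gamma$ on $[a_{i-1},a_i]$ with $b^{\eta_i}_{y_i}$ (and $u_\gamma$ with $u^{\eta_i}_{y_i}$); the slice domination; and identity iii) of Proposition \ref{prop_fine properties of ugamma} for $u$, which reassembles the segment contributions as $|D^s u_\gamma|(I_\gamma^\circ)$. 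A more self-contained alternative for \eqref{eq_eddaje super}: for $s<s'$ in $I_\gamma^\circ$ with $\gamma(s),\gamma(s')\in\tilde{\Omega}_u$ the sub-chain $\gamma|_{[s,s']}$ belongs to $\Gamma_\Omega(u)$, so the triangular inequality \eqref{SVD triangular inequality} yields $|b_\gamma(s)-b_\gamma(s')|\le\textnormal{SVD}_{u,\Omega}(\gamma(s),\gamma(s'))\le|D^s u_\gamma|((s,s'))$; summing over a partition and passing to the essential variation gives \eqref{eq_eddaje super}, and --- since $\textnormal{SVD}_{u,\Omega}$ is finite valued --- also the membership $b_\gamma\in BV(I_\gamma^\circ)$.

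For part ii), let $x_1,x_2\in\tilde{\Omega}_u$ and let $\gamma\in\Gamma_\Omega(u)$ connect them. By part i), $b_\gamma\in BV(I_\gamma^\circ)$ and the conclusions of Proposition \ref{prop_fine properties of ugamma} hold for $b_\gamma$; since $x_1=\gamma(a_0)$ and $x_2=\gamma(a_m)$ lie outside $S_u$, Proposition \ref{prop_GBV slicing} ii) together with the slice domination forces the first and last sections of $b$ to be approximately continuous at the corresponding endpoint parameters. These are exactly the ingredients of the proof of Lemma \ref{lem_calculation |u_gamma (a0)-u_gamma(am)|}, so that argument applies to $b_\gamma$ and gives $|b_\gamma(a_m)-b_\gamma(a_0)|\le|Db_\gamma|(I_\gamma^\circ)$; and $b_\gamma(a_0)=\textnormal{SVD}_{u,\Omega}(\bar x,x_1)=b(x_1)$, $b_\gamma(a_m)=\textnormal{SVD}_{u,\Omega}(\bar x,x_2)=b(x_2)$ by \eqref{def: the smart baricenter}, since $x_1,x_2\in\tilde{\Omega}_u$. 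This is \eqref{eq_calculation |b_gamma (a0)-b_gamma(am)|}.

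The step I expect to be the main obstacle is the bookkeeping at the vertices $a_1,\dots,a_{m-1}$ of the polygonal chain inside part i): one must make sure the singular mass of $b_\gamma$ created at these junctions is dominated by the corresponding singular mass of $u_\gamma$, with no over-counting when the segment contributions are summed. This is precisely the delicate Step 2 in the proof of Proposition \ref{prop_fine properties of ugamma}, and the reason it transfers to $b$ is the slice domination established along the way; granting this, the rest is a routine limiting argument.
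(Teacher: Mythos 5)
Your primary route is essentially the paper's: transfer Proposition~\ref{prop_fine properties of ugamma} to $b$ using the slice estimate \eqref{eq_eddaje!} from Lemma~\ref{lem_SVD is in GBV} together with $GBV$ slicing, get \eqref{eq_eddaje super} from $Db_\gamma=D^sb_\gamma$ and segment-by-segment domination (with the vertex jumps controlled through Proposition~\ref{prop_GBV slicing}.iii) and the Borel-level slice bound $|Db^{\eta_i}_{y_i}|\le|D^su^{\eta_i}_{y_i}|$), and for part~ii) note that the slice domination together with $x_1,x_2\notin S_u$ forces the first and last sections of $b$ to be approximately continuous at the endpoint parameters, which is precisely what the proof of Lemma~\ref{lem_calculation |u_gamma (a0)-u_gamma(am)|} uses. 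This matches the paper's (terse) argument in substance, and you correctly identify the vertex bookkeeping as the place where care is needed.

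The one caveat is your ``more self-contained alternative'' for \eqref{eq_eddaje super}. The partition argument requires $\gamma(s),\gamma(s')\in\tilde{\Omega}_u$, and while $\gamma(t)\notin S_u$ for $\mathcal{L}^1$-a.e.\ $t$ is automatic because each $u^{\eta_i}_{y_i}$ is $BV$, the suitability-for-connections requirement in Definition~\ref{def_suitable for connection} is an $\mathcal{L}^n$-a.e.\ property in $\Omega$ involving all the lower-dimensional Vol'pert sets $G_{\nu_1,\dots,\nu_k}$, and for $n\ge 3$ there is no Fubini-type reason why the projections of $\gamma(t)$ onto the relevant $(n-k)$-dimensional hyperplanes should land in those sets for $\mathcal{L}^1$-a.e.\ $t$ along a fixed polygonal chain. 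So the admissible partitions may not be dense enough to control the essential variation of $b_\gamma$, and the segment-wise route --- the one the paper follows --- is the one to carry through.
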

\begin{proof}
We divide the proof in two steps.\\
\textbf{Step 1.} In this step we prove the validity of $i)$. Let $\gamma\in \Gamma_\Omega(u)$, and consider $b_\gamma(t):= b\Low (\gamma(t))$ for every $t\in I_\gamma^\circ$. Thanks to relation \eqref{eq_eddaje!}, and Proposition \ref{prop_GBV slicing} for $GBV$ functions (see \cite[Proposition 4.35]{AFP}) a careful inspection of the proof of Proposition \ref{prop_fine properties of ugamma} shows that we can prove the validity of the three conditions listed in Proposition \ref{prop_fine properties of ugamma} for the function $b_\gamma$, which implies also the validity of \eqref{eq_eddaje super}. Since $b\Low(x)=b(x)$ for $\mathcal{L}^n$-a.e. $x\in \Omega$, and thanks to \eqref{eq_eddaje!} we conclude that $b_\gamma(t)=\textnormal{SVD}_{u,\Omega}(\bar{x},\gamma(t))$ for $\mathcal{L}^1$-a.e. $t\in I_\gamma^\circ$. This concludes the first step.\\
\textbf{Step 2.} We conclude. Let us first observe that if $x\in \tilde{\Omega}_u$, given any $\gamma\in \Gamma_\Omega(u)$ such that $\gamma(t)=x$  for some $t\in I_\gamma^\circ$, then thanks to step 1 we can apply relation ii) of Proposition \ref{prop_fine properties of ugamma} to $b_\gamma$ which implies that $[b_\gamma](t)=[b](x)=0$ which, thanks to \cite[Theorem 3.28]{AFP} implies that $t$ is a point of approximate continuity for $b_\gamma$. Then, following verbatim the argument presented in the proof of Lemma \ref{lem_calculation |u_gamma (a0)-u_gamma(am)|} we conclude.
\end{proof}
\noindent
Motivated by Lemma \ref{lem_bgamma is BV} we define the notion of \emph{singular vertical distance} for the function $b$ restricted to the curves in $\Gamma_\Omega(u)$, namely
\begin{align}\label{eq: def SVD^*}
\textnormal{SVD}^*_{b,\Omega}(\bar{x},x):=
\begin{cases}
\inf \left\{|D b_\gamma|(I_\gamma^\circ):\, \gamma\in \Gamma_\Omega(u),\,\gamma\textnormal{ connects }\bar{x},x  \right\}\quad &\mbox{if } \bar{x}\neq x;\vspace*{0.2cm}\\
0 \quad &\mbox{if }\bar{x}=x.
\end{cases}
\end{align}

\begin{lemma}\label{lem_SVDb=SVDu}
Let $u\in BV(\Omega)$, let $\bar{x} \in \tilde{\Omega}_u$, and let $b:\Omega\to [0,\infty)$ as in \eqref{def: the smart baricenter}. Then
\begin{align}
\textnormal{SVD}^*_{b,\Omega}(\bar{x},x) = \textnormal{SVD}_{u,\Omega}(\bar{x},x)\quad \forall\, x\in \tilde{\Omega}_u.
\end{align}
\end{lemma}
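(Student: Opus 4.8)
The plan is to prove the two inequalities $\textnormal{SVD}^*_{b,\Omega}(\bar x,x)\le \textnormal{SVD}_{u,\Omega}(\bar x,x)$ and $\textnormal{SVD}_{u,\Omega}(\bar x,x)\le \textnormal{SVD}^*_{b,\Omega}(\bar x,x)$ separately, both being immediate consequences of Lemma \ref{lem_bgamma is BV}. The case $\bar x=x$ is trivial since both quantities vanish by definition, so assume $\bar x\neq x$; then, because $\bar x,x\in\tilde{\Omega}_u$ are suitable for connections, Proposition \ref{prop_si puo connettere via Gamma} guarantees that the family of curves $\gamma\in\Gamma_\Omega(u)$ connecting $\bar x$ and $x$ is nonempty, so the infima in \eqref{eq: def SVD} and \eqref{eq: def SVD^*} are taken over a nonempty set. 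Recall also from Lemma \ref{lem_bgamma is BV} that for every such $\gamma$ the restriction $b_\gamma$ lies in $BV(I_\gamma^\circ)$, that $b_\gamma(t)=\textnormal{SVD}_{u,\Omega}(\bar x,\gamma(t))$ for $\mathcal{L}^1$-a.e. $t$, and that the estimates \eqref{eq_eddaje super} and \eqref{eq_calculation |b_gamma (a0)-b_gamma(am)|} hold.

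For the first inequality I would simply invoke \eqref{eq_eddaje super}: for every $\gamma\in\Gamma_\Omega(u)$ one has $|Db_\gamma|(I_\gamma^\circ)\le |D^s u_\gamma|(I_\gamma^\circ)$, and restricting attention to those $\gamma$ that connect $\bar x$ and $x$ and passing to the infimum gives $\textnormal{SVD}^*_{b,\Omega}(\bar x,x)\le \textnormal{SVD}_{u,\Omega}(\bar x,x)$. For the reverse inequality, fix any $\gamma\in\Gamma_\Omega(u)$ connecting $\bar x$ and $x$. Since $\bar x,x\in\tilde{\Omega}_u$, part ii) of Lemma \ref{lem_bgamma is BV}, i.e. \eqref{eq_calculation |b_gamma (a0)-b_gamma(am)|}, yields $|b(\bar x)-b(x)|\le |Db_\gamma|(I_\gamma^\circ)$. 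By the definition \eqref{def: the smart baricenter} of $b$ we have $b(\bar x)=\textnormal{SVD}_{u,\Omega}(\bar x,\bar x)=0$ and $b(x)=\textnormal{SVD}_{u,\Omega}(\bar x,x)\ge 0$, hence $\textnormal{SVD}_{u,\Omega}(\bar x,x)=|b(\bar x)-b(x)|\le |Db_\gamma|(I_\gamma^\circ)$. Taking the infimum over all such $\gamma$ gives $\textnormal{SVD}_{u,\Omega}(\bar x,x)\le \textnormal{SVD}^*_{b,\Omega}(\bar x,x)$, and combining the two bounds concludes the argument.

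I do not expect any genuine obstacle: the whole argument is a short deduction from Lemma \ref{lem_bgamma is BV}. The substantive content has already been absorbed into that lemma — in particular the comparison \eqref{eq_eddaje super} between the total variations of $b_\gamma$ and of $u_\gamma$, and the oscillation estimate \eqref{eq_calculation |b_gamma (a0)-b_gamma(am)|} bounding $|b(x_1)-b(x_2)|$ by $|Db_\gamma|(I_\gamma^\circ)$ — which in turn rely on the fine one-dimensional properties recorded in Proposition \ref{prop_fine properties of ugamma} and Lemma \ref{lem_calculation |u_gamma (a0)-u_gamma(am)|}.
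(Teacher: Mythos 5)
Your proof is correct and takes essentially the same route as the paper: both directions are deduced directly from Lemma \ref{lem_bgamma is BV}, using \eqref{eq_eddaje super} for $\textnormal{SVD}^*_{b,\Omega}\le \textnormal{SVD}_{u,\Omega}$ and \eqref{eq_calculation |b_gamma (a0)-b_gamma(am)|} together with $b(\bar x)=0$, $b(x)=\textnormal{SVD}_{u,\Omega}(\bar x,x)$ for the reverse inequality. Your added remarks (the trivial case $\bar x=x$ and the nonemptiness of the family of admissible $\gamma$ via Proposition \ref{prop_si puo connettere via Gamma}) are harmless elaborations that the paper leaves implicit.
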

\begin{proof}
Indeed, let $x\in \tilde{\Omega}_u$, and let $\gamma\in \Gamma_\Omega(u)$ be such that it connects $\bar{x}$ and $x$. Then
\begin{align*}
\textnormal{SVD}_{u,\Omega}(\bar{x},x)\stackrel{\eqref{def: the smart baricenter}}{=} b(x)= |b(x)-b(\bar{x})|\stackrel{\eqref{eq_calculation |b_gamma (a0)-b_gamma(am)|}}{\leq}|D b_\gamma|(I_\gamma^\circ).
\end{align*}
Passing to the $\inf$ on the right hand side of the above relation among all $\gamma\in \Gamma_\Omega(u)$ connecting $\bar{x}$ and $x$ we get that $\textnormal{SVD}_{u,\Omega}(\bar{x},x)\leq  \textnormal{SVD}^*_{b,\Omega}(\bar{x},x)$. Let us prove the reverse inequality. Indeed,
\begin{align*}
\textnormal{SVD}^*_{b,\Omega}(\bar{x},x)\leq |D b_\gamma|(I^\circ_\gamma) \stackrel{\eqref{eq_eddaje super}}{\leq} |D^s u_\gamma|(I_\gamma^\circ).
\end{align*}
Passing to the $\inf$ on the right hand side of the above relation among all $\gamma\in \Gamma_\Omega(u)$ connecting $\bar{x}$ and $x$ we get that $\textnormal{SVD}_{u,\Omega}(\bar{x},x)\geq  \textnormal{SVD}^*_{b,\Omega}(\bar{x},x)$. This concludes the proof.
\end{proof}

\begin{lemma}\label{lem_b non scende lungo cammini ottimi}
Let $u\in BV(\Omega)$, let $\bar{x} \in \tilde{\Omega}_u$, and let $b:\Omega\to [0,\infty)$ as in \eqref{def: the smart baricenter}. Let $x\in \tilde{\Omega}_u$ and let $(\gamma_i)_i\subset \Gamma_\Omega(u)$ be such that $\lim_{i\to \infty}|D u_{\gamma_i}|(I^\circ_{\gamma_i})= \textnormal{SVD}_{u,\Omega}(\bar{x},x)$. Lastly, we define the set $N_{b,i}\subset I^\circ_{\gamma_i}$ as 
\begin{align}\label{eq_Nb,gamma}
N_{b,i}:=\left\{t \in I^\circ_{\gamma_i}:\, \frac{d Db_{\gamma_i}}{d |Db_{\gamma_i}|}(t)=-1 \right\},\quad \textnormal{for }i\in \mathbb{N}.
\end{align}
Then
\begin{align}\label{eq_b sale lungo i cammini ottimi}
\lim_{i\to \infty} |D b_{\gamma_i}|(I_{\gamma_i}^\circ)(N_{b,i})=0.
\end{align}
\end{lemma}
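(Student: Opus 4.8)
The plan is to extract $|Db_{\gamma_i}|(N_{b,i})$ as one half of the gap between the total variation of $b_{\gamma_i}$ and its \emph{signed} total increment along $\gamma_i$, and then to show that this gap vanishes because the $\gamma_i$ are asymptotically optimal. First I would use that, by Lemma \ref{lem_bgamma is BV}, $b_{\gamma_i}\in BV(I^\circ_{\gamma_i})$, so that the polar decomposition $Db_{\gamma_i}=\sigma_i\,|Db_{\gamma_i}|$ holds with $\sigma_i:=\frac{d\,Db_{\gamma_i}}{d\,|Db_{\gamma_i}|}\in\{-1,+1\}$ for $|Db_{\gamma_i}|$-a.e.\ point, and by \eqref{eq_Nb,gamma} the set $N_{b,i}$ is precisely $\{\sigma_i=-1\}$. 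Consequently $Db_{\gamma_i}(N_{b,i})=-|Db_{\gamma_i}|(N_{b,i})$ and $Db_{\gamma_i}(I^\circ_{\gamma_i}\setminus N_{b,i})=|Db_{\gamma_i}|(I^\circ_{\gamma_i}\setminus N_{b,i})$, so that $Db_{\gamma_i}(I^\circ_{\gamma_i})=|Db_{\gamma_i}|(I^\circ_{\gamma_i}\setminus N_{b,i})-|Db_{\gamma_i}|(N_{b,i})$; subtracting this from the additivity identity $|Db_{\gamma_i}|(I^\circ_{\gamma_i})=|Db_{\gamma_i}|(N_{b,i})+|Db_{\gamma_i}|(I^\circ_{\gamma_i}\setminus N_{b,i})$ gives
\begin{align*}
2\,|Db_{\gamma_i}|(N_{b,i})=|Db_{\gamma_i}|(I^\circ_{\gamma_i})-Db_{\gamma_i}(I^\circ_{\gamma_i}).
\end{align*}

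Next I would identify $Db_{\gamma_i}(I^\circ_{\gamma_i})$. Each $\gamma_i$ connects $\bar x$ and $x$, and both points lie in $\tilde{\Omega}_u$; hence by Lemma \ref{lem_bgamma is BV} we have $b_{\gamma_i}(t)=\textnormal{SVD}_{u,\Omega}(\bar x,\gamma_i(t))$ for $t\in I_{\gamma_i}$, and---arguing exactly as in the proof of part ii) of Lemma \ref{lem_bgamma is BV}, which in turn relies on the extension trick used in the proof of Lemma \ref{lem_calculation |u_gamma (a0)-u_gamma(am)|}---the two endpoints of $I_{\gamma_i}$ are points of approximate continuity of $b_{\gamma_i}$, so they carry no atom of $Db_{\gamma_i}$. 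The one-dimensional fundamental theorem of calculus then yields, in signed form,
\begin{align*}
Db_{\gamma_i}(I^\circ_{\gamma_i})=b(x)-b(\bar x)=\textnormal{SVD}_{u,\Omega}(\bar x,x)=b(x),
\end{align*}
where we used that $b(\bar x)=\textnormal{SVD}_{u,\Omega}(\bar x,\bar x)=0$ and $b(x)=\textnormal{SVD}_{u,\Omega}(\bar x,x)$ by \eqref{eq: def SVD} and \eqref{def: the smart baricenter}. Combining the two displays, $|Db_{\gamma_i}|(N_{b,i})=\tfrac{1}{2}\big(|Db_{\gamma_i}|(I^\circ_{\gamma_i})-b(x)\big)$.

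Finally I would pass to the limit $i\to\infty$. By \eqref{eq_eddaje super}, $|Db_{\gamma_i}|(I^\circ_{\gamma_i})\le|D^s u_{\gamma_i}|(I^\circ_{\gamma_i})$; moreover $b(x)=\textnormal{SVD}_{u,\Omega}(\bar x,x)\le|D^s u_{\gamma_i}|(I^\circ_{\gamma_i})\le|Du_{\gamma_i}|(I^\circ_{\gamma_i})$, the first inequality simply because $\gamma_i$ is a competitor in the infimum of Definition \ref{def: SVD}. Since the hypothesis gives $|Du_{\gamma_i}|(I^\circ_{\gamma_i})\to\textnormal{SVD}_{u,\Omega}(\bar x,x)=b(x)$, squeezing forces $|D^s u_{\gamma_i}|(I^\circ_{\gamma_i})\to b(x)$, whence
\begin{align*}
0\le|Db_{\gamma_i}|(N_{b,i})=\tfrac{1}{2}\big(|Db_{\gamma_i}|(I^\circ_{\gamma_i})-b(x)\big)\le\tfrac{1}{2}\big(|D^s u_{\gamma_i}|(I^\circ_{\gamma_i})-b(x)\big)\longrightarrow 0,
\end{align*}
which is \eqref{eq_b sale lungo i cammini ottimi}. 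I do not expect a real obstacle: the computation is pure bookkeeping, and the only delicate point---upgrading the inequality in part ii) of Lemma \ref{lem_bgamma is BV} to the \emph{signed} identity $Db_{\gamma_i}(I^\circ_{\gamma_i})=b(x)$, that is, ruling out atoms of $Db_{\gamma_i}$ at the two endpoints---is already supplied by the arguments in the proofs of Lemmas \ref{lem_calculation |u_gamma (a0)-u_gamma(am)|} and \ref{lem_bgamma is BV}.
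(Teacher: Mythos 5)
Your proof is correct and follows essentially the same route as the paper: both reduce the claim to the algebraic identity $2\,|Db_{\gamma_i}|(N_{b,i}) = |Db_{\gamma_i}|(I^\circ_{\gamma_i}) - Db_{\gamma_i}(I^\circ_{\gamma_i})$, identify $Db_{\gamma_i}(I^\circ_{\gamma_i}) = b(x) - b(\bar x) = \textnormal{SVD}_{u,\Omega}(\bar x,x)$ via the no-endpoint-atoms argument from Lemmas \ref{lem_calculation |u_gamma (a0)-u_gamma(am)|} and \ref{lem_bgamma is BV}, and then let $|Db_{\gamma_i}|(I^\circ_{\gamma_i})$ collapse onto $\textnormal{SVD}_{u,\Omega}(\bar x,x)$ using \eqref{eq_eddaje super} and the optimality of the $\gamma_i$. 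Your version is marginally cleaner in that you make the squeeze $\textnormal{SVD}_{u,\Omega}(\bar x,x)\le|D^s u_{\gamma_i}|(I^\circ_{\gamma_i})\le|Du_{\gamma_i}|(I^\circ_{\gamma_i})$ explicit, whereas the paper passes somewhat abruptly from the hypothesis to the bound $|Db_{\gamma_i}|(I^\circ_{\gamma_i})\le\textnormal{SVD}^*_{b,\Omega}(\bar x,x)+\epsilon_i$ (and uses Lemma \ref{lem_SVDb=SVDu} to identify $\textnormal{SVD}^*_{b,\Omega}$ with $\textnormal{SVD}_{u,\Omega}$, which you sidestep by working directly with $b(x)$).
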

\begin{proof}
Let $x \in \tilde{\Omega}_u$ and let $\gamma\in \Gamma_\Omega(u)$ be connecting $\bar{x}$ and $x$ in the sense of Definition \ref{def_gamma connects two points}. Then,
\begin{align}\label{eq_siii?}
0\leq \textnormal{SVD}_{b,\Omega}^*(\bar{x},x)= \textnormal{SVD}_{u,\Omega}(\bar{x},x)=b(x)= b(x)-b(\bar{x})=D b_\gamma (I_\gamma^\circ)= |D b_\gamma (I_\gamma^\circ)|,
\end{align}
where for the first equality sign we used Lemma \ref{lem_SVDb=SVDu}, while for the second last equality sign we followed the argument used in step 3 of Lemma \ref{lem_calculation |u_gamma (a0)-u_gamma(am)|} applied to $b_\gamma$ (which can be done as explained in ii) of Lemma \ref{lem_bgamma is BV}). Moreover let us observe that calling with $N_{b,\gamma}$ the set defined in \eqref{eq_Nb,gamma} with respect to $\gamma$, by standard considerations we get
\begin{align}\label{eq_siii?1}
&|D b_\gamma|(I_\gamma^\circ)= |D b_\gamma|(I_\gamma^\circ\setminus N_{b,\gamma})+ |D b_\gamma|(N_{b,\gamma})= D b_\gamma(I_\gamma^\circ\setminus N_{b,\gamma})- D b_\gamma(N_{b,\gamma}) \nonumber\\
& D b_\gamma(I_\gamma^\circ)- 2 D b_\gamma(N_{b,\gamma})= |D b_\gamma(I_\gamma^\circ)|+ 2 |D b_\gamma|(N_{b,\gamma})\stackrel{\eqref{eq_siii?}} {=} \textnormal{SVD}^*_{b,\Omega}(\bar{x},x) + 2 |D b_\gamma|(N_{b,\gamma}).
\end{align}
Let us now consider a sequence of paths as in the statement of Lemma \ref{lem_b non scende lungo cammini ottimi}, so that for every sequence of positive real numbers $(\epsilon_i)_{i\in\mathbb{N}}$ such that $\lim_{i\to \infty} \epsilon_i =0$, up to relabelling, we get
\begin{align}\label{eq_siii?2}
|D b_{\gamma_i}|(I_{\gamma_i}^\circ) \leq \textnormal{SVD}^*_{b,\Omega}(\bar{x},x) + \epsilon_i,\quad \forall\,i\in \mathbb{N}.
\end{align} 
Thus,
\begin{align*}
\textnormal{SVD}^*_{b,\Omega}(\bar{x},x) + 2 |D b_\gamma|(N_{b,i})\stackrel{\eqref{eq_siii?1}}{=}|D b_\gamma|(I_{\gamma_i}^\circ) \stackrel{\eqref{eq_siii?2}}{\leq}\textnormal{SVD}^*_{b,\Omega}(\bar{x},x) + \epsilon_i \quad \forall\, i\in \mathbb{N}
\end{align*}
and so
\begin{align*}
2 |D b_\gamma|(N_{b,i}) \leq \epsilon_i \quad \forall\, i\in \mathbb{N}.
\end{align*}
This concludes the proof.
\end{proof}
\noindent
Roughly speaking, given a generic point $x\in \tilde{\Omega}_u$, the above result is telling us that along optimal sequences of paths connecting $\bar{x},x \in \tilde{\Omega}_u$, the function $b$ restricted to these paths, tends to be not decreasing. 

\noindent
Let us now conclude this section with a couple of open problems. The first one is about regularity properties of the function $b:\Omega\to [0,\infty)$ defined as in \eqref{def: the smart baricenter}, and it asks if it is possible to show that actually $b\in BV(\Omega)$ rather than simply $b\in GBV(\Omega)$. The second problem instead asks whether it is possible to decompose every function $u\in BV(\Omega)$ as a sum of two functions $\bar{u},b\in BV(\Omega)$ such that $\bar{u}$ is \emph{minimally singular}, both functions satisfy properties \eqref{eq_Salto b< 1/2 salto v}, and \eqref{eq_Cantor b< 1/2 Cantor v}, and in particular
\begin{align*}
\nabla \bar{u}(x)= \nabla u(x), \quad \nabla b (x)=0 \quad \textnormal{for }\mathcal{L}^n\textnormal{-a.e. }x\in \Omega.
\end{align*}
Roughly speaking this latter problem if answered positively, could be interpreted as a sort of weak $n$-dimensional generalization of the decomposition theorem for functions of bounded variation valid for $n=1$ that says that every function of bounded variation in an open and bounded interval, can be written as sum of its absolutely continuous part and its singular part (see for instance \cite[Corollary 3.33]{AFP}), where in our context we replace \emph{absolutely continuous} with \emph{minimally singular}. 
\begin{open}\label{open 1}
Let $u\in BV(\Omega)$, let $\bar{x} \in \tilde{\Omega}_u$, and let $b:\Omega\to [0,\infty)$ as in \eqref{def: the smart baricenter}. Is it always true that $b\in BV(\Omega)$? Observe that thanks to Lemma \ref{lem_SVD is in GBV} it will be sufficient to show that $b\in L^1(\Omega)$.
\end{open}

\begin{open}\label{open 2}
Let $u\in BV(\Omega)$. Prove or disprove that we can always decompose $u$ as
\begin{align*}
u(x)=\bar{u}(x) + b(x)\quad\textnormal{for }\mathcal{L}^n\textnormal{-a.e. }x\in \Omega,
\end{align*}
where $\bar{u}\in BV(\Omega)$ is \emph{minimally singular}, and $b\in BV(\Omega)$ satisfies conditions $i),\,ii),\,iii)$ of Definition \ref{def_minimally singular functions} with respect to  the function $u$.
\end{open}

\section{Characterization result for minimally singular functions}\label{section_minimally singular characterized via SVD}
\noindent
In this section we prove the main result of this work, which is a geometric characterization for \emph{minimally singular} functions using the notion of singular vertical distance that was introduced in the previous section. Let us first prove that when $n=1$ then \emph{minimally singular} functions coincide with absolutely continuous functions.

\begin{proposition}\label{prop_n=1}
Let $\Omega\subset \mathbb{R}$ be as in \eqref{assumptions on Omega}, and let $u\in BV(\Omega)$. Then $u$ is \emph{minimally singular} if and only if $u$ is absolutely continuous.
\end{proposition}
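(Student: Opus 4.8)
The plan is to prove the two implications directly, using that a connected open $\Omega\subset\mathbb{R}$ of finite measure is a bounded open interval $(a,b)$, and that $u\in BV(\Omega)$ is absolutely continuous exactly when $D^s u=0$ (equivalently $u\in W^{1,1}(\Omega)$, equivalently $S_u=\emptyset$ and $D^c u=0$).

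\textbf{If $u$ is absolutely continuous, then $u$ is minimally singular.} Let $b\in GBV(\Omega)$ satisfy \eqref{eq_nabla b = 0}, \eqref{eq_Salto b< 1/2 salto v}, \eqref{eq_Cantor b< 1/2 Cantor v}. Since $u$ is absolutely continuous we have $[u]\equiv 0$ on $\Omega$ and $D^c u=0$; hence \eqref{eq_Salto b< 1/2 salto v} forces $[b]\equiv 0$, i.e. $S_b=\emptyset$, while \eqref{eq_Cantor b< 1/2 Cantor v} forces $D^c(b^M)=0$ for $\mathcal{L}^1$-a.e. $M>0$ (recall $\Omega$ is bounded, so every Borel subset of $\Omega$ is bounded), whence $|D^c b|=0$ by \eqref{def: Cantor part GBV}. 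Plugging $\nabla^* b=0$ $\mathcal{L}^1$-a.e., $S_b=\emptyset$ and $|D^c b|=0$ into the coarea formula for $GBV$ functions recalled in Section~\ref{section_fundamentals} gives $\int_{\mathbb{R}}P(\{b>t\};\Omega)\,dt=0$. Thus for $\mathcal{L}^1$-a.e. $t$ the set $\{b>t\}$ has zero relative perimeter in the connected open set $\Omega$, hence is $\mathcal{L}^1$-equivalent to $\emptyset$ or to $\Omega$; since $t\mapsto\mathcal{L}^1(\{b>t\})$ is nonincreasing and $b$ is finite $\mathcal{L}^1$-a.e. (being approximately differentiable a.e.), there is $c\in\mathbb{R}$ with $b=c$ $\mathcal{L}^1$-a.e. on $\Omega$. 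This is precisely the definition of $u$ being minimally singular.

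\textbf{If $u$ is minimally singular, then $u$ is absolutely continuous.} In dimension one the requirement of Definition~\ref{def_suitable for connection} is vacuous, so $\tilde{\Omega}_u=\Omega\setminus S_u$, which is nonempty (it is $\mathcal{L}^1$-co-null in a set of positive measure). Fix $\bar x\in\tilde{\Omega}_u$ and let $b(x):=\textnormal{SVD}_{u,\Omega}(\bar x,x)$ be as in \eqref{def: the smart baricenter}. By Lemma~\ref{lem_SVD is in GBV}, $b\in GBV(\Omega)$ and satisfies i), ii), iii) of Definition~\ref{def_minimally singular functions} with respect to $u$, so by assumption $b$ is $\mathcal{L}^1$-a.e. equal to some $c\in\mathbb{R}$. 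Now identify $b$: in $\mathbb{R}$, a curve $\gamma\in\Gamma_\Omega(u)$ connecting $\bar x$ and $x$ (in the sense of Definition~\ref{def_gamma connects two points}) can only be a monotone affine parametrization of the compact segment $[\bar x\wedge x,\bar x\vee x]\subseteq\Omega$, and Proposition~\ref{prop_fine properties of ugamma}, together with $\bar x,x\notin S_u$ (so that no jump mass sits at the endpoints), yields $|D^s u_\gamma|(I_\gamma^\circ)=|D^s u|((\bar x\wedge x,\bar x\vee x))$. Hence $b(x)=|D^s u|((\bar x\wedge x,\bar x\vee x))$ for every $x\in\tilde{\Omega}_u$. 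Constancy of $b$ forces $|D^s u|((\bar x,x))=c$ for $\mathcal{L}^1$-a.e. $x\in(\bar x,b)$; letting $x\downarrow\bar x$ gives $c=0$, and by monotonicity of $x\mapsto|D^s u|((\bar x,x))$ we get $|D^s u|((\bar x,b))=0$. Symmetrically $|D^s u|((a,\bar x))=0$, and since $D^s u$ has no atom at $\bar x\notin S_u$ we conclude $|D^s u|(\Omega)=0$, i.e. $u$ is absolutely continuous.

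\textbf{Main point of care.} The proposition is essentially a warm-up, and the only step asking for attention is the explicit computation of $\textnormal{SVD}_{u,\Omega}$ in one dimension: one must observe that admissible polygonal chains in $\mathbb{R}$ are exactly monotone parametrizations of segments, and that the one-dimensional restriction $u_\gamma$ built in Section~\ref{section_restriction BV over curves} transports exactly the singular part of $Du$ across that segment (Proposition~\ref{prop_fine properties of ugamma}), provided the endpoints avoid $S_u$. Alternatively, the second implication follows at once from Theorem~\ref{thm_characterization minimally singular via SVD}: minimal singularity produces a point $\bar x$ with $\textnormal{SVD}_{u,\Omega}(\bar x,\cdot)=0$ $\mathcal{L}^1$-a.e., and the same one-dimensional identification of $\textnormal{SVD}$ forces $D^s u=0$.
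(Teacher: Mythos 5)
Your proof is correct. The sufficiency direction (absolutely continuous $\Rightarrow$ minimally singular) is essentially the same argument as the paper's, via the $GBV$ coarea formula; you flesh out a detail the paper leaves implicit, namely why the null-perimeter-of-all-superlevel-sets conclusion upgrades to ``$b$ is $\mathcal L^1$-a.e.\ constant,'' but the route is identical.

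The necessity direction (minimally singular $\Rightarrow$ absolutely continuous) is where you diverge. The paper does something more elementary: it directly defines the competitor $b(t):=|D^s u|((a,t))$, shows $b\in BV(\Omega)$ by \cite[Theorem 3.30]{AFP}, and verifies by a short Fubini/integration-by-parts computation that $Db=|D^s u|$, from which conditions \eqref{eq_nabla b = 0}--\eqref{eq_Cantor b< 1/2 Cantor v} are immediate; constancy then gives $|D^s u|((a,b))=0$. You instead manufacture the competitor via Lemma~\ref{lem_SVD is in GBV} applied to $b(x)=\textnormal{SVD}_{u,\Omega}(\bar x,x)$, then use Proposition~\ref{prop_fine properties of ugamma} and the observation that admissible $\gamma$ in $\mathbb R$ are exactly unit-speed monotone parametrizations to identify $\textnormal{SVD}_{u,\Omega}(\bar x,x)=|D^s u|\big((\bar x\wedge x,\bar x\vee x)\big)$, and conclude. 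Both approaches land on essentially the same function (yours differs from the paper's by a reflection at $\bar x$ and an additive constant). The trade-off: the paper's construction is self-contained and makes the proposition a genuine warm-up preceding the SVD machinery; yours imports the full weight of Section~\ref{section_SVD} (Lemma~\ref{lem_SVD is in GBV} in particular), which is heavier but has the merit of exhibiting what the singular vertical distance actually is in one dimension and serving as a consistency check on the general theory. Your closing alternative (invoke Theorem~\ref{thm_characterization minimally singular via SVD} wholesale) is also valid but makes the proposition a corollary of the main theorem rather than an independent observation. One cosmetic point: you reuse the symbol $b$ both for the competitor function and for the right endpoint of $\Omega=(a,b)$; rename one to avoid the clash.
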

\begin{proof}
Since $\Omega\subset \mathbb{R}$ satisfies condition \eqref{assumptions on Omega} we have that $\Omega=(a,b)$ for some $a,b\in \mathbb{R}$. Let $u$ be absolutely continuous, and let $b\in GBV(\Omega)$ satisfying \eqref{eq_nabla b = 0}, \eqref{eq_Salto b< 1/2 salto v}, and \eqref{eq_Cantor b< 1/2 Cantor v}. Then, thanks to the Coarea formula applied to $b$ we get that 
\begin{align*}
\int_{\mathbb{R}}P\left(\{ b>t \};\Omega  \right)\, dt=0.
\end{align*}
So $b$ must be equivalent to a constant function in $\Omega$, which by generality of $b$ implies that $u$ is \emph{minimally singular}. Vice versa, suppose $u$ is \emph{minimally singular}, and let $b:\Omega\to \mathbb{R}$  be defined as $b(t):= |D^s u|((a,t))$ for every $t\in \Omega$. Then, thanks to \cite[Theorem 3.30]{AFP} we know that $b\in BV(\Omega)$, while by Fubini Theorem we get that $Db = |D^s u|$. Indeed, let $\varphi \in C^1_c(\Omega)$ then
\begin{align*}
&\int_a^b b(t) \varphi'(t)\,dt = \int_a^b \int_a^t \varphi'(t) \,d|D^s u|(s)\, dt = \int_a^b \int_a^b \varphi'(t) \chi_{(a,t)}(s) \,d|D^s u|(s)\, dt\\
&=\int_a^b \int_a^b \varphi'(t) \chi_{(a,t)}(s) \, dt \,d|D^s u|(s) = \int_a^b \int_s^b \varphi'(t) \, dt \,d|D^s u|(s)=- \int_a^b \varphi(s)\, d|D^s u|(s).
\end{align*}
Thus, since $Db = |D^s u|$, we get that $b$ satisfies \eqref{eq_nabla b = 0}, \eqref{eq_Salto b< 1/2 salto v}, and \eqref{eq_Cantor b< 1/2 Cantor v}. At the same time, since $u$ is \emph{minimally singular}, $b$ must be constant, which implies that $|D^s u|((a,b))=0$. This implies that $u$ is absolutely continuous and so we conclude the proof.
\end{proof}
\noindent
We are now ready to prove  Theorem \ref{thm_characterization minimally singular via SVD} which represents the main result of this work.

\begin{proof}[Proof of Theorem \ref{thm_characterization minimally singular via SVD}]
We divide the proof into two steps.\\
\textbf{Step 1 (Sufficiency).} In this first part of the proof we will show that if there exists $\bar{x}\in \tilde{\Omega}_u$ such that $\textnormal{SVD}_{u,\Omega}(\bar{x},x)=0$ for $\mathcal{L}^n$-a.e. $ x\in \tilde{\Omega}_u$, then $u$ is \emph{minimally singular}. We divide the argument in two sub-steps.\\
\textbf{Step 1.1.} Let $b\in GBV(\Omega)$ be a function satisfying condition \eqref{eq_nabla b = 0}, \eqref{eq_Salto b< 1/2 salto v}, and \eqref{eq_Cantor b< 1/2 Cantor v}. Goal of this first step is to prove that for every $M>0$, for every $\nu\in \mathbb{S}^{n-1}$, and for $\mathcal{L}^{n-1}$-a.e. $y\in \Omega_\nu$ we have 
\begin{align}\label{eq:step1.1 sufficiency}
\int_{G}d|D (b^M)^\nu_y|(t)\leq \int_{G}d|D^s (u)^\nu_y|(t)  \quad \forall\, G\subset \Omega^\nu_y \textnormal{ Borel}.
\end{align}
Thanks to the assumptions on the function $b$ we can construct a Borel function $f_{M}:\Omega \to [-1,1]$ defined as
\begin{align*}
f_{M}(x)=
\begin{cases}
 (\nu_{b^M}(x) \cdot \nu_u(x)) \frac{[b^M](x)}{[u](x)}\quad &\mbox{for } \mathcal{H}^{n-1}\textnormal{-a.e. } x\in J_{b^M},\vspace{0.2cm}\\
f^c(x)\quad &\mbox{for } |D^c u|\textnormal{-a.e. }x \in \{|b|<M  \}^{(1)},\vspace{0.2cm}\\
0\quad &\mbox{otherwise,}
\end{cases}
\end{align*}
where $f^c$ is the Borel function appearing in \eqref{eq_Cantor b< 1/2 Cantor v}. Thus, 
\begin{align}\label{eq: Db=f D^sv}
D b^M (B)=\int_B f_{M}(x)\, dD^s u(x),\quad \forall\, B\subset \Omega \textnormal{ Borel}.
\end{align}
Thanks to \eqref{eq: Db=f D^sv}, \cite[Remark 4.8]{maggiBOOK}, and by definition of directional distributional derivative $D_\nu u =D u\cdot\nu$, for every $\nu \in \mathbb{S}^{n-1}$ we get that
$$
|D_\nu b^M|(B)= \int_{B}|f_{M}(x)|\, d|D_\nu^s u|(x) \quad \forall\, B\subset \Omega \textnormal{ Borel}.
$$
Applying \cite[Theorem 3.103]{AFP}, and thanks to the above relation we then get
\begin{align*}
\int_{B_\nu}dy\int_{B^\nu_y}d|D (b^M)^\nu_y|(t)\leq  \int_{B_\nu}dy\int_{B^\nu_y}d|D^s (u)^\nu_y|(t)  \quad \forall\, B\subset \Omega \textnormal{ Borel}.
\end{align*}
From the above relation, \eqref{eq:step1.1 sufficiency} follows.\\
\textbf{Step 1.2.} We conclude. By assumption there exists $\bar{x}\in \tilde{\Omega}_u$ such that $\textnormal{SVD}_{u,\Omega}(\bar{x},x)=0$ for $\mathcal{L}^n$-a.e. $ x\in \tilde{\Omega}_u$,  thus thanks to Remark \ref{rem: notion of connectedness} we have that $\mathcal{L}^n (\{\bar{x}\}_{u,\Omega})= \mathcal{L}^n(\Omega)$ where $\{\bar{x}\}_{u,\Omega}$ was defined in \eqref{def: (x) equivalence class}. Let $x\in \{\bar{x}\}_{u,\Omega}$ be such that $\bar{x}\neq x$. Let $M>0$, then since $\mathcal{L}^n(\Omega)<\infty$, we have that $b^M\in BV(\Omega)$ , and up to changing $\bar{x}, x$ with other representatives of the class $\{\bar{x}\}_{u,\Omega}$, it is not restrictive to assume that $\bar{x},x \in \tilde{\Omega}_u\cap \tilde{\Omega}_{b^M}$. Let $(\gamma_i)_{i\in \mathbb{N}}\subset \Gamma_{\Omega}(u)$ with $\gamma_i:I_i \to \Omega$ be such that each curve $\gamma_i$ connects $\bar{x}, x$ for every $i\in \mathbb{N}$, and $\textnormal{SVD}_{u,\Omega}(\bar{x},x)= \inf_{i\in\mathbb{N}}|D^s u_{\gamma_i}|(I_i^\circ)$. Let us observe that thanks to \eqref{eq:step1.1 sufficiency}, and by definition of $\Gamma_\Omega(b^M)$ it is not restrictive to assume that $(\gamma_i)_{i\in \mathbb{N}}\subset \Gamma_{\Omega}(b^M)$. Then, setting $\bar{x}=\gamma_i(a^i_0)$, and $x=\gamma_i(a^i_m)$ where $[a^i_0, a^i_m]= I_i$, for every $i\in\mathbb{N}$, and recalling that each curve $\gamma_i$ can be seen as the continuous union of $m=m(i)\in\mathbb{N}$ affine curves, we get 
\begin{align*}
&|(b^M)\Low(\bar{x})-(b^M)\Low(x)|=|(b^M)\Low(\gamma_i(a_0^i))-(b^M)\Low(\gamma_i(a^i_m))|\stackrel{\eqref{def_u_gamma}}{=}|(b^M)_{\gamma_i}(a_0^i)-(b^M)_{\gamma_i}(a_m^i)|\\&\stackrel{\eqref{eq_|u_gamma (a0)-u_gamma(am)| < eV}}{\leq} |D (b^M)_{\gamma_i}|(I_i^\circ)= \sum_{j=1}^{m}|D (b^M)_{\gamma_i}|((a^i_{j-1},a^i_j)) + \sum_{j=1}^{m-1}|D (b^M)_{\gamma_i}|(a^i_j)\\& \stackrel{\eqref{ugamma = u fettato in (ai-1 , ai)}}{=}\sum_{j=1}^{m}|D(b^M)^{\eta^i_j}_{y^i_j}|((t^i_j,t^i_j+(a^i_j-a^i_{j-1}))) + \sum_{j=1}^{m-1}|D (b^M)_{\gamma_i}|(a^i_j)\\& \stackrel{\textnormal{Proposition}\, \ref{prop_fine properties of ugamma}}{=} \sum_{j=1}^{m}|D(b^M)^{\eta^i_j}_{y^i_j}|((t^i_j,t^i_j+(a^i_j-a^i_{j-1}))) + \sum_{j=1}^{m-1}[b^m](\gamma_i(a^i_j))\\& \stackrel{\eqref{eq:step1.1 sufficiency}}{\leq} \sum_{j=1}^{m}|D^s u^{\eta^i_j}_{y^i_j}|((t^i_j,t^i_j+(a^i_j-a^i_{j-1}))) + \sum_{j=1}^{m-1}[b^m](\gamma_i(a^i_j))\\& \leq \sum_{j=1}^{m}|D^s u^{\eta^i_j}_{y^i_j}|((t^i_j,t^i_j+(a^i_j-a^i_{j-1}))) + \sum_{j=1}^{m-1}[u](\gamma_i(a^i_j))= |D^su_{\gamma_i}|(I_i^\circ),
\end{align*}
where $y^i_j$, $t^i_j$, and $\eta^i_j$ are the quantities associated to $\gamma_i$ as described in Section \ref{subsec_A class of one-dimensional curves}, and for the last inequality sign we used Lemma \ref{lem_bgamma is BV}. Taking the $\inf$ in the right hand side among all indexes $i\in\mathbb{N}$ we get that $|(b^M)\Low(\bar{x})-(b^M)\Low(x)|=0$. By arbitrariness of $x \in \tilde{\Omega}_u\cap \tilde{\Omega}_{b^M}$ we conclude that $b^M$ is $\mathcal{L}^n$-equivalent to a constant function in $\Omega$. By generality of $M>0$ we conclude that $b$ is $\mathcal{L}^n$-equivalent to a constant function in $\Omega$. Thus, by generality of $b$, we deduce that $u$ is a \emph{minimally singular} function.\\
\textbf{Step 2 (Necessity).} 
In this step we will prove the remaining implication, namely that if $u$ is \emph{minimally singular}, then there exists $\bar{x}\in \tilde{\Omega}_u$ such that $\textnormal{SVD}_{u,\Omega}(\bar{x},x)=0$ for $\mathcal{L}^n$-a.e. $ x\in \tilde{\Omega}_u$. We will do it by contradiction, showing that if for every $\bar{x}\in \tilde{\Omega}_u$ the function $\textnormal{SVD}(\bar{x},\cdot)$ is not $\mathcal{L}^n$-equivalent to zero in $\Omega$, then we can construct a function $b\in GBV(\Omega)$ that satisfies conditions i), ii), and iii) of Definition \ref{def_minimally singular functions} with respect to  $u$, and it is not $\mathcal{L}^n$-equivalent to a constant function in $\Omega$. So, thanks to Remark \ref{rem: notion of connectedness}  for every $\bar{x}\in \tilde{\Omega}_u$ the set of those $x\in \tilde{\Omega}_u$ such that the singular vertical distance between $(\bar{x},x)$ is zero, has $\mathcal{L}^n$-measure strictly less than $\mathcal{L}^n(\Omega)$, namely for every $\bar{x}\in \tilde{\Omega}_u$ 
\begin{align}\label{eq: L^n(x) < L^n(Omega)}
\mathcal{L}^n\left(\{ \bar{x} \}_{u,\Omega}\right)<\mathcal{L}^n(\Omega),
\end{align}
where $\{ \bar{x} \}_{u,\Omega}$ was defined in \eqref{def: (x) equivalence class}. So, let us fix any $\bar{x}\in\tilde{\Omega}_u$, and let us consider the function $b:\Omega\to [0,\infty)$ defined as in \eqref{def: the smart baricenter}. Let us prove that $b$ is not $\mathcal{L}^n$-equivalent to a constant in $\Omega$. Let $M>0$ be such that $\mathcal{L}^n(\{|b|\leq M  \})>0$. Let us prove that 
\begin{align}\label{eq_SVD_b kind of equal to SVD_u}
\textnormal{SVD}_{b^M,\Omega}(\bar{x},x)=\textnormal{SVD}_{u,\Omega}(\bar{x},x)\quad \forall\, x\in \{|b|\leq M  \}\cap \tilde{\Omega}_{b^M}\cap \tilde{\Omega}_u.
\end{align}
Indeed, thanks to Lemma \ref{lem_calculation |u_gamma (a0)-u_gamma(am)|} we have that for every $\gamma\in \Gamma_\Omega(b^M)$
\begin{align*}
\textnormal{SVD}_{u,\Omega}(\bar{x},x)=|b(x)-b(\bar{x})|=|b^M(x)-b^M(\bar{x})|\leq |D(b^M)_{\gamma}|(I_\gamma^\circ),
\end{align*}
where we used that both $x$ and $\bar{x}$ belong to $\tilde{\Omega}_{b^M}$ (see for instance Lemma \ref{lem_bgamma is BV}). Since thanks to step 2.2 $Db^M=D^s b^M$, passing to the $\inf$ in the right hand side among all $\gamma\in \Gamma_\Omega(b^M)$ connecting $\bar{x},x$ in the sense of Definition \ref{def_gamma connects two points}, we deduce that $\textnormal{SVD}_{b^M,\Omega}(\bar{x},x)\geq\textnormal{SVD}_{u,\Omega}(\bar{x},x)$. On the other hand, thanks to Lemma \ref{lem_bgamma is BV} we get that 
\begin{align*}
\textnormal{SVD}_{b^M,\Omega}(\bar{x},x)\leq |D^s (b^M)_\gamma|(I_\gamma^\circ)\leq |D^s u_\gamma|(I_\gamma^\circ)\quad   \forall\, \gamma\in \Gamma_u(\Omega).
\end{align*}
So, passing to the $\inf$ on the right hand side among all $\gamma\in \Gamma_u(\Omega)$ we prove that $\textnormal{SVD}_{b^M,\Omega}(\bar{x},x)\leq\textnormal{SVD}_{u,\Omega}(\bar{x},x)$. This proves \eqref{eq_SVD_b kind of equal to SVD_u}. So, let us now suppose by contradiction that there exists $c>0$ such that $b(x)= \textnormal{SVD}_{u,\Omega}(\bar{x},x)= c$ for $\mathcal{L}^n$-a.e. $x\in \Omega$. Thanks to \eqref{eq_SVD_b kind of equal to SVD_u} this implies that for every $M>c$ we have that $\textnormal{SVD}_{b^M,\Omega}(\bar{x},x)= c$ for $\mathcal{L}^n$-a.e. $x\in \tilde{\Omega}_{b^M}\cap \tilde{\Omega}_{u}$. At the same time, since $b(x)=c$ for $\mathcal{L}^n$-a.e.  $x\in \Omega$ it is not difficult to see that $\textnormal{SVD}_{b^M,\Omega}(\bar{x},x)=0$ for $\mathcal{L}^n$-a.e. $x\in \Omega$, which is a contradiction. So the only remaining possibility is that $c=0$, but this would imply that $\textnormal{SVD}_{u,\Omega}(\bar{x},x)=0$ for $\mathcal{L}^n$-a.e. $x\in \Omega$ which is a contradiction by \eqref{eq: L^n(x) < L^n(Omega)}. All in all, thanks to Lemma \ref{lem_SVD is in GBV} and to relation \eqref{eq: L^n(x) < L^n(Omega)} we have constructed a function $b\in GBV(\Omega)$ that is not $\mathcal{L}^n$-equivalent to a constant, and it satisfies conditions i), ii), and iii) of Definition \ref{def_minimally singular functions} with respect to $u$. This implies that $u$ is not \emph{minimally singular}, which is a contradiction with our assumption made at the beginning of step 2. This concludes the proof.
\end{proof}

\section{Application: the rigidity problem for Steiner's perimeter inequality}\label{section_application to Steiner}
\noindent
In this section we apply the theory of \emph{minimally singular} functions developed in the previous sections to  prove Theorem \ref{thm_rigidity by perugini}. We then conclude by commenting on the topological nature of the set $\{ v>0 \}$ for a given $v:\mathbb{R}^n\to [0,\infty)$ that satisfies \eqref{cond_ F[v] has finite perimeter}, and \eqref{cond_ F[v] is indecomposable}, and we prove Lemma \ref{lem_open sets exists for n=1}, Proposition \ref{prop_no open set equivalency}, and Lemma \ref{lem_sufficient condition for the existence of the open set}.

\begin{proof}[Proof of Theorem \ref{thm_rigidity by perugini}]
We divide the proof into two steps.\\
\textbf{Step 1.} Let $\Omega\subset\mathbb{R}^n$ be as in \eqref{CCF condition for Omega}. Let us prove that conditions $i)$ and $ii)$ of Theorem \ref{thm_rigidity by perugini} are indeed equivalent. Let us assume that \emph{rigidity} over $\Omega$ holds true, namely that  for every $E\in \mathcal{M}_\Omega(v)$ there exists $c\in \mathbb{R}$ such that $b_{E,\Omega}(x)=c$ for $\mathcal{L}^n$-a.e. $x\in \Omega$. Thanks to the generality of $E\in\mathcal{M}_\Omega(v)$, by Proposition \ref{prop_equality cases over open sets}, and Definition \ref{def_minimally singular functions}, this  implies that $\frac{1}{2} v$ is \emph{minimally singular}, thus $v$ is \emph{minimally singular}. Thanks to Theorem \ref{thm_characterization minimally singular via SVD} we conclude. Vice versa,  let us assume that $v\in BV(\Omega)$ is \emph{minimally singular} thus $\frac{1}{2} v$ is \emph{minimally singular} too. If $E\in \mathcal{M}_\Omega(v)$ thanks to Proposition \ref{prop_equality cases over open sets} we have that $b_{E,\Omega}\in GBV(\Omega)$ and satisfies  \eqref{eq_nabla b for E = 0 PROP}, \eqref{eq_Salto b< 1/2 salto v for E PROP}, and \eqref{eq_Cantor b< 1/2 Cantor v for E PROP}. Since $\frac{1}{2} v$ is \emph{minimally singular}, then there exists a constant $c\in \mathbb{R}$ such that $b_{E,\Omega}(x)=c$ for $\mathcal{L}^n$-a.e. $x\in \Omega$. Thus \emph{rigidity} over $\Omega$ holds true.\\
\noindent
\textbf{Step 2.} We conclude. Let us show that if there exists $\Omega\subset\mathbb{R}^n$ as in \eqref{CCF condition for Omega} such that $\mathcal{H}^{n-1}(\{ v\Low>0 \}\setminus \Omega)=0$, then $\mathcal{M}(v)=\mathcal{M}_\Omega(v)$. Clearly $\mathcal{M}(v)\subset \mathcal{M}_\Omega(v)$, thus let us focus on the opposite inclusion. First let us notice that for every $E\subset \mathbb{R}^{n+1}$ set of finite perimeter, thanks to De Giorgi structure theorem (see \cite[Theorem 15.9]{maggiBOOK}), we have that
\begin{align}\label{eq_P(E;BxR)=0 se H^(n-1)(B)= 0}
P(E;B\times\mathbb{R})=0\quad \forall\, B\subset \mathbb{R}^n \textnormal{ Borel set such that }\mathcal{H}^{n-1}(B)=0.
\end{align}
Let $E\in \mathcal{M}_\Omega(v)$ then
\begin{align*}
&P(E)=P(E;\{ v\Low>0 \}\times \mathbb{R}) + P(E;\{ v\Low=0 \}\times \mathbb{R})=P(E;(\{ v\Low>0 \}\setminus \Omega)\times \mathbb{R})\\
&+ P(E;\Omega\times\mathbb{R}) + P(E;\{ v\Low=0 \}\times \mathbb{R})\stackrel{\eqref{eq_P(E;BxR)=0 se H^(n-1)(B)= 0}}{=}P(E;\Omega\times\mathbb{R}) + P(E;\{ v\Low=0 \}\times \mathbb{R})\\
&= P(F[v];\Omega\times\mathbb{R}) + P(E;\{ v\Low=0 \}\times \mathbb{R})\stackrel{\eqref{eq_P(E;BxR)=0 se H^(n-1)(B)= 0}}{=}P(F[v];\{ v\Low >0 \}\times\mathbb{R}) + P(E;\{ v\Low=0 \}\times \mathbb{R})\\
&=P(F[v];\{ v\Low >0 \}\times\mathbb{R}) + P(F[v];\{ v\Low=0 \}\times \mathbb{R}) = P(F[v]),
\end{align*}
where for the second last equality sign we used
 \cite[Proposition 3.8]{CagnettiColomboDePhilippisMaggiSteiner}. Thus, putting together the first step with what we have proved so far, we conclude.
\end{proof}
\noindent
The following result shows that \emph{minimally singular} functions  with null approximate distributional gradient, must be equivalent to a constant.
\begin{proposition}\label{prop_characterization of constant functions}
Let $\Omega\subset \mathbb{R}^n$ be as in \eqref{assumptions on Omega}, and let $u\in BV(\Omega)$ be such that $\nabla u=0$ $\mathcal{L}^n$-a.e. in $\Omega$. Then, $u$ is \emph{minimally singular} if and only if $u$ is $\mathcal{L}^n$-equivalent to a constant function in $\Omega$.
\end{proposition}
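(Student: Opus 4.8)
The plan is to observe that, thanks to the hypothesis $\nabla u=0$ $\mathcal{L}^n$-a.e., the function $u$ is itself an admissible competitor in Definition \ref{def_minimally singular functions}, so minimal singularity must collapse it to a constant; the opposite implication is essentially formal.

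First I would dispose of the implication ``$u$ constant $\Rightarrow$ $u$ \emph{minimally singular}''. If $u$ is $\mathcal{L}^n$-equivalent to a constant then $Du=0$, hence $D^c u=0$ and $[u]=0$ $\mathcal{H}^{n-1}$-a.e. Given any $b\in GBV(\Omega)$ satisfying i)--iii) of Definition \ref{def_minimally singular functions} with respect to $u$, condition ii) forces $[b]=0$ $\mathcal{H}^{n-1}$-a.e. (hence $\mathcal{H}^{n-1}(J_b)=0$), condition iii) forces $D^c(b^M)=0$ for $\mathcal{L}^1$-a.e. $M$ (the integrand is paired with $D^c u=0$), and condition i) together with the truncation chain rule forces $\nabla b^M=0$ $\mathcal{L}^n$-a.e. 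Since $\mathcal{L}^n(\Omega)<\infty$, $b^M\in BV(\Omega)$, and the coarea formula then gives $Db^M=0$, so $b^M$ is $\mathcal{L}^n$-equivalent to a constant on the connected open set $\Omega$ for $\mathcal{L}^1$-a.e. $M$; letting $M\to\infty$ along such values shows $b$ is $\mathcal{L}^n$-equivalent to a constant. By arbitrariness of $b$, $u$ is \emph{minimally singular}.

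For the converse I would argue that, when $\nabla u=0$ $\mathcal{L}^n$-a.e., $u$ satisfies conditions i), ii), iii) of Definition \ref{def_minimally singular functions} \emph{with respect to itself}: condition i) is exactly the hypothesis $\nabla^* u=0$; condition ii) is the trivial $[u]\le[u]$; and for condition iii) I would take $f\equiv 1$, which requires the identity
\[
D^c(u^M)=D^c u\mres\{|u|<M\}^{(1)}\qquad\text{for }\mathcal{L}^1\text{-a.e. }M>0.
\]
Granting this, since $u\in BV(\Omega)\subset GBV(\Omega)$, applying the definition of \emph{minimally singular} to the competitor $b=u$ yields a constant $c$ with $u=c$ $\mathcal{L}^n$-a.e., which is the assertion. (Alternatively one can run the argument through Theorem \ref{thm_characterization minimally singular via SVD}: choose $\bar x$ with $\textnormal{SVD}_{u,\Omega}(\bar x,\cdot)=0$ a.e. and connect $\bar x$ to $\mathcal{L}^n$-a.e. $x$ by chains $\gamma_i\in\Gamma_\Omega(u)$ whose restrictions additionally satisfy $\nabla u_{\gamma_i}\equiv0$ — possible since, by Proposition \ref{prop_GBV slicing} and Fubini, $\mathcal{L}^1$-a.e. segment in $\mathcal{L}^{n-1}$-a.e. direction sees $\nabla u=0$ — so that $|Du_{\gamma_i}|(I_{\gamma_i}^\circ)=|D^s u_{\gamma_i}|(I_{\gamma_i}^\circ)\to0$ and hence $|u(x)-u(\bar x)|\le|Du_{\gamma_i}|(I_{\gamma_i}^\circ)\to0$ by Lemma \ref{lem_calculation |u_gamma (a0)-u_gamma(am)|}.)

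The only non-formal point — and the step I expect to be the main obstacle — is the identity $D^c(u^M)=D^c u\mres\{|u|<M\}^{(1)}$ for $\mathcal{L}^1$-a.e. $M$, i.e. the verification of condition iii) for $u$ against itself. It is the standard chain rule for $BV$ functions under the $1$-Lipschitz truncation $t\mapsto\max\{-M,\min\{M,t\}\}$, combined with Lemma \ref{lem: cantor total variation of v on v=0 is null}: on $\{|u|\le M\}$ one has $u^M=u$, so the Cantor parts agree there; on $\{u\ge M\}$ and $\{u\le-M\}$ one has $u^M=\pm M$, so $D^c(u^M)$ vanishes there; and the leftover essential boundaries, being $\mathcal{H}^{n-1}$-$\sigma$-finite for $\mathcal{L}^1$-a.e. $M$, carry no Cantor mass. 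The excluded values of $M$ (those for which $\{u>M\}$ or $\{u<-M\}$ fails to have finite perimeter, or for which $|D^c u|$ charges $\{u^\vee=\pm M\}\cup\{u^\wedge=\pm M\}$) form at most a countable set, because the corresponding level sets are pairwise disjoint in $M$ with finite total $|D^c u|$-mass.
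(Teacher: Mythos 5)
Your proof is correct, and both directions take a genuinely different route from the paper. The paper proves the proposition as a corollary of its main Theorem~\ref{thm_characterization minimally singular via SVD}: for the forward implication it observes that $u$ constant forces $u_\gamma\equiv c$ and hence $\textnormal{SVD}_{u,\Omega}(\bar x,\cdot)\equiv 0$; for the converse it takes an optimal sequence $\gamma_i$ with $|D^s u_{\gamma_i}|(I_i^\circ)\to 0$, invokes Proposition~\ref{prop_fine properties of ugamma}\,i) to see that $|D^a u_{\gamma_i}|(I_i^\circ)=0$ because $\nabla u=0$, and then applies Lemma~\ref{lem_calculation |u_gamma (a0)-u_gamma(am)|} to bound $|u\Low(x)-u\Low(\bar x)|$. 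Your argument, by contrast, works directly from Definition~\ref{def_minimally singular functions} and never touches the SVD machinery. The forward direction is the quick observation that $D^c u=0$ and $[u]=0$ collapse conditions ii)--iii) on any competitor $b$, and then i) plus truncation kills $D^a b^M$, so $Db^M=0$ on the connected set $\Omega$; this is shorter and more self-contained than the paper's route. The converse is a nice self-referential trick: with $\nabla u=0$ the function $u$ itself satisfies i)--iii) against $u$ (with $f\equiv 1$), so minimal singularity applied to the competitor $b=u$ forces $u$ to be constant. You correctly single out the only non-formal step — the truncation chain rule identity $D^c(u^M)=D^c u\mres\{|u|<M\}^{(1)}$ for $\mathcal{L}^1$-a.e.\ $M$ — and your sketch via Lemma~\ref{lem: cantor total variation of v on v=0 is null}, the locality on $\{|u|\le M\}^{(1)}$ and $\{u\gtrless\pm M\}^{(1)}$, and the countable set of bad levels $M$ is sound. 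What the paper's route buys is automatic reuse of the heavy machinery that is needed elsewhere; what yours buys is a proof of this proposition that is elementary, independent of Theorem~\ref{thm_characterization minimally singular via SVD} (and thus would survive even if that theorem were restricted), and conceptually highlights the fact that when $Du=D^s u$ the function is always its own admissible competitor.
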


\begin{proof}[Proof of Proposition \ref{prop_characterization of constant functions}] If $u$ is $\mathcal{L}^n$-equivalent to a constant function then for every $\gamma\in \Gamma_\Omega(u)$ we have that $u_\gamma=c$ $\mathcal{L}^1$-a.e. in $I_\gamma^\circ$, thus $\textnormal{SVD}_{u,\Omega}(\bar{x},x)=0$ for every $x\in \tilde{\Omega}_u$ which, thanks to Theorem \ref{thm_characterization minimally singular via SVD}, implies that $u$ is \emph{minimally singular}. Let us now focus on the remaining implication. Thanks to Theorem \ref{thm_characterization minimally singular via SVD} there exists $\bar{x}\in\tilde{\Omega}_u$ such that $\textnormal{SVD}_{u,\Omega}(\bar{x},x)=0$ for $\mathcal{L}^n$-a.e. $x\in \tilde{\Omega}_u$. Let $x\in \tilde{\Omega}_u$ be such that $\textnormal{SVD}_{u,\Omega}(\bar{x},x)=0$. Let $(\gamma_i)_{i\in \mathbb{N}}\subset \Gamma_\Omega(u)$ with $\gamma_i:I_i \to \Omega$ be such that $\gamma_i$ connects $\bar{x}, x$ for every $i\in \mathbb{N}$, and $\textnormal{SVD}_{u,\Omega}(\bar{x},x)= \inf_{i\in\mathbb{N}}|D^s u_{\gamma_i}|(I_i^\circ)$. Then, setting $\bar{x}=\gamma_i(a^i_0)$, and $x=\gamma_i(a^i_m)$ where $[a^i_0, a^i_m]= I_i$, for every $i\in\mathbb{N}$, we get
\begin{align}\label{proof,lemmaCONST,step1,a}
|u\Low (\bar{x})-u\Low(x)|=|u_{\gamma_i}(a^i_0)-u_{\gamma_i}(a^i_m)|\stackrel{\eqref{eq_|u_gamma (a0)-u_gamma(am)| < eV}}{\leq} |D u_{\gamma_i}|(I_i^\circ)= |D^a u_{\gamma_i}|(I_i^\circ) + |D^s u_{\gamma_i}|(I_i^\circ).
\end{align} 
We know apply Proposition \ref{prop_fine properties of ugamma} to each component of the curve $\gamma_i$. Indeed we recall that by definition of $\gamma_i\in \Gamma_\Omega(u)$ (see Section \ref{subsec_A class of one-dimensional curves}) each curve $\gamma_i$ can be seen as the continuous union of $m(i)\in\mathbb{N}$ affine curves, and thus
\begin{align}\label{proof,lemmaCONST,step1,b}
|D^a u_{\gamma_i}|(I_i^\circ)=\int_{I_i^\circ}|\nabla u_{\gamma_i}(s)|\, ds =  \sum_{j=1}^{m(i)}\int_{a^i_{j-1}}^{a^i_j}|\nabla u(y^i_j+  (s+(t^i_j-a^i_{j-1}))\eta^i_j)\cdot\nu^i_j |\, ds = 0,
\end{align}
where, $y^i_j$, $t^i_j$, and $\eta^i_j$ are the quantities associated to $\gamma_i$ as described in Section \ref{subsec_A class of one-dimensional curves}, and for the last inequality we used the fact that by assumption $\nabla u(x)= 0$ for $\mathcal{L}^n$-a.e. $x\in\Omega$. Thus, combining together \eqref{proof,lemmaCONST,step1,a} with \eqref{proof,lemmaCONST,step1,b}, we get $
|u\Low (\bar{x})-u\Low(x)|\leq |D^s u_{\gamma_i}|(I_i^\circ)$. Taking the $\inf$ among all $i\in\mathbb{N}$ in the right hand side, we get that $|u\Low (\bar{x})-u\Low(x)|=0$. By arbitrariness of $x\in \tilde{\Omega}_u$ we conclude that 
$$
|u\Low (\bar{x})-u\Low(x)|=0\quad \textnormal{for }\mathcal{L}^n\textnormal{-a.e. }x\in \tilde{\Omega}_u.
$$
Since $\Omega=\tilde{\Omega}_u$ up to a set of $\mathcal{L}^n$-measure zero, we conclude.
\end{proof}
\noindent
With the above proposition at hand we can quickly prove the following result.
\begin{corollary}\label{cor_rigidity for constant functions}
Let $v:\mathbb{R}^n\to [0,\infty)$ be a Lebesgue measurable function satisfying \eqref{cond_ F[v] has finite perimeter}, and let $\Omega \subset \mathbb{R}^n$ be as in \eqref{CCF condition for Omega}. Let us further assume that $\nabla v(x) = 0$ for $\mathcal{L}^n$-a.e. $x\in \Omega$. Then the following statements are equivalent:
\begin{itemize}
\item[i)] for every $E \in \mathcal{M}_\Omega(v)$ there exists $t \in \mathbb{R}$ so that $\mathcal{H}^{n+1}\left( \left(E \Delta (t e_{n+1}+ F[v])\right) \cap (\Omega\times \mathbb{R}) \right)=0$;\vspace*{0.1cm}
\item[ii)] there exists $c\in \mathbb{R}$ such that $v(x)=c$ for $\mathcal{L}^n$-a.e. $x\in \Omega$.
\end{itemize}
Moreover, if there exists $\Omega\subset \mathbb{R}^n$ as in \eqref{CCF condition for Omega} such that $\mathcal{H}^{n-1}(\{ v\Low>0 \}\setminus \Omega  )=0$, then statement $i)$ can be substituted with the following one
\begin{itemize}
\item[$\textnormal{i}^*)$] for every $E \in \mathcal{M}(v)$ there exists $t \in \mathbb{R}$ so that $\mathcal{H}^{n+1}\left( E \Delta (t e_{n+1}+ F[v]) \right)=0$.
\end{itemize}
\end{corollary}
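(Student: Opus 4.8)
The plan is to deduce the statement almost immediately from Theorem \ref{thm_rigidity by perugini} and Proposition \ref{prop_characterization of constant functions}; the argument is essentially an assembly of equivalences that have already been established, together with one short verification.

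First I would record that $\Omega$, being as in \eqref{CCF condition for Omega}, actually satisfies \eqref{assumptions on Omega}: it is open and connected by hypothesis, and the requirement $v\Low(x)>0$ for $\mathcal{H}^{n-1}$-a.e. $x\in\Omega$ in particular forces $v\Low>0$ $\mathcal{L}^n$-a.e. in $\Omega$, so that, using \eqref{eq: 2.9 Filippo page 15} with $t=0$ and \eqref{cond_ F[v] has finite perimeter}, one gets $\mathcal{L}^n(\Omega)\le\mathcal{L}^n(\{v>0\})<\infty$. Consequently the restriction of $v\in BV(\mathbb{R}^n;[0,\infty))$ is an element of $BV(\Omega)$, and by the standing assumption $\nabla v=0$ $\mathcal{L}^n$-a.e. in $\Omega$; hence the hypotheses of Proposition \ref{prop_characterization of constant functions} are met with $u=v$.

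Next, Theorem \ref{thm_rigidity by perugini} gives that statement $i)$ is equivalent to the assertion that $v\in BV(\Omega)$ is \emph{minimally singular}. On the other hand, Proposition \ref{prop_characterization of constant functions}, applied to $u=v$ (legitimate thanks to the previous paragraph and the hypothesis $\nabla v=0$ $\mathcal{L}^n$-a.e. in $\Omega$), shows that $v$ is \emph{minimally singular} if and only if $v$ is $\mathcal{L}^n$-equivalent to a constant function in $\Omega$, that is, if and only if $ii)$ holds. Chaining the two equivalences yields $i)\Leftrightarrow ii)$.

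Finally, for the last claim I would simply invoke the ``moreover'' part of Theorem \ref{thm_rigidity by perugini}: if there exists $\Omega$ as in \eqref{CCF condition for Omega} with $\mathcal{H}^{n-1}(\{v\Low>0\}\setminus\Omega)=0$, then in the above equivalence statement $i)$ may be replaced by $\textnormal{i}^*)$, and therefore $\textnormal{i}^*)\Leftrightarrow ii)$ follows at once. I do not expect a real obstacle in this proof: the only step requiring a line of justification is checking that a set $\Omega$ as in \eqref{CCF condition for Omega} satisfies the finiteness condition in \eqref{assumptions on Omega}, which is needed in order to apply Proposition \ref{prop_characterization of constant functions}.
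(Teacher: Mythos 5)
Your proof takes exactly the paper's route: the official proof is the one-line ``Combine Proposition~\ref{prop_characterization of constant functions} with Theorem~\ref{thm_rigidity by perugini},'' and you do precisely that, with the added (correct and useful) verification that a set $\Omega$ as in \eqref{CCF condition for Omega} automatically has $\mathcal{L}^n(\Omega)<\infty$ and hence satisfies \eqref{assumptions on Omega}, so Proposition~\ref{prop_characterization of constant functions} applies.
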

\begin{proof}[Proof of Corollary \ref{cor_rigidity for constant functions}]
Combine Proposition \ref{prop_characterization of constant functions} with Theorem \ref{thm_rigidity by perugini}.
\end{proof}
\noindent 
Let us now focus on the topological nature of $\{ v>0 \}$ whenever $v:\mathbb{R}^n\to [0,\infty)$ is a Lebesgue measurable function that satisfies \eqref{cond_ F[v] has finite perimeter}, and \eqref{cond_ F[v] is indecomposable}. Let us begin proving the following result that says that if an open set $\Omega$ is not essentially disconnected by its complement $\Omega^c$, then $\Omega$ is also connected.
\begin{lemma}\label{lem_Omega^c non disconnette Omega}
Let $\Omega\subset \mathbb{R}^n$ be an open set, and let us assume that $\Omega^c$ does not essentially disconnect $\Omega$, namely
\begin{align}\label{eq_Omega thesis}
\mathcal{H}^{n-1} \left( \left( \Omega^{(1)}\cap \partial^e \Omega_+ \cap \partial^e \Omega_- \right)\setminus \Omega^c \right)>0
\end{align}
for every non trivial Borel partition $\{\Omega_+,\Omega_-  \}$ of $\Omega$. Then $\Omega$ is a connected set.
\end{lemma}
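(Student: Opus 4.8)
The plan is to argue by contraposition: I would assume that $\Omega$ is \emph{not} connected and exhibit a single non-trivial Borel partition of $\Omega$ along which the "interface" set appearing in \eqref{eq_Omega thesis} is $\mathcal{H}^{n-1}$-negligible outside $\Omega^c$, which directly contradicts the hypothesis.

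First I would unwind the topological assumption. If $\Omega$ is disconnected, then by definition there exist nonempty, disjoint open sets $A, B \subset \mathbb{R}^n$ with $\Omega = A \cup B$. I would then set $\Omega_+ := A$ and $\Omega_- := B$ and check that $\{\Omega_+, \Omega_-\}$ is a non-trivial Borel partition of $\Omega$ in the sense of \eqref{non trivial Borel partition}. Indeed $A$ and $B$ are Borel, $\mathcal{H}^n(A \cap B) = 0$ and $\mathcal{H}^n(\Omega \Delta (A \cup B)) = 0$ hold trivially because $A, B$ are disjoint with union $\Omega$, and $\mathcal{H}^n(A)\mathcal{H}^n(B) > 0$ since any nonempty open set contains a ball, hence has strictly positive Lebesgue measure.

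The only substantive step is to show that
\[
\big(\Omega^{(1)} \cap \partial^e \Omega_+ \cap \partial^e \Omega_-\big) \setminus \Omega^c = \emptyset .
\]
To see this, let $x$ belong to this set. Since $x \notin \Omega^c$ we have $x \in \Omega = A \cup B$, so without loss of generality $x \in A = \Omega_+$. Because $A$ is open, there is $\rho_0 > 0$ with $B_{\rho_0}(x) \subset A$, whence $\theta(A,x) = 1$, i.e. $x \in A^{(1)}$, and therefore $x \notin \partial^e A = \mathbb{R}^n \setminus (A^{(0)} \cup A^{(1)})$. This contradicts $x \in \partial^e \Omega_+ = \partial^e A$, so the set is indeed empty and in particular has $\mathcal{H}^{n-1}$-measure zero. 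This contradicts \eqref{eq_Omega thesis} applied to the non-trivial Borel partition $\{\Omega_+, \Omega_-\}$, and hence $\Omega$ must be connected.

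I do not anticipate any genuine obstacle: once the definitions of essential boundary and of (non-)essential disconnection are unpacked, the statement is essentially immediate, the key point being only the elementary observation that a point of an open set is a point of density one for that set, so it never lies in its essential boundary. The one thing to be careful about is the bookkeeping: the candidate interface $\Omega^{(1)} \cap \partial^e\Omega_+ \cap \partial^e\Omega_-$ is forced to sit inside $\partial\Omega \subset \Omega^c$, so it disappears after subtracting $\Omega^c$ — I chose instead to phrase this as a direct contradiction, which avoids even needing $\partial\Omega\subset\Omega^c$.
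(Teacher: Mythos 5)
Your proof is correct and follows essentially the same approach as the paper: argue by contradiction, take the two open disjoint sets witnessing disconnectedness as the non-trivial Borel partition, and use the elementary fact that points of an open set are density-one points (hence not in the essential boundary) to conclude the interface set outside $\Omega^c$ is empty. Your version is in fact slightly leaner — the paper separately checks $\partial^e\Omega_\pm\cap\Omega_\pm=\emptyset$ and $\partial^e\Omega_\pm\cap\Omega_\mp=\emptyset$ to establish $\partial^e\Omega_\pm\subset\Omega^c$, whereas you bypass the second check by observing that any $x\in\Omega\setminus\Omega^c$ already lies in one of the two open pieces and is thus a density-one point for it, immediately contradicting membership in its essential boundary.
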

\begin{proof}
We argue by contradiction. Suppose that $\Omega$ is not connected, namely there exist $\Omega_+,\Omega_-\subset \Omega$ open sets such that $\Omega=\Omega_+ \cup \Omega_-$ and $\Omega_+\cap \Omega_- = \emptyset$. Observe that $\{\Omega_+,\Omega_-\}$ is a non trivial Borel partition of $\Omega$ according with \eqref{non trivial Borel partition}. Let us prove that
\begin{align}\label{Omega+ e Omega- non rompono i cojoni}
\partial^e \Omega_+ \subset \Omega^c,\quad \partial^e \Omega_- \subset \Omega^c.
\end{align}
Let us start noticing that, since $\Omega_+$ is an open set, $\Omega_+\subset \Omega_+^{(1)}$ and the same is true also for $\Omega_-$, thus
\begin{align}\label{eq_Omega step 1}
\partial^e\Omega_+ \cap \Omega_+ = \emptyset,\quad  \partial^e\Omega_- \cap \Omega_- = \emptyset.
\end{align}
In order to prove \eqref{Omega+ e Omega- non rompono i cojoni} it remains to verify the following conditions 
\begin{align}\label{eq_Omega step 2}
\partial^e \Omega_+ \cap \Omega_-=\emptyset,\quad \partial^e \Omega_- \cap \Omega_+=\emptyset.
\end{align}
Suppose by contradiction that $x\in \partial^e \Omega_+ \cap \Omega_-$, then since $\Omega_-$ is open there exists $\rho>0$ such that $B_\rho(x)\subset \Omega_-$, while by definition of essential boundary (see Section \ref{section_fundamentals}) we can find a decreasing sequence $(\rho_i)_i\subset (0,\rho)$ such that $\mathcal{H}^n(B_{\rho_i}(x)\cap \Omega_+)>0$ for every $i\in\mathbb{N}$ which implies that $B_{\rho_i}(x)\subset \Omega_+\cap \Omega_-$ contradicting the fact that $\Omega_+\cap \Omega_-=\emptyset$. Thus combining \eqref{eq_Omega step 1} with \eqref{eq_Omega step 2} we get \eqref{Omega+ e Omega- non rompono i cojoni}. Thus we found a non trivial Borel partition of $\Omega$ such that 
\begin{align*}
\mathcal{H}^{n-1} \left( \left( \Omega^{(1)}\cap \partial^e \Omega_+ \cap \partial^e \Omega_- \right)\setminus \Omega^c \right)\stackrel{\eqref{Omega+ e Omega- non rompono i cojoni}}{=}0,
\end{align*}
which contradicts \eqref{eq_Omega thesis}. This concludes the proof.
\end{proof}
\noindent
Let us now prove Lemma \ref{lem_open sets exists for n=1}.
\begin{proof}[Proof of Lemma \ref{lem_open sets exists for n=1}]
We divide the proof in few steps.\\
\textbf{Step 1.} In this step we prove that 
\begin{align}\label{eq_n=1 step 1}
x\in \{  v\Low >0 \}\quad \textnormal{ if and only if }\quad \mathcal{L}^1\left((-\infty, x)\cap \{  v>0\}  \right) \mathcal{L}^1\left((x, \infty)\cap \{  v>0\}  \right) > 0.
\end{align}
If $x\in \{ v\Low >0 \}$ then thanks to \eqref{eq: 2.9 Filippo page 15} we have that $x\in \{ v>0 \}^{(1)}$ which implies 
\begin{align*}
\lim_{\rho \to 0^+} \frac{\mathcal{L}^1((x-\rho, x)\cap \{ v>0 \})}{2\rho} = \frac{1}{2}.
\end{align*}
Thus, by definition of limit we get that for every $\epsilon >0$ there exists $\bar{\rho}>0$ such that
\begin{align*}
\mathcal{L}^1((x-\rho, x)\cap \{ v>0 \}) > \rho (1-\epsilon)\quad \forall\, \rho \in (0,\bar{\rho}],
\end{align*}
which for $\epsilon < 1$ guarantees that $\mathcal{L}^1((x-\rho, x)\cap \{ v>0 \})>0$ for every $\rho \in (0,\bar{\rho})$, and thus $\mathcal{L}^1\left((-\infty, x)\cap \{  v>0\}  \right)>0$. Applying the same argument we have just used we can prove that also $\mathcal{L}^1\left((x, \infty)\cap \{  v>0\}  \right) > 0$, and so we proved that 
$$
\mathcal{L}^1\left((-\infty, x)\cap \{  v>0\}  \right) \mathcal{L}^1\left((x, \infty)\cap \{  v>0\}  \right) > 0.
$$
Let us now prove the remaining implication. Let $x\in \mathbb{R}$ be such that 
$$
\mathcal{L}^1\left((-\infty, x)\cap \{  v>0\}  \right) \mathcal{L}^1\left((x, \infty)\cap \{  v>0\}  \right) > 0.
$$
Consider the non trivial Borel partition $\{ G_+,G_- \}$ of $\{ v>0 \}$ (see relation \eqref{non trivial Borel partition}), where
\begin{align*}
G_+:= (x,\infty)\cap \{ v>0 \},\quad G_-:= (-\infty,x ) \cap \{  v>0  \}. 
\end{align*}
Let us show that
\begin{align}\label{eq_n=1 other implication}
\partial^e G_+ \cap \partial^e G_- =\{ x \}. 
\end{align}
First let us note that $\partial^e G_+ \cap \partial^e G_- \neq \emptyset$. Indeed, if that was not the case then
\begin{align*}
0= \mathcal{H}^0 \left( \{ v>0 \}^{(1)}\cap \partial^e G_+ \cap \partial^e G_- \right) \geq \mathcal{H}^0 \left( \left(\{ v>0 \}^{(1)}\cap \partial^e G_+ \cap \partial^e G_- \right) \setminus \{ v\Low =0 \} \right)\stackrel{\eqref{cond_ F[v] is indecomposable}  }{>} 0,
\end{align*}
which is clearly a contradiction with \eqref{cond_ F[v] is indecomposable}. Thus let $y\in \partial^e G_+ \cap \partial^e G_-$ and suppose by contradiction that $y \neq x$. With no loss of generality we consider $y>x$. Then, since $\textnormal{dist}(y, G_-)= d > 0$ we get that
\begin{align*}
\lim_{\rho \to 0^+} \frac{\mathcal{L}^1( (y-\rho, x+ \rho)\cap G_- )  }{2\rho}=0,
\end{align*}
and thus $y\notin \partial^e G_-$, which is a contradiction with $y \in \partial^e G_+ \cap \partial^e G_-$. This proves \eqref{eq_n=1 other implication}. Finally, suppose by contradiction that $x\in \{ v\Low = 0 \}$ then we would have that 
$$
\mathcal{H}^0 \left( \left(\{ v>0 \}^{(1)}\cap \partial^e G_+ \cap \partial^e G_- \right) \setminus \{ v\Low =0 \} \right)=0,
$$
which again contradicts the validity of \eqref{cond_ F[v] is indecomposable}. So $x\in \{ v\Low >0 \}$ and this concludes the first step. \\
\textbf{Step 2.} Let $x,y \in \{ v\Low >0 \}$ with $x < y$, then $z \in \{ v\Low >0 \}$ for every $z \in (x,y)$. Indeed, let $z \in (x, y)$ then thanks to step 1 we have that
\begin{align*}
&\mathcal{L}^1 \left( (-\infty, z) \cap \{ v>0 \} \right) \geq \mathcal{L}^1 \left( (-\infty, x) \cap \{ v>0 \} \right) >0,\\
&\mathcal{L}^1 \left( (z, \infty) \cap \{ v>0 \} \right) \geq \mathcal{L}^1 \left( (y, \infty) \cap \{ v>0 \} \right) >0,
\end{align*}
thus 
\begin{align*}
\mathcal{L}^1 \left( (-\infty, z) \cap \{ v>0 \} \right) \mathcal{L}^1 \left( (z, \infty) \cap \{ v>0 \} \right) >0
\end{align*}
and so by step 1 we conclude that $z \in \{ v\Low > 0 \}$. This concludes the second step.\\
\textbf{Step 3.} Let us conclude. Let's prove that $\{ v\Low >0 \}$ coincides with its interior,
namely that for every $x \in \{ v\Low >0 \}$ there exists $d > 0$ such that
\begin{align}\label{eq_step 3 n=1}
(x-d, x+d) \subset \{ v\Low > 0  \}.
\end{align}
Let $x \in \{ v\Low > 0 \}$ then by step 1 we have that  $(-\infty, x)\cap \{ v\Low > 0 \}\neq \emptyset$, and $(x, \infty)\cap \{ v\Low > 0 \}\neq \emptyset$. So let $y_- \in (-\infty, x)\cap \{ v\Low > 0 \}$ and $y_+ \in (x, \infty)\cap \{ v\Low > 0 \}$. Setting $d:= \min \{|x- y_-|, |x- y_+| \}$, thanks to step 2 we have that 
$$
(x-d,x+d) \subset \{ v\Low > 0 \}
$$
which proves \eqref{eq_step 3 n=1}. Thus we have that $\{ v\Low > 0 \}$ coincides with its interior, and so $\{ v\Low > 0\}$ is an open set. Thanks to step 2 or using Lemma \ref{lem_Omega^c non disconnette Omega} we conclude that $\{ v\Low > 0 \}$ is an open and connected set. Since $\mathcal{L}^1 (\{ v> 0  \})< \infty$ we conclude that there exists an open and bounded interval $(a,b) \subset \mathbb{R}$ such that $\{ v\Low >0  \} = (a,b)$.
\end{proof}

\noindent
Before proving Proposition \ref{prop_no open set equivalency}, we need an intermediate result. Let us first recall the definition of an indecomposable set of finite perimeter (see for instance \cite{AmbrosioCasellesMasnau}). Let $m\in\mathbb{N}$, with $m\geq 1$, and let $G\subset\mathbb{R}^m$ be a set of finite perimeter. We say that $G$ is \emph{indecomposable} if for every non trivial partition of $G$ of sets of finite perimeter $\{ G_+,G_- \}$ as in \eqref{non trivial Borel partition}, we have that $P(G)< P(G_+)+P(G_-)$.

\begin{lemma}\label{lem_C indecomponibile, allora 1_c fa quello che deve.} Let $E\subset \mathbb{R}^n$ be an indecomposable set of finite perimeter and finite volume. Then, $\{ \chi_E\Low = 0 \}$ does not essentially disconnect $E$.
\end{lemma}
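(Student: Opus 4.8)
The plan is to reduce the statement to a concrete perimeter identity and then argue by contradiction. First one unwinds the definition: since $\chi_E$ takes only the values $0$ and $1$, a direct computation of the approximate lower limit gives $\chi_E\Low=\chi_{E^{(1)}}$, hence $\{\chi_E\Low=0\}=\mathbb{R}^n\setminus E^{(1)}$. Therefore, for any Borel set $A$,
$$\big(E^{(1)}\cap\partial^{e}A\cap\partial^{e}(E\setminus A)\big)\setminus\{\chi_E\Low=0\}=E^{(1)}\cap\partial^{e}A\cap\partial^{e}(E\setminus A),$$
so that ``$\{\chi_E\Low=0\}$ does not essentially disconnect $E$'' is precisely the assertion that $\mathcal{H}^{n-1}\big(E^{(1)}\cap\partial^{e}E_+\cap\partial^{e}E_-\big)>0$ for every non trivial Borel partition $\{E_+,E_-\}$ of $E$. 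I would then suppose, for contradiction, that some non trivial Borel partition $\{E_+,E_-\}$ of $E$ satisfies $\mathcal{H}^{n-1}\big(E^{(1)}\cap\partial^{e}E_+\cap\partial^{e}E_-\big)=0$, and manufacture from it a non trivial decomposition of $E$ into sets of finite perimeter with equal perimeters.

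The heart of the argument is a pointwise density computation. From $\chi_{E_+}+\chi_{E_-}=\chi_E$ $\mathcal{L}^n$-a.e. one gets $\mathcal{L}^n(E_+\cap B_\rho(x))+\mathcal{L}^n(E_-\cap B_\rho(x))=\mathcal{L}^n(E\cap B_\rho(x))$ for every $x$ and every $\rho>0$, and hence, at every point $x$ where $\theta(E,x)$ exists, $\theta^*(E_\pm,x)=\theta(E,x)-\theta_*(E_\mp,x)$ and $\theta_*(E_\pm,x)=\theta(E,x)-\theta^*(E_\mp,x)$. Using this at points of $E^{(0)}$, where $E_\pm\subset E$ forces $\theta^*(E_\pm,\cdot)=0$, shows $\partial^{e}E_+\cap E^{(0)}=\partial^{e}E_-\cap E^{(0)}=\emptyset$; using it at points of $E^{(1)}$ shows $x\in\partial^{e}E_+\iff x\in\partial^{e}E_-$, i.e.
$$E^{(1)}\cap\partial^{e}E_+=E^{(1)}\cap\partial^{e}E_-=E^{(1)}\cap\partial^{e}E_+\cap\partial^{e}E_-,$$
which is $\mathcal{H}^{n-1}$-null by the contradiction hypothesis. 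Decomposing $\partial^{e}E_+$ along $\mathbb{R}^n=E^{(0)}\cup E^{(1)}\cup\partial^{e}E$ then gives $\mathcal{H}^{n-1}(\partial^{e}E_+)\le\mathcal{H}^{n-1}(\partial^{e}E)=P(E)<\infty$, so Federer's criterion yields that $E_+$, and symmetrically $E_-$, is a set of finite perimeter, and moreover $\mathcal{H}^{n-1}(\partial^{*}E_+\setminus\partial^{*}E)=0$.

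It remains to upgrade the trivial inequality $P(E)\le P(E_+)+P(E_-)$, which follows from $D\chi_E=D\chi_{E_+}+D\chi_{E_-}$, to an equality. For this I would check that $\mathcal{H}^{n-1}(\partial^{*}E_+\cap\partial^{*}E_-)=0$: at an $\mathcal{H}^{n-1}$-a.e. point of this intersection one has $\theta(E_+,x)=\theta(E_-,x)=1/2$ and, by the previous step, $\theta(E,x)=1/2$, contradicting $\theta(E_+,x)+\theta(E_-,x)=\theta(E,x)$. Hence $|D\chi_{E_+}|$ and $|D\chi_{E_-}|$, being carried by $\partial^{*}E_+$ and $\partial^{*}E_-$, are concentrated on $\mathcal{H}^{n-1}$-essentially disjoint sets, so $|D\chi_{E_+}+D\chi_{E_-}|=|D\chi_{E_+}|+|D\chi_{E_-}|$ and $P(E)=P(E_+)+P(E_-)$. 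Since $\{E_+,E_-\}$ is a non trivial partition of $E$ into sets of finite perimeter, this contradicts the indecomposability of $E$. The only external input is Federer's criterion (a Lebesgue measurable set with $\mathcal{H}^{n-1}$-finite essential boundary has finite perimeter), which I would quote from \cite[Theorem~16.2]{maggiBOOK}; everything else is density bookkeeping and elementary manipulation of $D\chi_E$. I expect the most delicate point to be the pointwise analysis on $E^{(1)}$, since it is exactly what forces an arbitrary Borel partition of $E$ with $\mathcal{H}^{n-1}$-negligible interface to be a finite-perimeter partition, and thereby bridges the abstract notion of essential disconnectedness with the classical notion of indecomposability.
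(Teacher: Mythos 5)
Your proof is correct, but it takes a genuinely different route from the paper's. Both arguments share the preliminary reduction: since $\chi_E\Low = \chi_{E^{(1)}}$, one has $\{\chi_E\Low = 0\} = \mathbb{R}^n\setminus E^{(1)}$, so removing $\{\chi_E\Low=0\}$ is vacuous on $E^{(1)}$ and the lemma becomes the assertion that $E$ is essentially connected, i.e.\ $\mathcal{H}^{n-1}\bigl(E^{(1)}\cap\partial^e E_+\cap\partial^e E_-\bigr)>0$ for every non trivial Borel partition $\{E_+,E_-\}$ of $E$. At that point the paper simply invokes \cite[Remark 2.3]{ccdpmGAUSS}, which records that indecomposability and essential connectedness coincide for sets of finite perimeter and finite volume, and it is done. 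You instead prove the needed implication (indecomposable $\Rightarrow$ essentially connected) from scratch, by a contradiction argument: you use the density identity $\theta^*(E_\pm,\cdot)+\theta_*(E_\mp,\cdot)=\theta(E,\cdot)$ to show that on $E^{(0)}$ neither $E_\pm$ has essential boundary and that on $E^{(1)}$ the sets $\partial^e E_+$ and $\partial^e E_-$ coincide, then conclude via Federer's criterion that each $E_\pm$ has finite perimeter with $\mathcal{H}^{n-1}(\partial^*E_\pm\setminus\partial^*E)=0$, and finally use $\partial^*E_\pm\subset E_\pm^{(1/2)}$ to force $\mathcal{H}^{n-1}(\partial^*E_+\cap\partial^*E_-)=0$ and hence $P(E)=P(E_+)+P(E_-)$, contradicting indecomposability. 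The paper's version buys brevity; yours buys self-containment and transparency, and moreover supplies the reader with the perimeter-additivity mechanism that \cite{ccdpmGAUSS} encapsulates. One cosmetic slip: early on you say you will produce a decomposition ``with equal perimeters''; you of course mean a finite-perimeter partition with $P(E)=P(E_+)+P(E_-)$, which is what the argument actually establishes.
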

\begin{proof}
Since $E$ is an indecomposable set of finite perimeter and finite volume, thanks to \cite[Remark 2.3]{ccdpmGAUSS} we have that $E$ is essentially connected, namely for every non trivial Borel partition $\{ G_+, G_- \}$ of $E$ (see definition \eqref{def_does not essentially disconnect}) we have
\begin{align}\label{quello da cui partiamo}
\mathcal{H}^{n-1}(E^{(1)}\cap \partial^e G_+ \cap \partial^e G_-)>0.
\end{align}
Let us show that 
\begin{align}\label{sotto e densita 1 so ugual1}
\{ \chi_E\Low >0 \}=E^{(1)}.
\end{align}
Thanks to relation \eqref{eq: 2.9 Filippo page 15} we are only left to prove that $E^{(1)}\subset \{ \chi_E\Low >0 \}$. Let $x\in E^{(1)}$, and observe that $E= \{ \chi_E > t \}$ for every $0\leq t <1$. Then, 
\begin{align*}
\lim_{\rho \to 0^+}\frac{\mathcal{L}^n(B_\rho(x)\cap \{ \chi_E > t \} )}{\mathcal{L}^n(B_\rho(x))}=\lim_{\rho \to 0^+}\frac{\mathcal{L}^n(B_\rho(x)\cap E)}{\mathcal{L}^n(B_\rho(x))},\quad \forall\, 0\leq t <1,
\end{align*}
which by definition of $\chi_E\Low$ implies that $\chi_E\Low (x)=1$, and this proves \eqref{sotto e densita 1 so ugual1}. Let $\{G_+,G_-  \}$ be a non trivial Borel partition for $E$, then
\begin{align*}
&\mathcal{H}^{n-1}\left((E^{(1)}\cap \partial^e G_+ \cap \partial^e G_-)\setminus \{ \chi_E\Low = 0 \}\right) \stackrel{\eqref{sotto e densita 1 so ugual1}}{=}\mathcal{H}^{n-1}\left( (\{ \chi_E\Low >0 \}\cap \partial^e G_+ \cap \partial^e G_-)\setminus \{ \chi_E\Low = 0 \}\right)\\
&=\mathcal{H}^{n-1}\left( \{ \chi_E\Low >0 \}\cap \partial^e G_+ \cap \partial^e G_-\right)\stackrel{\eqref{sotto e densita 1 so ugual1}}{=}\mathcal{H}^{n-1}\left( E^{(1)}\cap \partial^e G_+ \cap \partial^e G_-\right)\stackrel{\eqref{quello da cui partiamo}}{>}0,
\end{align*}
and this concludes the proof.
\end{proof}
\begin{remark}
Note that thanks to Lemma \ref{lem_C indecomponibile, allora 1_c fa quello che deve.} and \cite[Theorem 4.3]{CagnettiColomboDePhilippisMaggiSteiner} we have that $E\subset\mathbb{R}^n$ is an indecomposable set of finite perimeter and finite volume if and only if $F[\chi_E]\subset\mathbb{R}^{n+1}$ is an indecomposable set of finite perimeter and finite volume.
\end{remark}
\noindent
Now we prove Proposition \ref{prop_no open set equivalency}. The proof of this result is deeply inspired by \cite[Example 12.25]{maggiBOOK}.

\begin{proof}[Proof of Proposition \ref{prop_no open set equivalency}]
We divide the proof in several steps.\\
\textbf{Step 1.} In this step we construct a set $K\subset\mathbb{R}^n$ of finite perimeter and finite volume with empty interior. Let $(x_h)_{h\in\mathbb{N}}\subset B_1(0)$ be a countable and dense sequence of points, and let $(r_h)_{h\in \mathbb{N}}\subset (0,\epsilon)$, with $\epsilon < n\omega_n = \mathcal{L}^n(B_1(0))$, be such that $n\omega_n \sum_{h\in\mathbb{N}} r_h^{n-1}\leq 1$.  As shown in \cite[Example 12.25]{maggiBOOK}, we have that the set $E:= \cup_{h\in\mathbb{N}} B_{r_h}(x_h)\subset B_1(0)$ is an open set such that $\mathcal{L}^n(E) \leq \epsilon$, and $P(E)<  1$. Let us consider the set $K\subset \mathbb{R}^n$ defined as
$$
K:= \overline{B_1(0)}\cap E^c.
$$
Then by construction, we have that $K$ is a close set of finite volume, while thanks to \cite[Exercise 12.9, and Theorem 16.3]{maggiBOOK} we have that $P(K)<\infty$. Finally let us note that, since $\bar{E}= \overline{B_1(0)}$, we have that $K=\partial E$, and so $K$ has empty interior. This concludes the first step.\\
\textbf{Step 2.} We need to work with an indecomposable component of $K$. Since the set $K$ may be a decomposable set, we apply the \emph{Decomposition Theorem} for sets of finite perimeter (see \cite[Theorem 1]{AmbrosioCasellesMasnau}) to find an indecomposable set $C\subset \mathbb{R}^n$ of finite perimeter with the property that $\mathcal{L}^n(C\setminus K)=0$. Let us note that thanks to standard BV-theory (see for instance Section \ref{section_fundamentals}) $\chi_C\in BV(\mathbb{R}^n)$ with $\mathcal{L}^n(C)<\infty$, while thanks to Lemma \ref{lem_C indecomponibile, allora 1_c fa quello che deve.} we have that $\{\chi_C\Low = 0  \}$ does not essentially disconnect $C$.\\
\textbf{Step 3.} Let $v:\mathbb{R}^n\to [0,\infty)$ be the Lebesgue measurable function defined as $v(x)=\chi_C(x)$ for every $x\in\mathbb{R}^n$. Let us show that \emph{rigidity} for $v$ holds true. First, let us note that thanks to step 2 we have that $v$ satisfies both \eqref{cond_ F[v] has finite perimeter}, and \eqref{cond_ F[v] is indecomposable}. Second, let us show that 
\begin{align*}
|D^s v|(\{ v\Low >0 \})= 0.
\end{align*}
Indeed, thanks to the Coarea formula we get
\begin{align*}
0= \int_{\mathbb{R}}\mathcal{H}^{n-1}(\{ v\Low >0 \}\cap \partial^e\{ v>t \}) dt = |D v|(\{ v\Low >0 \}) \geq |D^s v|(\{ v\Low >0 \}),
\end{align*}
where for the first equality sign we used the fact that since $\{ v\Low >0  \}\subset C^{(1)}$, and since $\{ v> t \}= C$ for every $0 \leq t < 1$, while $\{ v> t \}=\mathbb{R}^n$ for $t<0$, and $\{ v> t \}=\emptyset$ for $t > 1$, we have that $\{ v\Low >0 \}\cap \partial^e\{ v>t \}=\emptyset$ for $\mathcal{L}^1$-a.e. $t\in \mathbb{R}$. Thus, applying \cite[Theorem 1.16]{CagnettiColomboDePhilippisMaggiSteiner} we have that \emph{rigidity} for $v$ holds true. This concludes the step.\\
\textbf{Step 4.} We are left to prove that for every $\Omega\subset \mathbb{R}^n$ open set we have $\mathcal{L}^n(C\Delta\Omega)>0$. Let us show that for every $x\in C^{(1)}$ we have that 
\begin{align}\label{intermediate step 4 the real one}
0<\mathcal{L}^n(B_\rho(x) \cap C) < \mathcal{L}^n(B_\rho(x))\quad \forall\,\rho >0.
\end{align}
Since $x\in C^{(1)}$ we have that $\mathcal{L}^n(B_\rho(x) \cap C)>0$ for every $\rho > 0$. In order to prove the remaining inequality, let us observe that if $x\in K$ then, by construction of the set $E$, and since $K=\partial E$, we have that $B_\rho(x)\cap E\neq \emptyset$, and so $\mathcal{L}^n(B_\rho(x) \cap K) < \mathcal{L}^n(B_\rho(x))$ for every $\rho >0$. Let $x\in C^{(1)}$, and let $\rho>0$. Since $\mathcal{L}^n(B_\rho(x) \cap C)>0$, also $\mathcal{L}^n(B_\rho(x) \cap K)>0$, so we can find a point $z\in B_\rho(x) \cap K$, and a radius $r>0$ such that $B_r(z)\subset B_\rho(x)$. Thus
\begin{align*}
&\mathcal{L}^n(B_\rho(x) \cap C)\leq\mathcal{L}^n(B_\rho(x) \cap K)= \mathcal{L}^n(B_r(z) \cap K) + \mathcal{L}^n((B_\rho(x)\setminus B_r(z)) \cap K)\\&< \mathcal{L}^n(B_r(z)) + \mathcal{L}^n(B_\rho(x)\setminus B_r(z)) = \mathcal{L}^n(B_\rho(x)),
\end{align*}
which proves \eqref{intermediate step 4 the real one}. Let $\Omega\subset \mathbb{R}^n$ be an open set. Since $\{v>0  \}= C^{(1)}$ up to a set of $\mathcal{L}^n$-measure zero, thanks to \eqref{intermediate step 4 the real one} we conclude that $\mathcal{L}^n(\{ v>0 \}\Delta
 \Omega)>0$. Observe that this implies that $\mathcal{H}^{n-1}(\{ v\Low >0 \}\Delta \Omega)= \infty$.
\end{proof}
\noindent
Lastly, we conclude proving Lemma \ref{lem_sufficient condition for the existence of the open set}.
\begin{proof}[Proof of Lemma \ref{lem_sufficient condition for the existence of the open set}] Let us consider the set $\Omega := \mathbb{R}^n\setminus \overline{\{v\Low = 0  \}}$. Thanks to \eqref{cond_la chiusa di v = 0 meno v = 0 minuscola} we have that $\mathcal{H}^{n-1}(\overline{\{ v\Low=0 \}}\setminus \{ v\Low = 0 \})=0$. Thus, thanks to \cite[Remark 1.5]{CagnettiColomboDePhilippisMaggiSteiner} we get that $\Omega^c=\overline{\{ v\Low=0 \}}$ does not essentially disconnect $\{ v\Low>0 \}$. Moreover, by construction we have that 
\begin{align*}
&\mathcal{H}^{n-1}(\{ v\Low > 0 \} \Delta \Omega)=\mathcal{H}^{n-1}(\{ v\Low>0 \}\setminus \Omega)=\mathcal{H}^{n-1}(\{ v\Low > 0 \}\cap \overline{\{ v\Low = 0 \}})\\
&=\mathcal{H}^{n-1}(\{ v\Low > 0 \}\cap (\overline{\{ v\Low = 0 \}}\setminus \{ v\Low = 0 \})) \leq \mathcal{H}^{n-1}(\overline{\{ v\Low = 0 \}}\setminus \{ v\Low = 0 \})=0.
\end{align*}
Thus we have that $\Omega^{(1)}= \{ v\Low>0 \}^{(1)}$ and so by definition of essential connectedness $\Omega^c$ does not essentially disconnect $\Omega$. Finally since $\Omega^c$ does not essentially disconnect $\Omega$, and since $\Omega$ is an open set,  thanks to Lemma \ref{lem_Omega^c non disconnette Omega} we get that $\Omega$ is also connected. This concludes the proof.
\end{proof}

\def\cprime{$'$}

\end{document}